\documentclass{scrartcl}
\pdfoutput=1

%_______________________________________________________________________________
% page borders
%_______________________________________________________________________________

%\addtolength{\headheight}{2cm}
%\addtolength{\topmargin}{2cm}
%\setlength{\oddsidemargin}{1.0cm}
%\setlength{\evensidemargin}{0.5cm}
%\setlength{\textwidth}{14.3cm}
%\setlength{\parindent}{2em}

%_______________________________________________________________________________
% packages
%_______________________________________________________________________________

\usepackage[utf8]{inputenc}
\usepackage{bbm}
\usepackage{todonotes}
\usepackage{lipsum}
\usepackage{url}
\usepackage{amsmath}
\usepackage{amssymb}
\usepackage{tikz-cd}
\usepackage{amsthm}
\usepackage{mathtools}
\usepackage{MnSymbol}
\usepackage{stmaryrd}
\usepackage{enumitem}
% \usepackage{romannum}
% \usepackage{tensor}
% \usepackage{leftidx}
% \usepackage[compat=1.0.0]{tikz-feynman}
% \usepackage{xcolor}
% \usepackage{braket}
% \usepackage{marvosym}
% \usepackage{esint}
% \usepackage{wasysym}
% \usepackage{gensymb}
% \usepackage[
% style=phys,
% style = alphabetic,
% backend=biber,
% natbib=true,
% url = false
% ]{biblatex}
% \usepackage{footnote}
% \usepackage{caption}
% \usepackage{float}
% \usepackage{tabularx}
% \usepackage{pdflscape}
% \usepackage{subfigure}
% \usepackage{blindtext}
% \usepackage{todonotes}
% \usepackage{siunitx}
% \usepackage{tikz}
% \usepackage{pgf}
% \usepackage{pgfplots}
% \usetikzlibrary{shapes,arrows}
% \usepackage{standalone}
\usepackage{hyperref}

%\addbibresource{msbib.bib}

%_____________________________________________________________________
% amsthm enviroments
%_____________________________________________________________________

\newtheorem{Def}{Definition}
\newtheorem{Thm}{Theorem}
\newtheorem{Lem}{Lemma}
\newtheorem{Cor}{Corollary}
\newtheorem{Prop}{Proposition}
\newtheorem{Rem}{Remark}
\newtheorem{Ex}{Example}

\newtheorem*{Def*}{Definition}
\newtheorem*{Thm*}{Theorem}
\newtheorem*{Lem*}{Lemma}
\newtheorem*{Cor*}{Corollary}
\newtheorem*{Prop*}{Proposition}
\newtheorem*{Rem*}{Remark}
\newtheorem*{Ex*}{Example}
\newtheorem*{Con*}{Conjecture}
\newtheorem*{Aux*}{Auxiliary lemma}
%\newtheorem*{Not*}{Note to self}

%_____________________________________________________________________
% New commands
%_____________________________________________________________________

\newcommand{\mH}{\mathcal{H}}

\newcommand{\pa}{\partial}
\newcommand{\actdiski}{\mathcal{A}^{D^{z_0}_R}_{\mH; 1}}
\newcommand{\actdiskii}{\mathcal{A}^{D^{z_0}_R}_{\mH; 2}}
\newcommand{\acti}{\mathcal{A}^{D}_{\mH; 1}}
\newcommand{\actii}{\mathcal{A}^{D}_{\mH; 2}}
\newcommand{\R}{\mathbb{R}}
\newcommand{\Cx}{\mathbb{C}}
\newcommand{\can}{\text{can}}
\newcommand{\nab}[1]{\prescript{#1}{}{\nabla}}
\newcommand{\tor}[1]{\prescript{#1}{}{T}}
\newcommand{\hvec}[1]{\Gamma (T^{(1,0)} #1)}

%_______________________________________________________________________________
% renewed commands
%_______________________________________________________________________________
% \renewcommand{\topfraction}{1.}       % this is important for figure placement
% \renewcommand{\bottomfraction}{1.}
% \makeatletter
% \renewcommand\paragraph{\@startsection{paragraph}{4}{\z@}%
%   {-3.25ex\@plus -1ex \@minus -.2ex}%
%   {1.5ex \@plus .2ex}%
%   {\normalfont\normalsize\bfseries}
% }
% \makeatother

%_______________________________________________________________________________

%\pagestyle{empty}
%\pagestyle{headings}
%for changing the style on a specific page use \thispagestyle{e.g., empty}

%_______________________________________________________________________________
%_______________________________________________________________________________

%_______________________________________________________________________________
% bold fonts for headings
%_______________________________________________________________________________
\font\afont=cmssbx10 scaled \magstep5     % for the title
     % for chapter headings

\font\dfont=cmssbx10 scaled \magstep2     % for section headings and author name
\font\efont=cmssbx10 scaled \magstephalf
\hypersetup{
	unicode = true,
	pdfborder = {0 0 0},
	linktoc = page,
	colorlinks,
	linkcolor = black,
	citecolor = black,
	filecolor = black,
	urlcolor = blue
}

%_______________________________________________________________________________
% index depth
%_______________________________________________________________________________
% \setcounter{secnumdepth}{3}
% \setcounter{tocdepth}{3}

\begin{document}

%\bibliographystyle{alpha}

%_______________________________________________________________________________
\begin{titlepage}
  \vspace*{6mm}
  \begin{center}
     {\afont Pseudo-Holomorphic Hamiltonian\\}\vspace{0.5cm}
     {\afont Systems}
     \\[1cm]
     {\large by}
     \\[1cm]
     {\dfont Luiz Frederic Wagner}
     \\[1cm]
     {\dfont March 16, 2023}
     \\[3cm]
     {\efont Abstract}
     \\[0.5cm]
  \end{center}
  In this paper, we first explore holomorphic Hamiltonian systems. In particular, we define action functionals for those systems and show that holomorphic trajectories obey an action principle, i.e., that they can be understood -- in some sense -- as critical points of these action functionals. As an application, we use holomorphic Hamiltonian systems to establish a relation between Lefschetz fibrations and almost toric fibrations. During the investigation of action functionals for holomorphic Hamiltonian systems, we observe that the complex structure $J$ corresponding to a holomorphic Hamiltonian system poses strong restrictions on the existence of certain trajectories. For instance, no non-trivial holomorphic trajectories with complex tori as domains can exist in $\mathbb{C}^{2n}$ due to the maximum principle. To lift this restriction, we generalize the notion of holomorphic Hamiltonian systems to systems with non-integrable almost complex structures $J$ leading us to the definition of pseudo-holomorphic Hamiltonian systems. We show that these systems exhibit properties very similar to their holomorphic counterparts, notably, that they are also subject to an action principle. Furthermore, we prove that the integrability of $J$ is equivalent to the closedness of the ``pseudo-holomorphic symplectic'' form. Lastly, we show that, aside from dimension four, the set of proper pseudo-holomorphic Hamiltonian systems is open and dense in the set of pseudo-holomorphic Hamiltonian systems by considering deformations of holomorphic Hamiltonian systems. This implies that proper pseudo-holomorphic Hamiltonian systems are generic.

\end{titlepage}

%_______________________________________________________________________________

\newpage
\pagenumbering{roman}
\tableofcontents
\newpage
\pagenumbering{arabic}

\section*{Acknowledgments}
\addcontentsline{toc}{section}{Acknowledgments}
\markboth{}{Acknowledgments}

First and foremost I thank my supervisor Kai Cieliebak for his guidance, helpful feedback, and insightful discussions. Moreover, I thank Lei Zhao who convinced me to write this paper and the remaining members of the symplectic group in Augsburg, in particular Urs Frauenfelder, for their input and company. Lastly, I thank my family and friends who supported me throughout my academic career, in particular Christian P.M. Schneider, Alexander Segner, and Florian Stuhlmann.

\newpage
\section{Introduction}
\label{sec:intro}

Hamiltonian systems (HSs) as the mathematical model for classical mechanics have been central to the advance of modern physics and mathematics alike. In physics, HSs provide a theoretical foundation for several approaches to quantization. In mathematics, the interest in HSs has led to the study of symplectic geometry and topology. The methods developed in this study, e.g. Floer theory, have proven to be of great success for various branches of mathematics and physics alike, for instance celestial mechanics and string theory.\\
Simply put, a HS consists of three data: a smooth manifold $M$, a symplectic $2$-form $\omega$ on $M$, and smooth function $H\in C^\infty (M,\mathbb{R})$. In physical terms, the symplectic manifold $(M,\omega)$ can be understood as the phase space of the system, while the function $H$, often called Hamilton function or, simply, Hamiltonian, assigns to every point in phase space its energy. These data allow us to define the Hamiltonian vector field $X_H$ on $M$ via the equation $\iota_{X_H}\omega = -dH$. The dynamics of the HS $(M,\omega, H)$ is governed by the vector field $X_H$. Precisely speaking, the physical trajectories of point-like particles described by the HS $(M,\omega, H)$ are exactly the integral curves of $X_H$. The connection between the integral curve equation of $X_H$ and the Hamilton equations known from classical mechanics is given by Darboux's theorem: every symplectic form $\omega$ can locally be written as
\begin{gather*}
 \omega = \sum^n_{i = 1} dp_i\wedge dq_i.
\end{gather*}
In such Darboux charts, the integrable curve equation of $X_H$ reduces to the Hamilton equations:
\begin{gather*}
 \dot q_i(t) = \frac{\partial H}{\partial p_i};\quad \dot p_i(t) = -\frac{\partial H}{\partial q_i}\quad\forall t\in I\ \forall i\in\{1,\ldots, m\}.
\end{gather*}
Since the integral curve equation is just a first-order differential equation, there always exists an open interval $I$ and a trajectory $\gamma:I\to M$ with $\gamma (t_0) = x$ for any initial value $x\in M$ and $t_0\in\mathbb{R}$. Furthermore, two trajectories $\gamma_1:I_1\to M$ and $\gamma_2:I_2\to M$ are identical iff they have the same domain ($I_1 = I_2\equiv I$) and attain the same value at some point $t_0\in I$. In particular, maximal trajectories for a given initial value are unique.\\
Physical trajectories also obey the action principle, i.e., they can be obtained as ``critical points'' of the action functional $\mathcal{A}_H:C^\infty(I,M)\to\mathbb{R}$ assigned to an exact\linebreak HS $(M,\omega = d\lambda, H)$:
\begin{gather*}
 \mathcal{A}_H[\gamma]\equiv \mathcal{A}^\lambda_H[\gamma]\coloneqq \int\limits_I \gamma^\ast\lambda - \int\limits_I H\circ\gamma (t)\, dt.
\end{gather*}
Here, ``critical point'' means that the first variation of $\mathcal{A}_H$ has to vanish at the trajectory $\gamma\in C^\infty(I,M)$ where we only allow for variations of $\gamma$ which keep the endpoints of $\gamma$ fixed. Sometimes, for instance in Floer theory, one wishes to view certain trajectories as actual critical points of some action functional. There are several ways to achieve this, e.g. by putting the endpoints of a trajectory on a Lagrangian or by only considering periodic trajectories.\\
Since HSs are given in terms of \underline{real-valued} manifolds $M$, forms $\omega$, and functions $H$, it is only natural to ask whether a similar construction with similar properties exists for \underline{complex-valued} manifolds $X$, forms $\Omega$, and functions $\mathcal{H}$. The answer to this question directly leads to the notion of \textbf{holomorphic Hamiltonian systems} (HHSs). Similarly to real Hamiltonian systems\footnote{To distinguish real- and complex-valued Hamiltonian systems, we call real-valued Hamiltonian systems real Hamiltonian systems from now on.} (RHSs), HHSs are also described by three data (cf. \autoref{subsec:def_HHS}): a complex manifold $X$ (implicitly defining an integrable complex structure $J$), a holomorphic symplectic $2$-form $\Omega$ on $X$, and a holomorphic function $\mH:X\to\mathbb{C}$.
%As for RHSs, one can also associate a (holomorphic) Hamiltonian vector field $X_\mH$, (holomorphic) trajectories, and action functionals with a HHS.
HHSs have been studied since the early 2000s, e.g. by Gerdjikov and Kyuldjiev et al. \cite{gerd2001}, \cite{gerd2002}, \cite{gerd2004} or by Arathoon and Fontaine \cite{arathoon2020}. In the given references, HHSs are usually viewed as complexifications of RHSs and mostly used as a tool to study RHSs which arise as real forms\footnote{The terms ``complexifications and real forms of Hamiltonian systems'' can be defined properly, but we do not give an explicit definition here. For us, it suffices to know that complexifications and real forms of Hamiltonian systems are defined similarly to complexifications and real forms of manifolds: a real manifold $M$ is the real form of a complex manifold $X$ and $X$ is a complexification of $M$ iff $M$ is the fixed point set of some anti-holomorphic involution on $X$.} of HHSs. In \cite{arathoon2020}, for instance, an integrable and compact RHS is constructed out of the HHS obtained from the complexification of the spherical pendulum.\\
In the present paper, we take a different approach. We study HHSs on their own and try to recreate the results known from RHSs for HHSs. To start with, we discuss the existence and uniqueness of holomorphic trajectories. Similarly to RHSs, holomorphic trajectories are defined as the holomorphic integral curves of the holomorphic Hamiltonian vector field $X_\mH$. We show in \autoref{subsec:holo_traj} that, locally, holomorphic trajectories always exist and are unique, given an initial value. Maximal holomorphic trajectories, however, are not unique anymore, even given an initial value, due to the effects of monodromy\footnote{Recently, the monodromy of the complexified Kepler problem has been studied by Sun and You (cf. \cite{shanzhong2020}).}. This behavior is in sharp contrast to RHSs. Nevertheless, the holomorphic trajectories still give rise to a foliation by energy hypersurfaces $\mH^{-1}(E)$ for regular values $E$ of $\mH$, as shown in \autoref{subsec:holo_traj}.\\
After this discussion, we prove in \autoref{subsec:holo_action_fun_and_prin} that the holomorphic trajectories satisfy an action principle\footnote{To the extent of the author's knowledge, an action principle for HHSs has not been formulated before in the literature.}, i.e., that they can be understood -- in some sense -- as critical points of certain action functionals. These action functionals are obtained by first decomposing a HHS $(X,\Omega,\mH)$ into four RHSs, one for each combination of real and imaginary part of $\Omega$ and $\mH$. To each RHS, we can assign the usual action functional of a RHS. Afterwards, we average each of these action functionals over the imaginary (or real) time axis and take an appropriate linear combination to obtain the action functional for the HHS $(X,\Omega,\mH)$. In fact, this method gives rise to a plethora of action functionals for the HHS $(X,\Omega,\mH)$ which simply differ by how one averages and takes the linear combination. We conclude \autoref{sec:HHS} with an application of HHSs. Precisely speaking, we establish a relation between Lefschetz fibrations and almost toric fibrations in \autoref{subsec:Lefschetz} using HHSs.\\
During the investigation of action functionals for HHSs, we observe that $J$, the complex structure of $X$, poses rather strong restrictions on the existence of certain holomorphic trajectories. In \autoref{subsec:holo_action_fun_and_prin}, we consider holomorphic trajectories whose domains are complex tori and interpret them as the complexification of periodic orbits. However, by the maximum principle, such holomorphic trajectories are always constant if the complex manifold in question is $X=\mathbb{C}^{2n}$ equipped with the standard complex structure $J = i$. The same argument does not hold anymore if we allow $J$ to be any almost complex structure. In his beautiful paper \cite{moser1995} from 1995, Moser shows that it is possible to pseudo-holomorphically embed complex tori in $\mathbb{R}^4$, where $\mathbb{R}^4$ is equipped with a suitable, not necessarily integrable almost complex structure $J$.\\
To avoid constraints imposed by the integrability of $J$, we introduce special Hamiltonian systems in \autoref{sec:PHHS} which are described by the same data as HHSs, but whose almost complex structure $J$ does not need to be integrable anymore. These Hamiltonian systems are called \textbf{pseudo-holomorphic Hamiltonian systems} (PHHSs) and exhibit, by design, the same properties as HHSs. In particular, pseudo-holomorphic trajectories of PHHSs induce foliations by regular energy hypersurfaces $\mH^{-1}(E)$ and obey an action principle (cf. \autoref{subsec:def_PHHS}).\\
At first glance, PHHSs may appear to be contrived and artificial, especially since, by definition, the imaginary part of $\Omega$ does not need to be closed anymore. However, the non-closedness of the imaginary part of $\Omega$ is an unavoidable consequence of the non-integrability of $J$, as we show in \autoref{subsec:rel_HHS_PHHS}. In fact, we prove that we recover a HHS from a PHHS if and only if $J$ is integrable or, equivalently, the imaginary part of $\Omega$ is closed. To further strengthen our claim that PHHSs are indeed a natural generalization of HHSs, we show that the space of proper\footnote{A proper PHHS is a PHHS which is not simultaneously a HHS.} PHHSs is open and dense in the space of PHHSs on a fixed manifold $X$ with $\text{dim}_\mathbb{R}(X)>4$ implying that proper PHHSs are generic. To prove that proper PHHSs are generic, we first give a method to construct proper PHHSs out of HHSs (cf. \autoref{subsec:constructing_PHHS}). The method itself is very interesting, since it is related to hyperkähler structures and allows us to equip the cotangent bundle of a complex manifold with the structure of a PHHS. Lastly, we use this construction to deform HHSs by proper PHHSs (cf. \autoref{subsec:deforming_HHS}).

\newpage
\section{Holomorphic Hamiltonian Systems}
\label{sec:HHS}

In this section, we recreate the results known from RHSs (laid out in \autoref{sec:intro}% or in \autoref{sec:RHS}
) for holomorphic Hamiltonian systems. We first cover the basic notion of a holomorphic Hamiltonian system (\autoref{subsec:def_HHS}), including the definition of a holomorphic symplectic manifold and Darboux's theorem for holomorphic symplectic manifolds, afterwards discuss the properties of holomorphic trajectories (\autoref{subsec:holo_traj}), then formulate an action principle for holomorphic Hamiltonian systems (\autoref{subsec:holo_action_fun_and_prin}), and, lastly, apply HHSs to investigate the relation between Lefschetz and almost toric fibrations (\autoref{subsec:Lefschetz}).

\subsection{HHS: Basic Definitions and Notions}
\label{subsec:def_HHS}

To begin with, we define a holomorphic symplectic manifold:
\begin{Def}[Holomorphic symplectic manifold]\label{def:holo_sym_man}
 A pair $(X,\Omega)$ is called \textbf{holomorphic symplectic manifold}\footnote{Warning: In some branches of algebraic geometry, the term ``holomorphic symplectic manifold'' is also used, but defined with additional constraints on $X$ and $\Omega$!} (HSM) iff $X$ is a complex manifold and $\Omega$ is a holomorphic $2$-form on $X$ which is closed and non-degenerate on the $(1,0)$-tangent bundle $T^{(1,0)}X$ of $X$. In this setup, $\Omega$ is called the holomorphic symplectic $2$-form of $(X,\Omega)$.
\end{Def}
Let us spend some time understanding the definition of a HSM. Recall that a complex manifold $X$ is defined via an atlas $\{(\phi_\alpha, U_\alpha)\}_{\alpha\in I}$ of charts with values in $\mathbb{C}^m$ such that their transition functions are holomorphic. If $\phi = (z_1,\ldots, z_m):U\to V\subset\mathbb{C}^m$ is such a holomorphic chart, then a holomorphic $2$-form $\Omega$ on $X$ can locally be written as:
\begin{gather*}
 \Omega\vert_U = \sum^{m}_{i,j = 1} \Omega_{ij} dz_i\wedge dz_j,
\end{gather*}
where $\Omega_{ij}:U\to\mathbb{C}$ are holomorphic functions on $U$. Similarly to the real case, one can define an exterior derivative $d$ on complex-valued forms such that closedness of $\Omega$ simply equates to $d\Omega = 0$. To understand the non-degeneracy in the definition of a HSM, recall that every complex manifold $X$ implicitly defines an integrable (almost) complex structure $J$ on $X$ and that, further, the complexified tangent and cotangent bundle of $X$, viewed as a real manifold, each decompose into a direct sum of two subbundles:
\begin{gather*}
 T_\mathbb{C}X = T^{(1,0)}X\oplus T^{(0,1)}X;\quad T^\ast_\mathbb{C}X = T^{\ast, (1,0)}X\oplus T^{\ast, (0,1)}X,
\end{gather*}
where the $(1,0)$- and $(0,1)$-bundles are fiberwise eigenspaces of $J$ (or its dual $J^\ast$) with eigenvalue $i$ and $-i$, respectively. By construction, the local forms $dz_i$ are local sections of $T^{\ast, (1,0)}X$ and, hence, map elements of $T^{(0,1)}X$ to zero. This implies that holomorphic $2$-forms can never be non-degenerate on the entire complexified tangent bundle, as they always vanish on the $(0,1)$-bundle. For a holomorphic $2$-form $\Omega$, we can at most achieve non-degeneracy on $T^{(1,0)}X$, i.e.:
\begin{gather*}
 \forall x\in X\ \forall V\in T^{(1,0)}_xX\backslash\{0\}\ \exists W\in T^{(1,0)}_xX:\quad \Omega_x(V, W) \neq 0.
\end{gather*}
In particular, as in the real case, non-degeneracy of $\Omega$ implies that the complex dimension $m$ of $X$ is even. Also note that, by construction, $\Omega$ and $J$ satisfy the following relations:
\begin{gather}\label{eq:J-anticompatible}
 \Omega (J\cdot, \cdot) = \Omega (\cdot, J\cdot) = i\cdot\Omega;\quad \Omega (J\cdot, J\cdot) = -\Omega.
\end{gather}
As in ``real'' symplectic geometry, there are two standard examples of HSMs: the first one is $X = \mathbb{C}^{2n}$ together with the standard form $\Omega = \sum^n_{j = 1} dP_j\wedge dQ_j$, where $(Q_1,\ldots, Q_n, P_1,\ldots, P_n)\in\mathbb{C}^{2n}$. The other one is the holomorphic cotangent bundle $X = T^{\ast, (1,0)}Y$ of a complex manifold $Y$ with canonical $2$-form $\Omega_\text{can} = d\Lambda_\text{can}$, where $\Lambda_\text{can}$ is the holomorphic Liouville $1$-form.\\
We know by Darboux's theorem that symplectic manifolds exhibit no local invariants, since they all are locally isomorphic to the standard symplectic manifold\linebreak $(\mathbb{R}^{2n}, \sum^n_{i = 1} dp_i\wedge dq_i)$. The same statement is true for HSMs:
\begin{Thm}[Darboux's theorem for HSMs]\label{thm:holo_Darboux}
 Let $(X,\Omega)$ be a HSM of complex dimension $\text{\normalfont dim}_\mathbb{C}(X) = 2n$ $(n\in\mathbb{N})$. Then, for every point $x\in X$, there is a holomorphic chart $\psi = (Q_1,\ldots, Q_n, P_1,\ldots, P_n):U\to V\subset\mathbb{C}^{2n}$ of $X$ near $x$ such that
 \begin{gather*}
  \Omega\vert_{U} = \sum\limits^n_{j = 1} dP_j\wedge dQ_j.
 \end{gather*}
\end{Thm}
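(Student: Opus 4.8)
The plan is to prove \autoref{thm:holo_Darboux} by Moser's deformation (path) method, carried out in the holomorphic category. First I would pass to a local model: choosing a holomorphic chart around $x$, we may assume $X$ is an open neighborhood $U$ of $0$ in $\Cx^{2n}$ and $x=0$. Since $\Omega_0:=\Omega|_0$ is a non-degenerate alternating $\Cx$-bilinear form on $T^{(1,0)}_0X\cong\Cx^{2n}$, the linear-algebra normal form for complex symplectic vector spaces yields $A\in\mathrm{GL}(2n,\Cx)$ with $A^\ast\Omega_0=\Omega_{\mathrm{st}}:=\sum_{j=1}^n dP_j\wedge dQ_j$ at the origin; pre-composing the chart with $A$ (a biholomorphism) we may assume that $\Omega$ and $\Omega_{\mathrm{st}}$ agree at $0$. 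Because non-degeneracy on $T^{(1,0)}X$ is an open condition and $[0,1]$ is compact, after shrinking $U$ the forms $\Omega_t:=\Omega_{\mathrm{st}}+t(\Omega-\Omega_{\mathrm{st}})$ are holomorphic symplectic on $U$ for all $t\in[0,1]$, and all of them agree with $\Omega_{\mathrm{st}}$ at $0$.

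Now comes the Moser argument. Set $\sigma:=\Omega-\Omega_{\mathrm{st}}=\tfrac{d}{dt}\Omega_t$, a closed holomorphic $2$-form vanishing at $0$. By the holomorphic Poincaré lemma there is, on a smaller polydisc around $0$, a holomorphic $1$-form $\tau$ with $d\tau=\sigma$; subtracting the differential of a linear function we may take $\tau|_0=0$. Using non-degeneracy of $\Omega_t$ on $T^{(1,0)}X$, define a time-dependent holomorphic vector field $Y_t$ by $\iota_{Y_t}\Omega_t=-\tau$; then $Y_t(0)=0$, so the origin is a fixed point and, by analytic dependence of ODE solutions on initial conditions, the flow $\psi_t$ of $Y_t$ is defined and holomorphic on a common neighborhood of $0$ for all $t\in[0,1]$, with $\psi_t(0)=0$. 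Cartan's formula together with $d\Omega_t=0$ gives $\tfrac{d}{dt}\bigl(\psi_t^\ast\Omega_t\bigr)=\psi_t^\ast\bigl(\mathcal{L}_{Y_t}\Omega_t+\sigma\bigr)=\psi_t^\ast\bigl(d\iota_{Y_t}\Omega_t+\sigma\bigr)=\psi_t^\ast(-d\tau+\sigma)=0$, hence $\psi_1^\ast\Omega=\psi_1^\ast\Omega_1=\psi_0^\ast\Omega_0=\Omega_{\mathrm{st}}$. Thus $\psi_1$ is a biholomorphism near $0$ pulling $\Omega$ back to the standard form; tracing the coordinate functions of $\Cx^{2n}$ back through $\psi_1$, the linear map $A$, and the original chart produces the holomorphic Darboux chart $\psi=(Q_1,\dots,Q_n,P_1,\dots,P_n)$ near $x$.

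The parts needing genuine care — the places where this differs from the smooth proof — are the holomorphic inputs, and I expect the main obstacle to be verifying that no step leaves the holomorphic category. The key point is the holomorphic Poincaré lemma: a closed holomorphic form on a polydisc has a holomorphic primitive. I would establish this via the standard radial homotopy operator, using the holomorphic Euler vector field $R=\sum_i z_i\,\partial_{z_i}$ and the rescalings $z\mapsto tz$ for $t\in[0,1]$, noting that contracting and integrating a holomorphic form against this data again yields a holomorphic form (the integrand is holomorphic in $z$ for each fixed $t$, and differentiation under the integral sign preserves the Cauchy–Riemann equations); alternatively one invokes the Dolbeault–Grothendieck lemma together with the ordinary Poincaré lemma. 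One also needs that the flow of a time-dependent holomorphic vector field is holomorphic and, since $Y_t$ vanishes at $0$, exists up to time $1$ on a uniform neighborhood: this follows from the usual existence/uniqueness and analytic-dependence theorems on $\Cx^{2n}\cong\R^{4n}$, using that a flow of a vector field commuting with $J$ is automatically $J$-holomorphic (equivalently, it preserves the splitting $T_\Cx X=T^{(1,0)}X\oplus T^{(0,1)}X$).

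An alternative that avoids Moser entirely mirrors the classical inductive proof: pick a holomorphic function $P_1$ near $x$ with $dP_1|_x\neq 0$, let $X_{P_1}$ be its holomorphic Hamiltonian vector field (non-vanishing near $x$), use the holomorphic flow-box theorem to find a holomorphic $Q_1$ with $X_{P_1}(Q_1)=1$, check that the common level sets of $(Q_1,P_1)$ are holomorphic symplectic submanifolds of complex dimension $2n-2$ onto which $\Omega$ restricts as a holomorphic symplectic form, and induct on $n$. I would present the Moser proof as the main argument, as it packages the analysis most cleanly.
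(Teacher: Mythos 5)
Your proposal is correct and follows essentially the same route as the paper's proof in \autoref{app:darboux}: a linear normal form at the point, the interpolation $\Omega_t$, a closed holomorphic primitive of $\tfrac{d}{dt}\Omega_t$ normalized to vanish at the point, and the holomorphic Moser flow. The only place where the paper invests noticeably more work is in justifying that the flow of the \emph{time-dependent} holomorphic vector field is biholomorphic (it suspends the field to a time-independent holomorphic vector field on $\hat U\times O$ in order to apply the Kobayashi--Nomizu criterion for $J$-preserving fields), a point you correctly flag but treat more briefly via analytic dependence on initial conditions.
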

The correctness of Darboux's theorem for HSMs is widely accepted in the mathematical community, however, no proof has yet been formally written down, at least to the author's extent of knowledge. For completeness' sake, a proof of Darboux's theorem for HSMs is provided in \autoref{app:darboux}. We will make use of Darboux's theorem for HSMs in \autoref{subsec:deforming_HHS}.\\
Now, let us turn our attention to holomorphic Hamiltonian systems:
\begin{Def}[Holomorphic Hamiltonian system]\label{def:holo_ham_sys}
 We call the triple $(X,\Omega, \mathcal{H})$ a \textbf{holomorphic Hamiltonian system} (HHS) iff $(X,\Omega)$ is a HSM and $\mH:X\to\mathbb{C}$ is a holomorphic function on $X$. In this setup, we call $\mH$ the \textbf{Hamilton function} or, simply, the Hamiltonian of the HHS $(X,\Omega, \mH)$.
\end{Def}
Examples of HHSs include the standard HSM $(\mathbb{C}^{2n}, \sum^n_{j = 1} dP_j\wedge dQ_j)$ together with any holomorphic function $\mH:\mathbb{C}^{2n}\to\mathbb{C}$ on it and holomorphic cotangent bundles $(T^{\ast, (1,0)}Y, \Omega_\text{can})$ together with natural Hamiltonians. We call a Hamiltonian on a holomorphic cotangent bundle natural iff it can be written as a sum of kinetic and potential energy, $\mH = \mathcal{T} + \mathcal{V}$. Hereby, we say a holomorphic function $\mathcal{V}:T^{\ast, (1,0)}Y\to\mathbb{C}$ denotes potential energy iff it factors through the holomorphic projection $\pi:T^{\ast, (1,0)}Y\to Y$, i.e., can be written as $\mathcal{V} = \mathcal{V}_0\circ\pi$ for some holomorphic function $\mathcal{V}_0:Y\to \mathbb{C}$. Furthermore, a holomorphic function $\mathcal{T}:T^{\ast, (1,0)}Y\to\mathbb{C}$ is called kinetic energy iff $2\mathcal{T}(x)\equiv g^\ast (x,x)$ $\forall x\in T^{\ast, (1,0)}Y$, where $g^\ast$ is the dualization of some holomorphic metric $g$ on $Y$. A holomorphic metric $g$ on $Y$ is a holomorphic symmetric $\mathbb{C}$-bilinear form which is non-degenerate on $T^{(1,0)}Y$, i.e., for any holomorphic chart $\phi = (z_1,\ldots, z_n):U\to V\subset\mathbb{C}^n$ of $Y$, $g$ can be written as
\begin{gather*}
 g\vert_U = \sum^n_{i,j = 1} g_{ij} dz_i\otimes dz_j,
\end{gather*}
where $g_{ij}:U\to\mathbb{C}$ are holomorphic functions satisfying $g_{ij} = g_{ji}$ and $\text{det}(g_{ij})\neq 0$.
We will study these examples in more detail in the upcoming subsections.

\subsection{Holomorphic Trajectories}
\label{subsec:holo_traj}

Our next goal is to investigate the dynamics of a HHS. As for RHSs, they are determined by a Hamiltonian vector field:
\begin{Def}[Holomorphic Hamiltonian vector field]\label{def:holo_ham_field}
 Let $(X,\Omega, \mH)$ be a HHS. We call the holomorphic vector field $X_\mH$ on $X$ defined by $\iota_{X_\mH}\Omega = -d\mH$ the (holomorphic) \textbf{Hamiltonian vector field} of the HHS $(X,\Omega, \mH)$.
\end{Def}
\begin{Rem}[$X_\mH$ is well-defined]\label{rem:ham_vec_field_well-def}
 Note that a holomorphic vector field $V$ on a complex manifold $X$ can -- in a holomorphic chart $\phi \equiv (z_1,\ldots, z_{m}):U\to V\subset\mathbb{C}^m$ -- be written as
 \begin{gather*}
  V\vert_U = \sum^m_{j = 1} V_j\partial_{z_j},
 \end{gather*}
 where $V_j:U\to\mathbb{C}$ are holomorphic functions on U. Thus, a holomorphic vector field on $X$ only attains values in the bundle $T^{(1,0)}X$. Together with the non-degeneracy of $\Omega$ on $T^{(1,0)}X$, this implies that the Hamiltonian vector field $X_\mH$ is well-defined.
\end{Rem}
Before we define what a trajectory of a HHS is, it is wise to study $X_\mH$ or, better yet, holomorphic vector fields in general. The following proposition is a standard result from complex geometry:
\begin{Prop}[Holomorphic vector fields $\Leftrightarrow$ $J$-preserving vector fields]\label{prop:holo_vec_field_equiv_J_pre_vec_field}
 Let $X$ be a complex manifold with complex structure $J\in\Gamma (\text{\normalfont End}(TX))$. Then, the tangent bundles $TX$\footnote{For $TX$, $X$ is viewed as a real manifold.} and $T^{(1,0)}X$ are isomorphic as smooth complex vector bundles via
 \begin{gather*}
  f:TX\to T^{(1,0)}X,\quad v\mapsto \frac{1}{2}(v - i\cdot J(v)),
 \end{gather*}
 where the fiberwise complex vector space structure of $TX$ is given by the complex multiplication $(a+ib)\odot v\coloneqq av + bJ(v)$ for every $a,b\in\mathbb{R}$ and $v\in TX$.\\
 Now consider the space $\Gamma_J (TX)$ of smooth real $J$-preserving\footnote{$J$-preserving vector fields are called infinitesimal automorphisms in \cite{kobayashi1969}.} vector fields on $X$:
 \begin{gather*}
  \Gamma_J (TX)\coloneqq \{V_R\in\Gamma (TX)\mid L_{V_R}J = 0\},
 \end{gather*}
 where $L_{V_R}J$ is the Lie derivative of $J$ with respect to $V_R$ and $\Gamma (TX)$ is the space of smooth real vector fields on $X$. Then, $\Gamma_J (TX)$ together with the standard commutator $[\cdot, \cdot]$ of vector fields and the complex structure $J$ forms a complex Lie algebra. In fact, $(\Gamma_J (TX), [\cdot, \cdot])$ is isomorphic as complex Lie algebras to the space $(\Gamma (T^{(1,0)}X), [\cdot,\cdot])$ of holomorphic vector fields on $X$ via
 \begin{gather*}
  F:\Gamma_J (TX)\to \Gamma (T^{(1,0)}X),\quad V_R\mapsto \frac{1}{2}(V_R - i\cdot J(V_R)).
 \end{gather*}
\end{Prop}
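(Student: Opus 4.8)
The plan is to prove the three assertions in the order stated. Throughout, extend $J$ complex-linearly to $T_\mathbb{C}X$, so that $T^{(1,0)}X$ and $T^{(0,1)}X$ are its $\pm i$-eigenbundles; write $\pi^{(1,0)}$ for the projection onto $T^{(1,0)}X$ and $\bar{\,\cdot\,}$ for complex conjugation on $T_\mathbb{C}X$. I will repeatedly use the standard facts that $\overline{T^{(1,0)}X}=T^{(0,1)}X$ and that, because $J$ is integrable, the smooth sections of $T^{(0,1)}X$ are closed under the Lie bracket. For the bundle statement, a direct computation gives $J\bigl(v-iJ(v)\bigr)=J(v)+iv=i\bigl(v-iJ(v)\bigr)$, so $f(v)\in T^{(1,0)}X$; in fact $f(v)=\pi^{(1,0)}(v)$ for $v\in TX$. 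Since $T_\mathbb{C}X=TX\oplus iTX$ as real bundles, $v-iJ(v)=0$ forces $v=0$, so $f$ is fiberwise injective and hence, by counting real dimensions, a fiberwise real-linear isomorphism; complex linearity for $\odot$ is the one-line check $f(av+bJv)=\tfrac12\bigl((a+ib)v-i(a+ib)Jv\bigr)=(a+ib)f(v)$, and $f$ is smooth because it is a fixed linear formula in any local frame. In particular $F=f$ on sections, so $F$ is $\mathbb{R}$-linear and satisfies $F(JV_R)=iF(V_R)$.

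The heart of the matter is the equivalence, for a smooth real vector field $V_R$ with $(1,0)$-part $Z:=F(V_R)=\pi^{(1,0)}(V_R)$, hence $V_R=Z+\bar Z$,
\[
 L_{V_R}J=0\quad\Longleftrightarrow\quad Z\text{ is a holomorphic vector field.}
\]
From $(L_VJ)(W)=[V,JW]-J[V,W]$, together with $J|_{T^{(1,0)}X}=i$ and $J|_{T^{(0,1)}X}=-i$, one reads off (after complexifying $L_VJ$) that $L_{V_R}J=0$ is equivalent to $[V_R,W]$ being a section of $T^{(0,1)}X$ for every smooth section $W$ of $T^{(0,1)}X$ (the analogous condition on $T^{(1,0)}X$ is the complex conjugate of this one, as $V_R$ is real). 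Since $[\bar Z,W]$ is a section of $T^{(0,1)}X$ for every such $W$ by integrability, this reduces to: $[Z,W]$ is a section of $T^{(0,1)}X$ for all smooth sections $W$ of $T^{(0,1)}X$. Writing $Z=\sum_jZ_j\partial_{z_j}$ in a holomorphic chart and testing against $W=\partial_{\bar z_k}$ shows $[Z,\partial_{\bar z_k}]=-\sum_j(\partial_{\bar z_k}Z_j)\partial_{z_j}$, a section of $T^{(1,0)}X$, lies in $T^{(0,1)}X$ iff it vanishes, i.e. iff every $Z_j$ is holomorphic; conversely, holomorphicity of the $Z_j$ makes $[Z,W]$ a section of $T^{(0,1)}X$ for general $W$. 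This proves the equivalence, hence that $F$ restricts to a bijection of $\Gamma_J(TX)$ onto the holomorphic vector fields (surjectivity: a holomorphic $Z$ gives the real field $V_R:=Z+\bar Z$ with $F(V_R)=Z$). Combined with $F(JV_R)=iF(V_R)$, the equivalence also yields $JV\in\Gamma_J(TX)$ whenever $V\in\Gamma_J(TX)$.

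Next, $\Gamma_J(TX)$ is a real Lie subalgebra of $\Gamma(TX)$, since $L_{(\cdot)}J$ is $\mathbb{R}$-linear in the vector field and $L_{[V,W]}=[L_V,L_W]$ on tensor fields. Unwinding $L_WJ=0$ to $[JV,W]=J[V,W]$ for all $V$, and using antisymmetry of the bracket to get $[V,JW]=J[V,W]$ as well, shows the bracket is $\mathbb{C}$-bilinear once one declares $i\cdot V:=JV$; together with $J^2=-\mathrm{id}$ and the previous paragraph this makes $(\Gamma_J(TX),[\cdot,\cdot],J)$ a complex Lie algebra, and $F$ a $\mathbb{C}$-linear bijection onto the holomorphic vector fields. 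Finally, for $V_1,V_2\in\Gamma_J(TX)$ with holomorphic $(1,0)$-parts $Z_1,Z_2$, the characterization above (with $W=\bar Z_2$, resp. $W=\bar Z_1$) gives that $[Z_1,\bar Z_2]$ and $[\bar Z_1,Z_2]$ are sections of $T^{(0,1)}X$, as is $[\bar Z_1,\bar Z_2]$; hence $\pi^{(1,0)}[V_1,V_2]=[Z_1,Z_2]$, so $F([V_1,V_2])=[Z_1,Z_2]=[F(V_1),F(V_2)]$ ( $[Z_1,Z_2]$ being automatically holomorphic, as the image of $[V_1,V_2]\in\Gamma_J(TX)$). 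Thus $F$ is an isomorphism of complex Lie algebras.

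I expect the only delicate point to be the bracket bookkeeping in the equivalence $L_{V_R}J=0\Leftrightarrow F(V_R)$ holomorphic: one must keep careful track of which contributions of $Z$ and $\bar Z$ land in $T^{(1,0)}X$ versus $T^{(0,1)}X$, and notice that it is precisely the integrability of $J$ — closedness of the smooth sections of $T^{(0,1)}X$ under the bracket — that kills the $\bar Z$-contributions in the relevant component. Everything else is routine linear algebra in the $(1,0)/(0,1)$-decomposition together with standard identities for the Lie derivative.
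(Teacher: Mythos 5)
Your proof is correct. The paper does not prove this proposition itself but defers to Kobayashi--Nomizu (Chapter IX, Propositions 2.10 and 2.11), and your argument is essentially the standard one given there: characterizing $L_{V_R}J=0$ by $[V_R,W]\in\Gamma(T^{(0,1)}X)$ for $(0,1)$-sections $W$, using integrability to discard the $\bar Z$-contribution, and testing against $\partial_{\bar z_k}$ to identify this with holomorphy of the coefficients of $F(V_R)$.
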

The proof of Proposition \autoref{prop:holo_vec_field_equiv_J_pre_vec_field} can be found in Chapter IX of \cite{kobayashi1969} (cf. Proposition 2.10 and 2.11). One important consequence of Proposition \autoref{prop:holo_vec_field_equiv_J_pre_vec_field} which we heavily use later on is the fact that the real and imaginary part of a holomorphic vector field commute:
\begin{Cor}[$V_R$ and $J(V_R)$ commute]\label{cor:commute}
 Let $X$ be a complex manifold with complex structure $J$ and $V_R\in\Gamma_J (TX)$ be a $J$-preserving vector field. Then:
 \begin{gather*}
  \left[V_R, J(V_R)\right] = 0.
 \end{gather*}
 In particular, the real and imaginary part of holomorphic vector fields on $X$ commute.
\end{Cor}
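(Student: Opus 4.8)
The plan is to read the conclusion straight off the hypothesis $L_{V_R}J = 0$ by evaluating the Lie derivative of the endomorphism field $J$ on the vector field $V_R$ itself. First I would recall the Leibniz-type identity for the Lie derivative of a $(1,1)$-tensor: for any smooth vector fields $V,W$ on $X$,
\begin{gather*}
 (L_V J)(W) = L_V\!\bigl(J(W)\bigr) - J\bigl(L_V W\bigr) = [V, J(W)] - J\bigl([V,W]\bigr),
\end{gather*}
which is just the definition of $L_V J$ together with $L_V W = [V,W]$.

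Next I would specialize to $V = W = V_R$. Since $[V_R, V_R] = 0$, the right-hand side collapses, giving
\begin{gather*}
 (L_{V_R}J)(V_R) = [V_R, J(V_R)].
\end{gather*}
By assumption $V_R \in \Gamma_J(TX)$, i.e. $L_{V_R}J = 0$ as a section of $\text{End}(TX)$, so in particular $(L_{V_R}J)(V_R) = 0$; comparing with the previous display yields $[V_R, J(V_R)] = 0$, which is the main claim.

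For the ``in particular'' part I would invoke Proposition \autoref{prop:holo_vec_field_equiv_J_pre_vec_field}: every holomorphic vector field $Z$ on $X$ is of the form $Z = F(V_R) = \tfrac{1}{2}\bigl(V_R - i\,J(V_R)\bigr)$ for a unique $J$-preserving vector field $V_R$, so that, viewing $Z$ inside $\Gamma(T_\mathbb{C}X)$, its real part is $\tfrac{1}{2}V_R$ and its imaginary part is $-\tfrac{1}{2}J(V_R)$. Since the bracket is $\mathbb{R}$-bilinear, $\bigl[\tfrac{1}{2}V_R,\, -\tfrac{1}{2}J(V_R)\bigr] = -\tfrac{1}{4}[V_R, J(V_R)] = 0$ by the first part, so the real and imaginary parts of $Z$ commute.

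I do not expect a genuine obstacle here: the computation is essentially one line from the definition of $L_{V_R}J$. The only points requiring a little care are getting the shape and sign of the identity for $L_V J$ acting on a vector field right, and being explicit about what ``real and imaginary part of a holomorphic vector field'' means — for which the isomorphism $F$ of Proposition \autoref{prop:holo_vec_field_equiv_J_pre_vec_field} is exactly the bookkeeping device to use.
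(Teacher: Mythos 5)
Your proof is correct and rests on the same key identity as the paper's: the paper cites Proposition \autoref{prop:holo_vec_field_equiv_J_pre_vec_field} to pull $J$ out of the bracket, $[V_R,J(V_R)] = J([V_R,V_R]) = 0$, which is exactly the statement $(L_{V_R}J)(W)=0$ that you instead unpack directly from the definition of the Lie derivative at $W=V_R$. Your version is marginally more self-contained since it bypasses the complex-Lie-algebra structure of $\Gamma_J(TX)$ entirely, but the computation is the same.
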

\begin{proof}
 By Proposition \autoref{prop:holo_vec_field_equiv_J_pre_vec_field}, we know that $(\Gamma_J (TX), [\cdot, \cdot])$ is a complex Lie algebra, hence:
 \begin{gather*}
  \left[V_R, J(V_R)\right] = J\left([V_R, V_R]\right) = 0.
 \end{gather*}
\end{proof}
By Proposition \autoref{prop:holo_vec_field_equiv_J_pre_vec_field}, we can associate with the Hamiltonian vector field $X_\mH$ of a HHS $(X,\Omega,\mH)$ a $J$-preserving vector field $X^R_\mH$ which is uniquely determined by
\begin{gather*}
 X_\mH = \frac{1}{2}(X^R_\mH - i\cdot J(X^R_\mH)).
\end{gather*}
Equipped with this knowledge, there are now two ways to define holomorphic trajectories of a HHS: the first one is to simply say that holomorphic trajectories are holomorphic integral curves of the holomorphic Hamiltonian vector field $X_\mH$. The second one is to define the holomorphic trajectories as analytic continuations of the integral curves of $X^R_\mH$. Both definitions are indeed equivalent and we make use of both of them. For our purposes, we use the first one as the actual definition and the second one to construct and investigate holomorphic trajectories afterwards:
\begin{Def}[Holomorphic trajectories]\label{def:holo_traj}
 Let $(X,\Omega,\mH)$ be a HHS and\linebreak $X_\mH = 1/2(X^R_\mH - i\cdot J(X^R_\mH))$ be its Hamiltonian vector field. We call a holomorphic map $\gamma:U\to X$ a \textbf{holomorphic trajectory} of the HHS $(X,\Omega, \mH)$ iff $\gamma$ satisfies the holomorphic integral curve equation:
 \begin{gather*}
  \gamma^\prime (z) = X_\mH (\gamma (z))\quad\forall z\in U,
 \end{gather*}
 where $U\subset\mathbb{C}$ is an open and connected subset and $\gamma^\prime$ is the complex derivative of $\gamma$. We call a holomorphic trajectory $\gamma:U\to X$ \textbf{maximal} iff for every holomorphic trajectory $\hat\gamma:\hat U\to X$ with $U\subset \hat U$ and $\hat\gamma\vert_U = \gamma$ one has $\hat U = U$ and $\hat \gamma = \gamma$. Sometimes, we call the integral curves of the real vector field $X^R_\mH$ the \textbf{real trajectories} of the HHS $(X,\Omega,\mH)$.
\end{Def}
Next, let us consider the existence and uniqueness of holomorphic trajectories:%In particular, we want to show that for any given initial value $x_0\in X$ and $z_0\in\mathbb{C}$ a holomorphic trajectory $\gamma^{z_0, x_0}:U\to X$ with $\gamma^{z_0, x_0} (z_0) = x_0$ exists and that two holomorphic trajectories $\gamma^{z_0, x_0}_1:U_1\to X$ and $\gamma^{z_0, x_0}_2:U_2\to X$ for the same initial value are equal iff their domains $U_1$ and $U_2$ are equal:

\begin{Prop}[Existence and uniqueness of holomorphic trajectories]\label{prop:holo_traj}
 Let $(X,\Omega,\mH)$ be a HHS. Then, for any $z_0\in\mathbb{C}$ and $x_0\in X$, there exists an open and connected subset $U\subset\mathbb{C}$ with $z_0\in U$ and a holomorphic trajectory\ $\gamma^{z_0, x_0}:U\to X$ of $(X,\Omega,\mH)$ with $\gamma^{z_0, x_0} (z_0) = x_0$. Two holomorphic trajectories $\gamma^{z_0, x_0}_1:U_1\to X$ and $\gamma^{z_0, x_0}_2:U_2\to X$ with $\gamma^{z_0, x_0}_1 (z_0) = x_0 = \gamma^{z_0, x_0}_2 (z_0)$ locally coincide, in particular, they are equal iff their domains $U_1$ and $U_2$ are equal. Furthermore, the holomorphic trajectory $\gamma^{z_0, x_0}$ depends holomorphically on $z_0$ and $x_0$.
\end{Prop}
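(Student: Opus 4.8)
The plan is to construct $\gamma^{z_0,x_0}$ by hand from the flows of the real trajectory field $X^R_\mH$ and of $J(X^R_\mH)$, using two facts already available: these two vector fields commute (\autoref{cor:commute}), and each of them is $J$-preserving, hence generates a local flow by biholomorphisms (\autoref{prop:holo_vec_field_equiv_J_pre_vec_field}, together with the integrated form $L_V J = 0 \Rightarrow \phi_s^\ast J = J$ of $J$-invariance). As a sanity check, in a holomorphic chart the whole proposition is nothing but the classical existence--uniqueness--analytic-dependence theorem for ODEs with holomorphic right-hand side, but I would give the coordinate-free argument since the paper has set up the necessary machinery.

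\emph{Existence.} Since $X^R_\mH$ and $J(X^R_\mH)$ are smooth, for each $x_0$ there is $\varepsilon > 0$ and local flows $\phi_s$ of $X^R_\mH$ and $\psi_t$ of $J(X^R_\mH)$, both defined for $|s|,|t| < \varepsilon$ near $x_0$; as $[X^R_\mH, J(X^R_\mH)] = 0$ they commute wherever defined. I would then set, on the square $U := \{z_0 + s + it : |s|,|t| < \varepsilon\}$,
\begin{gather*}
 \gamma^{z_0,x_0}(z) := \phi_{\operatorname{Re}(z - z_0)}\bigl(\psi_{\operatorname{Im}(z - z_0)}(x_0)\bigr),
\end{gather*}
so that $\gamma^{z_0,x_0}(z_0) = x_0$. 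Writing $z - z_0 = s + it$ and differentiating gives $\partial_s\gamma^{z_0,x_0} = X^R_\mH\circ\gamma^{z_0,x_0}$ and, after commuting the two flows, $\partial_t\gamma^{z_0,x_0} = J(X^R_\mH)\circ\gamma^{z_0,x_0} = J(\partial_s\gamma^{z_0,x_0})$; the last identity is exactly the Cauchy--Riemann equation, so $\gamma^{z_0,x_0}$ is holomorphic, and through the isomorphism $f$ of \autoref{prop:holo_vec_field_equiv_J_pre_vec_field} its complex derivative is $f(\partial_s\gamma^{z_0,x_0}) = \tfrac12(X^R_\mH - iJ(X^R_\mH))\circ\gamma^{z_0,x_0} = X_\mH\circ\gamma^{z_0,x_0}$. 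Hence $\gamma^{z_0,x_0}$ is a holomorphic trajectory with the required initial value.

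\emph{Uniqueness.} For the converse, let $\gamma:U\to X$ be any holomorphic trajectory with $\gamma(z_0) = x_0$. Inverting $f$, the equation $\gamma' = X_\mH\circ\gamma$ becomes $\partial_s\gamma = X^R_\mH\circ\gamma$, and holomorphicity of $\gamma$ forces $\partial_t\gamma = J(\partial_s\gamma) = J(X^R_\mH)\circ\gamma$; so $t\mapsto\gamma(z_0+it)$ is an integral curve of $J(X^R_\mH)$ and, for each fixed $t$, $s\mapsto\gamma(z_0+s+it)$ is an integral curve of $X^R_\mH$. Uniqueness of integral curves of smooth vector fields then gives $\gamma(z_0 + it) = \psi_t(x_0)$ and $\gamma(z_0 + s + it) = \phi_s(\psi_t(x_0)) = \gamma^{z_0,x_0}(z_0 + s + it)$ on a neighborhood of $z_0$, which is the asserted local coincidence. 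Finally, the set of points of $U_1\cap U_2$ near which two holomorphic trajectories sharing a value at $z_0$ agree is open (apply local coincidence at each such point) and closed (continuity, then local coincidence at a limit point), hence contains the whole connected component of $U_1\cap U_2$ through $z_0$; in particular $\gamma_1 = \gamma_2$ as soon as $U_1 = U_2$.

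\emph{Holomorphic dependence, and the main obstacle.} Since $\phi_s$ and $\psi_t$ are local biholomorphisms, $x_0\mapsto\gamma^{z_0,x_0}(z)$ is holomorphic for fixed $z$ and $z_0$; $z\mapsto\gamma^{z_0,x_0}(z)$ is holomorphic by the existence step; and $\gamma^{z_0,x_0}(z) = \gamma^{0,x_0}(z - z_0)$ yields holomorphicity in $z_0$. Because $\gamma^{z_0,x_0}(z)$ is moreover jointly smooth (smooth dependence of flows on initial conditions), being $\bar\partial$-closed separately in $z$, in $z_0$, and in $x_0$ makes it jointly holomorphic in $(z,z_0,x_0)$. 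The real content is thus carried entirely by two classical inputs — smooth dependence of flows on initial conditions, and the fact that a vector field preserving $J$ integrates to biholomorphisms — so I expect no genuine difficulty; the one spot that needs care is the global part of uniqueness, where $U_1\cap U_2$ may be disconnected and agreement can only be propagated over the component containing $z_0$.
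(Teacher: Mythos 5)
Your proposal is correct and follows essentially the same route as the paper: the trajectory is built as the composition of the commuting flows of $X^R_\mH$ and $J(X^R_\mH)$, holomorphy is read off from the Cauchy--Riemann equations, and analytic dependence on $x_0$ comes from the flows of $J$-preserving fields being biholomorphic. The only cosmetic difference is in the uniqueness step, where you propagate agreement by ODE uniqueness along horizontal and vertical lines plus an open--closed argument, while the paper invokes the identity theorem for holomorphic functions after matching the trajectories along the real axis; both are sound.
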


\begin{proof}
 Let $(X,\Omega,\mH)$ be a HHS with Hamiltonian vector field $X_\mH = 1/2(X^R_\mH - iJ(X^R_\mH))$ and let $z_0\in\mathbb{C}$ and $x_0\in X$ be any points. To construct a holomorphic trajectory $\gamma^{z_0, x_0}$, we first realize that $t\mapsto\gamma^{z_0, x_0}(t + is)$ for fixed $s\in\mathbb{R}$ is a real trajectory. We can see this by taking the real part of the holomorphic integral curve equation. Thus, finding holomorphic trajectories amounts to finding analytic continuations of real trajectories. To accomplish this task, we observe that similarly $s\mapsto\gamma^{z_0, x_0}(t + is)$ for fixed $t\in\mathbb{R}$ is an integral curve of $J(X^R_\mH)$. %by considering the imaginary part of the holomorphic integral curve equation.
 Naively, one hopes that $\gamma^{z_0, x_0}(t + is)$ is given by:
 \begin{gather*}
   \gamma^{z_0, x_0}(t + is) = \varphi^{J(X^R_\mH)}_{s-s_0}\circ\varphi^{X^R_\mH}_{t-t_0} (x_0),
 \end{gather*}
 where $z_0 = t_0 + is_0$ and $\varphi^{J(X^R_\mH)}_{s-s_0}$ and $\varphi^{X^R_\mH}_{t-t_0}$ are the flows of the vector fields $J(X^R_\mH)$ and $X^R_\mH$ with times $s-s_0$ and $t-t_0$, respectively. In general, however, this expression is problematic: even though it is an integral curve of $J(X^R_\mH)$ for fixed $t$, it might not be an integral curve of $X^R_\mH$ for fixed $s$ anymore due to the composition with $\varphi^{J(X^R_\mH)}_{s-s_0}$. In order to avoid this problem, we need the composition of the flows to commute, at least for small times $t-t_0$ and $s-s_0$. This occurs if the vector fields $X^R_\mH$ and $J(X^R_\mH)$ themselves commute. In our situation, this is indeed the case, as we can use Corollary \autoref{cor:commute} and the fact that $X_\mH$ is a holomorphic vector field. Thus, we can define:
 \begin{gather*}
   \gamma^{z_0, x_0}(t + is)\coloneqq \varphi^{J(X^R_\mH)}_{s-s_0}\circ\varphi^{X^R_\mH}_{t-t_0} (x_0)\equiv \varphi^{X^R_\mH}_{t-t_0}\circ\varphi^{J(X^R_\mH)}_{s-s_0} (x_0)\equiv\varphi^{(t-t_0)X^R_\mH + (s-s_0)J(X^R_\mH)}_1 (x_0).
 \end{gather*}
 The expressions above are well-defined for $|t-t_0|,|s-s_0|<\varepsilon$ with $\varepsilon$ small enough and all identical due to the commutativity of $X^R_\mH$ and $J(X^R_\mH)$.\\
 Let us check that the given expressions for $\gamma^{z_0, x_0}$ indeed define a holomorphic trajectory. By construction, the map $\gamma^{z_0, x_0}$ is holomorphic, as it satisfies the Cauchy-Riemann equations. Hence, we only need to compute the complex derivative $\gamma^{z_0, x_0\ \prime}$.\linebreak If $\phi = (z_1,\ldots, z_{2n}):V\to W\subset\mathbb{C}^{2n}$ is a holomorphic chart of $X$ near $x_0$,\linebreak then we can define the complex derivative $\gamma^{z_0, x_0\ \prime}(z)$ for suitable $z$\linebreak using $(\gamma^{z_0, x_0}_1(z),\ldots, \gamma^{z_0, x_0}_{2n}(z))\coloneqq \phi\circ\gamma^{z_0, x_0} (z)$:
 \begin{gather*}
 \gamma^{z_0, x_0\ \prime} (z)\coloneqq\sum^{2n}_{j = 1} \gamma^{z_0, x_0\ \prime}_j (z)\cdot\left.\pa_{z_j}\right\vert_{\gamma^{z_0, x_0} (z)},
 \end{gather*}
 where $\gamma^{z_0, x_0\ \prime}_j (z)$ is the usual complex derivative of a holomorphic map from $\mathbb{C}$ to $\mathbb{C}$. A straightforward calculation reveals that the complex derivative $\gamma^{z_0, x_0\ \prime}(z)$ equates to:
 \begin{gather*}
  \gamma^{z_0, x_0\ \prime}(z) = \frac{1}{2}\left(\frac{\pa \gamma^{z_0, x_0}}{\pa t}(z) - i\cdot\frac{\pa \gamma^{z_0, x_0}}{\pa s}(z)\right).
 \end{gather*}
 By definition of $\gamma^{z_0, x_0}$, we have:
 \begin{gather*}
  \frac{\pa \gamma^{z_0, x_0}}{\pa t}(z) = X^R_\mH (\gamma^{z_0, x_0}(z));\quad \frac{\pa \gamma^{z_0, x_0}}{\pa s}(z) = J\left(X^R_\mH (\gamma^{z_0, x_0}(z))\right).
 \end{gather*}
 Putting everything together gives:
 \begin{gather*}
  \gamma^{z_0, x_0\ \prime}(z) = \frac{1}{2}\left(X^R_\mH (\gamma^{z_0, x_0}(z)) - i\cdot J\left(X^R_\mH (\gamma^{z_0, x_0}(z))\right)\right) = X_\mH (\gamma^{z_0, x_0}(z)).
 \end{gather*}
 Thus, $\gamma^{z_0, x_0}$ is indeed a holomorphic trajectory. Clearly, $\gamma^{z_0, x_0}$ satisfies $\gamma^{z_0, x_0}(z_0) = x_0$ proving the existence in Proposition \autoref{prop:holo_traj}.\\
 To show local uniqueness given an initial value, we recall that $\gamma^{z_0, x_0}$ is just an integral curve of $X^R_\mH$ along the $t$-axis satisfying $\gamma^{z_0, x_0}(z_0) = x_0$. Hence, every other holomorphic trajectory $\hat\gamma^{z_0, x_0}$ with $\hat\gamma^{z_0, x_0}(z_0) = x_0$ agrees with $\gamma^{z_0, x_0}$ for $s = s_0$ and $t$ near $t_0$. This allows us to apply the identity theorem for holomorphic functions to the coordinates of $\gamma^{z_0, x_0}$ and $\hat\gamma^{z_0, x_0}$ in a holomorphic chart near $x_0$ giving us the local uniqueness. In order to show that $\gamma^{z_0, x_0}$ and $\hat\gamma^{z_0, x_0}$ coincide completely iff their domains are equal, we cover the images of $\gamma^{z_0, x_0}$ and $\hat\gamma^{z_0, x_0}$ with holomorphic charts and repeatedly apply the identity theorem.\\
 Lastly, we need to show that $\gamma^{z_0, x_0}$ depends analytically on $z_0$ and $x_0$. For $z_0$, this is trivial, since $\gamma^{z_1, x_0}(z)$ and $\gamma^{z_2, x_0}(z)$ for $z_1\neq z_2$ only differ by a translation in $z$. For $x_0$, this is true if and only if the flows $\varphi^{X^R_\mH}_{t-t_0}$ and $\varphi^{J(X^R_\mH)}_{s-s_0}$ of $X^R_\mH$ and $J(X^R_\mH)$ are holomorphic maps from and to $X$. As explained in Chapter IX of \cite{kobayashi1969}, the $J$-preserving vector fields on $X$ are exactly those real vector fields on $X$ whose flow is holomorphic. Remembering that, by Proposition \autoref{prop:holo_vec_field_equiv_J_pre_vec_field}, the vector fields $X^R_\mH$ and $J(X^R_\mH)$ are $J$-preserving concludes the proof.
\end{proof}

\begin{Rem}\label{rem:holo_traj_alpha}
 In the last proof, we have used that a holomorphic trajectory $\gamma (t+is)$ of a HHS $(X,\Omega,\mH)$ is an integral curve of $X^R_\mH$ for fixed $s$ and an integral curve of $J(X^R_\mH)$ for fixed $t$. We can generalize this observation. If we express $t+is$ in polar coordinates, $t+is = re^{i\alpha}$, then $\gamma (re^{i\alpha})$ is an integral curve of $\cos(\alpha) X^R_\mH + \sin (\alpha)J(X^R_\mH)$ for fixed $\alpha$.
\end{Rem}

The properties we have found so far seem to indicate that holomorphic trajectories of a HHS exhibit the same behavior as trajectories of a RHS. However, this is not entirely true. In sharp contrast to the real case, the maximal holomorphic trajectories, given an initial value, do \underline{not} need to be unique, as the following counterexample demonstrates.

\begin{Ex}[Central problem in one complex dimension]\label{ex:holo_cen_prob}\normalfont
 {\textcolor{white}{Easter Egg}}\linebreak Let $X\coloneqq T^{\ast, (1,0)}\mathbb{C}^\times\cong\mathbb{C}^\times\times\mathbb{C}$ be the holomorphic cotangent bundle of $\mathbb{C}^\times\coloneqq\mathbb{C}\backslash\{0\}$ together with the standard form $\Omega = \Omega_\text{can} = dP\wedge dQ$, $(Q,P)\in X$, and the natural Hamiltonian\footnote{The given Hamiltonian is even regular, i.e., $d\mH\neq 0$ for all points of $X$.} $\mH (Q,P)\coloneqq \frac{P^2}{2} - \frac{1}{8Q^2}$. Physically speaking, the HHS $(X,\Omega, \mH)$ is the complexification of the RHS describing a single particle in one-dimensional position space subject to the almost Kepler-like central potential $V(q) = -\frac{1}{8q^2}$. The Hamiltonian vector field $X_\mH$ of the HHS $(X,\Omega,\mH)$ is given by
 \begin{gather*}
  X_\mH (Q,P) = P\cdot\pa_Q - \frac{1}{4Q^3}\cdot\pa_P.
 \end{gather*}
 Hence, the holomorphic trajectories $\gamma (z) = (Q(z), P(z))$ of $(X,\Omega, \mH)$ satisfy
 \begin{gather*}
  Q^\prime (z) = P (z);\quad P^\prime (z) = -\frac{1}{4Q^3 (z)}
 \end{gather*}
 or, combining both equations:
 \begin{gather*}
  Q^{\prime\prime} (z) = -\frac{1}{4Q^3 (z)}.
 \end{gather*}
 We want to determine the holomorphic trajectories $\gamma$ satisfying $\gamma (z_0) = x_0 = (Q_0,P_0)$ for $z_0, P_0\in\mathbb{C}$ and $Q_0\in\mathbb{C}^\times$. After translation in $z$, we can assume $z_0 = 0$. A straightforward computation reveals that, locally, the desired solutions are given by
 \begin{gather*}
  Q(z) = \sqrt{Q^2_0 + 2Q_0 P_0\cdot z + 2E_0\cdot z^2};\quad P(z) = Q^\prime (z),
 \end{gather*}
 where $E_0\coloneqq \mH (Q_0, P_0)$ and $\sqrt{\cdot}$ is chosen such that $\sqrt{Q^2_0} = Q_0$. Two square roots mapping $Q^2_0$ to $Q_0$ coincide on a small neighborhood of $Q^2_0$, however, they do not need to have the same domain. Let us make this precise by choosing values for $Q_0$ and $P_0$. Pick $Q_0 = 1$ and $P_0 = \frac{1}{2}$. Then, $E_0 = 0$ and $Q(z) = \sqrt{z + 1}$. Here, all square roots are admissible that coincide with the standard square root for real positive numbers. For instance, one can choose
 \begin{gather*}
  \sqrt{\cdot}^1:\{z\in\mathbb{C}\mid \text{Im}(z) \neq 0\text{ or }\text{Re}(z) > 0\}\to\mathbb{C},\quad z = re^{i\alpha}\mapsto \sqrt{r}e^{\frac{i\alpha}{2}}\ \left(\alpha \in (-\pi, \pi)\right)
 \end{gather*}
 or
 \begin{gather*}
  \sqrt{\cdot}^2:\{z\in\mathbb{C}\mid \text{Re}(z) \neq 0\text{ or }\text{Im}(z) > 0\}\to\mathbb{C},\quad z = re^{i\alpha}\mapsto \sqrt{r}e^{\frac{i\alpha}{2}}\ \left(\alpha \in \left(-\frac{\pi}{2}, \frac{3\pi}{2}\right)\right).
 \end{gather*}
 Using these square roots, we can define the holomorphic trajectories $\gamma_1: U_1\to X$ and $\gamma_2: U_2\to X$ given by
 \begin{align*}
  Q_1:U_1\coloneqq\{z\in\mathbb{C}\mid \text{Im}(z+1) \neq 0\text{ or }\text{Re}(z+1) > 0\}\to\mathbb{C},\quad Q_1(z)&\coloneqq \sqrt{z + 1}^1,\\
  Q_2:U_2\coloneqq\{z\in\mathbb{C}\mid \text{Re}(z+1) \neq 0\text{ or }\text{Im}(z+1) > 0\}\to\mathbb{C},\quad Q_2(z)&\coloneqq \sqrt{z + 1}^2.
 \end{align*}
 Clearly, $\gamma_1$ and $\gamma_2$ are maximal holomorphic trajectories satisfying $\gamma_1 (0) = (1,\frac{1}{2}) = \gamma_2 (0)$. However, their domains $U_1$ and $U_2$ differ showing that maximal trajectories are not unique, even given an initial value. In particular, the trajectories $\gamma_1$ and $\gamma_2$ yield different values for $z\in\{z\in\mathbb{C}\mid\text{Re}(z+1)<0\text{ and }\text{Im}(z+1)<0\}\subset U_1\cap U_2$, namely $\gamma_1 (z) = -\gamma_2 (z)$.
\end{Ex}

Even though the square root, which spoils the uniqueness of maximal trajectories in the previous example, is not well-defined on all of $\mathbb{C}\backslash\{0\}$, it is well-defined on the $2:1$ covering $z\mapsto z^2$ of $\mathbb{C}\backslash\{0\}$. In the same vein, the maximal trajectories of a HHS become unique after ``passing them down to a covering''. To understand this idea, recall that, for a RHS $(M,\omega, H)$, the energy hypersurface $H^{-1}(E)$ is foliated by maximal trajectories of $(M,\omega, H)$ if $E$ is a regular value of $H$. Similarly, there exists a foliation for regular energy hypersurfaces of a HHS $(X,\Omega, \mH)$. However, the leaves of the foliation are ``more than just'' the maximal trajectories this time:

\newpage

\begin{Prop}[Holomorphic foliation of a regular hypersurface]\label{prop:holo_foli}
 Let $(X,\Omega,\mH)$ be a HHS with complex structure $J$, Hamiltonian vector field $X_\mH = 1/2 (X^R_\mH - i J(X^R_\mH))$, and regular value $E$ of $\mH$. Then, the energy hypersurface $\mH^{-1}(E)$ admits a holomorphic\footnote{If the reader is unfamiliar with the notion of a holomorphic foliation, confer the proof of Proposition \autoref{prop:pseudo-holo_foli} for its definition.} foliation. The leaf $L_{x_0}$ of this foliation through a point $x_0\in\mH^{-1}(E)$ is given by
 \begin{align*}
  L_{x_0}\coloneqq \{y\in X\mid &y = \varphi^{X^R_\mH}_{t_1}\circ\varphi^{J(X^R_\mH)}_{s_1}\circ\varphi^{X^R_\mH}_{t_2}\circ\varphi^{J(X^R_\mH)}_{s_2}\circ\ldots\circ\varphi^{X^R_\mH}_{t_n}\circ\varphi^{J(X^R_\mH)}_{s_n} (x_0);\\
  &t_1,\ldots,t_n, s_1,\ldots, s_n\in\mathbb{R};\ n\in\mathbb{N}\},
 \end{align*}
 where $\varphi^{X^R_\mH}_{t_j}$ and $\varphi^{J(X^R_\mH)}_{s_j}$ are the flows of $X^R_\mH$ and $J(X^R_\mH)$ for time $t_j$ and $s_j$, respectively. Every holomorphic trajectory of $(X,\Omega,\mH)$ with energy $E$ is completely contained in one such leaf.
\end{Prop}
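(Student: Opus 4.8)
The plan is to realize the foliation as the integral foliation of the involutive rank-two real distribution spanned by $X^R_\mH$ and $J(X^R_\mH)$ on $\mH^{-1}(E)$, to upgrade it to a holomorphic foliation via the holomorphic flow of $X_\mH$, and to match its leaves with the iterated-flow sets in the statement. First, because $\mH$ is holomorphic and $E$ is a regular value, $\partial\mH$ is nowhere zero on $\mH^{-1}(E)$, so the holomorphic implicit function theorem makes $\mH^{-1}(E)$ a complex submanifold of complex codimension one with $T^{(1,0)}_x\mH^{-1}(E)=\ker\partial\mH_x$. From $\iota_{X_\mH}\Omega=-d\mH$ and non-degeneracy of $\Omega$ on $T^{(1,0)}X$, the field $X_\mH$ is nowhere zero on $\mH^{-1}(E)$; from $d\mH(X_\mH)=-\Omega(X_\mH,X_\mH)=0$ it is tangent to $\mH^{-1}(E)$. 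Since that hypersurface is a complex submanifold, its real tangent bundle is $J$-invariant, so the isomorphism $F$ of Proposition~\autoref{prop:holo_vec_field_equiv_J_pre_vec_field} restricts to it and $X^R_\mH,J(X^R_\mH)$ become real vector fields tangent to $\mH^{-1}(E)$ spanning a $J$-invariant real plane $D_x$ at every point of it (they are $\mathbb{R}$-linearly independent there since $X^R_\mH\neq 0$ and $J$ has no real eigenvectors).

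The rank-two distribution $D=\mathbb{R}X^R_\mH\oplus\mathbb{R}J(X^R_\mH)$ on $\mH^{-1}(E)$ is involutive: $X^R_\mH,J(X^R_\mH)$ is a global frame, and expanding the bracket of two $\mathbb{R}$-linear combinations of this frame by the Leibniz rule leaves only terms in $D$ once one uses $[X^R_\mH,J(X^R_\mH)]=0$, which is Corollary~\autoref{cor:commute}. The Frobenius theorem then provides a foliation $\mathcal F$ of $\mH^{-1}(E)$ with two-dimensional leaves tangent to $D$. To see that $\mathcal F$ is a holomorphic foliation (definition recalled in the proof of Proposition~\autoref{prop:pseudo-holo_foli}), I would apply the holomorphic flow-box theorem to the nowhere-vanishing holomorphic vector field $X_\mH$ on the complex manifold $\mH^{-1}(E)$: around any point there is a holomorphic chart $(w_1,\dots,w_{2n-1})$ of $\mH^{-1}(E)$ with $X_\mH=\partial_{w_1}$; writing $w_1=u+iv$ and using $F(X^R_\mH)=X_\mH$ together with $J\partial_u=\partial_v$ gives $X^R_\mH=\partial_u$ and $J(X^R_\mH)=\partial_v$, so the plaques of $\mathcal F$ in this chart are the level sets of $(w_2,\dots,w_{2n-1})$. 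Any two such charts are holomorphic charts of $\mH^{-1}(E)$, and the requirement that plaques be carried to plaques forces the transverse coordinates to transform independently of $w_1$, which is exactly a holomorphic foliated atlas. (Alternatively one may invoke a holomorphic Frobenius theorem for the involutive holomorphic line subbundle $\mathbb{C}\,X_\mH\subset T^{(1,0)}\mH^{-1}(E)$.)

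It remains to identify the leaf $L_{x_0}$ of $\mathcal F$ with the orbit set $S_{x_0}$ appearing in the statement. Each flow $\varphi^{X^R_\mH}_{t}$ and $\varphi^{J(X^R_\mH)}_{s}$ moves along an integral curve of $D$ inside $\mH^{-1}(E)$ and hence preserves leaves, so $S_{x_0}\subseteq L_{x_0}$; conversely, in a flow-box chart as above the composition $\varphi^{X^R_\mH}_{t}\circ\varphi^{J(X^R_\mH)}_{s}$ shifts $w_1$ by $t+is$ — this is precisely the local construction of holomorphic trajectories in the proof of Proposition~\autoref{prop:holo_traj} — so every plaque is swept out by such compositions, and chaining finitely many plaques along a path in a leaf (allowing negative times, and only tuples for which the flows are defined) shows $S_{x_0}$ is open and closed in $L_{x_0}$ for the leaf topology, hence equal to it. A holomorphic trajectory $\gamma$ of energy $E$ has $\mH\circ\gamma\equiv E$ because $\tfrac{d}{dz}(\mH\circ\gamma)=\partial\mH(X_\mH\circ\gamma)=-\Omega(X_\mH,X_\mH)\circ\gamma=0$ and its domain is connected, so its image lies in $\mH^{-1}(E)$; and $\gamma'(z)=X_\mH(\gamma(z))\in D_{\gamma(z)}$ makes $\gamma$ an integral curve of $D$, hence contained in a single leaf, namely $L_{\gamma(z_0)}$.

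The step I expect to be the main obstacle is pinning down the holomorphicity of $\mathcal F$ — producing the foliated holomorphic atlas, or justifying the holomorphic Frobenius theorem for $\mathbb{C}\,X_\mH$ — since that is where genuine complex-analytic input enters; by contrast, the geometric inputs (regular level set, $X_\mH$ tangent and nowhere zero, $X^R_\mH$ and $J(X^R_\mH)$ commuting) are immediate from the earlier results, and the identification of the Frobenius leaf with the explicit iterated-flow description is delicate bookkeeping but follows from the constructions already in hand.
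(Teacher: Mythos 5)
Your proposal is correct and follows essentially the same route as the paper: tangency and nonvanishing of $X^R_\mH$ and $J(X^R_\mH)$ on the regular level set, $\mathbb{R}$-linear independence, commutation via Corollary \autoref{cor:commute}, the real Frobenius theorem for the rank-two distribution, and the holomorphic Frobenius theorem (which the paper cites from Voisin, and which your flow-box chart for $X_\mH$ essentially reproves in this rank-one case) for holomorphicity of the foliation. Your open-and-closed argument identifying the Frobenius leaf with the iterated-flow orbit set, and the constancy of $\mH$ along trajectories, fill in details the paper leaves to a comparison with the proof of Proposition \autoref{prop:holo_traj}, but the underlying argument is the same.
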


\begin{proof}
 Take the assumptions and notations from above. $E$ is a regular value of $\mH$, hence, $\mH^{-1}(E)$ is a complex submanifold of $X$. The (real) tangent space of $\mH^{-1} (E)$ consists of vectors $W$ in the (real) tangent space of $X$ satisfying $d\mH (W) = 0$. Using the holomorphicity of $\mH$, $d\mH\circ J = i\cdot d\mH$, we obtain
 \begin{align*}
  d\mH (X^R_\mH) &= d\mH (X_\mH) = -\Omega (X_\mH, X_\mH) = 0,\\
  d\mH (J(X^R_\mH)) &= i\cdot d\mH (X^R_\mH) = 0
 \end{align*}
 showing that $X^R_\mH$ and $J(X^R_\mH)$ live in the tangent space of $\mH^{-1}(E)$ at each point of $\mH^{-1}(E)$. This allows us to restrict $X^R_\mH$ and $J(X^R_\mH)$ to real vector fields on $\mH^{-1}(E)$.\\
 As $\mH$ is regular on $\mH^{-1} (E)$, neither $X^R_\mH$ nor $J(X^R_\mH)$ vanish on $\mH^{-1} (E)$. Furthermore, as real vector fields, they are $\mathbb{R}$-linearly independent at every point of $\mH^{-1}(E)$, since there is no real number which squares to $-1$. By Corollary \autoref{cor:commute}, $X^R_\mH$ and $J(X^R_\mH)$ also commute. This allows us to apply Frobenius' theorem to the distribution spanned by the real vector real fields $X^R_\mH$ and $J(X^R_\mH)$ giving us a foliation of $\mH^{-1}(E)$ whose leaves take the form described in Proposition \autoref{prop:holo_foli}.
 %As the vector space spanned by $X^R_\mH$ and $J(X^R_\mH)$ at each point of $\mH^{-1}(E)$ is closed under $J$, this foliation is $J$-holomorphic.
 As $X^R_\mH$ and $J(X^R_\mH)$, the vector fields generating the foliation, are real and imaginary part of a holomorphic vector field, the foliation itself is holomorphic by the holomorphic version of Frobenius' theorem (cf. Theorem 2.26 in \cite{voisin2002}). Comparing the construction of holomorphic trajectories in the proof of Proposition \autoref{prop:holo_traj} with the form of the leaves in Proposition \autoref{prop:holo_foli} reveals that a holomorphic trajectory is completely contained in one leaf concluding the proof.
\end{proof}

\begin{Rem}[Flows of $X^R_\mH$ and $J(X^R_\mH)$ do not commute globally]\label{rem:flows_do_not_commute}
 One might be tempted to set $n$ in the definition of the leaves in Proposition \autoref{prop:holo_foli} to $1$, since $X^R_\mH$ and $J(X^R_\mH)$\linebreak as well as their flows commute. However, this is only locally the case.\linebreak To illustrate this, consider Example \autoref{ex:holo_cen_prob} again. Choose the initial value\linebreak $x_0 = (Q_0, P_0) = (1, 1/2)$ and set $n=2$,  $t_1 = 0$, $s_1 = -2$, $t_2 = -2$, and $s_2 = 1$:
 \begin{gather*}
  \varphi^{X^R_\mH}_{0}\circ\varphi^{J(X^R_\mH)}_{-2}\circ\varphi^{X^R_\mH}_{-2}\circ\varphi^{J(X^R_\mH)}_{1} \left(1, \frac{1}{2}\right) = \left(\sqrt{2}\cdot e^{i\frac{5\pi}{8}}, \frac{1}{2\sqrt{2}}\cdot e^{-i\frac{5\pi}{8}}\right).
 \end{gather*}
 If we exchange the order of $s_1$ and $s_2$, the result is different:
 \begin{gather*}
  \varphi^{X^R_\mH}_{0}\circ\varphi^{J(X^R_\mH)}_{1}\circ\varphi^{X^R_\mH}_{-2}\circ\varphi^{J(X^R_\mH)}_{-2} \left(1, \frac{1}{2}\right) = -\left(\sqrt{2}\cdot e^{i\frac{5\pi}{8}}, \frac{1}{2\sqrt{2}}\cdot e^{-i\frac{5\pi}{8}}\right).
 \end{gather*}
\end{Rem}

In light of Proposition \autoref{prop:holo_foli}, one might say that the leaves of a HHS $(X,\Omega, \mH)$ should be considered to be the holomorphic counterpart to the maximal trajectories of a RHS $(M,\omega, H)$. Let us investigate this statement further. To do that, we first need to generalize the notion of holomorphic trajectories in such a way that we can view any Riemann surface as the domain of a trajectory, not only subsets of $\mathbb{C}$:

\begin{Def}[Geometric trajectory]\label{def:geo_traj}
 Let $(X,\Omega,\mH)$ be a HHS with regular value $E$ of $\mH$ and foliation $L = \{L_{x_0}\}_{x_0\in I}$ ($I$: some index set) of $\mH^{-1}(E)$ as in Proposition \autoref{prop:holo_foli}. Further, let $\Sigma$ be a Riemann surface, i.e., a connected, complex one-dimensional manifold. We call a holomorphic map $\gamma:\Sigma\to X$ a \textbf{geometric trajectory} of energy $E$ iff $\gamma$ is an immersion and the image of $\gamma$ is completely contained in one leaf $L_{x_0}$ of the foliation $L$.
\end{Def}

The definition of a geometric trajectory is reasonable, as every geometric trajectory is locally a holomorphic trajectory:

\begin{Prop}[Geometric trajectories are locally holomorphic trajectories]\label{prop:geo_traj}
 Let\linebreak $(X,\Omega,\mH)$ be a HHS with regular value $E$ of $\mH$, $\Sigma$ be a Riemann surface, and $\gamma:\Sigma\to X$ be a geometric trajectory of energy $E$. Then, for every $s_0\in\Sigma$, there exists an open neighborhood $V\subset\Sigma$ of $s_0$ and a holomorphic chart $\varphi:V\to U\subset\mathbb{C}$ of $\Sigma$ such that $\gamma\circ\varphi^{-1}:U\to X$ is a holomorphic trajectory of the HHS $(X,\Omega,\mH)$.
\end{Prop}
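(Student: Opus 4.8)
The plan is to reduce the statement to a reparametrisation problem and to solve that problem by taking a holomorphic primitive. Fix $s_0\in\Sigma$ and set $x_0\coloneqq\gamma(s_0)$, so that $x_0$ lies on a leaf $L_{x_0}$ of the foliation of $\mH^{-1}(E)$ from Proposition \autoref{prop:holo_foli}. After composing with a holomorphic chart of $\Sigma$ near $s_0$ we may assume $\gamma$ is defined on an open subset of $\Cx$, so that its complex derivative $\gamma'$ is available as in Definition \autoref{def:holo_traj}. Since $E$ is a regular value, neither $X^R_\mH$ nor $J(X^R_\mH)$ vanishes on $\mH^{-1}(E)$, hence $X_\mH=\tfrac12(X^R_\mH-iJ(X^R_\mH))$ has no zeros on $L_{x_0}$; and since the foliation is holomorphic, $L_{x_0}$ is a one-dimensional complex submanifold whose $(1,0)$-tangent space at each point is precisely $\Cx\cdot X_\mH$. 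In particular $X_\mH$ trivialises the holomorphic line bundle $T^{(1,0)}L_{x_0}$.

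First I would show that $\gamma$ is, up to a nowhere-vanishing holomorphic scalar, already an integral curve of $X_\mH$. Because $\gamma$ is holomorphic, $\gamma'$ is holomorphic and takes values in $T^{(1,0)}X$, and because the image of $\gamma$ lies in $L_{x_0}$, we get $\gamma'(z)\in T^{(1,0)}_{\gamma(z)}L_{x_0}=\Cx\cdot X_\mH(\gamma(z))$ near $s_0$. Writing both $\gamma'$ and $X_\mH$ in a holomorphic chart of $X$ and dividing the components of $\gamma'$ by a component of $X_\mH$ that does not vanish at $x_0$, one obtains a holomorphic function $f$ defined near $s_0$ with
\begin{gather*}
 \gamma'(z)=f(z)\cdot X_\mH(\gamma(z)).
\end{gather*}
As $\gamma$ is an immersion we have $\gamma'(z)\neq 0$, and since $X_\mH(\gamma(z))\neq 0$ this forces $f$ to be nowhere zero on its domain.

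Next I would build the chart. Let $\varphi$ be a holomorphic primitive of $f$ on a simply connected neighbourhood of $s_0$, normalised by $\varphi(s_0)=0$. Then $\varphi'=f$ is nowhere zero, so, after shrinking, $\varphi$ restricts to a biholomorphism $\varphi\colon V\to U\subset\Cx$ with $V\subset\Sigma$ an open neighbourhood of $s_0$; this is the required holomorphic chart of $\Sigma$. Setting $\eta\coloneqq\gamma\circ\varphi^{-1}\colon U\to X$, the chain rule gives $\eta'(w)=\gamma'\big(\varphi^{-1}(w)\big)\big/\varphi'\big(\varphi^{-1}(w)\big)=X_\mH\big(\eta(w)\big)$, using $\gamma'=f\cdot(X_\mH\circ\gamma)$ and $\varphi'=f$. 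Hence $\eta$ satisfies the holomorphic integral curve equation, i.e.\ it is a holomorphic trajectory of $(X,\Omega,\mH)$, which is exactly the assertion.

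The only genuinely delicate point I anticipate is the holomorphy of the scalar factor $f$: one must know that the $(1,0)$-tangent bundle of the leaf $L_{x_0}$ is a holomorphic line subbundle of $T^{(1,0)}X\vert_{L_{x_0}}$ and is trivialised by $X_\mH$. This rests on the holomorphicity of the foliation (equivalently, of $X_\mH$) recorded in Proposition \autoref{prop:holo_foli}, together with $X_\mH$ being nowhere zero on $\mH^{-1}(E)$ because $E$ is regular. Everything else — existence of a local holomorphic primitive of $f$, the inverse function theorem, and the chain rule — is standard one-variable complex analysis.
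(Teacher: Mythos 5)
Your proof is correct and is essentially the paper's argument: both reduce the statement to reparametrising $\gamma$ so that it becomes an integral curve of $X_\mH$, the paper by pulling $X_\mH$ back to a holomorphic vector field $Y_\mH$ on $\Sigma$ and taking the inverse of its flow as the chart, you by writing $\gamma'=f\cdot(X_\mH\circ\gamma)$ and taking a holomorphic primitive of $f$. Since $Y_\mH=(1/f)\,\partial_z$ in your local coordinate, the two constructions produce the same chart; your antidifferentiation step is just a more elementary substitute for invoking the existence theorem for holomorphic integral curves.
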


\begin{proof}
 Invoke the assumptions and notations from above. As $\gamma$ is a holomorphic immersion whose image is completely contained in one leaf and the leaves of $L$ are generated by the Hamiltonian vector field $X_\mH$, there exists for every $s\in\Sigma$ an uniquely determined vector $Y_\mH (s)\in T^{(1,0)}_s\Sigma$ such that:
 \begin{gather*}
  d\gamma\vert_s (Y_\mH (s)) = X_\mH (\gamma (s)).
 \end{gather*}
 These vectors form a holomorphic vector field $Y_\mH$ on $\Sigma$. Now pick $s_0\in\Sigma$ and $z_0\in\mathbb{C}$. By Proposition \autoref{prop:holo_traj} and Remark \autoref{rem:holo_vec_fields}, there exists an open and connected subset $U\subset\mathbb{C}$ such that $\varphi^{-1}:U\to\Sigma$ is a holomorphic integral curve of $Y_\mH$ satisfying $\varphi^{-1}(z_0) = s_0$. After shrinking $U$ if necessary, $\varphi^{-1}$ becomes a biholomorphism onto its image $\varphi^{-1}(U)\eqqcolon V$. Thus, $\varphi:V\to U$ is a holomorphic chart of $\Sigma$ near $s_0$. Furthermore, the curve $\gamma\circ\varphi^{-1}:U\to X$ is an integral curve of $X_\mH$:
 \begin{gather*}
  \left(\gamma\circ\varphi^{-1}\right)^\prime (z) = d\gamma\vert_{\varphi^{-1}(z)}\left(\varphi^{-1\,\prime} (z)\right) = d\gamma\vert_{\varphi^{-1}(z)}\left(Y_\mH(\varphi^{-1} (z))\right) = X_\mH (\gamma\circ\varphi^{-1}(z))\quad\forall z\in U,
 \end{gather*}
 hence, a holomorphic trajectory concluding the proof.
\end{proof}

Let us now assume that $(X,\Omega,\mH)$ is a HHS with regular value $E$ of $\mH$ and foliation $L = \{L_{x_0}\}_{x_0\in I}$ of $\mH^{-1}(E)$. Pick a leaf $L_{x_0}$. If $L_{x_0}$ is a complex submanifold of $X$, the inclusion $L_{x_0}\hookrightarrow X$ is clearly a geometric trajectory. If $L_{x_0}$ is not a complex submanifold of $X$, we can always equip $L_{x_0}$ with the structure of a complex manifold by choosing a suitable atlas such that the inclusion $L_{x_0}\hookrightarrow X$ becomes a geometric trajectory. The atlas in question consists of maps $\gamma^{-1}$ where $\gamma:U\to X$ is an injective holomorphic trajectory whose image is contained in $L_{x_0}$. In contrast to maximal trajectories, the geometric trajectories $L_{x_0}\hookrightarrow X$ are unique given an initial value $x_0\in \mH^{-1}(E)\subset X$ such that $x_0\in L_{x_0}$. In fact, the uniqueness can be expressed as a universal property: pick $x_0\in\mH^{-1}(E)$. Then, every geometric trajectory $\gamma:\Sigma\to X$ with initial value $x_0\in\gamma (\Sigma)$ factors uniquely through the geometric trajectory $L_{x_0}\hookrightarrow X$ and the geometric trajectory $L_{x_0}\hookrightarrow X$ is unique up to biholomorphisms with that property.\\
Often, the geometric trajectories $L_{x_0}\hookrightarrow X$ can be understood as coverings of or to be covered by maximal trajectories. Example \autoref{ex:holo_cen_prob} exemplifies this behavior. To see this, we first need to determine the leaves in Example \autoref{ex:holo_cen_prob}. We achieve this by applying the following proposition:

\begin{Prop}[Energy hypersurfaces of low-dimensional systems]\label{prop:low_dim}
 Let $(X,\Omega,\mH)$ be a HHS with $\text{\normalfont dim}_\mathbb{C}(X) = 2$ and regular value $E$ of $\mH$. Then, the leaves of $\mH^{-1} (E)$ are its connected components and, in particular, complex submanifolds of $X$.
\end{Prop}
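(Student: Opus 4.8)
The plan is to observe that this is essentially a dimension count: the leaves from \autoref{prop:holo_foli} are integral manifolds of the distribution spanned by $X^R_\mH$ and $J(X^R_\mH)$, and in complex dimension two this distribution already exhausts the tangent bundle of the energy hypersurface.

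First I would recall that, since $E$ is a regular value of $\mH$, the energy hypersurface $\mH^{-1}(E)$ is a complex submanifold of $X$ of complex dimension $\text{dim}_\Cx(X) - 1 = 1$, hence of real dimension $2$. Next I would invoke the proof of \autoref{prop:holo_foli}: the vector fields $X^R_\mH$ and $J(X^R_\mH)$ restrict to real vector fields on $\mH^{-1}(E)$ which, by regularity of $E$, are nowhere vanishing, $\R$-linearly independent at every point of $\mH^{-1}(E)$, and whose span is the distribution integrating to the leaves $L_{x_0}$. Since this distribution has rank $2$ and $\mH^{-1}(E)$ has real dimension $2$, the distribution coincides with the full tangent bundle $T\mH^{-1}(E)$ at every point.

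A distribution equal to the entire tangent bundle is trivially involutive, and its maximal integral manifolds are precisely the connected components of the ambient manifold; hence the leaves $L_{x_0}$ are exactly the connected components of $\mH^{-1}(E)$. (Alternatively, comparing with the explicit description of $L_{x_0}$ in \autoref{prop:holo_foli}: the flows of $X^R_\mH$ and $J(X^R_\mH)$ stay inside $\mH^{-1}(E)$, so each $L_{x_0}$ is open in $\mH^{-1}(E)$; it is also closed in $\mH^{-1}(E)$, being a leaf, and connected, so it is a connected component.) Finally, a connected component of $\mH^{-1}(E)$ is an open subset of the complex submanifold $\mH^{-1}(E)\subset X$, hence itself a complex submanifold of $X$, which gives the last claim.

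There is no real obstacle here; the only thing to be careful about is spelling out why the rank of the distribution forces it to be all of $T\mH^{-1}(E)$ — namely the $\R$-linear independence of $X^R_\mH$ and $J(X^R_\mH)$ established in \autoref{prop:holo_foli} together with the dimension count $\text{dim}_\R \mH^{-1}(E) = 2$ — and to note that openness plus the leaf property (closedness, connectedness) identifies the leaves with connected components.
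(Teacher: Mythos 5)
Your proof is correct and follows essentially the same route as the paper's: the dimension count shows each leaf is open in the complex one-dimensional hypersurface $\mH^{-1}(E)$, and closedness then follows because the foliation decomposes $\mH^{-1}(E)$ into a disjoint union of leaves. The extra care you take about the rank of the distribution and the final remark on complex submanifolds are consistent with, and slightly more explicit than, the argument in the paper.
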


\begin{proof}
 Take the notations and assumptions from above. $X$ is a complex two-dimensional manifold, hence, $\mH^{-1}(E)$ is a complex one-dimensional one. Likewise, the leaves of $\mH^{-1}(E)$ are one-dimensional complex manifolds, immersed in $\mH^{-1}(E)$. Thus, the leaves are open in $\mH^{-1}(E)$. By definition of a foliation, $\mH^{-1}(E)$ decomposes into a disjoint union of leaves. Therefore, the leaves are also closed in $\mH^{-1}(E)$ concluding the proof.
\end{proof}

Return to Example \autoref{ex:holo_cen_prob}. By Proposition \autoref{prop:low_dim}, the leaves in this example are just the connected components of the energy hypersurfaces $\mH^{-1}(E)$. For $E\neq 0$, the energy hypersurface is connected and only consists of one leaf, namely itself. For $E = 0$, we have:
\begin{gather*}
 \mH(Q,P) = \frac{P^2}{2} - \frac{1}{8Q^2} = \frac{1}{2}\left(P-\frac{1}{2Q}\right)\left(P+\frac{1}{2Q}\right) \stackrel{!}{=} 0.
\end{gather*}
We see that there are two connected components and, consequentially, two leaves this time: one with $Q\cdot P = \frac{1}{2}$ and another one with $Q\cdot P = -\frac{1}{2}$. We have already determined the maximal trajectories of the leaf with $Q\cdot P = \frac{1}{2}$: they are given by $Q (z) = \sqrt{z+1}$ and $P = Q^\prime (z)$ with appropriate square roots. Precomposing the maximal trajectories with the $2:1$ covering $z\mapsto z^2 -1$ gives us a well-defined map from $\mathbb{C}\backslash\{0\}$ to the leaf with $Q\cdot P = \frac{1}{2}$. In fact, this map is a biholomorphism. The leaf can be understood in this sense as the double cover of the maximal trajectories. For an example where a leaf is covered by a maximal trajectory, confer Example \autoref{ex:complex_torus}.\\
Before we conclude the section on holomorphic trajectories, we quickly add two comments. The first one concerns the notion of holomorphic and geometric trajectories: throughout the rest of this paper, we will not distinguish between holomorphic and geometric trajectories anymore and use both names interchangeably. The second one concerns holomorphic vector fields on general complex manifolds: 

\begin{Rem}\label{rem:holo_vec_fields}
 The previous results are not only true for holomorphic Hamiltonian vector fields and holomorphic trajectories of HHSs, but also for holomorphic vector fields and holomorphic integral curves on any complex manifold.
\end{Rem}

\subsection{Action Functionals and Principles for HHSs}
\label{subsec:holo_action_fun_and_prin}

As in the real case, we wish to link the holomorphic trajectories of a HHS $(X,\Omega,\mH)$ to critical points of some action functional. To achieve this, let us first study the holomorphic symplectic $2$-form $\Omega$ on $X$. By definition, $\Omega$ is non-degenerate on $T^{(1,0)}X$, but vanishes on $T^{(0,1)}X$. Every complex form $\Omega$ together with complex vectors $V$ and $W$ satisfies $\overline{\Omega (V,W)} = \overline{\Omega} (\overline{V},\overline{W})$, hence, the complex conjugate $\overline{\Omega}$ of the holomorphic symplectic form $\Omega$ is non-degenerate on $T^{(0,1)}X$, but vanishes on $T^{(1,0)}X$\footnote{Here, we have also used that the complex conjugation maps $T^{(1,0)}X$ to $T^{(0,1)}X$ and $T^{(0,1)}X$ to $T^{(1,0)}X$.}. In total, this implies that the real and imaginary part of $\Omega$,
\begin{gather*}
 \Omega_R = \frac{1}{2}(\Omega + \overline{\Omega})\quad\text{and}\quad\Omega_I = \frac{-i}{2}(\Omega - \overline{\Omega}),
\end{gather*}
are non-degenerate on the entire complexified tangent bundle $T_\mathbb{C}X = T^{(1,0)}X\oplus T^{(0,1)}X$ of $X$. Clearly, $\Omega_R$ and $\Omega_I$ are real $2$-forms on $X$ and, hence, must be already non-degenerate on the real tangent bundle $TX$. As $\Omega$ is holomorphic and closed, $\Omega_R$ and $\Omega_I$ are smooth and closed. Putting everything together, we find that the real and imaginary part $\Omega_R$ and $\Omega_I$ of $\Omega$, respectively, are symplectic $2$-forms on $X$ viewed as a real manifold.\\
Let us return to the HHS $(X,\Omega, \mH)$. Clearly, the real and imaginary part of the Hamiltonian $\mH = \mH_R + i\mH_I$ are smooth real functions on $X$. Thus, any HHS $(X,\Omega,\mH)$ gives rise to four \underline{RHSs}: $(X,\Omega_R,\mH_R)$, $(X,\Omega_I,\mH_R)$, $(X,\Omega_R,\mH_I)$, and $(X,\Omega_I,\mH_I)$\footnote{As we will see later on, it might be more appropriate to say that $(X,\Omega,\mH)$ gives rise to only two RHSs, since $(X,\Omega_R,\mH_R)$ and $(X,\Omega_I,\mH_I)$ as well as $(X,\Omega_I,\mH_R)$ and $(X,\Omega_R,-\mH_I)$ are subject to the same dynamics.}. Our next task is to determine the Hamiltonian vector fields of the four RHSs.\\
We start with the RHS $(X,\Omega_R,\mH_R)$. We write the holomorphic Hamiltonian vector field $X_\mH$ as $X_\mH = 1/2 (X^R_\mH - iJ(X^R_\mH))$ and compute $\iota_{X^R_\mH}\Omega_R$:
\begin{align*}
 \iota_{X^R_\mH}\Omega_R &= \frac{1}{2}\left(\iota_{X^R_\mH}\Omega + \iota_{X^R_\mH}\overline{\Omega}\right) = \frac{1}{2}\left(\iota_{X^R_\mH}\Omega + \iota_{\overline{X^R_\mH}}\overline{\Omega}\right) = \frac{1}{2}\left(\iota_{X^R_\mH}\Omega + \overline{\iota_{X^R_\mH}\Omega}\right)\\
 &= \frac{1}{2}\left(\iota_{X_\mH}\Omega + \overline{\iota_{X_\mH}\Omega}\right) = -\frac{1}{2}\left(d\mH + \overline{d\mH}\right) = -\frac{1}{2}d\left(\mH + \overline{\mH}\right) = - d\mH_R,
\end{align*}
where we used that $X^R_\mH$ is a real vector field on $X$, i.e., $\overline{X^R_\mH} = X^R_\mH$, and that\linebreak $\iota_{X_\mH}\Omega = \iota_{X^R_\mH}\Omega$ due to Equation \eqref{eq:J-anticompatible}. We deduce from the expression above that $X^R_\mH$ is the real Hamiltonian vector field of the RHS $(X,\Omega_R,\mH_R)$. Similarly, one can show the following proposition:

\begin{Prop}[HHS as four RHSs]\label{prop:four_RHS}
 Let $(X,\Omega = \Omega_R + i\Omega_I,\mH = \mH_R + i\mH_I)$ be a HHS with Hamiltonian vector field $X_\mH = 1/2 (X^R_\mH - iJ(X^R_\mH))$. Then:
 \begin{enumerate}
  \item $(X,\Omega_R, \mH_R)$ is a RHS with Hamiltonian vector field $X^R_\mH$.
  \item $(X,\Omega_R, \mH_I)$ is a RHS with Hamiltonian vector field $-J(X^R_\mH)$.
  \item $(X,\Omega_I, \mH_R)$ is a RHS with Hamiltonian vector field $J(X^R_\mH)$.
  \item $(X,\Omega_I, \mH_I)$ is a RHS with Hamiltonian vector field $X^R_\mH$.
 \end{enumerate}
\end{Prop}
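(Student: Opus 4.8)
The first item has already been verified above, so the plan is to dispatch items 2--4 by the same mechanism. The key input is the pair of identities
\begin{gather*}
 \iota_{X^R_\mH}\Omega = \iota_{X_\mH}\Omega = -d\mH,\qquad \iota_{J(X^R_\mH)}\Omega = i\,\iota_{X^R_\mH}\Omega = -i\,d\mH,
\end{gather*}
the first of which was established in the proof of item 1 (using $\iota_{X_\mH}\Omega = \iota_{X^R_\mH}\Omega$, a consequence of \eqref{eq:J-anticompatible}), and the second of which is just another application of $\Omega(J\cdot,\cdot) = i\,\Omega$ from \eqref{eq:J-anticompatible}. The first step is therefore to record these two formulas together with their complex conjugates: since $X^R_\mH$ and $J(X^R_\mH)$ are real vector fields, the identity $\overline{\iota_V\Omega} = \iota_{\overline V}\,\overline\Omega$ immediately yields $\iota_{X^R_\mH}\overline\Omega = -d\overline{\mH}$ and $\iota_{J(X^R_\mH)}\overline\Omega = i\,d\overline{\mH}$.

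The second step is to substitute these four expressions into $\Omega_R = \tfrac12(\Omega + \overline\Omega)$ and $\Omega_I = \tfrac{-i}{2}(\Omega - \overline\Omega)$ and to match the outcome against $d\mH_R = \tfrac12\,d(\mH + \overline{\mH})$ and $d\mH_I = \tfrac{-i}{2}\,d(\mH - \overline{\mH})$. Concretely, for item 3 one gets $\iota_{J(X^R_\mH)}\Omega_I = \tfrac{-i}{2}\big(-i\,d\mH - i\,d\overline{\mH}\big) = -\tfrac12\,d(\mH + \overline{\mH}) = -d\mH_R$; for item 2, $\iota_{-J(X^R_\mH)}\Omega_R = -\tfrac12\big(-i\,d\mH + i\,d\overline{\mH}\big) = \tfrac{i}{2}\,d(\mH - \overline{\mH}) = -d\mH_I$; and for item 4, $\iota_{X^R_\mH}\Omega_I = \tfrac{-i}{2}\big(-d\mH + d\overline{\mH}\big) = \tfrac{i}{2}\,d(\mH - \overline{\mH}) = -d\mH_I$. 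Finally, I would note that $\Omega_R$ and $\Omega_I$ were already shown to be symplectic $2$-forms on $X$ in the paragraph preceding the proposition, and that $X^R_\mH$ and $J(X^R_\mH)$ are genuine smooth real vector fields by Proposition \autoref{prop:holo_vec_field_equiv_J_pre_vec_field}; together with the three defining equations just verified, this is precisely the assertion that the listed triples are RHSs with the indicated Hamiltonian vector fields.

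There is essentially no conceptual obstacle here: once item 1 and \eqref{eq:J-anticompatible} are in hand, the computation is pure bookkeeping. The only thing that needs genuine care is the tracking of the signs and the factors of $i$ — in particular, applying the conjugation rule $\overline{\iota_V\Omega} = \iota_{\overline V}\,\overline\Omega$ correctly and keeping the convention $\Omega_I = \tfrac{-i}{2}(\Omega - \overline\Omega)$ (rather than $\tfrac{i}{2}(\,\cdot\,)$) straight, since a sign slip there would interchange items 2 and 3.
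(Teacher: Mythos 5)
Your proposal is correct and follows exactly the route the paper takes: the paper carries out the computation $\iota_{X^R_\mH}\Omega_R = -d\mH_R$ explicitly just before the proposition and dismisses items 2--4 with ``similarly,'' which is precisely the bookkeeping you have written out. All four contractions and the signs (including the convention $\Omega_I = \tfrac{-i}{2}(\Omega-\overline{\Omega})$, $\mH_I = \tfrac{-i}{2}(\mH-\overline{\mH})$) check out.
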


\begin{Rem}[Cauchy-Riemann-like relations]\label{rem:cauchy-riemann}
 At first glance, one might be confused why the Hamiltonian vector fields of $(X,\Omega_R,\mH_R)$ and $(X,\Omega_I,\mH_I)$ coincide, while the Hamiltonian vector fields of $(X,\Omega_R,\mH_I)$ and $(X,\Omega_I,\mH_R)$ differ by a sign. However, this observation is just a consequence of the analyticity of the HHS $(X,\Omega,\mH)$ and one might think of it as Cauchy-Riemann-like relations:
 \begin{gather*}
  X^{\Omega_R}_{\mH_R} = X^{\Omega_I}_{\mH_I};\quad J\left(X^{\Omega_R}_{\mH_R}\right) = X^{\Omega_I}_{\mH_R} = - X^{\Omega_R}_{\mH_I},
 \end{gather*}
 where $X^{\Omega_a}_{\mH_b}$ is the Hamiltonian vector field of the RHS $(X,\Omega_a,\mH_b)$.
\end{Rem}

The upshot of Proposition \autoref{prop:four_RHS} is that the real trajectories of the HHS $(X,\Omega,\mH)$ are just the trajectories of the RHSs $(X,\Omega_R,\mH_R)$ and $(X,\Omega_I,\mH_I)$. In particular, the real trajectories of $(X,\Omega,\mH)$ are critical points of an action functional, at least if\linebreak $(X,\Omega = d\Lambda, \mH)$ is exact\footnote{A HHS $(X,\Omega,\mH)$ is exact iff $\Omega$ has a holomorphic primitive $\Lambda$.}. Of course, the same is true for the trajectories of $(X, \Omega_R, \mH_I)$ and $(X, \Omega_I, \mH_R)$:

\begin{Prop}[Action principle for real trajectories]\label{prop:action_fct_of_real_traj}
 Let $(X,\Omega = d\Lambda, \mH)$ be an exact HHS with Hamiltonian vector field $X_\mH = 1/2 (X^R_\mH - iJ(X^R_\mH))$ and decompositions\linebreak $\Omega = \Omega_R + i\Omega_I$, $\Lambda = \Lambda_R + i\Lambda_I$, and $\mH = \mH_R + i\mH_I$. Let $I_0\subset\mathbb{R}$ be an interval. We set $\mathcal{P}_{I_0}\coloneqq C^\infty (I_0,X)$ and define the action functionals $\mathcal{A}^{\Lambda_R}_{\mH_R}:\mathcal{P}_{I_0}\to\mathbb{R}$, $\mathcal{A}^{\Lambda_R}_{-\mH_I}:\mathcal{P}_{I_0}\to\mathbb{R}$\footnote{Note the different signs in the definition of $\mathcal{A}^{\Lambda_R}_{-\mH_I}$ due to the Cauchy-Riemann-like relations.}, $\mathcal{A}^{\Lambda_I}_{\mH_R}:\mathcal{P}_{I_0}\to\mathbb{R}$, $\mathcal{A}^{\Lambda_I}_{\mH_I}:\mathcal{P}_{I_0}\to\mathbb{R}$, $\mathcal{A}^{\Lambda}_{\mH}:\mathcal{P}_{I_0}\to\mathbb{C}$, and $\mathcal{A}^{\Lambda}_{i\mH}:\mathcal{P}_{I_0}\to\mathbb{C}$ by
 \begin{align*}
  \mathcal{A}^{\Lambda_R}_{\mH_R}[\gamma]&\coloneqq \int\limits_{I_0}\gamma^\ast\Lambda_R - \int\limits_{I_0}\mH_R\circ\gamma (t)dt,\quad \mathcal{A}^{\Lambda_R}_{-\mH_I}[\gamma]\coloneqq \int\limits_{I_0}\gamma^\ast\Lambda_R + \int\limits_{I_0}\mH_I\circ\gamma (t)dt,\\
  \mathcal{A}^{\Lambda_I}_{\mH_R}[\gamma]&\coloneqq \int\limits_{I_0}\gamma^\ast\Lambda_I - \int\limits_{I_0}\mH_R\circ\gamma (t)dt,\quad \mathcal{A}^{\Lambda_I}_{\mH_I}[\gamma]\coloneqq \int\limits_{I_0}\gamma^\ast\Lambda_I - \int\limits_{I_0}\mH_I\circ\gamma (t)dt,\\
  \mathcal{A}^{\Lambda}_{\mH}[\gamma]&\coloneqq \mathcal{A}^{\Lambda_R}_{\mH_R}[\gamma] + i\mathcal{A}^{\Lambda_I}_{\mH_I}[\gamma] = \int\limits_{I_0}\gamma^\ast\Lambda - \int\limits_{I_0}\mH\circ\gamma (t)dt,\\
  \mathcal{A}^{\Lambda}_{i\mH}[\gamma]&\coloneqq \mathcal{A}^{\Lambda_R}_{-\mH_I}[\gamma] + i\mathcal{A}^{\Lambda_I}_{\mH_R}[\gamma] = \int\limits_{I_0}\gamma^\ast\Lambda - i\int\limits_{I_0}\mH\circ\gamma (t)dt,
 \end{align*}
 where $\gamma\in\mathcal{P}_{I_0}$. Now, let $\gamma\in\mathcal{P}_{I_0}$ be any smooth path in $X$. Then, $\gamma$ is a real trajectory of $(X,\Omega,\mH)$ if and only if $\gamma$ is a ``critical point'' of the action functionals $\mathcal{A}^{\Lambda_R}_{\mH_R}$, $\mathcal{A}^{\Lambda_I}_{\mH_I}$, and $\mathcal{A}^{\Lambda}_{\mH}$. Similarly, $\gamma$ is a (real) integral curve of $J(X^R_\mH)$ if and only if $\gamma$ is a ``critical point'' of the action functionals $\mathcal{A}^{\Lambda_R}_{-\mH_I}$, $\mathcal{A}^{\Lambda_I}_{\mH_R}$, and $\mathcal{A}^{\Lambda}_{i\mH}$.
\end{Prop}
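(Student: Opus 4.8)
The plan is to reduce everything to the classical action principle for an exact real Hamiltonian system together with the identification of Hamiltonian vector fields in Proposition~\autoref{prop:four_RHS}. First I would record the standard first-variation formula: given an exact RHS $(M,\omega = d\lambda, H)$, a path $\gamma\in C^\infty(I_0,M)$ and a variation $(\gamma_\epsilon)$ fixing the endpoints with variation field $\xi$, Cartan's formula applied to $\frac{d}{d\epsilon}\big|_0\int_{I_0}\gamma_\epsilon^\ast\lambda$ (the exact part contributing only a boundary term, which vanishes) together with $\iota_{X_H}\omega = -dH$ yields
\[
 \frac{d}{d\epsilon}\bigg|_{\epsilon = 0}\mathcal{A}^\lambda_H[\gamma_\epsilon] = \int_{I_0}\omega_{\gamma(t)}\big(\xi(t),\,\dot\gamma(t) - X_H(\gamma(t))\big)\,dt .
\]
Since $\omega$ is non-degenerate and $\xi$ ranges over all endpoint-fixing variations, the first variation vanishes if and only if $\dot\gamma = X_H\circ\gamma$; this is exactly the action principle recalled in \autoref{sec:intro}, now used verbatim.

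Next I would apply this to the four pieces of $(X,\Omega,\mH)$. From $\Omega = d\Lambda$ and $\Lambda = \Lambda_R + i\Lambda_I$, and the fact that $d$ commutes with taking real and imaginary parts, we get $\Omega_R = d\Lambda_R$ and $\Omega_I = d\Lambda_I$, so all four RHSs are exact; moreover $\Omega_R,\Omega_I$ are genuine symplectic forms by the discussion preceding Proposition~\autoref{prop:four_RHS}. Proposition~\autoref{prop:four_RHS}(1) then identifies the critical points of $\mathcal{A}^{\Lambda_R}_{\mH_R}$ with the integral curves of $X^R_\mH$, i.e.\ the real trajectories, and part~(4) does the same for $\mathcal{A}^{\Lambda_I}_{\mH_I}$, since $(X,\Omega_I,\mH_I)$ again has Hamiltonian vector field $X^R_\mH$. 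For the $J(X^R_\mH)$-statements I would observe that $\mathcal{A}^{\Lambda_R}_{-\mH_I}$ is precisely the action functional of the exact RHS $(X,\Omega_R,-\mH_I)$, whose Hamiltonian vector field is $-(-J(X^R_\mH)) = J(X^R_\mH)$ by part~(2), and that $\mathcal{A}^{\Lambda_I}_{\mH_R}$ is the action functional of $(X,\Omega_I,\mH_R)$, whose Hamiltonian vector field is $J(X^R_\mH)$ by part~(3); hence the critical points of either are exactly the integral curves of $J(X^R_\mH)$.

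It remains to treat the two $\mathbb{C}$-valued functionals. Writing $\mathcal{A}^\Lambda_\mH = \mathcal{A}^{\Lambda_R}_{\mH_R} + i\,\mathcal{A}^{\Lambda_I}_{\mH_I}$ and noting that every endpoint-fixing variation field $\xi$ is real, the quantities $d\mathcal{A}^{\Lambda_R}_{\mH_R}[\gamma](\xi)$ and $d\mathcal{A}^{\Lambda_I}_{\mH_I}[\gamma](\xi)$ are real numbers, so $d\mathcal{A}^\Lambda_\mH[\gamma](\xi)$ is the decomposition of a complex number into its real and imaginary parts; it vanishes for all $\xi$ iff both summands do, i.e.\ iff $\gamma$ is critical for both $\mathcal{A}^{\Lambda_R}_{\mH_R}$ and $\mathcal{A}^{\Lambda_I}_{\mH_I}$, which by the previous paragraph means $\gamma$ is a real trajectory. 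The identical argument with $\mathcal{A}^\Lambda_{i\mH} = \mathcal{A}^{\Lambda_R}_{-\mH_I} + i\,\mathcal{A}^{\Lambda_I}_{\mH_R}$ characterizes its critical points as the integral curves of $J(X^R_\mH)$. Collecting real and imaginary parts also confirms the closed forms $\mathcal{A}^\Lambda_\mH[\gamma] = \int_{I_0}\gamma^\ast\Lambda - \int_{I_0}\mH\circ\gamma(t)\,dt$ and $\mathcal{A}^\Lambda_{i\mH}[\gamma] = \int_{I_0}\gamma^\ast\Lambda - i\int_{I_0}\mH\circ\gamma(t)\,dt$.

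There is no serious obstacle here: the entire argument is bookkeeping on top of the one-variable first-variation formula and Proposition~\autoref{prop:four_RHS}. The one point that demands care is the sign accounting forced by the Cauchy-Riemann-like relations of Remark~\autoref{rem:cauchy-riemann} -- in particular the $-\mH_I$ in $\mathcal{A}^{\Lambda_R}_{-\mH_I}$ and the switch from $(X,\Omega_R,\mH_I)$ to $(X,\Omega_R,-\mH_I)$ -- and perhaps spelling out once that ``critical point'' means vanishing of the first variation under variations of $\gamma\in\mathcal{P}_{I_0}$ keeping the endpoints fixed, exactly as in \autoref{sec:intro}.
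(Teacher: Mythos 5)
Your proposal is correct and follows exactly the route the paper takes: the paper's own proof is the one-line observation that the statement follows from Proposition \autoref{prop:four_RHS} together with the classical action principle for exact RHSs, and your write-up simply fills in that argument (first-variation formula, exactness of $\Omega_R = d\Lambda_R$ and $\Omega_I = d\Lambda_I$, the sign bookkeeping for $-\mH_I$, and the real/imaginary-part splitting for the $\mathbb{C}$-valued functionals). Nothing to add.
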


\begin{proof}
 Proposition \autoref{prop:action_fct_of_real_traj} is a consequence of Proposition \autoref{prop:four_RHS} and the action principle for RHSs.
\end{proof}

\begin{Rem}[Meaning of ``critical point'']\label{rem:critical_point}
 ``Critical point'' does not denote an actual critical point. ``Critical point'' in Proposition \autoref{prop:action_fct_of_real_traj} means that the first derivative of the action functionals vanishes at $\gamma$ \underline{only} for all variations of $\gamma$ \textbf{which keep the endpoints of $\mathbf{\gamma}$ fixed!} Often, one wishes to view trajectories as actual critical points of some action functional, not just with fixed endpoints. There are two main ways to achieve this:
 \begin{enumerate}
  \item One can only consider paths which start and end at points where the primitive of the symplectic form vanishes, usually Lagrangian submanifolds of the symplectic manifold.
  \item One can only consider periodic paths such that the boundary terms in the first derivative of the action cancel each other.
 \end{enumerate}
\end{Rem}

\begin{Rem}[Action functional for ``tilted'' trajectories]\label{rem:tilted_traj}
 The observation that the integral curves of $X^R_\mH$ and $J(X^R_\mH)$ are linked to the ``critical points'' of the action functionals $\mathcal{A}^{\Lambda}_{\mH}$ and $\mathcal{A}^{\Lambda}_{i\mH}$, respectively, can be generalized to ``tilted'' trajectories. For any $\alpha\in\mathbb{R}$, $\gamma$ is an integral curve of $\cos (\alpha)\cdot X^R_\mH + \sin (\alpha)\cdot J(X^R_\mH)$ if and only if $\gamma$ is a ``critical point'' of the action functional $\mathcal{A}^{\Lambda}_{e^{i\alpha}\mH}:\mathcal{P}_{I_0}\to\mathbb{C}$ defined by
 \begin{gather*}
  \mathcal{A}^{\Lambda}_{e^{i\alpha}\mH}[\gamma]\coloneqq \int\limits_{I_0}\gamma^\ast\Lambda - e^{i\alpha}\int\limits_{I_0}\mH\circ\gamma (t)dt.
 \end{gather*}
 Of course, the same action principle holds true if we only consider the real or imaginary part of $\mathcal{A}^{\Lambda}_{e^{i\alpha}\mH}$. This fact will be of great importance later on and in \autoref{app:action_functionals}.
\end{Rem}

As the holomorphic trajectories of the HHS $(X,\Omega,\mH)$ are analytic continuations of its real trajectories, one might be content with finding action functionals for the real trajectories. However, it is also possible to construct an action functional for the holomorphic trajectories of $(X,\Omega,\mH)$ by averaging the action functionals of the four underlying RHSs:
% The idea behind the construction is simple: Recall from the proof of Proposition \autoref{prop:holo_traj} that $\gamma$ is a holomorphic trajectory of a HHS $(X,\Omega, \mH)$ iff $\gamma (t + is)$ is an integral curve of $X^R_\mH$ for fixed $s$ and an integral curve of $J(X^R_\mH)$ for fixed $t$. By Proposition \autoref{prop:action_fct_of_real_traj}, the map $\gamma_s: t\mapsto \gamma (t+is)$, for any fixed $s$, is an integral curve of $X^R_\mH$ iff $\gamma_s$ is a ``critical point'' of $\mathcal{A}^{\Lambda_R}_{\mH_R}$. Likewise, the map $\gamma_t: s\mapsto \gamma (t+is)$, for any fixed $t$, is an integral curve of $J(X^R_\mH)$ iff $\gamma_t$ is a ``critical point'' of $\mathcal{A}^{\Lambda_R}_{-\mH_I}$. Similar statements still hold if we average the action functionals $\mathcal{A}^{\Lambda_R}_{\mH_R}[\gamma_s]$ and $\mathcal{A}^{\Lambda_R}_{-\mH_I}[\gamma_t]$ over $s$ and $t$, respectively. For instance, $\gamma_s: t\mapsto \gamma (t+is)$ is an integral curve of $X^R_\mH$ for \underline{all} $s$ iff $\gamma$ is a ``critical point'' of the functional
% \begin{gather*}
%  \gamma\mapsto \int \mathcal{A}^{\Lambda_R}_{\mH_R}[\gamma_s] ds.
% \end{gather*}
% Thus, averaging the action functionals $\mathcal{A}^{\Lambda_R}_{\mH_R}[\gamma_s]$ and $\mathcal{A}^{\Lambda_R}_{-\mH_I}[\gamma_t]$ over $s$ and $t$, respectively, and taking a suitable complex linear combination of them afterwards gives us the desired action functional for holomorphic trajectories:

\begin{Lem}[Action principle for holomorphic trajectories]\label{lem:holo_action_prin}
 Let $(X,\Omega = d\Lambda,\mH)$ be an exact HHS with $\Lambda = \Lambda_R + i\Lambda_I$. Furthermore, let $R\coloneqq [t_1,t_2] + i[s_1,s_2]\subset\mathbb{C}$ be a rectangle in the complex plane with real numbers $t_1< t_2$ and $s_1< s_2$. Denote the space of smooth maps from $R$ to $X$ by $\mathcal{P}_R$ and define the action functional $\mathcal{A}^R_\mH:\mathcal{P}_R\to\mathbb{C}$ by
 \begin{gather*}
  \mathcal{A}^R_\mH[\gamma]\coloneqq \int\limits^{t_2}_{t_1}\int\limits^{s_2}_{s_1}\left[\Lambda_R\vert_{\gamma (t+is)}\left(2\frac{\pa\gamma}{\pa z}(t+is)\right) - \mH\circ\gamma (t+is)\right] ds\, dt\ \text{with}\ \frac{\pa \gamma}{\pa z}\coloneqq \frac{1}{2}\left(\frac{\pa \gamma}{\pa t} - i\frac{\pa \gamma}{\pa s}\right)
 \end{gather*}
 for every $\gamma\in\mathcal{P}_R$. Now, let $\gamma\in\mathcal{P}_R$ be a smooth map from $R$ to $X$. Then, $\gamma$ is a holomorphic trajectory of $(X,\Omega,\mH)$ iff $\gamma$ is a ``critical point''\footnote{``Critical point'' means that only those variations are allowed which keep $\gamma$ fixed on the boundary $\partial R$.} of $\mathcal{A}^R_\mH$.
\end{Lem}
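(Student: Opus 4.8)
\emph{Proof strategy.} The plan is to compute the first variation of $\mathcal{A}^R_\mH$ by hand — slicing the rectangle $R$ in each of the two coordinate directions — and then read off the resulting Euler--Lagrange equation. Fix $\gamma\in\mathcal{P}_R$ and an arbitrary smooth variation $\gamma_\epsilon$ with $\gamma_0=\gamma$ and $\xi\coloneqq\partial_\epsilon\gamma_\epsilon|_{\epsilon=0}$ vanishing on $\partial R$. Since $2\frac{\partial\gamma}{\partial z}=\partial_t\gamma-i\,\partial_s\gamma$, the integrand of $\mathcal{A}^R_\mH$ is $\Lambda_R(\partial_t\gamma)-i\,\Lambda_R(\partial_s\gamma)-\mH\circ\gamma$, where $\Lambda_R$ is extended $\Cx$-linearly to complex tangent vectors. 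For the first summand I would slice $R$ at fixed $s$: the curve $t\mapsto\gamma_\epsilon(t+is)$ has both endpoints on $\partial R$, so the standard variational formula for a Liouville term (with $d\Lambda_R=\Omega_R$) gives $\frac{d}{d\epsilon}|_{0}\int_{t_1}^{t_2}\Lambda_R(\partial_t\gamma_\epsilon)\,dt=\int_{t_1}^{t_2}\Omega_R(\xi,\partial_t\gamma)\,dt$ with no boundary contribution; integrating over $s$ yields $\int_R\Omega_R(\xi,\partial_t\gamma)\,ds\,dt$. Slicing at fixed $t$ treats the second summand symmetrically and produces $\int_R\Omega_R(\xi,\partial_s\gamma)\,ds\,dt$, again with vanishing boundary terms since the $s$-endpoints of each slice lie on $\partial R$. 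Combining these with $\frac{d}{d\epsilon}|_{0}\int_R\mH\circ\gamma_\epsilon\,ds\,dt=\int_R d\mH(\xi)\,ds\,dt$ (so the $-\mH$ term contributes $-\int_R d\mH(\xi)\,ds\,dt$) and pulling the complex coefficients back inside $\Omega_R$, this should give
\[
 \left.\frac{d}{d\epsilon}\right|_{\epsilon=0}\mathcal{A}^R_\mH[\gamma_\epsilon]\;=\;\int_R\Big[\Omega_R\big(\xi,\,2\tfrac{\partial\gamma}{\partial z}\big)-d\mH(\xi)\Big]\,ds\,dt .
\]

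Next I would invoke the fundamental lemma of the calculus of variations — applied to the real and imaginary parts of the integrand, with bump-function test fields supported in the interior of $R$ — to conclude that $\gamma$ is a ``critical point'' of $\mathcal{A}^R_\mH$ if and only if, at every $z\in R$, one has $\Omega_R\big(v,\,2\tfrac{\partial\gamma}{\partial z}(z)\big)=d\mH_{\gamma(z)}(v)$ for all real $v\in T_{\gamma(z)}X$. Decomposing the complex tangent vector $2\tfrac{\partial\gamma}{\partial z}=\partial_t\gamma-i\,\partial_s\gamma$ into its real and imaginary parts $\partial_t\gamma$ and $-\partial_s\gamma$, writing $d\mH=d\mH_R+i\,d\mH_I$, and comparing real against real and imaginary against imaginary — using the non-degeneracy of $\Omega_R$ on $TX$ together with \autoref{prop:four_RHS}, which identifies $X^{\Omega_R}_{\mH_R}=X^R_\mH$ and $X^{\Omega_R}_{\mH_I}=-J(X^R_\mH)$ — this single complex equation becomes the pair of real equations $\partial_t\gamma=X^R_\mH\circ\gamma$ and $\partial_s\gamma=J(X^R_\mH)\circ\gamma$.

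Finally I would check that this pair of equations says exactly that $\gamma$ is a holomorphic trajectory in the sense of \autoref{def:holo_traj}. Substituting the first equation into the second gives $\partial_s\gamma=J(\partial_t\gamma)$, the Cauchy--Riemann equations for $\gamma$, so $\gamma$ is holomorphic; for a holomorphic $\gamma$ one has $\gamma'(z)=\tfrac{\partial\gamma}{\partial z}=\tfrac12\big(\partial_t\gamma-iJ(\partial_t\gamma)\big)=f(\partial_t\gamma)$, where $f=\tfrac12(\mathrm{id}-iJ)\colon TX\to T^{(1,0)}X$ is the bundle isomorphism of \autoref{prop:holo_vec_field_equiv_J_pre_vec_field} and $X_\mH=f(X^R_\mH)$, so $\partial_t\gamma=X^R_\mH\circ\gamma$ is equivalent to $\gamma'=X_\mH\circ\gamma$. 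Conversely, a holomorphic trajectory satisfies both real equations at once — take real and imaginary parts of $\gamma'=X_\mH\circ\gamma$, using $X_\mH=\tfrac12(X^R_\mH-iJ(X^R_\mH))$ and $\partial_s\gamma=J(\partial_t\gamma)$ — so the equivalence runs in both directions, finishing the proof. The part I expect to demand the most care is the first-variation computation itself: keeping the $\Cx$-linear extension of $\Omega_R$ and $\Lambda_R$ straight and checking that every boundary term drops out when $R$ is sliced in each of the two directions. Once the Euler--Lagrange equation $\Omega_R\big(v,\,2\tfrac{\partial\gamma}{\partial z}\big)=d\mH(v)$ is in hand, the identification with the holomorphic integral-curve equation is a purely formal consequence of \autoref{prop:four_RHS} and \autoref{prop:holo_vec_field_equiv_J_pre_vec_field}.
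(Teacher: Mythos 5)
Your proposal is correct and follows essentially the same route as the paper: both reduce the problem to the pair of real Euler--Lagrange equations $\partial_t\gamma = X^R_\mH\circ\gamma$ and $\partial_s\gamma = J(X^R_\mH)\circ\gamma$ by slicing the rectangle in the $t$- and $s$-directions, and then identify this pair with the holomorphic integral curve equation via $X_\mH = \tfrac12(X^R_\mH - iJ(X^R_\mH))$. The only difference is presentational — you carry out the first-variation computation explicitly (correctly, including the vanishing boundary terms and the sign bookkeeping through Proposition \autoref{prop:four_RHS}), whereas the paper packages the same computation by rewriting $\mathcal{A}^R_\mH$ as $\int\mathcal{A}^{\Lambda_R}_{\mH_R}[\gamma_s]\,ds - i\int\mathcal{A}^{\Lambda_R}_{-\mH_I}[\gamma_t]\,dt$ and citing the one-dimensional action principles of Proposition \autoref{prop:action_fct_of_real_traj}.
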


\begin{proof}
 Take the notations from above and decompose $\mH = \mH_R + i\mH_I$. Furthermore, let $\gamma\in\mathcal{P}_R$ be a smooth map and let $\gamma_s:[t_1,t_2]\to X$ and $\gamma_t:[s_1,s_2]\to X$ be defined by $\gamma_s (t) = \gamma (t+is) = \gamma_t (s)$ for any $s\in [s_1,s_2]$ and $t\in [t_1,t_2]$. Recall the action functionals from Proposition \autoref{prop:action_fct_of_real_traj}. A short calculation reveals that $\mathcal{A}^R_\mH [\gamma]$ can be expressed as
 \begin{gather*}
  \mathcal{A}^R_\mH [\gamma] = \int\limits^{s_2}_{s_1} \mathcal{A}^{\Lambda_R}_{\mH_R}[\gamma_s] ds - i\int\limits^{t_2}_{t_1} \mathcal{A}^{\Lambda_R}_{-\mH_I}[\gamma_t] dt.
 \end{gather*}
 $\gamma$ is a ``critical point'' of $\mathcal{A}^R_\mH$ iff $\gamma$ is a ``critical point'' of its real and imaginary part.\\
 Now consider the real part of $\mathcal{A}^R_\mH$. For any $s\in [s_1,s_2]$, $\gamma_s$ is a ``critical point'' of $\mathcal{A}^{\Lambda_R}_{\mH_R}$ iff $\gamma_s$ is an integral curve of $X^R_\mH$, where $X_\mH = 1/2(X^R_\mH - iJ(X^R_\mH))$ is the Hamiltonian vector field of $(X,\Omega,\mH)$. Explicitly writing down the first derivative of the functional $\gamma\mapsto\int \mathcal{A}^{\Lambda_R}_{\mH_R}[\gamma_s] ds$ shows that this property is preserved under averaging: $\gamma$ is a ``critical point'' of $\gamma\mapsto\int \mathcal{A}^{\Lambda_R}_{\mH_R}[\gamma_s] ds$ iff $\gamma_s$ is an integral curve of $X^R_\mH$ for every $s\in [s_1,s_2]$.\\
 Similarly, we find for the imaginary part of $\mathcal{A}^R_\mH$ that $\gamma$ is a ``critical point'' of\linebreak $\gamma\mapsto\int \mathcal{A}^{\Lambda_R}_{-\mH_I}[\gamma_t] dt$ iff $\gamma_t$ is an integral curve of $J(X^R_\mH)$ for every $t\in [t_1,t_2]$. Combining our results so far, we find that $\gamma$ is a ``critical point'' of $\mathcal{A}^R_\mH$ iff $\gamma_s$ is an integral curve of $X^R_\mH$ for every $s\in [s_1,s_2]$ and $\gamma_t$ is an integral curve of $J(X^R_\mH)$ for every $t\in [t_1,t_2]$. Recalling from \autoref{subsec:holo_traj} that holomorphic trajectories of $(X,\Omega,\mH)$ are exactly those smooth maps $\gamma$ that are integral curves of $X^R_\mH$ in $t$-direction and $J(X^R_\mH)$ in $s$-direction concludes the proof.
\end{proof}

To get a better understanding of Lemma \autoref{lem:holo_action_prin}, several remarks are in order:

\begin{Rem}\label{rem:several_remarks}
 {\textcolor{white}{Easter Egg}}
 \begin{enumerate}
  \item Note that the action functional $\mathcal{A}^R_\mH$ in Lemma \autoref{lem:holo_action_prin} only uses the real part $\Lambda_R$ and \underline{not} $\Lambda_I$. Of course, a similar action functional including $\Lambda_I$ exists, but we will not use it for reasons that become apparent in \autoref{sec:PHHS}.
  \item As before, one might wish to express holomorphic trajectories as actual critical points of some functional. Again, there are two main ways to achieve this: one may only consider smooth maps $\gamma$ from the rectangle $R$ to $X$ which\ldots
  \begin{enumerate}
   \item \dots map the boundary $\partial R$ to points in $X$ where the $1$-form $\Lambda_R$ vanishes, usually Lagrangian submanifolds of $X$.
   \item \dots are doubly-periodic, i.e., periodic in both $t$- and $s$-direction.
  \end{enumerate}
  Theoretically, one can even imagine a mix of both methods: one only considers maps $\gamma$ which are periodic in one direction and map the boundary orthogonal to the remaining direction to Lagrangian submanifolds of $X$.
  \item In more geometrical terms, the action $\mathcal{A}^R_\mH$ can be expressed as
  \begin{gather*}
   \mathcal{A}^R_\mH[\gamma] =  \iint\limits_{R}\left[\Lambda_R\vert_{\gamma (t+is)}\left(2\frac{\pa \gamma}{\pa z}(t+is)\right) - \mH\circ\gamma (t+is)\right] dt\wedge ds,
  \end{gather*}
  where $dt\wedge ds$ is the standard area form on $\mathbb{C}\cong\mathbb{R}^2$. If $\gamma$ is a ``critical point'' of $\mathcal{A}^R_\mH$ or simply a holomorphic curve, we find:
  \begin{gather*}
   \frac{\pa\gamma}{\pa z} (t+is) = \gamma^\prime (t+is) \in T^{(1,0)}X.
  \end{gather*}
  Using $\Lambda_R (V) = i\Lambda_I (V)$ for $V\in T^{(1,0)}X$, we obtain that the action at such $\gamma$ is given by
  \begin{gather*}
   \mathcal{A}^R_\mH[\gamma] =  \iint\limits_{R}\left[\Lambda\left(\gamma^\prime (t+is)\right) - \mH\circ\gamma (t+is)\right] dt\wedge ds,
  \end{gather*}
  where the expression in rectangular brackets is holomorphic in $z = t + is$.
  \item Upon closer inspection of Lemma \autoref{lem:holo_action_prin}, one might wonder whether Lemma \autoref{lem:holo_action_prin} is still true if one restricts the domain $\mathcal{P}_R$ of $\mathcal{A}^R_\mH$ to the holomorphic curves from $R$ to $X$ instead of varying over all smooth curves from $R$ to $X$. Clearly, this is not the case, as the values $\gamma$ attains at $\partial R$ completely determine one holomorphic curve, so variation over this space is not viable. A different perspective is offered by the action functional $\mathcal{A}^R_\mH$ itself. By writing $dt\wedge ds = i/2 \cdot dz\wedge d\bar{z}$ and recalling Point 3, $\mathcal{A}^R_\mH [\gamma]$ can be written as the integral of a form admitting a primitive for holomorphic $\gamma$. By Stokes' theorem, $\mathcal{A}^R_\mH[\gamma]$ then only depends on the values of $\gamma$ on the boundary $\partial R$. Since these values are kept fixed during variation, the action functional never changes in the variational process and gives us no information. An additional explanation for this behavior is presented in \autoref{app:action_functionals}.
 \end{enumerate}
\end{Rem}

We can define action functionals like $\mathcal{A}^R_\mH$ not only for rectangles, but for all kinds of domains in $\mathbb{C}$. A large selection of them is explored in \autoref{app:action_functionals}. Here, let us quickly introduce one generalization of $\mathcal{A}^R_\mH$, namely the action functional $\mathcal{A}^{P_\alpha}_\mH$ for parallelograms $P_\alpha$. For the sake of simplicity, we assume that the first vector spanning the parallelogram $P_\alpha$ is parallel to the real axis such that we can write $P_\alpha = [t_1,t_2] + e^{i\alpha}\cdot[r_1,r_2]$ for some angle $\alpha\in\mathbb{R}\backslash\{n\cdot\pi\mid n\in\mathbb{Z}\}$ and some real numbers $t_1 < t_2$ and $r_1 < r_2$. Using the standard area form $dt\wedge ds$, the generalization of $\mathcal{A}^R_\mH$ to $\mathcal{A}^{P_\alpha}_\mH$ is straightforward:
\begin{gather*}
 \mathcal{A}^{P_\alpha}_\mH[\gamma]\coloneqq \iint\limits_{P_\alpha}\left[\Lambda_R\vert_{\gamma (t+is)}\left(2\frac{\pa\gamma}{\pa z}(t+is)\right) - \mH\circ\gamma (t+is)\right] dt\wedge ds\quad\forall\gamma\in\mathcal{P}_{P_\alpha},
\end{gather*}
where we used the coordinates $z = t + is$ and defined $\partial\gamma/\partial z$ as in Lemma \autoref{lem:holo_action_prin}. To show that $\gamma$ is a ``critical point'' of $\mathcal{A}^{P_\alpha}_\mH$ iff $\gamma$ is a holomorphic trajectory, we express $\mathcal{A}^{P_\alpha}_\mH$ in the ``tilted'' coordinates $z = t + r\cdot e^{i\alpha}$ ($\alpha$ fixed):
\begin{align*}
 \mathcal{A}^{P_\alpha}_\mH[\gamma]&= \int\limits^{t_2}_{t_1}\int\limits^{r_2}_{r_1}\left[\Lambda_R\vert_{\gamma (t+re^{i\alpha})}\left(2\frac{\pa\gamma}{\pa z}(t+re^{i\alpha})\right) - \mH\circ\gamma (t+re^{i\alpha})\right]\, \sin(\alpha)\,dr\, dt\\
 &= \int\limits^{t_2}_{t_1}\int\limits^{r_2}_{r_1}\left[\Lambda_R\vert_{\gamma (t+re^{i\alpha})}\left(ie^{-i\alpha}\cdot \frac{d\gamma_r}{d t}(t) - i\cdot \frac{d\gamma_t}{dr}(r)\right)\right.\\
 &\qquad\quad \left.- \left(ie^{-i\alpha}\mH_R - i\text{Re}(e^{i\alpha}\mH)\right)\circ\gamma (t+re^{i\alpha})\right]\, dr\, dt\\
 &= ie^{-i\alpha}\int\limits^{r_2}_{r_1}\mathcal{A}^{\Lambda_R}_{\mH_R} [\gamma_r] dr - i\int\limits^{t_2}_{t_1}\text{Re} (\mathcal{A}^\Lambda_{e^{i\alpha}\mH} [\gamma_t]) dt,
\end{align*}
where, for any $t\in [t_1, t_2]$ and $r\in [r_1,r_2]$, the curves $\gamma_r:[t_1,t_2]\to X$ and $\gamma_t:[r_1,r_2]\to X$ are defined by $\gamma_r (t) = \gamma (t + re^{i\alpha}) = \gamma_t (r)$, $\text{Re}(\cdot)$ denotes the real part, and $\mathcal{A}^{\Lambda_R}_{\mH_R}$ and $\mathcal{A}^{\Lambda}_{e^{i\alpha}\mH}$ are the action functionals from Proposition \autoref{prop:action_fct_of_real_traj} and Remark \autoref{rem:tilted_traj}, respectively. For $\alpha\neq n\cdot \pi$, $n\in\mathbb{Z}$, the complex numbers $ie^{-i\alpha}$ and $-i$ form a $\mathbb{R}$-linear basis of $\mathbb{C}$. Thus, $\gamma$ is a ``critical point'' of $\mathcal{A}^{P_\alpha}_\mH$ iff $\gamma$ is a ``critical point'' of the functionals $\gamma\mapsto\int\mathcal{A}^{\Lambda_R}_{\mH_R} [\gamma_r] dr$ and $\gamma\mapsto\int\text{Re} (\mathcal{A}^\Lambda_{e^{i\alpha}\mH} [\gamma_t]) dt$. The rest now follows as in proof of Lemma \autoref{lem:holo_action_prin} by exploiting Proposition \autoref{prop:action_fct_of_real_traj}, Remark \autoref{rem:tilted_traj}, and the fact that holomorphic trajectories are exactly those smooth curves which are integral curves of $X^R_\mH$ in $t$-direction and integral curves of $\cos (\alpha) X^R_\mH + \sin (\alpha) J(X^R_\mH)$ in $r$-direction ($z = t + re^{i\alpha}$) for $\alpha\in\mathbb{R}\backslash\{n\cdot\pi\mid n\in\mathbb{Z}\}$. Summing up our results, we have just shown:

\begin{Prop}[Action principle for parallelograms]\label{prop:holo_action_prin_para}
 Let $(X,\Omega = d\Lambda,\mH)$ be an exact HHS with $\Lambda = \Lambda_R + i\Lambda_I$. For $\alpha\in\mathbb{R}\backslash\{n\cdot\pi\mid n\in\mathbb{Z}\}$, let $P_\alpha\coloneqq [t_1,t_2] + e^{i\alpha}[r_1,r_2]\subset\mathbb{C}$ be a parallelogram in the complex plane with real numbers $t_1< t_2$ and $r_1< r_2$. Denote the space of smooth maps from $P_\alpha$ to $X$ by $\mathcal{P}_{P_\alpha}$ and define the action functional $\mathcal{A}^{P_\alpha}_\mH:\mathcal{P}_{P_\alpha}\to\mathbb{C}$ by
\begin{gather*}
 \mathcal{A}^{P_\alpha}_\mH[\gamma]\coloneqq \iint\limits_{P_\alpha}\left[\Lambda_R\vert_{\gamma (t+is)}\left(2\frac{\pa\gamma}{\pa z}(t+is)\right) - \mH\circ\gamma (t+is)\right] dt\wedge ds\ \text{with}\ \frac{\pa\gamma}{\pa z}\coloneqq \frac{1}{2}\left(\frac{\pa\gamma}{\pa t} - i\frac{\pa\gamma}{\pa s}\right)
\end{gather*}
 for every $\gamma\in\mathcal{P}_{P_\alpha}$. Now, let $\gamma\in\mathcal{P}_{P_\alpha}$ be a smooth map from $P_\alpha$ to $X$. Then, $\gamma$ is a holomorphic trajectory of $(X,\Omega,\mH)$ iff $\gamma$ is a ``critical point''\footnote{Again, ``critical point'' means that only those variations are allowed which keep $\gamma$ fixed on the boundary $\partial P_\alpha$.} of $\mathcal{A}^{P_\alpha}_\mH$.
\end{Prop}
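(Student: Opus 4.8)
The plan is to reduce the statement to Proposition \autoref{prop:action_fct_of_real_traj}, Remark \autoref{rem:tilted_traj}, and the ``averaging preserves criticality'' argument already used in the proof of Lemma \autoref{lem:holo_action_prin} -- in other words, to make rigorous the computation carried out in the paragraph preceding the proposition. First I would pass to the ``tilted'' coordinates $z = t + r e^{i\alpha}$ (with $\alpha$ fixed), in which the parallelogram $P_\alpha$ becomes the honest rectangle $[t_1,t_2]\times[r_1,r_2]$ and the standard area form transforms as $dt\wedge ds = \sin(\alpha)\, dr\wedge dt$. Using the chain rule to rewrite $\pa\gamma/\pa z$ in these coordinates and splitting the integrand into real and imaginary parts then yields the identity
\begin{gather*}
 \mathcal{A}^{P_\alpha}_\mH[\gamma] = ie^{-i\alpha}\int\limits^{r_2}_{r_1}\mathcal{A}^{\Lambda_R}_{\mH_R}[\gamma_r]\, dr - i\int\limits^{t_2}_{t_1}\text{Re}(\mathcal{A}^\Lambda_{e^{i\alpha}\mH}[\gamma_t])\, dt,
\end{gather*}
where $\gamma_r(t) = \gamma(t + r e^{i\alpha}) = \gamma_t(r)$, and $\mathcal{A}^{\Lambda_R}_{\mH_R}$, $\mathcal{A}^\Lambda_{e^{i\alpha}\mH}$ are the action functionals from Proposition \autoref{prop:action_fct_of_real_traj} and Remark \autoref{rem:tilted_traj}.

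Next, since $\alpha\notin\pi\mathbb{Z}$ the complex numbers $ie^{-i\alpha}$ and $-i$ are $\mathbb{R}$-linearly independent in $\mathbb{C}$, so $\gamma$ is a ``critical point'' of $\mathcal{A}^{P_\alpha}_\mH$ (for variations fixing $\gamma$ on $\pa P_\alpha$) if and only if it is simultaneously a ``critical point'' of the two averaged real functionals $\gamma\mapsto\int\mathcal{A}^{\Lambda_R}_{\mH_R}[\gamma_r]\, dr$ and $\gamma\mapsto\int\text{Re}(\mathcal{A}^\Lambda_{e^{i\alpha}\mH}[\gamma_t])\, dt$. Here I would note that the two families of edges of $P_\alpha$ are precisely the loci $\{r = r_1\}\cup\{r = r_2\}$ and $\{t = t_1\}\cup\{t = t_2\}$, so that any variation of $\gamma$ fixing $\pa P_\alpha$ restricts, for each fixed $r$, to a variation of the slice $\gamma_r$ fixing its two endpoints $t_1 + r e^{i\alpha}$ and $t_2 + r e^{i\alpha}$ -- and symmetrically for the slices $\gamma_t$. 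Interchanging the variational derivative at parameter value $0$ with the $r$- (resp.\ $t$-) integral, and conversely testing against variations concentrated near a single slice by means of a bump function in the transverse variable, one obtains -- exactly as in the proof of Lemma \autoref{lem:holo_action_prin} -- that $\gamma$ is a ``critical point'' of $\gamma\mapsto\int\mathcal{A}^{\Lambda_R}_{\mH_R}[\gamma_r]\, dr$ iff every slice $\gamma_r$ is an integral curve of $X^R_\mH$, and that $\gamma$ is a ``critical point'' of $\gamma\mapsto\int\text{Re}(\mathcal{A}^\Lambda_{e^{i\alpha}\mH}[\gamma_t])\, dt$ iff every slice $\gamma_t$ is an integral curve of $\cos(\alpha)X^R_\mH + \sin(\alpha)J(X^R_\mH)$, by Proposition \autoref{prop:action_fct_of_real_traj} and Remark \autoref{rem:tilted_traj}.

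To conclude, I would invoke Remark \autoref{rem:holo_traj_alpha}: a smooth map $\gamma$ is a holomorphic trajectory of $(X,\Omega,\mH)$ exactly when, in the coordinates $z = t + r e^{i\alpha}$, it is an integral curve of $X^R_\mH$ in the $t$-direction and an integral curve of $\cos(\alpha)X^R_\mH + \sin(\alpha)J(X^R_\mH)$ in the $r$-direction. Combining this with the two slicewise equivalences above gives that $\gamma$ is a ``critical point'' of $\mathcal{A}^{P_\alpha}_\mH$ iff $\gamma$ is a holomorphic trajectory, which is the assertion. The only genuinely non-formal step is this ``averaging preserves criticality'' claim, i.e.\ interchanging the variational derivative with the integral over the family of slices and then running the fundamental lemma of the calculus of variations slicewise; the one subtlety I would be careful about is checking that endpoint-fixing variations of $\gamma$ really do induce endpoint-fixing variations of every slice, which is the single place where the parallelogram shape -- rather than an arbitrary domain -- is genuinely used.
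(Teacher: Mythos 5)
Your proposal follows essentially the same route as the paper's own proof: the identity $\mathcal{A}^{P_\alpha}_\mH[\gamma] = ie^{-i\alpha}\int\mathcal{A}^{\Lambda_R}_{\mH_R}[\gamma_r]\,dr - i\int\text{Re}(\mathcal{A}^\Lambda_{e^{i\alpha}\mH}[\gamma_t])\,dt$ in the tilted coordinates, the $\mathbb{R}$-linear independence of $ie^{-i\alpha}$ and $-i$, and the reduction to the slicewise averaging argument of Lemma \autoref{lem:holo_action_prin} via Proposition \autoref{prop:action_fct_of_real_traj} and Remark \autoref{rem:tilted_traj} are exactly the steps the paper carries out in the paragraph preceding the proposition. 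Your explicit attention to the fact that boundary-fixing variations of $\gamma$ induce endpoint-fixing variations of each slice is a sound elaboration of a point the paper leaves implicit.
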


\begin{Rem}\label{rem:action_para}
 Proposition \autoref{prop:holo_action_prin_para} is a direct generalization of Lemma \autoref{lem:holo_action_prin}, since one obtains Lemma \autoref{lem:holo_action_prin} from Proposition \autoref{prop:holo_action_prin_para} by setting $\alpha = \pi/2$.
\end{Rem}

Before we conclude \autoref{subsec:holo_action_fun_and_prin}, let us inspect Point 2b) of Remark \autoref{rem:several_remarks} more closely. If a holomorphic curve $\gamma:P_\alpha\to X$ whose domain is a parallelogram $P_\alpha\subset\mathbb{C}$ is doubly-periodic, i.e., periodic in $t$- and $r$-direction for $z = t + re^{i\alpha}$, then we can also view $\gamma$ as a holomorphic map from a complex torus to $X$. In this sense, we can interpret holomorphic trajectories whose domains are complex tori as the holomorphic analogue to periodic orbits of RHSs. In contrast to periodic orbits of RHSs, however, the domains of two holomorphic periodic orbits do not need to be isomorphic. Indeed, the complex structure of such a torus is determined by the shape of the parallelogram $P_\alpha$. Therefore, the action functional $\mathcal{A}^{P_\alpha}_\mH$ is only sensitive to certain holomorphic periodic orbits, namely those whose domains share the complex structure induced by $P_\alpha$.\\
Non-constant holomorphic periodic orbits are rather rare and do not exist in most HHSs $(X,\Omega,\mH)$. For instance, take $X$ to be the standard example $\mathbb{C}^{2n}$. Due to the compactness of a complex torus $\mathbb{C}/\Gamma$, the maximum principle applies and any holomorphic map\linebreak $\gamma:\mathbb{C}/\Gamma\to X$ has to be constant. The same result applies if $X$ is Brody hyperbolic\footnote{A complex manifold $X$ is Brody hyperbolic iff every holomorphic map $\gamma:\mathbb{C}\to X$ defined on all of $\mathbb{C}$ is constant.}. Furthermore, if $X$ is compact, then all holomorphic trajectories are constant, since all Hamiltonians are constant by the maximum principle. Still, there are examples of HHSs $(X,\Omega,\mH)$ where a plethora of holomorphic periodic orbits exists.

\begin{Ex}[Natural Hamiltonians on complex tori $\mathbb{C}^n/\Gamma$]\label{ex:complex_torus}\normalfont
 Let $n\in\mathbb{N}$ be a natural number and $\Gamma\subset\mathbb{C}^n$ be a lattice, i.e.
 \begin{gather*}
  \Gamma\coloneqq \left\{\sum\limits^{2n}_{j=1} k_j\cdot e_j\middle| k_j\in\mathbb{Z}\right\},
 \end{gather*}
 where the vectors $e_1,\ldots, e_{2n}\in\mathbb{C}^n$ form an $\mathbb{R}$-linear basis of $\mathbb{C}^n$. Then, $\mathbb{C}^n/\Gamma$ is a complex torus of complex dimension $n$. Now consider the holomorphic cotangent bundle\linebreak $X\coloneqq T^{\ast, (1,0)}(\mathbb{C}^n/\Gamma)\cong \mathbb{C}^n/\Gamma\times\mathbb{C}^n$ with coordinates $([Q_1,\ldots, Q_j], P_1,\ldots, P_j)\in \mathbb{C}^n/\Gamma\times\mathbb{C}^n$ and canonical $2$-form $\Omega = \sum^n_{j=1} dP_j\wedge dQ_j$. We want to determine all natural Hamiltonians $\mH = \mathcal{T} + \mathcal{V}$ on the HSM $(X,\Omega)$. The potential energy $\mathcal{V}$ factors through a holomorphic function on $\mathbb{C}^n/\Gamma$. As $\mathbb{C}^n/\Gamma$ is compact, all holomorphic functions on it are constant due to the maximum principle. Since changing the Hamiltonian by a constant does not change the dynamics of the system, we can set the potential energy to zero without loss of generality. To compute the kinetic energy $\mathcal{T}$, we need to classify all holomorphic metrics $g$ on $\mathbb{C}^n/\Gamma$. The projection $\mathbb{C}^n\to\mathbb{C}^n/\Gamma$ gives rise to $n$ linearly independent holomorphic $1$-forms $dQ_j$, $j = 1,\ldots, n$, on the torus $\mathbb{C}^n/\Gamma$ which we have already used to express the canonical form $\Omega$. Using these $1$-forms, we can write $g$ as
 \begin{gather*}
  g = \sum^{2n}_{i,j = 1} g_{ij}dQ_i\otimes dQ_j,
 \end{gather*}
 where $g_{ij}$ are holomorphic functions on the torus. As before, these functions have to be constant implying that the space of holomorphic metrics $g$ on $\mathbb{C}^n/\Gamma$ is isomorphic to the space of symmetric and non-degenerate $\mathbb{C}$-bilinear forms on the complex vector space $\mathbb{C}^n$. By a standard result from linear algebra, every symmetric and non-degenerate $\mathbb{C}$-bilinear form on $\mathbb{C}^n$ can be brought into the standard form $g_{ij} = \delta_{ij}$\footnote{Here, $\delta_{ij}$ is the Kronecker delta!} by a $\mathbb{C}$-linear transformation. Hence, after transforming the lattice $\Gamma$ if necessary, we can assume that the metric $g$ is given by $g = \sum^{2n}_{j = 1} dQ^2_j$. In total, it suffices to investigate the dynamics of the HHS $(\mathbb{C}^n/\Gamma\times\mathbb{C}^n, \sum^n_{j=1} dP_j\wedge dQ_j, \mH)$ with Hamiltonian
 \begin{gather*}
  \mH (Q_1,\ldots, Q_n, P_1,\ldots, P_n) = \sum^{2n}_{j = 1} \frac{P^2_j}{2}
 \end{gather*}
 for all lattices $\Gamma\subset\mathbb{C}^n$ in order to study all natural Hamiltonians on a complex torus.\\
 Clearly, the Hamilton equations related to this problem are given by
 \begin{gather*}
  Q^\prime_j (z) = P_j (z);\quad P^\prime_j (z) = 0
 \end{gather*}
 and, given the initial value $\gamma (0) = ([Q^0_1,\ldots, Q^0_n], P^0_1,\ldots, P^0_n)$, obviously solved by the holomorphic trajectory $\gamma:\mathbb{C}\to X$,
 \begin{gather*}
  \gamma (z)\coloneqq ([Q^0_1 + z\cdot P^0_1,\ldots, Q^0_n + z\cdot P^0_n], P^0_1,\ldots, P^0_n).
 \end{gather*}
 Let us now define $P^0\coloneqq (P^0_1,\ldots, P^0_n)\in\mathbb{C}^n$ and consider different values for $P^0$:
 \begin{enumerate}
  \item If $P^0 = 0$, then $\gamma$ is simply a constant curve.
  \item If $P^0 \neq 0$ and $z\cdot P^0\notin\Gamma$ for every $z\in\mathbb{C}\backslash\{0\}$, then $\gamma$ is a regular holomorphic trajectory with no periodicity.
  \item If $P^0\neq 0$ and $z_1\cdot P^0\in\Gamma$ for at least one $z_1\neq 0$, then $\gamma$ is a regular holomorphic trajectory which is periodic in at least one direction. In this case, we can view $\gamma$ as a holomorphic map from a complex cylinder to $X$.
  \item If $P^0\neq 0$ and $z_1\cdot P^0, z_2\cdot P^0\in\Gamma$ for two $\mathbb{R}$-linearly independent complex numbers $z_1, z_2\in\mathbb{C}$, then $\gamma$ is a regular, doubly-periodic holomorphic trajectory. In this case, $\gamma$ is holomorphic periodic orbit.
 \end{enumerate}
 We observe that the topology and the complex structure of the domain of $\gamma$ changes depending on the momentum $P^0$.
\end{Ex}

\begin{Rem}[General Hamiltonians on a complex torus]\label{rem:ham_on_com_tor}
 As it turns out, Example \autoref{ex:complex_torus} covers all possible Hamiltonians on the HSM $(T^{\ast, (1,0)}(\mathbb{C}^n/\Gamma), \sum^{2n}_{j = 1} dP_j\wedge dQ_j)$. Let $\mH$ be any holomorphic function on $T^{\ast, (1,0)}(\mathbb{C}^n/\Gamma)$. Since $T^{\ast, (1,0)}(\mathbb{C}^n/\Gamma)$ is isomorphic to $\mathbb{C}^n/\Gamma\times\mathbb{C}^n$, $\mH$ cannot depend on the $Q_j$-coordinates due to maximum principle. This allows us to repeat the discussion from Example \autoref{ex:complex_torus} by replacing $P^0_j$ with $\partial\mH/\partial P_j (P^0)$ in the solution to the Hamilton equations.
\end{Rem}

\subsection{Application of HHSs: Lefschetz and Almost Toric Fibrations}
\label{subsec:Lefschetz}

One interesting aspect of HHSs is their interplay with two important structures in (symplectic) geometry: Lefschetz fibrations and almost toric fibrations. In this subsection, we briefly explore the connection between these structures. We begin the investigation by giving a short introduction to Lefschetz and almost toric fibrations.

\subsubsection*{Lefschetz Fibrations}

Let us first recall the definition of a Lefschetz fibration:

\begin{Def}[Lefschetz fibration]
 Let $X$ be a smooth $2m$-manifold and $C$ be a smooth $2$-manifold (both possibly with boundary). We call a smooth surjective map $\pi:X\to C$ a \textbf{Lefschetz fibration} iff the following three conditions are satisfied:
 \begin{enumerate}
  \item $\partial X = \pi^{-1} (\partial C)$
  \item All points on the boundary $\partial X$ are regular points of $\pi$.
  \item For every critical point $p\in\text{Crit}(\pi)\subset\text{Int}(X)$, there exists a smooth chart\linebreak $\psi_X:U_X\to V_X\subset\R^{2m}\cong \Cx^m$ near $p$ and a smooth chart $\psi_C:U_C\to V_C\subset\R^{2}\cong \Cx$ near $\pi (p)$ such that
   \begin{gather*}
    \psi_C\circ\pi\circ\psi_X^{-1} (z_1,\ldots,z_m) = \sum^m_{j=1}z^2_j.
   \end{gather*}
 \end{enumerate}
\end{Def}

Roughly speaking, a Lefschetz fibration generalizes the notion of a fiber bundle over a surface where we now also allow singular fibers. This aspect is captured by the following proposition:

\begin{Prop}[Lefschetz fibrations as fiber bundles]\label{prop:Lef_fiber_bundles}
 Let $\pi:X\to C$ be a Lefschetz fibration and $C^\ast$ be the set of regular values of $\pi$. Further, assume that $X$ is connected. If $\pi:\pi^{-1}(C^\ast)\to C^\ast$ is proper, then $\pi^{-1}(C^\ast)\stackrel{\pi}{\to} C^\ast$ is a fiber bundle. In particular, if $X$ (and then also $C$) is compact, $\pi^{-1}(C^\ast)\stackrel{\pi}{\to} C^\ast$ is a fiber bundle.
\end{Prop}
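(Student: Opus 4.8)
The plan is to prove this statement as an application of the Ehresmann fibration theorem, which says that a proper submersion between smooth manifolds is a locally trivial fiber bundle. First I would observe that, by the very definition of a Lefschetz fibration, the restriction $\pi\colon\pi^{-1}(C^\ast)\to C^\ast$ is a submersion: a point $x\in\pi^{-1}(C^\ast)$ lies over a regular value, hence is itself a regular point of $\pi$, so $d\pi_x$ is surjective. Moreover $\pi^{-1}(C^\ast)$ and $C^\ast$ are smooth manifolds (possibly with boundary): $C^\ast$ is an open subset of $C$, and $\pi^{-1}(C^\ast)$ is an open subset of $X$ together with the boundary piece $\pi^{-1}(\partial C\cap C^\ast)$, on which condition 2 in the definition guarantees $\pi$ is still a submersion and condition 1 guarantees $\partial(\pi^{-1}(C^\ast)) = \pi^{-1}(\partial C^\ast)$, so the manifold-with-boundary version of Ehresmann applies cleanly.

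Next I would invoke the hypothesis that $\pi\colon\pi^{-1}(C^\ast)\to C^\ast$ is proper. Combined with the submersion property just established, Ehresmann's theorem immediately yields that $\pi^{-1}(C^\ast)\xrightarrow{\pi}C^\ast$ is a locally trivial fibration over each connected component of $C^\ast$. To upgrade this to the statement that it is a fiber bundle with a single typical fiber, I would use that $X$ is connected: one checks that the set of points of $C^\ast$ over which the fiber is diffeomorphic to a fixed model fiber $F$ is both open (local triviality) and closed (its complement is a union of other such open sets), hence all of $C^\ast$ if $C^\ast$ is connected; if $C^\ast$ is disconnected one argues componentwise and notes that connectedness of $X$ forces the fibers over different components to still be identified through paths in $X$ crossing singular fibers — in any case the conclusion "fiber bundle'' is understood in the sense of local triviality, which is what Ehresmann delivers directly, so this point is largely a matter of bookkeeping.

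Finally, for the last sentence I would simply note that if $X$ is compact then $\pi$ is automatically proper (a continuous map from a compact space is proper, and hence so is any restriction to a closed subset $\pi^{-1}(C^\ast)$ — more carefully, preimages of compact sets are closed subsets of the compact $X$, hence compact), and that compactness of $X$ forces $C = \pi(X)$ to be compact as the continuous image of a compact space. Then the first part applies verbatim. The main obstacle, such as it is, is the care needed at the boundary: one must verify that conditions 1 and 2 in the definition of a Lefschetz fibration are exactly what is required for the manifold-with-boundary version of Ehresmann's theorem to apply over boundary points of $C^\ast$, so that properness plus the submersion property genuinely suffice. Everything else is a direct citation of Ehresmann's theorem.
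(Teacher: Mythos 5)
Your proposal is correct and follows essentially the same route as the paper, which likewise deduces the statement directly from Ehresmann's theorem that a smooth, surjective, proper submersion is a fiber bundle. Your additional care about the boundary and the componentwise identification of fibers is reasonable elaboration of details the paper leaves implicit.
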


\begin{proof}
 This follows directly from the fact\footnote{Due to Ehresmann, cf. \cite{Ehresmann1952}.} that every smooth, surjective, and proper submersion between connected manifolds is a fiber bundle.
\end{proof}

Most authors include additional conditions in the definition of a Lefschetz fibration. Often, $X$ is assumed to be a compact, connected, and oriented fourfold. On one hand, this has historic reasons: While Lefschetz introduced these fibrations to study the topology of complex surfaces, Donaldson and Gompf brought Lefschetz fibrations to the attention of the symplectic community by showing that every compact symplectic fourfold admits the structure of a Lefschetz fibration (after blowing up if necessary) and vice versa\footnote{Under mild conditions.}. For a comprehensive overview of the history of fourfolds and Lefschetz fibrations, confer the introduction in \cite{Fuller2003}.\\
On the other hand, the case $m=2$ has a rich structure and is well understood: If $X$ is a closed fourfold, then the regular fibers of $\pi:X\to C$ are closed surfaces. Under mild conditions\footnote{Both $X$ and $C$ are oriented and connected and, additionally, $C$ is simply connected, cf. \cite{Naylor2016}.}, the regular fibers of $\pi:X\to C$ are even oriented and connected, hence, are surfaces of genus $g$. In this case, the singular fibers are pinched surfaces of genus $g$ (cf. \cite{Naylor2016}). In our investigation, the case $m=2$ also plays an important role.

\subsubsection*{Almost Toric Fibrations}

Almost toric fibrations generalize the notion of toric fibrations which themselves can be understood as the momentum map of an effective Hamiltonian torus action. To capture this idea, let us recall the famous convexity theorem of Atiyah, Guillemin, and Sternberg (confer, for instance, \cite{Audin2004} and \cite{Symington2002}):

\begin{Thm}[Convexity theorem]
 Let $(X^{2m},\omega)$ be a connected and closed symplectic manifold with an effective Hamiltonian $T^m$-action on it. Then, the image of the associated momentum map $\pi: X\to\R^m$ is a convex polytope.
\end{Thm}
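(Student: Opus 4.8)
The statement is the classical Atiyah--Guillemin--Sternberg convexity theorem, so the plan is to reproduce the standard argument, whose two pillars are a local normal form for the momentum map and the connectedness of its fibres. Write $\mathfrak{t}\cong\mathbb{R}^m$ for the Lie algebra of $T^m$, identify $\pi$ with a map $X\to\mathfrak{t}^\ast\cong\mathbb{R}^m$, and for $\xi\in\mathfrak{t}$ set $\pi^\xi\coloneqq\langle\pi,\xi\rangle\in C^\infty(X,\mathbb{R})$, so that $X_{\pi^\xi}$ generates the subgroup $\exp(\mathbb{R}\xi)\subseteq T^m$.

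The first step is local: at a point $p\in X$ fixed by $T^m$, the equivariant Darboux theorem supplies a chart $\mathbb{C}^m\supset V\xrightarrow{\sim}U\ni p$ in which $\omega$ is standard and $T^m$ acts linearly with integral weights $\alpha_1(p),\dots,\alpha_m(p)$, and in which $\pi=\pi(p)+\tfrac12\sum_{j=1}^m|z_j|^2\,\alpha_j(p)$ (the weights tangent to the fixed component vanishing). Hence, as soon as $\xi$ generates a dense one-parameter subgroup of $T^m$ — which forces $\langle\alpha_j(p),\xi\rangle\ne 0$ for every nonzero weight and makes $\mathrm{Crit}(\pi^\xi)$ equal to the $T^m$-fixed locus — the function $\pi^\xi$ is Morse--Bott and each of its critical submanifolds has \emph{even} index and \emph{even} coindex.

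The second step is the topological lemma that a Morse--Bott function on a connected compact manifold, none of whose critical submanifolds has index or coindex $1$, has connected (possibly empty) level sets and sublevel sets. Applied to $\pi^\xi$ for $\xi$ as above, this shows $\pi^\xi(X)$ is a closed interval with connected fibres. Bootstrapping this through an induction on $m$ — choose a rational $\xi$ so that $\exp(\mathbb{R}\xi)$ is a circle $S$, pass to a regular level $Z_c=(\pi^\xi)^{-1}(c)$, and consider the residual effective Hamiltonian $(T^m/S)$-action (with $T^m/S\cong T^{m-1}$) on the reduced space $Z_c/S$ — one deduces that \emph{every} fibre $\pi^{-1}(v)$ is connected and that $\Delta\coloneqq\pi(X)$ is convex: for $p,q\in X$ one finds a rational $\xi$ putting $p$ and $q$ in a common reduced level set, and the inductive hypothesis places the whole segment $[\pi(p),\pi(q)]$ inside $\Delta$. (Alternatively one may avoid the induction: $\Delta$ is closed and connected, and the local normal form above, together with the symplectic slice theorem at non-fixed orbits, shows $\Delta$ is locally convex — indeed locally a translate of a polyhedral cone — so that $\Delta$ is convex by the Tietze--Nakajima local-to-global principle.)

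Finally, $\Delta$ is compact, convex, and near each fixed point $p$ coincides with a neighbourhood of $\pi(p)$ intersected with $\pi(p)+\mathrm{cone}(\alpha_1(p),\dots,\alpha_m(p))$ (and with an analogous cone near any orbit, via the slice theorem); a compact convex set that is locally a finite intersection of half-spaces is a convex polytope, and since vertices can only occur at images of fixed points, in fact $\Delta=\mathrm{conv}\big(\pi(\mathrm{Fix}(T^m))\big)$. The main obstacle is the second step: making the fibre-connectedness induction rigorous when $\pi^\xi$ has \emph{singular} level sets — where $Z_c/S$ is only a stratified symplectic space — is precisely the technical core of the Atiyah and Guillemin--Sternberg arguments, and the local-convexity alternative merely trades it for a careful application of the Marle--Guillemin--Sternberg normal form.
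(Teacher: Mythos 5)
The paper does not prove this statement at all: it is recalled as the classical Atiyah--Guillemin--Sternberg convexity theorem, with pointers to \cite{Audin2004} and \cite{Symington2002}, and is only used as motivation for the definition of (almost) toric fibrations. So there is no in-paper argument to compare yours against; what you have written is the standard proof from the literature, and as an outline it is sound. The two pillars you identify are the right ones: the equivariant local normal form $\pi=\pi(p)+\tfrac12\sum_j|z_j|^2\alpha_j(p)$ at fixed points, which makes $\pi^\xi$ Morse--Bott with even index and coindex for generic $\xi$, and the connectedness lemma for level sets of such functions, bootstrapped by induction to fibre connectedness and convexity of the image; the identification $\Delta=\mathrm{conv}\bigl(\pi(\mathrm{Fix}(T^m))\bigr)$ then yields the polytope. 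Two places where your sketch is looser than the sources: the residual $(T^m/S)$-action on $Z_c/S$ need not be effective and the reduced space is in general only an orbifold even at regular levels, so Atiyah's original induction works directly with level sets of partial momentum maps rather than with reduced spaces; and the local-convexity alternative requires the Marle--Guillemin--Sternberg normal form at arbitrary orbits, not just at fixed points. You flag both the singular-level issue and the slice-theorem dependence yourself, so the gaps are acknowledged rather than hidden; for the purposes of this paper, citing the theorem as the author does is the appropriate level of detail.
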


The polytope has a natural stratification. The highest dimensional stratum, i.e., its interior is the set of regular values of $\pi$. The regular level sets are Lagrangian tori and one can express $\omega$ and $\pi$ in a neighborhood of any regular point as\footnote{This is just the equally famous Arnold-Liouville theorem.}
\begin{gather*}
 \omega = \sum^m_{j=1} dx_j\wedge dy_j;\quad \pi_j = x_j.
\end{gather*}
For $k<m$, the points on the $k$-dimensional stratum are critical values. Similarly as before, one can show that in a neighborhood of such a critical point we can write (cf. \cite{Symington2002} and \cite{Leung2003}):
\begin{gather}
 \omega = \sum^m_{j=1} dx_j\wedge dy_j;\quad \pi_j = x_j\text{ for }1\leq j\leq k;\quad \pi_j = x^2_j + y^2_j\text{ for }k<j\leq m.\label{eq:toric}
\end{gather}
We see that a toric fibration is a symplectic manifold viewed as a fiber bundle whose regular fibers are Lagrangian tori and whose singular fibers take the local form as described by \autoref{eq:toric}.\\
Almost toric fibrations, introduced by M. Symington in \cite{Symington2002}, exemplify a similar structure, but the local description of their singular fibers is broadened. The following definitions are taken from \cite{Symington2002} and \cite{Leung2003}:

\begin{Def}[Lagrangian and almost toric fibrations]\label{def:alm_tor}
 Let $C^m$ be a smooth and $(X^{2m},\omega)$ be a symplectic manifold (both possibly with boundary). Further, let $\pi:X\to C$ be a smooth and surjective map. We call $\pi$ a \textbf{Lagrangian fibration} of $(X,\omega)$ iff there exists an open and dense subset $C^\ast\subset C$ such that $\pi^{-1}(C^\ast)\stackrel{\pi}{\to} C^\ast$ is a fiber bundle with Lagrangian fibers. We call $\pi$ an \textbf{almost toric fibration} iff $\pi$ is a Lagrangian fibration and every critical point of $\pi$ has a neighborhood in which $\omega$ and $\pi$, after choosing charts, take the following form ($0\leq k<m$):
 \begin{enumerate}
  \item $\omega = \sum^m_{j=1} dx_j\wedge dy_j$,
  \item $\pi_j = x_j$ for $1\leq j\leq k$,
  \item $\pi_j = x^2_j + y^2_j$ or $(\pi_j, \pi_{j+1}) = (x_jy_j + x_{j+1}y_{j+1}, x_jy_{j+1} - x_{j+1}y_j)$ for $k<j\leq m$.
 \end{enumerate}
\end{Def}

\begin{Rem}[Fibers of almost toric fibrations]\label{rem:toric_fibers}
 If the regular fibers of an almost toric fibration are compact and connected, then they are diffeomorphic to a torus by the Arnold-Liouville theorem, explaining the name ``almost toric fibration''.
\end{Rem}

Let us now investigate the relation between Lefschetz and almost toric fibrations using HHSs. The idea is that we define a compatible holomorphic symplectic structure on a Lefschetz fibration. Such a fibration can be interpreted as a HHS. At the same time, a holomorphic symplectic Lefschetz fibration in real dimension four is, under mild conditions, an almost toric fibration, as we will show later on.\\
First, we formulate the notion of a Lefschetz fibration in a holomorphic setup:

\begin{Def}[Holomorphic Lefschetz fibration]
 Let $X$ be a complex $m$-dimensional and $C$ be a complex one-dimensional manifold (both possibly with boundary\footnote{A complex manifold with boundary is defined similarly to a real manifold with boundary, even though there are subtle differences.}). We say that a surjective holomorphic map $\pi:X\to C$ is a \textbf{holomorphic Lefschetz fibration} iff the following three conditions are satisfied:
 \begin{enumerate}
  \item $\partial X = \pi^{-1} (\partial C)$
  \item All points on the boundary $\partial X$ are regular points of $\pi$.
  \item Every critical point $p\in\text{Crit}(\pi)\subset\text{Int}(X)$ is non-degenerate.
 \end{enumerate}
\end{Def}

Here, a critical point $p\in\text{Crit}(\pi)$ is called non-degenerate iff its (complex) Hessian\footnote{Since $p$ is a critical point, the non-degeneracy of the Hessian is well-defined, i.e., independent of the choice of charts.} at $p$ is non-degenerate in some holomorphic charts of $X$ and $C$. As the name implies, a holomorphic Lefschetz fibration is also a (smooth) Lefschetz fibration:

\begin{Prop}[Holomorphic Lefschetz fibrations are Lefschetz fibrations]
 Let\linebreak $\pi:X\to C$ be a holomorphic Lefschetz fibration. Then $\pi:X\to C$ is a (smooth) Lefschetz fibration in the usual sense. In particular, the charts $\psi_X$ and $\psi_C$ can be chosen to be holomorphic.
\end{Prop}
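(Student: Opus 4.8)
The plan is to verify the three defining conditions of a smooth Lefschetz fibration for $\pi:X\to C$, where the first two are essentially immediate and the real work lies in the third, producing the local normal form $\sum_{j=1}^m z_j^2$ around each critical point. First I would note that conditions (1) and (2) are literally identical in the holomorphic and smooth definitions: the equality $\partial X = \pi^{-1}(\partial C)$ is a set-theoretic statement that does not reference the smooth or holomorphic structure, and ``regular point'' means the differential is surjective, which for a holomorphic map is equivalent (by identifying $T_pX$ with $T^{(1,0)}_pX$ as in Proposition \autoref{prop:holo_vec_field_equiv_J_pre_vec_field}) to the complex differential $d\pi_p^{(1,0)}$ being surjective. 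So those two carry over verbatim, and a holomorphic submersion is in particular a smooth submersion.

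For condition (3), fix a critical point $p\in\mathrm{Crit}(\pi)\subset\mathrm{Int}(X)$. By hypothesis the complex Hessian of $\pi$ at $p$ is non-degenerate in some holomorphic charts. Passing to holomorphic charts $\psi_X$ centered at $p$ and $\psi_C$ centered at $\pi(p)$, the map $f := \psi_C\circ\pi\circ\psi_X^{-1}:V\to\Cx$ (with $V\subset\Cx^m$ a neighborhood of $0$) is holomorphic with $f(0)=0$, $df_0 = 0$, and non-degenerate Hessian $\mathrm{Hess}_0 f$. The key step is the \emph{holomorphic Morse lemma} (equivalently, the complex Morse--Palais lemma, or Morse lemma with holomorphic dependence): there exists a local biholomorphism $\Phi$ of a neighborhood of $0$ in $\Cx^m$ fixing $0$ such that $f\circ\Phi^{-1}(w_1,\dots,w_m) = \sum_{j=1}^m w_j^2$. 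I would prove this by the standard argument: first use a $\Cx$-linear change of coordinates to diagonalize the symmetric non-degenerate matrix $\mathrm{Hess}_0 f$ to the identity (over $\Cx$ every symmetric non-degenerate bilinear form is equivalent to the standard one, exactly as invoked in Example \autoref{ex:complex_torus}); then apply the holomorphic division/Hadamard lemma to write $f(z) = \sum_{i,j} z_i z_j a_{ij}(z)$ with $a_{ij}$ holomorphic, $a_{ij}=a_{ji}$, and $(a_{ij}(0)) = \mathrm{Id}$; then take a holomorphic square root of the matrix $(a_{ij}(z))$ near $0$ (possible since $(a_{ij}(0))=\mathrm{Id}$ and matrix square root is holomorphic near $\mathrm{Id}$) and set $w = \sqrt{(a_{ij}(z))}\,z$, which is a biholomorphism near $0$ by the inverse function theorem since its differential at $0$ is the identity, and which transforms $f$ into $\sum w_j^2$. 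Composing $\psi_X$ with $\Phi$ gives a new holomorphic chart near $p$; together with $\psi_C$ this realizes the required normal form, and since holomorphic charts are in particular smooth charts, $\pi$ is a smooth Lefschetz fibration, with the charts chosen holomorphic as claimed.

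The main obstacle is making the holomorphic Morse lemma precise — in particular the existence of a \emph{holomorphic} square root of the matrix-valued function $(a_{ij}(z))$ and the verification that the resulting substitution is a genuine local biholomorphism. This is routine but is the only non-formal ingredient: everything else (conditions (1), (2), diagonalization of the Hessian, the Hadamard lemma) is bookkeeping. I would also remark briefly that non-degeneracy of the Hessian is chart-independent at a critical point (the Hessian transforms by congruence under the derivative of the coordinate change), so the hypothesis in the definition of holomorphic Lefschetz fibration is well-posed and the argument does not depend on the initial choice of $\psi_X,\psi_C$.
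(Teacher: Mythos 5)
Your proposal is correct and follows essentially the same route as the paper: the paper's proof is the one-line observation that the statement is a direct consequence of the holomorphic Morse lemma, which it in turn proves by adapting the standard real Morse lemma argument (citing Audin--Damian) with holomorphic analogues of the smooth ingredients -- exactly the Hadamard-lemma-plus-holomorphic-matrix-square-root argument you sketch. Your additional remarks on conditions (1) and (2) and on chart-independence of the Hessian's non-degeneracy are consistent with what the paper treats as immediate.
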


\begin{proof}
 This is a direct consequence of the holomorphic Morse lemma.
\end{proof}

\begin{Lem}[Holomorphic Morse lemma]
 Let $X$ be a complex manifold, $f:X\to\Cx$ be a holomorphic function, and $p\in X$ be a non-degenerate critical point of $f$. Then, there exists a holomorphic chart $\psi_X:U_X\to V_X\subset\Cx^m$ of $X$ near $p$ with $\psi_X (p) = 0$ such that:
 \begin{gather*}
  f\circ\psi^{-1}_X (z_1,\ldots, z_m) = f(p) + \sum^m_{j=1} z^2_j\quad\forall (z_1,\ldots, z_m)\in V_X.
 \end{gather*}
\end{Lem}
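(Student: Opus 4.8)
The plan is to adapt the classical Morse lemma argument to the holomorphic category, checking at each step that the constructions (change of variables, square roots, orthogonalization) can be carried out holomorphically. First I would reduce to a local statement: pick any holomorphic chart centered at $p$ so that $f$ becomes a holomorphic function on a neighborhood of $0$ in $\Cx^m$ with $df(0) = 0$, and after subtracting the constant $f(p)$ we may assume $f(0) = 0$. By Hadamard's lemma (valid for holomorphic functions via integration along complex rays, since the relevant integrals of holomorphic integrands depend holomorphically on parameters), we can write $f(z) = \sum_{i,j} z_i z_j h_{ij}(z)$ with $h_{ij}$ holomorphic near $0$, and after symmetrizing we may take $h_{ij} = h_{ji}$; moreover $h_{ij}(0) = \tfrac12 \partial_i\partial_j f(0)$, so the matrix $(h_{ij}(0))$ is the (up to a factor) Hessian, which is non-degenerate by hypothesis.

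Next I would run the inductive diagonalization exactly as in Milnor's proof of the real Morse lemma. By a $\Cx$-linear change of coordinates we arrange $h_{11}(0) \neq 0$; then on a small neighborhood $h_{11}(z) \neq 0$, and we pick a holomorphic branch $g_1(z) = \sqrt{h_{11}(z)}$ (possible since $h_{11}$ is non-vanishing on a simply connected neighborhood of $0$ — shrink to a polydisc). Introducing the new holomorphic coordinate $u_1 := g_1(z)\bigl(z_1 + \sum_{j>1} z_j h_{1j}(z)/h_{11}(z)\bigr)$ and leaving $u_2 = z_2, \dots, u_m = z_m$, one completes the square so that $f = \pm u_1^2 + (\text{terms in } u_2, \dots, u_m \text{ only})$, where the remaining quadratic form again has non-degenerate Hessian and holomorphic coefficients. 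One must verify that $(u_1, \dots, u_m)$ is a genuine holomorphic chart near $0$: its Jacobian at $0$ is triangular with nonzero diagonal entry $g_1(0)$, so by the holomorphic inverse function theorem it is a biholomorphism on a smaller neighborhood. Iterating $m$ times yields holomorphic coordinates $(w_1, \dots, w_m)$ with $f = \sum_j \epsilon_j w_j^2$ for signs $\epsilon_j \in \{\pm 1\}$.

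Finally, the signs $\epsilon_j$ can all be absorbed: over $\Cx$ we have $-w^2 = (iw)^2$, so replacing $w_j$ by $iw_j$ whenever $\epsilon_j = -1$ gives a holomorphic change of coordinates after which $f \circ \psi_X^{-1}(z_1, \dots, z_m) = f(p) + \sum_{j=1}^m z_j^2$, as claimed. (This absorption step, incidentally, is exactly why there is no index invariant in the holomorphic case, in contrast to the real Morse lemma.)

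The main obstacle is ensuring that every classical step survives passage to the holomorphic setting — in particular, that the square root $\sqrt{h_{11}}$ exists as a single-valued holomorphic function, which forces us to work on a simply connected (polydisc) neighborhood and to shrink repeatedly; and that Hadamard's lemma and the inverse function theorem are invoked in their holomorphic forms. None of these is deep, but they are the points where a careless transcription of Milnor's proof would fail. One should also take minor care that the linear algebra step "arrange $h_{11}(0) \neq 0$ by a linear change" works: if all diagonal entries of the symmetric matrix $(h_{ij}(0))$ vanish, some off-diagonal entry $h_{k\ell}(0) \neq 0$ by non-degeneracy, and the substitution $z_k \mapsto z_k + z_\ell$ produces a nonzero diagonal entry — precisely as in the real case, and this is a $\Cx$-linear (hence holomorphic) change of coordinates.
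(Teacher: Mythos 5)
Your proposal is correct and follows exactly the route the paper takes: the paper's proof simply states that one runs the classical Morse lemma argument (citing Audin--Damian) with the holomorphic versions of the relevant tools, which is precisely the completion-of-squares induction you carry out, including the holomorphic square root on a simply connected neighborhood and the absorption of signs via $-w^2 = (iw)^2$ (the paper records this last point separately in its remark that all non-degenerate symmetric $\Cx$-bilinear forms are isomorphic). Your write-up is in fact more detailed than the paper's.
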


\begin{Rem}[Signature of Hessian]
 In contrast to the real Morse lemma, we do not need to care about the signature of the (complex) Hessian in the holomorphic Morse lemma, since all non-degenerate symmetric $\Cx$-bilinear forms on $\Cx^m$ are isomorphic.
\end{Rem}

\begin{proof}
 The holomorphic Morse lemma can be shown in the same way as the usual Morse lemma (confer, for instance, the proof in \cite{Audin2014} on page 12 ff.), where, of course, we use the appropriate theorems from complex analysis instead of their smooth counterparts\footnote{For instance, the holomorphic implicit function theorem instead of the implicit function theorem.}.
\end{proof}

Now, we put a compatible symplectic structure on a holomorphic Lefschetz fibration:

\begin{Def}[Holomorphic symplectic Lefschetz fibration]
 We call a holomorphic Lefschetz fibration $\pi:X\to C$ \textbf{symplectic} iff $X$ admits the structure of a HSM $(X,\Omega)$ such that every critical point $p\in\text{Crit}(\pi)$ has holomorphic Morse Darboux charts near it, i.e., there are holomorphic charts $\psi_X = (z_1,\ldots, z_{2n}):U_X\to V_X\subset\Cx^{2n}$ of $X$ near $p$ and $\psi_C:U_C\to V_C\subset\Cx$ of $C$ near $\pi (p)$ satisfying:
 \begin{enumerate}
  \item $\Omega\vert_{U_X} = \sum^n_{j=1} dz_{j+n}\wedge dz_j$,
  \item $\psi_C\circ\pi\vert_{U_X} = \sum^{2n}_{j=1} z^2_j$.
 \end{enumerate}
\end{Def}

If $\pi:X\to C$ is a holomorphic symplectic Lefschetz fibration with underlying HSM $(X,\Omega)$, we can locally interpret $(X,\Omega,\pi)$ as a HHS after choosing a holomorphic chart $\psi_C$ of $C$. Of course, the Hamiltonian vector field of this HHS is not well-defined, since the Hamilton function depends on the choice of $\psi_C$. However, two Hamiltonian vector fields only differ by a holomorphic function:
\begin{gather*}
 X_{\hat \psi_C\circ\pi}\vert_p = (\hat\psi_C\circ\psi^{-1}_C)^\prime (\psi_C\circ\pi(p))\cdot X_{\psi_C\circ\pi}\vert_p\quad\text{for }p\in X.
\end{gather*}
Hence, the two Hamiltonian vector fields have the same trajectories, just parameterized differently. This implies that the holomorphic foliation by the regular level sets of the HHS is still well-defined, even though the Hamiltonian vector field is not. In particular, if $X$ is complex two-dimensional or, equivalently, real four-dimensional and the level sets of $\pi:X\to C$ are connected, then the leaves of this foliation are just the regular level sets themselves allowing us to interpret a holomorphic symplectic Lefschetz fibration as the holomorphic foliation of a HHS.\\
The next step is to link holomorphic symplectic Lefschetz fibrations to almost toric fibrations. We show that every holomorphic symplectic Lefschetz fibration in real dimension four is also an almost toric fibration if $\pi$ is proper.

\begin{Prop}[Holomorphic symplectic Lefschetz fibrations in real dimension four]\label{prop:holo_lef_toric}
 Let $\pi:X\to C$ be a holomorphic symplectic Lefschetz fibration with underlying HSM $(X,\Omega = \Omega_R + i\Omega_I)$. Assume that $X$ is connected and has real dimension four. If $\pi:X\to C$ is proper, then $\pi:X\to C$ is an almost toric fibration of $(X,\Omega_R)$. In particular, if $X$ is compact, then $\pi:X\to C$ is an almost toric fibration of $(X,\Omega_R)$.
\end{Prop}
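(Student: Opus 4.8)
\emph{Proof proposal.} The plan is to verify directly the two clauses of \autoref{def:alm_tor}: that $\pi$ is a Lagrangian fibration of $(X,\Omega_R)$, and that $\Omega_R$ and $\pi$ can be put into the almost toric normal form near each critical point. Since $\dim_{\mathbb{C}}(X)=2$ we are in the case $m=2$, and a Lefschetz (focus--focus) singularity corresponds to $k=0$, so the target model near a critical point is $\Omega_R=dx_1\wedge dy_1+dx_2\wedge dy_2$ with $(\pi_1,\pi_2)=(x_1y_1+x_2y_2,\;x_1y_2-x_2y_1)$.

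First I would treat the regular part. Each critical point of $\pi$ is non-degenerate, hence isolated by the holomorphic Morse lemma; since $\pi$ is proper it is a closed map, and properness together with isolatedness forces $\pi(\mathrm{Crit}(\pi))$ to be closed and discrete in $C$, so $C^\ast\coloneqq C\setminus\pi(\mathrm{Crit}(\pi))$ is open and dense. Restricting the proper map $\pi$ to $\pi^{-1}(C^\ast)$ keeps it proper, so \autoref{prop:Lef_fiber_bundles} gives that $\pi^{-1}(C^\ast)\stackrel{\pi}{\to}C^\ast$ is a fiber bundle. Then I would check that its fibers are Lagrangian for $\Omega_R$: a regular fiber $\pi^{-1}(c)$ is a complex one-dimensional submanifold of the complex surface $X$, hence a real surface of half the dimension of $X$ whose tangent spaces are complex lines $\mathrm{span}_{\mathbb{R}}(v,Jv)$, and by \eqref{eq:J-anticompatible} we have $\Omega(v,Jv)=i\,\Omega(v,v)=0$ (using that $\Omega$ is alternating), so $\Omega_R(v,Jv)=0$ and $\pi^{-1}(c)$ is Lagrangian. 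This makes $\pi$ a Lagrangian fibration of $(X,\Omega_R)$.

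Second I would handle the critical points. Near $p\in\mathrm{Crit}(\pi)$ the definition of a holomorphic symplectic Lefschetz fibration supplies holomorphic charts $(z_1,z_2)$ of $X$ about $p$ and $\psi_C$ of $C$ about $\pi(p)$ with $\Omega=dz_2\wedge dz_1$ and $\psi_C\circ\pi=z_1^2+z_2^2$. I would factor $z_1^2+z_2^2=(z_1+iz_2)(z_1-iz_2)$, pass to the holomorphic coordinates $\zeta_1=z_1+iz_2,\ \zeta_2=z_1-iz_2$ (in which $\Omega=-\tfrac{i}{2}\,d\zeta_1\wedge d\zeta_2$ and $\psi_C\circ\pi=\zeta_1\zeta_2$), then rescale $\zeta_2$ to normalize $\Omega$ and compensate by rescaling the chart $\psi_C$ by the reciprocal constant, obtaining holomorphic coordinates $\eta_1=p_1+iq_1,\ \eta_2=p_2+iq_2$ with $\Omega=d\eta_1\wedge d\eta_2$ and $\psi_C\circ\pi=\eta_1\eta_2$. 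In real coordinates this reads $\Omega_R=dp_1\wedge dp_2-dq_1\wedge dq_2$ and $(\pi_1,\pi_2)=(p_1p_2-q_1q_2,\;p_1q_2+p_2q_1)$, and the real renaming $x_1=p_1,\ y_1=p_2,\ x_2=-q_1,\ y_2=q_2$ converts these into exactly the $k=0$ focus--focus model above. Combining this with the Lagrangian fibration property yields that $\pi$ is an almost toric fibration of $(X,\Omega_R)$; the ``in particular'' case follows since a continuous map out of a compact space is automatically proper.

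The main obstacle is the bookkeeping in the critical-point step: the rescaling of $\Omega$ on $X$ and of $\psi_C$ on $C$ must be performed simultaneously so that the product form $\psi_C\circ\pi=\eta_1\eta_2$ is not destroyed, and the final real renaming $(x_\bullet,y_\bullet)$ must be chosen with the correct signs so that $\Omega_R$ \emph{and} both components of $\pi$ land in the model at once --- a naive choice flips the sign of $\pi_2$. A secondary point requiring care is the argument that $\pi(\mathrm{Crit}(\pi))$ is discrete, which is exactly where the properness hypothesis is needed; everything else is routine.
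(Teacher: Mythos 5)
Your proposal is correct and follows essentially the same route as the paper: Ehresmann/properness on the regular part, the observation that complex one-dimensional fibers of a complex surface are automatically $\Omega_R$-Lagrangian, and a holomorphic linear change of coordinates turning $z_1^2+z_2^2$ into a product $\eta_1\eta_2$ while keeping $\Omega$ in Darboux form, followed by the same real renaming into the focus--focus model. The only cosmetic difference is that the paper performs the diagonalization and normalization of $\Omega$ in a single linear substitution $z_1 = (\hat z_2 - i\hat z_1)/\sqrt{2}$, $z_2 = (\hat z_1 - i\hat z_2)/\sqrt{2}$ rather than in two steps, and it asserts the openness and density of $C^\ast$ without the (correct) discreteness argument you supply.
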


\begin{proof}
 The set $C^\ast$ of regular values of $\pi$ is open and dense in $C$. By the same argument as in the proof of Proposition \autoref{prop:Lef_fiber_bundles}, $\pi^{-1}(C^\ast)\stackrel{\pi}{\to}C^\ast$ is a fiber bundle. We now consider the fibers of $\pi^{-1}(C^\ast)\stackrel{\pi}{\to}C^\ast$. The HHS $(X,\Omega,\pi)$ is complex two-dimensional, hence, the regular level sets of $\pi$ are complex Lagrangian submanifolds of $(X,\Omega)$. Thus, they are also Lagrangian submanifolds of $(X,\Omega_R)$, as $\Omega = \Omega_R + i\Omega_I$ is a holomorphic $2$-form. This implies that $\pi:X\to C$ is a Lagrangian fibration of $(X,\Omega_R)$.\\
 \pagebreak
 To conclude the proof, we need to consider a critical point $p$ of $\pi$. By definition, there are holomorphic charts $\psi_X = (z_1, z_2):U_X\to V_X\subset\Cx^2$ of $X$ near $p$ and $\psi_C:U_C\to V_C\subset\Cx$ of $C$ near $\pi (p)$ satisfying:
 \begin{align*}
  \Omega\vert_{U_X} &= dz_2\wedge dz_1,\\
  \psi_C\circ\pi\vert_{U_X} &= z^2_1 + z^2_2.
 \end{align*}
 In the new coordinates $z_1\eqqcolon (\hat z_2 - i\hat z_1)/\sqrt{2}$, $z_2\eqqcolon (\hat z_1 - i\hat z_2)/\sqrt{2}$, and $\hat \psi_C \coloneqq -\psi_C/2i$, we find:
 \begin{alignat*}{4}
  \Omega\vert_{\hat U_X} &= \, d\hat z_1\wedge d\hat z_2 &&= &&(d\hat x_1\wedge d\hat x_2 - d\hat y_1\wedge d\hat y_2) &&+ i(d\hat x_1\wedge d\hat y_2 + d\hat y_1\wedge d\hat x_2) ,\\
  \hat \psi_C\circ\pi\vert_{\hat U_X} &= \hat z_1\hat z_2 &&= &&(\hat x_1 \hat x_2 - \hat y_1\hat y_2) &&+ i(\hat x_1\hat y_2 + \hat y_1 \hat x_2),
 \end{alignat*}
 where we have used the decomposition $\hat z_j = \hat x_j + i\hat y_j$. In particular, we have\linebreak $\Omega_R\vert_{\hat U_X} = d\hat x_1\wedge d\hat x_2 - d\hat y_1\wedge d\hat y_2$ in these coordinates. Setting $x_1 \coloneqq \hat x_1$, $x_2\coloneqq -\hat y_1$, $y_1\coloneqq \hat x_2$, $y_2\coloneqq \hat y_2$, $\pi_1 \equiv \text{Re}(\hat \psi_C\circ\pi\vert_{\hat U_X})$, and $\pi_2 \equiv \text{Im}(\hat \psi_C\circ\pi\vert_{\hat U_X})$ reproduces the local structure near a critical point as in the definition of an almost toric fibration.
\end{proof}

\begin{Rem}[Proposition \autoref{prop:holo_lef_toric} for $(X,\Omega_I)$]
 With the same assumptions as in Proposition \autoref{prop:holo_lef_toric}, we find that $\pi:X\to C$ is also an almost toric fibration of $(X,\Omega_I)$ if $\pi:X\to C$ is proper. In that regard, a holomorphic symplectic Lefschetz fibration gives rise to two different almost toric fibrations.
\end{Rem}

\begin{Rem}[Critical points are non-toric]
 Observe that the critical points of a holomorphic symplectic Lefschetz fibration are non-toric, i.e., their local description matches the non-toric case in Definition \autoref{def:alm_tor}.
\end{Rem}

One might wonder how many manifolds $X$ Proposition \autoref{prop:holo_lef_toric} is applicable to. In the case that $X$ is closed, the answer is already known: Leung and Symington classified in \cite{Leung2003} all closed almost toric four-folds up to diffeomorphisms. They have shown that the only examples which are locally Lefschetz, i.e., whose critical points are non-toric are the K3 surface with base $C = S^2$ and its $\mathbb{Z}_2$-quotient, i.e., the Enriques surface with base $C = \R P^2$. Since $\R P^2$ is not orientable, it cannot admit a complex structure and, thus, the Enriques surface cannot be a holomorphic symplectic Lefschetz fibration. Hence, the only possible example of a closed holomorphic symplectic Lefschetz fibration in four dimensions is the K3 surface\footnote{By Remark \autoref{rem:toric_fibers}, the regular fibers of the K3 surface are tori.}.\\
To conclude this section, we generalize the last question and ask ourselves whether there is an obstruction for a holomorphic Lefschetz fibration to be symplectic. Of course, the space $X$ of a holomorphic Lefschetz fibration $\pi:X\to C$ needs to admit the structure of a HSM in order for $\pi:X\to C$ to be symplectic. This itself is a non-trivial condition. But even if $X$ is a HSM, it is not clear whether the Lefschetz fibration admits Morse Darboux charts near critical points. This question is especially interesting, since we already know that every HSM admits Darboux charts near any point and every holomorphic Lefschetz fibration admits Morse charts near critical points.\\
In the complex case, this is a non-trivial problem and not completely understood\footnote{At least not by the author.}. To gain some intuition, we consider the real case in dimension two instead.

\newpage

\textbf{Setting:} Let $M$ be a smooth manifold of real dimension two with symplectic form $\omega$ on it, $L$ be a smooth manifold of real dimension one, $f\in C^\infty (M,L)$, and $p\in M$ be a non-degenerate\footnote{Confer Definition \autoref{def:morse_index}.} critical point of $f$.\\

\textbf{Question:} Do smooth charts $\psi_M = (x,y):U_M\to V_M\subset\R^2$ of $M$ near $p$ and\linebreak $\psi_L:U_L\to V_L\subset\R$ of $L$ near $f(p)$ exist such that
\begin{enumerate}
 \item $\omega\vert_{U_M} = dx\wedge dy$,
 \item $\psi_L\circ f\vert_{U_M} = x^2 + y^2$?
\end{enumerate}

Of course, the answer to the question in full generality is negative due to the Morse index of $p$ which is absent in the complex case:

\begin{Def}[Morse index $\mu_f (p)$]\label{def:morse_index}
 Let $M$ be a smooth $m$-manifold, $L$ be a smooth $1$-manifold, $f\in C^\infty (M,L)$, and $p\in M$ be a critical point of $f$. We say that $p$ is non-degenerate iff the Hessian of $\psi_L\circ f\circ \psi^{-1}_M$ at $\psi_M (p)$ is non-degenerate for some charts $\psi_M$ of $M$ near $p$ and $\psi_L$ of $L$ near $f(p)$. The \textbf{Morse index} $\mu_f (p)$ is the number $\min\{k, m-k\}$ where $k$ is the usual Morse index of $\psi_L\circ f\circ \psi^{-1}_M$ at $\psi_M (p)$ for charts $\psi_M$ and $\psi_L$, i.e, the number of negative eigenvalues of its Hessian.
\end{Def}

\begin{Rem*}
 Even though the non-degeneracy of the Hessian is independent of the choice of charts, the number of negative eigenvalues of the Hessian is not. A orientation reversing transformation of the chart $\psi_L$, for instance $\psi_L\mapsto -\psi_L$, changes the number from $k$ to $m-k$, explaining the definition of the Morse index.
\end{Rem*}

The Morse index of $(x,y)\mapsto x^2 + y^2$ is $0$. Since the Morse index is invariant under change of charts, a necessary condition for the existence of Morse Darboux charts near $p$ is $\mu_f (p) = 0$. The question remains whether this condition is sufficient. Aside from regularity issues, this seems to be the case. To make this precise, consider the following two lemmata which are proven in \autoref{app:morse_darboux_lem}:

\begin{Lem}[Morse Darboux lemma I]\label{lem:morse_darboux_lem_I}
 Let $(M^2,\omega)$ be a symplectic $2$-manifold, $L^1$ be a smooth $1$-manifold, $f\in C^\infty (M,L)$, and $p\in M$ be a non-degenerate critical point of $f$ with Morse index $\mu_f (p) = 0$. Further, let $T>0$ be a positive real number. Then, there exists a $C^1$-chart $\psi_L:U_L\to V_L\subset\R$ of $L$ near $f(p)$ which is smooth on $U_L\backslash\{f(p)\}$ such that all non-constant trajectories near $p$ of the RHS $(U_M, \omega\vert_{U_M}, H)$ with $U_M\coloneqq f^{-1}(U_L)$ and $H\coloneqq \psi_L\circ f\vert_{U_M}$ are $T$-periodic.
\end{Lem}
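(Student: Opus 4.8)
The plan is to build $\psi_L$ out of the \emph{symplectic area of the sublevel sets of $f$ near $p$}. The hypothesis $\mu_f(p)=0$ will be used twice: it forces $f$ to have a nondegenerate local minimum at $p$ (after orienting $L$ suitably), so that the level sets $\{f=c\}$ near $p$ with $c$ slightly above $f(p)$ are small circles bounding disks, and it also puts $f$ into the normal form $u^2+v^2$ near $p$. Neither would survive if the Morse index were $1$ (a saddle), where no periodic orbits near $p$ exist at all.

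First I would normalize. Choose a chart on $L$ near $f(p)$ turning $f$ into a real-valued function with $f(p)=0$ and a nondegenerate local minimum at $p$; this is possible precisely because $\mu_f(p)=0$. Choose a Darboux chart $\psi_M=(x,y)$ of $M$ near $p$ with $\omega=dx\wedge dy$ and $\psi_M(p)=0$, and write $g$ for $f$ read in this chart. By the Morse lemma there is a smooth diffeomorphism $\phi$ from a neighborhood of $0$ onto a neighborhood of $0$ with $g\circ\phi(u,v)=u^2+v^2$; thus for small $c\ge 0$ the sublevel set $\{f\le c\}$ near $p$ is the topological disk $\phi(\{u^2+v^2\le c\})$, of symplectic area $A(c)=\int_{\{u^2+v^2\le c\}}|\det D\phi(u,v)|\,du\,dv$ (the absolute value appears after the change of variables regardless of whether $\phi$ preserves orientation).

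The technical heart of the argument is to show that $A$ extends to a smooth, strictly increasing function of $c$ on a two-sided neighborhood of $0$ with $A(0)=0$ and $A'(0)>0$. In polar coordinates, $A(c)=\int_0^{\sqrt c} r\,P(r)\,dr$ with $P(r)=\int_0^{2\pi}|\det D\phi(r\cos\theta,r\sin\theta)|\,d\theta$; since $D\phi$ has constant sign on a connected neighborhood of $0$, $P$ is smooth, and the symmetry $(u,v)\mapsto(-u,-v)$ shows $P$ is even, hence $P(r)=Q(r^2)$ for a smooth function $Q$ with $Q(0)=2\pi|\det D\phi(0)|>0$. Substituting $s=r^2$ gives $A(c)=\int_0^c\tfrac12 Q(s)\,ds$, which is smooth in $c$ near $0$ with $A'(0)=\pi|\det D\phi(0)|>0$. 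I expect this to be the main obstacle: everything rests on the regularity of the sublevel-area function at the critical value, and the ``even function of a square'' trick is exactly what upgrades the naive $C^1$ (coming from $c\mapsto\sqrt c$) to smoothness; in fact it shows the chart we construct may be taken smooth everywhere, more than the statement demands.

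Let $\tilde A$ be this smooth extension and define the chart near $f(p)$ by $\psi_L:=\tfrac1T\tilde A$ (post-composed with the normalizing chart on $L$). Since $\psi_L'(f(p))=\tfrac1T\tilde A'(0)>0$, it is a genuine chart on a sufficiently small $U_L$, and it is smooth, hence $C^1$ and smooth off $f(p)$. Put $U_M=f^{-1}(U_L)$ and $H=\psi_L\circ f$. For small $E>0$, since $\tilde A$ is strictly increasing, the sublevel set $\{H\le E\}$ near $p$ equals $\{f\le\tilde A^{-1}(TE)\}$ near $p$, a disk of symplectic area $\tilde A(\tilde A^{-1}(TE))=TE$. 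Now I would invoke the two-dimensional fact that the period of the periodic orbit on the energy level $\{H=E\}$ bounding such a disk equals $\frac{d}{dE}\bigl(\text{symplectic area of }\{H\le E\}\bigr)$ — immediate from $\omega=dH\wedge d\tau$ in energy--time coordinates on the annular region around $p$ (equivalently, from action--angle coordinates with action $\tfrac1{2\pi}\times$ area). Since that area is $TE$, every such orbit has period $\frac{d}{dE}(TE)=T$. Finally, the non-constant trajectories of $(U_M,\omega|_{U_M},H)$ lying near $p$ are exactly these circles $\{H=E\}$ for small $E>0$ (for $c<f(p)$ there are no trajectories near $p$, so the behaviour of $\psi_L$ there is irrelevant), so they are all $T$-periodic, which is the assertion.
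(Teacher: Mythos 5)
Your proof is correct, but it is organized around a different function than the paper's: you build the chart from the sublevel-set area $A(c)=\int_{\{f\le c\}}\omega$ and recover the period afterwards via the identity $T(E)=\frac{d}{dE}\,\mathrm{Area}(\{H\le E\})$ (valid since $\omega=dH\wedge d\tau$ in energy--time coordinates on the punctured neighborhood), whereas the paper works in a Morse chart with $\omega=\hat v\,d\hat x\wedge d\hat y$, computes the period function $\hat T(r)=\tfrac12\int_0^{2\pi}v(r\cos\varphi,r\sin\varphi)\,d\varphi$ directly from the angular ODE, and takes $\psi_L$ to be an antiderivative of $\hat T(\sqrt{|s|})/T$. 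The two are equivalent -- the paper's $\hat T(r)$ is exactly $\frac{d}{dc}A(c)$ at $c=r^2$ -- but your packaging effectively turns the implication ``$3.\Rightarrow 2.$'' of Lemma \autoref{lem:morse_darboux_lem_II} into the construction itself, which is clean. The one step you should not wave through is ``$P$ smooth and even, hence $P(r)=Q(r^2)$ with $Q$ smooth'': that is Whitney's theorem on even functions, a genuine classical result, not a formal manipulation, and it is precisely the point at which the paper's own proof stops, settling for continuity of $\hat T(\sqrt{|s|})$ and hence only a $C^1$ chart (the paper resolves the issue fully only in the real-analytic category, cf.\ Remark \autoref{rem:no_reg_issue}). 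With Whitney cited, your argument is not only complete but strictly stronger than the stated lemma: the chart and hence the rescaled Hamiltonian $\psi_L\circ f$ are smooth, which is exactly the regularity the paper says it lacks in order to chain Lemma \autoref{lem:morse_darboux_lem_I} into Lemma \autoref{lem:morse_darboux_lem_II} in the smooth category -- so this observation is worth making explicit rather than leaving as an aside.
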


\newpage
% 
% \begin{Lem}[Morse Darboux lemma II]\label{lem:morse_darboux_lem_II}
%  Let $(M^2,\omega)$ be a symplectic manifold and let\linebreak $H\in C^\infty (M,\R)$ be a smooth function on $M$ with non-degenerate critical point $p\in M$. Further, let $T>0$ be a positive real number. Then, the following statements are equivalent:
%  \begin{enumerate}
%   \item There exists a topological chart $\psi_M = (x,y):U_M\to V_M\subset\R^2$ of $M$ near $p$ which is smooth on $U_M\backslash\{p\}$ such that ($\psi_M (p) = 0$):
%   \begin{enumerate}[label = (\alph*)]
%    \item $H\vert_{U_M} = H(p) \pm \frac{\pi}{T}(x^2 + y^2)$,
%    \item $\omega\vert_{U_M} = dx\wedge dy$.
%   \end{enumerate}
%   \item There exists an open neighborhood $U_M\subset M$ of $p$ such that:
%   \begin{enumerate}[label = (\alph*)]
%    \item For every metric $d$ on $M$ compatible with its topology and every $\varepsilon >0$, there is a non-constant trajectory $\gamma$ of the RHS $(U_M, \omega\vert_{U_M}, H\vert_{U_M})$ such that $\sup_{t\in\R} d(\gamma (t), p) <\varepsilon$.
%    \item All non-constant trajectories of the RHS $(U_M, \omega\vert_{U_M}, H\vert_{U_M})$ are $T$-periodic.
%   \end{enumerate}
%   \item There exists a number $E_0 > 0$ such that:
%   \begin{enumerate}[label = (\alph*)]
%    \item $\mu_H (p) \neq 1$,
%    \item $\int_{U(E)} \omega = T\cdot E$ for every $E\in [0, E_0]$, where $U(E)$ is the connected component containing $p$ of the set $\{q\in M\mid |H(q)-H(p)|\leq E\}$.
%   \end{enumerate}
%  \end{enumerate}
% \end{Lem}

\begin{Lem}[Morse Darboux lemma II]\label{lem:morse_darboux_lem_II}
 Let $(M^2,\omega)$ be a symplectic $2$-manifold and let\linebreak $H\in C^\infty (M,\R)$ be a smooth function on $M$ with non-degenerate critical point $p\in M$ of Morse index $\mu_H (p)\neq 1$. Further, let $T>0$ be a positive real number. Then, the following statements are equivalent:
 \begin{enumerate}
  \item There exists a topological chart $\psi_M = (x,y):U_M\to V_M\subset\R^2$ of $M$ near $p$ which is smooth on $U_M\backslash\{p\}$ such that ($\psi_M (p) = 0$):
  \begin{enumerate}[label = (\alph*)]
   \item $H\vert_{U_M} = H(p) \pm \frac{\pi}{T}(x^2 + y^2)$,
   \item $\omega\vert_{U_M} = dx\wedge dy$.
  \end{enumerate}
  \item There exists an open neighborhood $U_M\subset M$ of $p$ such that all non-constant trajectories of the RHS $(U_M, \omega\vert_{U_M}, H\vert_{U_M})$ are $T$-periodic.
  \item There exists a number $E_0 > 0$ such that $\int_{U(E)} \omega = T\cdot E$ for every $E\in [0, E_0]$,\linebreak where $U(E)$ is the connected component containing $p$ of the set\linebreak $\{q\in M\mid |H(q)-H(p)|\leq E\}$.
 \end{enumerate}
\end{Lem}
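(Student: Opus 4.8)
The plan is to first reduce to the case $\mu_H(p)=0$. The case $\mu_H(p)=2$ follows by applying the result to $-H$: this swaps the two signs in condition~1, and it leaves conditions~2 and~3 untouched, since $X_{-H}=-X_H$ has the same trajectories as $X_H$ up to time reversal (hence the same images and the same minimal periods) and the sets $U(E)$ depend only on $|H-H(p)|$. So assume $p$ is a nondegenerate local minimum of $H$ and put $h\coloneqq H-H(p)\ge 0$ near $p$. Standard Morse theory then supplies a neighbourhood of $p$ and an $E^\ast>0$ such that, for $0<E\le E^\ast$, the level sets $\{h=E\}$ are embedded circles, the connected components $U(E)$ of $\{h\le E\}$ containing $p$ are closed disks, nested and shrinking monotonically to $\{p\}$ as $E\downarrow 0$, and the region $\{0<h\le E^\ast\}$ near $p$ is smoothly foliated by the circles $\{h=E\}$.

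The analytic core is the classical action--period relation. For $0<E<E^\ast$ the Hamiltonian vector field $X_H$ is tangent to and nowhere vanishing on $\{h=E\}$ (because $dh(X_H)=0$ while $dh\neq 0$ there), so its flow runs once around this circle in a well-defined minimal period $T(E)>0$; moreover $T$ is smooth on $(0,E^\ast)$ and extends continuously and positively to $E=0$, its value there being the minimal period of the elliptic linearised flow. I would then prove
\[
 \int_{U(E)}\omega=\int_0^E T(E')\,dE',\qquad\text{so that}\qquad \frac{d}{dE}\int_{U(E)}\omega=T(E),
\]
by introducing on $U(E^\ast)\setminus\{p\}$ coordinates $(E,t)$, where $E=h$ and $t$ is the flow time of $X_H$ measured from a smooth local transversal (so $t$ takes values in $\R/T(E)\mathbb{Z}$): in these coordinates $X_H=\partial_t$, and $\iota_{\partial_t}\omega=-dh=-dE$ forces $\omega=dE\wedge dt$, whence Fubini gives the displayed identity, the apparent singularity at $p$ being harmless since the integrand is bounded and $\{p\}$ is a null set.

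With this in place I would close the cycle (1)\,$\Rightarrow$\,(3)\,$\Rightarrow$\,(2)\,$\Rightarrow$\,(1). For (1)\,$\Rightarrow$\,(3): in the model chart $\omega=dx\wedge dy$ and $h=\tfrac{\pi}{T}(x^2+y^2)$, so $U(E)$ is the Euclidean disk of radius $\sqrt{TE/\pi}$ and area $TE$ for all small $E$. For (3)\,$\Rightarrow$\,(2): differentiating $\int_{U(E)}\omega=TE$ and invoking the action--period relation yields $T(E)=T$ for $0<E<E_0$ (shrinking $E_0$ below $E^\ast$ if need be), so every non-constant trajectory of $(\operatorname{int}U(E_0),\omega,H)$ lies on some circle $\{h=E\}$ with $E<E_0$ and is $T$-periodic; take $U_M\coloneqq\operatorname{int}U(E_0)$. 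For (2)\,$\Rightarrow$\,(1): shrink $U_M$ so that it contains some $U(E_0)\subseteq U(E^\ast)$; each circle $\{h=E\}$ with $E<E_0$ is then a $T$-periodic orbit, and since $T(\cdot)$ is continuous on $[0,E_0)$ with $T(0)>0$ and $T/T(E)$ is a positive integer for each $E$, that integer is constant, forcing $T(E)\equiv T$ on $(0,E_0)$. Consequently the time coordinate $t$ above is genuinely $\R/T\mathbb{Z}$-valued, so setting $r\coloneqq\sqrt{TE/\pi}$, $\theta\coloneqq 2\pi t/T$, $x\coloneqq r\cos\theta$, and $y\coloneqq r\sin\theta$ one computes $dx\wedge dy=r\,dr\wedge d\theta=dE\wedge dt=\omega$ and $x^2+y^2=r^2=\tfrac{T}{\pi}(H-H(p))$, i.e.\ condition~1 with the $+$ sign; this $(x,y)$ extends continuously by $p\mapsto 0$ and is a homeomorphism onto a disk precisely because the $U(E)$ nest monotonically down to $p$, and it is smooth away from $p$.

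I expect the main obstacle to be the careful proof of the action--period relation, in particular checking that the energy--time coordinates behave well down to the critical point $p$ and that $T(\cdot)$ extends continuously and positively to $E=0$ — the latter being exactly what forces $T/T(E)$ to be eventually constant in (2)\,$\Rightarrow$\,(1). A related point worth flagging is that, for the equivalence to hold, ``$T$-periodic'' in condition~2 must be read as ``$T$ is the minimal period'' (otherwise, e.g., the harmonic oscillator $H(p)+\tfrac{2\pi}{T}(x^2+y^2)$ would satisfy~2 but violate~3). A further, minor point is verifying that the symplectic polar chart $(x,y)$ produced in (2)\,$\Rightarrow$\,(1) is genuinely a topological chart at $p$, via the monotone nesting of the $U(E)$, without claiming smoothness there.
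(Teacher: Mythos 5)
Your proof is correct, but it takes a genuinely different route from the paper's. The paper proves $1\Rightarrow 2\Rightarrow 3\Rightarrow 1$ and works throughout in a fixed Morse chart in which $\omega = \hat v\, d\hat x\wedge d\hat y$: periods and symplectic areas are computed by passing to polar coordinates and integrating $v$ over the angle, and the Morse Darboux chart in $3\Rightarrow 1$ is produced by the explicit angle reparametrization $P(r,[\varphi]) = (r,[\int_0^\varphi v\, d\varphi'])$, whose Jacobian is $v$. You instead reduce to the case $\mu_H(p)=0$ by replacing $H$ with $-H$ (the paper carries the sign $\varepsilon=\pm 1$ along), run the cycle $1\Rightarrow 3\Rightarrow 2\Rightarrow 1$, and base everything on the action--period relation $\frac{d}{dE}\int_{U(E)}\omega = T(E)$ obtained from energy--time coordinates with $\omega = dE\wedge dt$; your chart in $2\Rightarrow 1$ is then the symplectic polar map $r=\sqrt{TE/\pi}$, $\theta = 2\pi t/T$. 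The two arguments are close relatives --- the paper's identity $\int_0^{2\pi}v(r\cos\varphi,r\sin\varphi)\,d\varphi = 2\pi$ is exactly your $T(E)\equiv T$, and $S\circ P\circ S^{-1}$ is your $(E,t)\mapsto(x,y)$ in disguise --- but yours is the more conceptual packaging (the classical ``area equals the integral of the period'' statement), while the paper's is more elementary and entirely coordinate-based. Your remark that condition 2 must be read as ``$T$ is the \emph{minimal} period'' is a legitimate catch: the paper's step $\Phi^{-1}(2\pi)=\varepsilon T$ in $2\Rightarrow 3$ silently assumes the trajectory winds around exactly once in time $T$, and your harmonic-oscillator example shows the literal reading would break the equivalence. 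One small wobble: in $2\Rightarrow 1$ you assert that constancy of the integer $T/T(E)$ ``forces $T(E)\equiv T$'', which only follows once the minimal-period reading is adopted; since you do adopt it, the step stands.
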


% \begin{Rem*}
%  Condition (a) in Statement 2 and 3 of Lemma \autoref{lem:morse_darboux_lem_II} can be omitted if we include $\mu_H (p)\neq 1$ in the assumptions. We worded Lemma \autoref{lem:morse_darboux_lem_II} this way so that the equivalence of Statement 1 and 2 has a (weaker) holomorphic analogue (cf. Lemma \autoref{lem:holo_morse_darboux_lem_II}).
% \end{Rem*}

With Lemma \autoref{lem:morse_darboux_lem_I} and \autoref{lem:morse_darboux_lem_II} in mind, the rough idea to show the existence of Morse Darboux charts is clear: First, we use Lemma \autoref{lem:morse_darboux_lem_I} to find a chart $\psi_L$ in which all trajectories near $p$ have period $T = \pi$ and the usual Morse index of $\psi_L\circ f$ is $0$. Afterwards, we wish to apply Lemma \autoref{lem:morse_darboux_lem_II} to find a chart $\psi_M$ in which $\omega$ and $H = \psi_L\circ f$ assume their respective standard form. However, the regularity issues come into play here: We cannot apply Lemma \autoref{lem:morse_darboux_lem_II}\footnote{Note, however, that we \underline{can} apply Lemma \autoref{lem:morse_darboux_lem_II} after Lemma \autoref{lem:morse_darboux_lem_I} if all objects in Lemma \autoref{lem:morse_darboux_lem_I} are real analytic, as the chart $\psi_L$ and the Hamilton function $\psi_L\circ f$ are real analytic in this case. Confer Remark \autoref{rem:no_reg_issue} in \autoref{app:morse_darboux_lem} for details.}, since the Hamilton function $\psi_L\circ f$ is, in general, only $C^1$. This is troublesome for several reasons, not the least of which is that the notion of a Morse index does not even make sense for $C^1$-functions.\\
At this point, it is clear that a deeper analysis of this problem is in order, especially if one aims to also discuss the complex case. We stop this excursion here and move on to the main part of this paper.

\newpage
\section{Pseudo-Holomorphic Hamiltonian Systems}
\label{sec:PHHS}

In \autoref{subsec:holo_action_fun_and_prin}, we have seen that the existence of non-constant holomorphic periodic orbits is forbidden for a large class of HHSs $(X,\Omega, \mH)$. One possible obstruction to the existence of such orbits is the integrability of the almost complex structure $J$ of $X$. In this section, we drop the integrability of $J$ leading us to the notion of a pseudo-holomorphic Hamiltonian system (PHHS). We demonstrate in \autoref{subsec:def_PHHS} that PHHSs exhibit, by design, (almost) the same properties as found for HHSs in \autoref{sec:HHS}, in particular with respect to the existence and uniqueness of pseudo-holomorphic trajectories and with respect to action functionals and principles. In \autoref{subsec:rel_HHS_PHHS}, we explore the relation between HHSs and PHHSs and show that we recover a HHS from a PHHS if we restore the integrability of its almost complex structure $J$.

\subsection{PHHS: Basic Definitions, Notions, and Properties}
\label{subsec:def_PHHS}

In \autoref{subsec:holo_action_fun_and_prin}, we have found that most HHSs $(X,\Omega,\mH)$ do not possess non-constant holomorphic periodic orbits. Often, their existence was forbidden by the maximum principle. For instance, consider $X = \mathbb{R}^{4}\cong \mathbb{C}^2$ with standard complex structure $J = i$. As holomorphic periodic orbits are holomorphic maps $\gamma:\mathbb{C}/\Gamma\to X$ and a complex torus $\mathbb{C}/\Gamma$ is compact, the maximum principle applies and every holomorphic periodic orbit in $\mathbb{C}^2$ is constant.\\
In his beautiful paper \cite{moser1995} from 1995, Moser showed that the same argument does \underline{not} apply if we equip $\mathbb{R}^4$ with a different almost complex structure $J$. Let $\gamma$ be any smooth embedding of the $2$-torus into $\mathbb{R}^4$, e.g. the inclusion $S^1\times S^1 \subset \mathbb{R}^2\times\mathbb{R}^2\equiv \mathbb{R}^4$. Then, the image of $\gamma$ is a $2$-dimensional submanifold of $\mathbb{R}^4$ and its tangent bundle can be continued to a smooth $2$-dimensional distribution $D$ on $\mathbb{R}^4$. This can be seen as follows: the tangent bundle of $S^1\times S^1\subset\mathbb{R}^4$ is spanned by the two vector fields $V_1$ and $V_2$ on $S^1\times S^1$:
\begin{align*}
 V_1:&S^1\times S^1\to\mathbb{R}^4,\ (x_1,x_2,x_3,x_4)\mapsto (-x_2,x_1,0,0);\\ V_2:&S^1\times S^1\to\mathbb{R}^4,\ (x_1,x_2,x_3,x_4)\mapsto (0,0, -x_4, x_3).
\end{align*}
We show that there are two linearly independent vector fields $\hat V_1$ and $\hat V_2$ on $\mathbb{R}^4$ continuing $V_1$ and $V_2$, i.e., $\hat V_i\vert_{S^1\times S^1} = V_i$. To construct $\hat V_1$ and $\hat V_2$, we first define the functions $r_1\coloneqq \sqrt{x^2_1 + x^2_2}$, $r_2\coloneqq \sqrt{x^2_3 + x^2_4}$, and $R\coloneqq (1-r^2_1)^2 + (1-r^2_2)^2$. Next, define the vector fields $\hat V_1$, $W_1$, and $W_2$ on $\mathbb{R}^4$ as follows:
\begin{align*}
 \hat V_1:& \mathbb{R}^4\to\mathbb{R}^4,\ (x_1,x_2,x_3,x_4)\mapsto (-x_2,x_1,R,0);\\
 W_1:& \mathbb{R}^4\to\mathbb{R}^4,\ (x_1,x_2,x_3,x_4)\mapsto (-x_2x_4R,\, x_1x_4R,\, -r^2_1x_4,\, r^2_1x_3);\\
 W_2:& \mathbb{R}^4\to\mathbb{R}^4,\ (x_1,x_2,x_3,x_4)\mapsto (x_1,x_2,0,R).
\end{align*}
One easily checks that $\hat V_1$ is a continuation of $V_1$, vanishes nowhere, and is orthogonal to $W_1$ and $W_2$ with respect to the standard metric on $\mathbb{R}^4$. Furthermore, we notice that $W_1$ is a continuation of $V_2$. However, $W_1$ vanishes for $x_1 = x_2 = 0$ or $x_3 = x_4 = 0$. To rectify this, we take an appropriate linear combination of $W_1$ and $W_2$. For that, we first observe that $W_1$ does not vanish on $S^1\times S^1$. Hence, we can pick an open neighborhood $U\subset\mathbb{R}^4$ of $S^1\times S^1$ such that $W_1$ does not vanish on $U$. Next, we choose a partition of unity $\{f_1, f_2\}$ on $\mathbb{R}^4$ subordinate to the open covering $\{U, \mathbb{R}^4\backslash (S^1\times S^1)\}$ of $\mathbb{R}^4$, i.e, two smooth functions $f_1,f_2\in C^\infty (\mathbb{R}^4,\mathbb{R}_{\geq 0})$ satisfying:
\begin{enumerate}
 \item $f_1(x) + f_2(x) = 1\quad\forall x\in\mathbb{R}^4$,
 \item $\text{supp}(f_1)\subset U$ and $\text{supp}(f_2)\subset\mathbb{R}^4\backslash (S^1\times S^1)$.
\end{enumerate}
Now define the vector field $\hat V_2$ by $\hat V_2\coloneqq f_1\cdot W_1 + f_2\cdot W_2$. By construction, the vector field $\hat V_2$ is a continuation of $V_2$. Moreover, one can show that $\hat V_2$ vanishes nowhere by considering $\hat V_2$ separately on $S^1\times S^1$, $U\backslash (S^1\times S^1)$, and $\mathbb{R}^4\backslash U$. As $\hat V_1$ is orthogonal to $W_1$ and $W_2$, $\hat V_1$ is also orthogonal to the vector field $\hat V_2$. Two orthogonal vector fields which vanish nowhere are linearly independent, hence, the vector fields $\hat V_1$ and $\hat V_2$ are the desired continuations of $V_1$ and $V_2$. The distribution $D$ is now just the span of $\hat V_1$ and $\hat V_2$ at any point $(x_1,x_2,x_3,x_4)\in\mathbb{R}^4$.\\
Return to Moser's construction. Choose a Riemannian metric $g$ on $\mathbb{R}^4$ and consider\linebreak the orthogonal complement $D^\perp$ of $D$ with respect to $g$. $D^\perp$ is also a smooth\linebreak $2$-dimensional distribution on $\mathbb{R}^4$. Moreover, $D$ and $D^\perp$ span the tangent bundle of $\mathbb{R}^4$,\linebreak $T\mathbb{R}^4 = D\oplus D^\perp$. Now we can construct the almost complex structure $J$ on $\mathbb{R}^4$ as follows: choose orientations for $D$ and $D^\perp$ and define $J$ to be the $90^{\circ}$-rotation in $D$ and $D^\perp$ with respect to $g$ and the given orientations. After choosing a suitable complex structure $j$ on the $2$-torus, $\gamma$ becomes a pseudo-holomorphic\footnote{A smooth map $f:(X_1,J_1)\to (X_2, J_2)$ between smooth manifolds $X_1$ and $X_2$ with almost complex structures $J_1$ and $J_2$ is called \textbf{pseudo-holomorphic} iff $df\circ J_1 = J_2\circ df$. We call a pseudo-holomorphic map $f$ \textbf{holomorphic} if further $J_1$ and $J_2$ are integrable.} embedding, i.e., $d\gamma\circ j = J\circ d\gamma$. The almost complex structure $J$ constructed this way will, in general, \underline{not} be integrable.\\
Moser's example indicates that Hamiltonian systems with non-integrable almost complex structures $J$ might be richer than HHSs when it comes to (pseudo-)holomorphic periodic orbits. However, the generalization of HHSs is not straightforward, as the complex structure $J$ only enters most definitions regarding HHSs implicitly. To that end, let us recapitulate which objects and relations are essential to the definitions and discussions in \autoref{sec:HHS}. A HHS consists of six objects: a smooth manifold $X$ together with an integrable almost complex structure $J$ on it, two real $2$-forms $\Omega_R$ and $\Omega_I$ on $X$ which assemble to a holomorphic symplectic form $\Omega = \Omega_R + i\Omega_I$, and two smooth real functions $\mH_R$ and $\mH_I$ on $X$ forming a holomorphic function $\mH = \mH_R + i\mH_I$ on $X$. Closely tracing every step of \autoref{sec:HHS}, we see that these six objects need to satisfy the following relations:
\begin{enumerate}
 \item $\Omega_R$ must be closed\footnote{For the action functionals of holomorphic trajectories in \autoref{subsec:holo_action_fun_and_prin}, we need a primitive of $\Omega_R$, but not of $\Omega_I$!}.
 \item $J$, $\Omega_R$, and $\Omega_I$ need to satisfy the relations induced by Equation \eqref{eq:J-anticompatible}.
 \item $J$ and the Hamiltonian vector fields of the underlying RHSs have to fulfill Cauchy-Riemann-like relations formulated in Remark \autoref{rem:cauchy-riemann}.
 \item All Hamiltonian vector fields must commute reproducing Corollary \autoref{cor:commute}.
\end{enumerate}
Now, one way to define a pseudo-holomorphic Hamiltonian system is to simply impose these relations for any almost complex structure $J$:

\begin{Def}[Pseudo-holomorphic Hamiltonian system]\label{def:PHHS_1}
 We call a collection\linebreak $(X,J;\Omega_R,\Omega_I;\mH_R,\mH_I)$ a \textbf{pseudo-holomorphic Hamiltonian system} (PHHS) iff $X$ is a smooth manifold, $J$ is a (not necessarily integrable) almost complex structure on $X$, $\Omega_R$ and $\Omega_I$ are two smooth, non-degenerate, alternating, real $2$-forms on $X$, and $\mH_R$ and $\mH_I$ are two smooth real functions on $X$ satisfying:
 \begin{enumerate}
  \item $\Omega_R$ is closed, $d\Omega_R = 0$.
  \item $\Omega_R(J\cdot,\cdot) = \Omega_R(\cdot,J\cdot) = -\Omega_I;\quad \Omega_I(J\cdot,\cdot) = \Omega_I(\cdot,J\cdot) = \Omega_R;$\\
  $\Omega_R (J\cdot,J\cdot) = -\Omega_R;\qquad\qquad\ \,\ \Omega_I (J\cdot,J\cdot) = -\Omega_I$.
  \item $X^{\Omega_R}_{\mH_R} = X^{\Omega_I}_{\mH_I}$ and $J\left(X^{\Omega_R}_{\mH_R}\right) = X^{\Omega_I}_{\mH_R} = - X^{\Omega_R}_{\mH_I}$, where $X^{\Omega_a}_{\mH_b}$ is defined by $\iota_{X^{\Omega_a}_{\mH_b}}\Omega_a = -d\mH_b$.
  \item $[X^{\Omega_a}_{\mH_b}, X^{\Omega_c}_{\mH_d}] = 0$ for all $a,b,c,d\in\{R,I\}$.
 \end{enumerate}
\end{Def}

\begin{Rem}[Property 3 in Definition \autoref{def:PHHS_1}]\label{rem:H_pseudo-holo}
 Note that we can replace Property 3 in Definition \autoref{def:PHHS_1} with the condition that $\mH\coloneqq \mH_R + i\cdot \mH_I$ is pseudo-holomorphic. Indeed, Property 2 and 3 imply that $\mH:X\to\mathbb{C}$ is pseudo-holomorphic:
 \begin{align*}
  d\mH\circ J &= -\Omega_R (X^{\Omega_R}_{\mH_R}, J\cdot) - i\cdot \Omega_R (X^{\Omega_R}_{\mH_I}, J\cdot)\\
  &= \Omega_I (X^{\Omega_I}_{\mH_I}, \cdot) + i\cdot\Omega_I (-X^{\Omega_I}_{\mH_R},\cdot) = -d\mH_I + i\cdot d\mH_R = i\cdot d\mH.
 \end{align*}
 Conversely, Property 2 and $d\mH\circ J = i\cdot d\mH$ imply the Cauchy-Riemann-like equations in Property 3.
\end{Rem}

\begin{Rem}[The form $\Omega$, Part I]\label{rem:omega_I}
 As for complex manifolds, we can decompose the complexified tangent\footnote{Of course, similar remarks apply to the complexified cotangent bundle $T^{\ast}_\mathbb{C}X$ of $X$.} bundle $T_\mathbb{C}X$ of a manifold $X$ with almost complex structure $J$ into a direct sum of bundles $T^{(1,0)}X$ and $T^{(0,1)}X$ which are fiberwise given by the eigenspaces of $J$ with eigenvalue $i$ and $-i$, respectively. The difference is that now $T^{(1,0)}X$ is not a holomorphic , but merely a smooth complex vector bundle over $X$. Still, if we define the complex $2$-form $\Omega$ to be $\Omega_R + i\Omega_I$ for PHHSs, we find that $\Omega$ is of type $(2,0)$, i.e., vanishes on $T^{(0,1)}X$.
\end{Rem}

One might be confused why we only require $\Omega_R$ to be closed. The reason is that if we were to include the closedness of $\Omega_I$ into the definition of a PHHS, the almost complex structure $J$ would automatically be integrable rendering our construction pointless. The proof of this statement is given in \autoref{subsec:rel_HHS_PHHS}, when we explore the relation between HHSs and PHHSs.\\
Definition \autoref{def:PHHS_1} is convoluted, redundant, and rather unwieldy. For a better approach to PHHSs, let us first define pseudo-holomorphic symplectic manifolds:

\begin{Def}[Pseudo-holomorphic symplectic manifolds]\label{def:PHSM}
 We call a triple $(X,J;\Omega_R)$ \textbf{pseudo-holomorphic symplectic manifold} (PHSM) iff $(X,\Omega_R)$ is a symplectic manifold and $J$ is an almost complex structure on $X$ which is also $\Omega_R$-anticompatible, i.e. $\Omega_R (J\cdot, J\cdot) = -\Omega_R$. A PHSM $(X,J;\Omega_R)$ is called \textbf{proper} iff $J$ is not integrable.
\end{Def}

\begin{Rem}[The form $\Omega$, Part II]\label{rem:omega_II}
 Every PHSM $(X,J;\Omega_R)$ possesses forms $\Omega_I$ and $\Omega$ defined by
 \begin{gather*}
  \Omega_I\coloneqq -\Omega_R (J\cdot,\cdot);\quad \Omega\coloneqq \Omega_R + i\Omega_I.
 \end{gather*}
 It is easy to see that $\Omega_I$ is a smooth, non-degenerate, alternating $2$-form on $X$ which is also anticompatible with $J$. Furthermore, $\Omega$ is also anticompatible with $J$, satisfies $\Omega (J\cdot,\cdot) = \Omega (\cdot, J\cdot) = i\Omega$, i.e., $\Omega$ is of type $(2,0)$, and is non-degenerate on $T^{(1,0)}X$. However, neither $\Omega_I$ nor $\Omega$ are necessarily closed.
\end{Rem}

Now we can give an alternative definition of a PHHS:

\begin{Def}[Pseudo-holomorphic Hamiltonian system]\label{def:PHHS_2}
 We call a collection\linebreak $(X,J;\Omega_R, \mH_R)$ a \textbf{pseudo-holomorphic Hamiltonian system} (PHHS) iff $(X,J;\Omega_R)$ is a PHSM, $\mH_R:X\to\mathbb{R}$ is a smooth function on $X$, and the $1$-form $\Omega_R (J(X^{\Omega_R}_{\mH_R}),\cdot)$ is exact, where $\Omega_R (X^{\Omega_R}_{\mH_R},\cdot)\coloneqq -d\mH_R$. We call a PHHS $(X,J;\Omega_R,\mH_R)$ \textbf{proper} iff $J$ is not integrable.
\end{Def}

For both definitions to agree, it is obvious that the condition ``$\Omega_R (J(X^{\Omega_R}_{\mH_R}),\cdot)$ is exact'' is necessary, since by Definition \autoref{def:PHHS_1}: $\Omega_R (J(X^{\Omega_R}_{\mH_R}),\cdot) = d\mH_I$. It is also sufficient as guaranteed by the following proposition:

\begin{Prop}[PHHSs well-defined]\label{prop:def_PHHS_agree}
 Definition \autoref{def:PHHS_1} and Definition \autoref{def:PHHS_2} coincide.
\end{Prop}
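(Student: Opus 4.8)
The plan is to recover the ``missing'' data $\Omega_I$ and $\mH_I$ of \autoref{def:PHHS_1} from the reduced data of \autoref{def:PHHS_2}, verify the four properties, and then run the (much easier) converse. Throughout, abbreviate $X_R\coloneqq X^{\Omega_R}_{\mH_R}$, so that $\iota_{X_R}\Omega_R=-d\mH_R$.

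Starting from a PHHS $(X,J;\Omega_R,\mH_R)$ in the sense of \autoref{def:PHHS_2}, I would set $\Omega_I\coloneqq-\Omega_R(J\cdot,\cdot)$ as in \autoref{rem:omega_II} and take $\mH_I$ to be a primitive of the (by hypothesis exact) $1$-form $\Omega_R(J(X_R),\cdot)=\iota_{JX_R}\Omega_R$, which is unique up to an additive constant. Property~1 of \autoref{def:PHHS_1} is just closedness of the symplectic form $\Omega_R$. Property~2 is fibrewise linear algebra: from $J^2=-\mathrm{id}$ and the anticompatibility $\Omega_R(J\cdot,J\cdot)=-\Omega_R$ one first gets $\Omega_R(J\cdot,\cdot)=\Omega_R(\cdot,J\cdot)$ (move $J^{-1}=-J$ across one slot), hence $\Omega_R(J\cdot,\cdot)=\Omega_R(\cdot,J\cdot)=-\Omega_I$, and the remaining three identities follow by the same bookkeeping; this also re-proves that $\Omega_I$ is a non-degenerate alternating $2$-form (cf.\ \autoref{rem:omega_II}). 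For Property~3 I would use non-degeneracy of $\Omega_R$ and of $\Omega_I$: the definition of $\mH_I$ gives $\iota_{X^{\Omega_R}_{\mH_I}}\Omega_R=-d\mH_I=-\iota_{JX_R}\Omega_R$, i.e.\ $X^{\Omega_R}_{\mH_I}=-JX_R$; substituting $\Omega_I=-\Omega_R(J\cdot,\cdot)$ into the defining relations of $X^{\Omega_I}_{\mH_R}$ and $X^{\Omega_I}_{\mH_I}$ yields $X^{\Omega_I}_{\mH_R}=JX_R$ and $X^{\Omega_I}_{\mH_I}=X_R$, which is exactly the content of Property~3.

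The one genuinely non-formal point is Property~4. By Property~3 every vector field $X^{\Omega_a}_{\mH_b}$ with $a,b\in\{R,I\}$ equals $X_R$, $JX_R$ or $-JX_R$, so it suffices to show $[X_R,JX_R]=0$. Here I would observe that $X_R$ and $JX_R$ are \emph{both} Hamiltonian for the symplectic form $\Omega_R$ --- with Hamiltonians $\mH_R$ and $-\mH_I$ respectively, by the identity $X^{\Omega_R}_{\mH_I}=-JX_R$ just obtained --- so $[X_R,JX_R]=\pm X^{\Omega_R}_{\{\mH_R,-\mH_I\}}$, and the Poisson bracket vanishes identically because
\begin{gather*}
 \{\mH_R,-\mH_I\}=\Omega_R(X_R,JX_R)=-\Omega_I(X_R,X_R)=0,
\end{gather*}
using $\Omega_R(\cdot,J\cdot)=-\Omega_I$ and that $\Omega_I$ is alternating. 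Hence all brackets among $X_R$ and $\pm JX_R$ vanish, establishing Property~4.

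The converse is immediate. Given a collection satisfying Properties~1--4, $\Omega_R$ is closed and non-degenerate, so $(X,\Omega_R)$ is symplectic, and $\Omega_R(J\cdot,J\cdot)=-\Omega_R$ (part of Property~2), so $(X,J;\Omega_R)$ is a PHSM (\autoref{def:PHSM}); moreover, by Property~3, $\Omega_R(J(X^{\Omega_R}_{\mH_R}),\cdot)=-\Omega_R(X^{\Omega_R}_{\mH_I},\cdot)=d\mH_I$ is exact, so $(X,J;\Omega_R,\mH_R)$ is a PHHS in the sense of \autoref{def:PHHS_2}. Finally, Property~2 forces $\Omega_I=-\Omega_R(J\cdot,\cdot)$ and Property~3 forces $d\mH_I=\Omega_R(J(X^{\Omega_R}_{\mH_R}),\cdot)$, so the auxiliary data is exactly what the first part reconstructs (up to the harmless additive constant in $\mH_I$); and since ``proper'' means ``$J$ non-integrable'' in both definitions, the two notions --- and the two notions of \emph{proper} PHHS --- coincide. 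The main obstacle worth flagging is Property~4: one has to notice that $JX_R$ is again $\Omega_R$-Hamiltonian and that anticompatibility makes the relevant Poisson bracket vanish identically; everything else is formal manipulation with $J^2=-\mathrm{id}$ and the non-degeneracy of $\Omega_R$.
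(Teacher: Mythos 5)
Your proposal is correct and follows essentially the same route as the paper: reconstruct $\Omega_I$ and $\mH_I$ from the reduced data, check Properties 1--3 by linear algebra and non-degeneracy, and reduce Property 4 to the vanishing of the Poisson bracket $\{\mH_R,\mH_I\}=\Omega_R(X_R,JX_R)=-\Omega_I(X_R,X_R)=0$ via the Lie algebra homomorphism $F\mapsto X_F$. The only cosmetic difference is that the paper verifies Property 3 by showing $\mH=\mH_R+i\mH_I$ is pseudo-holomorphic and invoking Remark \autoref{rem:H_pseudo-holo}, whereas you compute the four Hamiltonian vector fields directly; both are equivalent.
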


\begin{proof}
 Clearly, every PHHS as in Definition \autoref{def:PHHS_1} also fulfills Definition \autoref{def:PHHS_2}. Now let\linebreak $(X,J;\Omega_R,\mH_R)$ be a PHHS as in Definition \autoref{def:PHHS_2}. Then, we define $\Omega_I\coloneqq -\Omega_R (J\cdot,\cdot)$ and take $\mH_I$ to be a primitive of the $1$-form $\Omega_R (J(X^{\Omega_R}_{\mH_R}),\cdot)$. We need to check that these data satisfy the properties 1-4 in Definition \autoref{def:PHHS_1}. Property 1 is trivially true by definition. Verifying Property 2 is a short and easy computation. To check Property 3, we recall Remark \autoref{rem:H_pseudo-holo}. It suffices to verify that the map $\mH = \mH_R + i\mH_I:X\to\mathbb{C}$ is pseudo-holomorphic which follows immediately:
 \begin{align*}
  d\mH_R\circ J &= -\Omega_R (X^{\Omega_R}_{\mH_R},J\cdot) = -\Omega_R (J(X^{\Omega_R}_{\mH_R}),\cdot) = -d\mH_I,\\
  d\mH_I\circ J &= \Omega_R (J(X^{\Omega_R}_{\mH_R}),J\cdot) = -\Omega_R (X^{\Omega_R}_{\mH_R},\cdot) = d\mH_R,\\
  \Rightarrow d\mH\circ J &= i\cdot d\mH.
 \end{align*}
 Lastly, we need to check Property 4. For this, remember that any symplectic manifold $(X,\Omega_R)$ admits a Poisson bracket $\{\cdot,\cdot\}:C^{\infty}(X,\mathbb{R})\times C^{\infty}(X,\mathbb{R})\to C^{\infty}(X,\mathbb{R})$ given by
 \begin{gather*}
  \{F,G\}\coloneqq \Omega_R (X_F, X_G),
 \end{gather*}
 where $X_F$ and $X_G$ are the Hamiltonian vector fields of the functions $F$ and $G$. Furthermore, recall that the map $X_\cdot:C^{\infty}(X,\mathbb{R})\to\Gamma (TX)$ is a Lie algebra homomorphism:
 \begin{gather*}
  X_{\{F,G\}} = [X_F, X_G]\quad\forall F,G\in C^{\infty}(X,\mathbb{R}).
 \end{gather*}
 Hence, it suffices to prove that $\{\mH_R,\mH_I\}$ vanishes in order to show that $X^{\Omega_R}_{\mH_R}$ and $X^{\Omega_R}_{\mH_I}$ commute. Let us calculate $\{\mH_R,\mH_I\}$ using Property 2 and 3:
 \begin{gather*}
  \{\mH_R,\mH_I\} = \Omega_R (X^{\Omega_R}_{\mH_R}, X^{\Omega_R}_{\mH_I}) = -\Omega_R (X^{\Omega_R}_{\mH_R}, J(X^{\Omega_R}_{\mH_R})) = \Omega_I (X^{\Omega_R}_{\mH_R},X^{\Omega_R}_{\mH_R}) = 0.
 \end{gather*}
 Commutativity of the remaining Hamiltonian vector fields follows from commutativity of $X^{\Omega_R}_{\mH_R}$ and $X^{\Omega_R}_{\mH_I}$ as well as Property 3 concluding the proof.
\end{proof}

Now that we have found a compact definition of PHHSs, we should briefly mention some examples of PHHSs. Of course, every HHS is, by design, a PHHS with integrable $J$. Even though the set of proper PHHSs is much larger than the set of HHSs, finding them is a bit more involved and, thus, relegated to \autoref{sec:deformation}. Partially, this is due to the fact that there are no ``standard'' examples of proper PHHSs like cotangent bundles\footnote{At least no canonical ones! In \autoref{sec:deformation}, we will equip the holomorphic cotangent bundle of a complex manifold with a non-canonical PHHS-structure.} as there are for RHSs and HHSs. On a deeper level, this is caused by the absence of a Darboux-like theorem. Clearly, there cannot be a counterpart to Darboux's theorem for PHSMs, since $J$ is usually not integrable and, hence, there are no coordinates in which $J$ assumes the standard form, let alone coordinates in which both $J$ and $\Omega = \Omega_R + i\Omega_I$ assume some standard form. Still, one can bring $J$ and $\Omega$ into standard form using local (usually non-integrable) frames:

\begin{Lem}[PHSMs in local frames]\label{lem:PHSM_in_sta_form}
 Let $(X,J;\Omega_R)$ be a PHSM with $\Omega\coloneqq \Omega_R - i\Omega_R (J\cdot,\cdot)$ and let $x_0\in X$ be any point. Then, there exists an open neighborhood $U\subset X$ of $x_0$ and a local frame $\theta^Q_1,\ldots, \theta^Q_n,\theta^P_1,\ldots,\theta^P_n$ of the smooth complex vector bundle $T^{\ast, (1,0)}X$ on $U$ such that $\Omega$ on $U$ can be expressed as
 \begin{gather*}
  \Omega\vert_U = \sum^{n}_{j} \theta^P_j\wedge \theta^Q_j.
 \end{gather*}
 In particular, the real dimension of $X$ is a multiple of $4$. The local frame can be chosen to be integrable (after shrinking $U$ if necessary) if and only if $J$ is integrable near $x_0$.
\end{Lem}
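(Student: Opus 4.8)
The plan is to reduce the statement to a fibrewise linear-algebra normal form for a non-degenerate alternating $\mathbb{C}$-bilinear form, and then to promote that normal form to a smooth \emph{moving} frame over a neighbourhood of $x_0$ by the symplectic Gram--Schmidt procedure. By \autoref{rem:omega_II}, on the smooth complex vector bundle $T^{(1,0)}X$ the form $\Omega$ is fibrewise $\mathbb{C}$-bilinear, alternating and non-degenerate. In particular, at the point $x_0$ a non-degenerate alternating $\mathbb{C}$-bilinear form forces $\dim_{\mathbb C}(T^{(1,0)}_{x_0}X)=\dim_{\mathbb C}(X)$ to be even, say $2n$, so $\dim_{\mathbb R}(X)=4n$. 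Now pick a smooth local frame $e_1,\dots,e_{2n}$ of $T^{(1,0)}X$ near $x_0$ (smooth vector bundles are locally trivial) and run the usual symplectic Gram--Schmidt, checking that each step can be carried out smoothly after shrinking $U$: there is an index $j$ with $\Omega(e_1,e_j)(x_0)\neq 0$, this persists on a neighbourhood, so $f_1\coloneqq e_1$ and $g_1\coloneqq e_j/\Omega(e_1,e_j)$ are smooth sections with $\Omega(f_1,g_1)\equiv 1$; replacing every remaining $e_k$ by $e_k-\Omega(e_k,g_1)f_1+\Omega(e_k,f_1)g_1$ produces a smooth frame of the smooth $\Omega$-orthogonal complement of $\mathrm{span}_{\mathbb C}(f_1,g_1)$, on which $\Omega$ is again non-degenerate, and one iterates. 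After $n$ steps this yields, over a possibly smaller $U$, a smooth frame $Q_1,\dots,Q_n,P_1,\dots,P_n$ of $T^{(1,0)}X$ with $\Omega(P_i,Q_j)=\delta_{ij}$ and $\Omega(P_i,P_j)=\Omega(Q_i,Q_j)=0$; taking $\theta^Q_1,\dots,\theta^Q_n,\theta^P_1,\dots,\theta^P_n$ to be the dual coframe of $T^{\ast,(1,0)}X$ and evaluating $\sum_j\theta^P_j\wedge\theta^Q_j$ on this frame gives $\Omega\vert_U=\sum_j\theta^P_j\wedge\theta^Q_j$, establishing the first claim.

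For the integrability dichotomy I would take an \emph{integrable} coframe to mean one whose members are all closed (equivalently, the coordinate coframe $dz_1,\dots,dz_{2n}$ of some holomorphic chart), and prove both implications. \textbf{($J$ integrable $\Rightarrow$ an integrable frame exists.)} If $J$ is integrable near $x_0$, then $\Omega$ is a form of type $(2,0)$, so $d\Omega$ has only components of type $(3,0)$ and $(2,1)$; on the other hand, since $\Omega_R=\tfrac12(\Omega+\overline{\Omega})$ is closed by \autoref{def:PHSM}, we get $\overline{d\Omega}=d\overline{\Omega}=-d\Omega$, and comparing bidegrees in this identity kills every component of $d\Omega$, so $d\Omega=0$. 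Thus $(U,\Omega\vert_U)$ is a HSM in the sense of \autoref{def:holo_sym_man}, and Darboux's theorem for HSMs (\autoref{thm:holo_Darboux}) supplies holomorphic coordinates $(Q_1,\dots,Q_n,P_1,\dots,P_n)$ near $x_0$ with $\Omega=\sum_j dP_j\wedge dQ_j$; the closed coframe $\theta^Q_j\coloneqq dQ_j$, $\theta^P_j\coloneqq dP_j$ is the desired one. \textbf{(An integrable frame exists $\Rightarrow$ $J$ integrable.)} Conversely, if $d\theta^Q_j=d\theta^P_j=0$, then by the Poincar\'e lemma (shrinking $U$) there are $\mathbb C$-valued functions with $\theta^Q_j=dQ_j$, $\theta^P_j=dP_j$. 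Since $\{dQ_j,dP_j\}$ frames $T^{\ast,(1,0)}X$ and its conjugate frames $T^{\ast,(0,1)}X$, the complexified differential of $\psi\coloneqq(Q_1,\dots,Q_n,P_1,\dots,P_n)\colon U\to\mathbb C^{2n}$ is a bundle isomorphism, hence $\psi$ is, after shrinking, a chart; a tangent vector lies in $T^{(1,0)}X$ exactly when it is annihilated by all $d\overline{Q}_j,d\overline{P}_j$, which identifies $T^{(1,0)}X$ with $\psi^\ast T^{(1,0)}\mathbb C^{2n}$ and $J$ with $\psi^\ast i$, so $\psi$ is a holomorphic chart for $J$ and $J$ is integrable near $x_0$.

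I expect the main obstacle not to be the moving-frame construction itself (every operation there is a $\mathbb C$-linear combination with smooth coefficients, one of them nowhere vanishing, so only a little bookkeeping of the successive shrinkings of $U$ and of linear independence along the way is needed), but rather the implication ``$J$ integrable $\Rightarrow \Omega$ closed'' invoked above: it is precisely the easy half of the equivalence between integrability of $J$ and closedness of $\Omega$ proved in \autoref{subsec:rel_HHS_PHHS}, and it is what licenses the appeal to the HSM Darboux theorem. The other point requiring a moment's care is the converse step, where one must check that a closed $(1,0)$-coframe really reconstitutes a holomorphic atlas for $J$ rather than merely a smooth chart; this is exactly the bidegree/eigenspace computation sketched above.
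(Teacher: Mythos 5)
Your proof is correct and follows essentially the same route as the paper: the pointwise/moving-frame symplectic Gram--Schmidt on a local frame of $T^{(1,0)}X$ with the dual coframe, Darboux's theorem for HSMs for the direction ``$J$ integrable $\Rightarrow$ integrable frame,'' and reconstruction of a holomorphic chart from a closed $(1,0)$-coframe for the converse. The only (harmless) deviation is that your bidegree argument for $d\Omega=0$ is a self-contained substitute for the paper's citation of Theorem \autoref{thm:rel_HSM_PHSM}.
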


\begin{Rem}[$J$ in standard form]\label{rem:J_sta}
 $J$ is also in standard form in the dual frame of $\theta^Q_1,\ldots, \theta^Q_n,\theta^P_1,\ldots,\theta^P_n$. One can see this as follows: the real and imaginary part of the local frame $\theta^Q_1,\ldots, \theta^Q_n,\theta^P_1,\ldots,\theta^P_n$ ($\theta = \theta^x + i \theta^y$) give rise to a local frame of the real cotangent bundle $T^{\ast}X$. Its dual frame ${\hat e}^{Q, x}_{1},\ldots, {\hat e}^{P,y}_{n}$ is a local frame of the tangent bundle $TX$. By setting ${\hat e}\coloneqq 1/2({\hat e}^x - i{\hat e}^y)$, one obtains a local frame of $T^{(1,0)}X$. On $T^{(1,0)}X$, $J$ simply acts by $i$, thus, ${\hat e}^x$ and ${\hat e}^y$ satisfy $J({\hat e}^x) = {\hat e}^y$ and $J({\hat e}^y) = -{\hat e}^x$. This is the standard form of $J$.
\end{Rem}

\begin{proof}
 Lemma \autoref{lem:PHSM_in_sta_form} follows from the application of the symplectic Gram-Schmidt process, which can be found in any textbook on symplectic geometry, to a local frame of $T^{(1,0)}X$. Confer Proposition 2.8 in \cite{bogomolov2020} for the complex analogue of the symplectic Gram-Schmidt process. For completeness' sake, we repeat the explicit construction here. Let $(X,J;\Omega_R)$ be a PHSM with $\Omega\coloneqq \Omega_R - i\Omega_R (J\cdot,\cdot)$ and take the real dimension of $X$ to be\linebreak $\text{dim}_\mathbb{R} (X) = 2m$, $m\in\mathbb{N}$. The dimension of $X$ is even, as $X$ admits an almost complex structure $J$. Then, the complex rank of the complexified bundle $T_\mathbb{C}X$ is also given by $2m$. Now recall the decomposition $T_\mathbb{C}X = T^{(1,0)}X\oplus T^{(0,1)}X$. Since the complex vector bundles $T^{(1,0)}X$ and $T^{(0,1)}X$ are isomorphic via the complex conjugation $v + iw\mapsto v- iw$, their fibers have the same complex dimension, namely $m$. Now let $x_0\in X$ be any point and pick a local frame $v_1,\ldots, v_m$ of $T^{(1,0)}X$ on an open neighborhood $U\subset X$ of $x_0$. $\Omega$ is non-degenerate on $T^{(1,0)}X$ by Remark \autoref{rem:omega_II}, hence, there exists a vector ${\hat e}^Q_1\vert_{x_0}\in T^{(1,0)}_{x_0}X$ such that $\Omega\vert_{x_0} (v_1\vert_{x_0}, {\hat e}^Q_1\vert_{x_0})\neq 0$. $v_1,\ldots, v_m$ is a local frame of $T^{(1,0)}X$ near $x_0$, thus, we can write
 \begin{gather*}
  {\hat e}^Q_1\vert_{x_0} = \sum^m_{j = 1} c_j\cdot v_j\vert_{x_0}
 \end{gather*}
 for some constants $c_j\in\mathbb{C}$. Now define the local section ${\hat e}^Q_1 \coloneqq \sum_{j} c_j\cdot v_j$ of $T^{(1,0)}X$. After shrinking $U$ while preserving $x_0\in U$ if necessary, one obtains $\Omega\vert_x (v_1\vert_x, {\hat e}^Q_1\vert_x)\neq 0$ for every $x\in U$. Setting ${\hat e}^P_1\coloneqq v_1$ and changing the normalization of ${\hat e}^Q_1$ if necessary allows us to write $\Omega\vert_U ({\hat e}^P_1, {\hat e}^Q_1) = 1$.\\
 If $m = 2$, we simply define $\theta^Q_1, \theta^P_1$ to be the dual frame of ${\hat e}^Q_1, {\hat e}^P_1$. If $m> 2$, then we can pick one local section of the frame $v_1,\ldots, v_m$, say $v_2$, such that ${\hat e}^Q_1\vert_x$, ${\hat e}^P_1\vert_x$, and $v_2\vert_x$ are $\mathbb{C}$-linearly independent for every $x\in U$ after shrinking $U\ni x_0$ if necessary. We set:
 \begin{gather*}
  \hat v_2\coloneqq v_2 - \Omega\vert_U (v_2, {\hat e}^Q_1)\cdot {\hat e}^P_1 + \Omega\vert_U (v_2, {\hat e}^P_1)\cdot {\hat e}^Q_1.
 \end{gather*}
 Then, ${\hat e}^Q_1$, ${\hat e}^P_1$, and $\hat v_2$ are still $\mathbb{C}$-linearly independent on $U$ and $\hat v_2$ is $\Omega$-orthogonal to ${\hat e}^Q_1$ and ${\hat e}^P_1$, i.e., $\Omega\vert_U (\hat v_2, {\hat e}^Q_1) = \Omega\vert_U (\hat v_2, {\hat e}^P_1) = 0$. Again by the non-degeneracy of $\Omega$, we can find a vector $e^Q_2\vert_{x_0}\in T^{(1,0)}_{x_0}X$ such that $\Omega\vert_{x_0} (\hat v_2\vert_{x_0}, e^Q_2\vert_{x_0})\neq 0$. As before, we can write
 \begin{gather*}
  {e}^Q_2\vert_{x_0} = \sum^m_{j = 1} d_j\cdot v_j\vert_{x_0}
 \end{gather*}
 for some constants $d_j\in\mathbb{C}$ and define the local section ${e}^Q_2 \coloneqq \sum_{j} d_j\cdot v_j$ of $T^{(1,0)}$. After shrinking $U$ and changing the normalization of $e^Q_2$ if necessary, we obtain $\Omega\vert_U (\hat v_2, e^Q_2) = 1$. Now we set:
 \begin{gather*}
  {\hat e}^P_2\coloneqq \hat v_2;\quad {\hat e}^Q_2\coloneqq e^Q_2 - \Omega\vert_U (e^Q_2, {\hat e}^Q_1)\cdot {\hat e}^P_1 + \Omega\vert_U (e^Q_2, {\hat e}^P_1)\cdot {\hat e}^Q_1.
 \end{gather*}
 Proceeding inductively gives us a local frame ${\hat e}^Q_1,\ldots {\hat e}^Q_n$, ${\hat e}^P_1,\ldots, {\hat e}^P_n$ of $T^{(1,0)}X$ on some neighborhood $U$ of $x_0$ ($n\coloneqq m/2$) satisfying:
 \begin{gather*}
  \Omega\vert_U ({\hat e}^Q_i, {\hat e}^Q_j) = \Omega\vert_U ({\hat e}^P_i, {\hat e}^P_j) = 0;\quad \Omega\vert_U ({\hat e}^P_i, {\hat e}^Q_j) = \delta_{ij}.
 \end{gather*}
 Thus, the frame $\theta^Q_1,\ldots, \theta^Q_n$, $\theta^P_1,\ldots, \theta^P_n$ dual to the frame ${\hat e}^Q_1,\ldots {\hat e}^Q_n$, ${\hat e}^P_1,\ldots, {\hat e}^P_n$ is the desired local frame of $T^{\ast, (1,0)}X$ near $x_0$ in which $\Omega$ takes the form:
 \begin{gather*}
  \Omega\vert_U = \sum^{n}_{j} \theta^P_j\wedge \theta^Q_j.
 \end{gather*}
 In particular, the real dimension of $X$ is $4n$. If the frame $\theta^Q_1,\ldots$ is integrable, then the frame ${\hat e}^Q_1,\ldots$ is also integrable and there exists a (holomorphic) chart\linebreak $\phi = (Q_1,\ldots, Q_n, P_1,\ldots, P_n):U\to V\subset\mathbb{C}^{2n}$ near $x_0$ such that:
 \begin{gather*}
  {\hat e}^Q_j\equiv \pa_{Q_j};\quad {\hat e}^P_j\equiv \pa_{P_j};\quad \theta^Q_j\equiv dQ_j;\quad \theta^P_j\equiv dP_j.
 \end{gather*}
 By Remark \autoref{rem:J_sta}, $J$ also assumes its standard form in this chart, thus, the Nijenhuis tensor of $J$ vanishes on an open neighborhood of $x_0$ and $J$ is integrable near $x_0$. The converse direction follows from Theorem \autoref{thm:rel_HSM_PHSM} (cf. \autoref{subsec:rel_HHS_PHHS}) and Darboux's theorem for HSMs (cf. Theorem \autoref{thm:holo_Darboux} and \autoref{app:darboux}).
\end{proof}

Next, let us investigate the dynamics of a PHHS $(X,J;\Omega_R,\mH_R)$. To do so, we need to introduce pseudo-holomorphic Hamiltonian vector fields and trajectories. Hereby, we imitate the definitions from \autoref{sec:HHS}. For the sake of simplicity, we always associate from now on with a PHHS $(X,J;\Omega_R,\mH_R)$ the forms $\Omega_I\coloneqq -\Omega_R (J\cdot,\cdot)$ and $\Omega\coloneqq \Omega_R + i\Omega_I$ as well as the functions $\mH_I$ and $\mH\coloneqq \mH_R + i\mH_I$, where $\mH_I$ is a primitive of $\Omega_R (J(X^{\Omega_R}_{\mH_R}),\cdot)$.

\begin{Def}[Pseudo-holomorphic Hamiltonian vector fields and trajectories]\label{def:pseudo-holo_ham_field_and_traj}
 Let\linebreak $(X,J;\Omega_R, \mH_R)$ be a PHHS. We call the smooth section $X_\mH$ of $T^{(1,0)}X$ defined by\linebreak $\iota_{X_\mH}\Omega = -d\mH$ the (pseudo-holomorphic) \textbf{Hamiltonian vector field} of the\linebreak PHHS $(X,J;\Omega_R, \mH_R)$. Furthermore, we call a pseudo-holomorphic map $\gamma:U\to X$ a \textbf{pseudo-holomorphic trajectory} of the PHHS $(X,J;\Omega_R, \mH_R)$ iff $\gamma$ satisfies the pseudo-holomorphic integral curve equation:
 \begin{gather*}
  \frac{\pa\gamma}{\pa z} (z)\coloneqq \frac{1}{2}\left(\frac{\pa \gamma}{\pa t} (z) - i\frac{\pa\gamma}{\pa s} (z)\right) = X_\mH (\gamma (z))\quad\forall z = t + is\in U,
 \end{gather*}
 where $U\subset\mathbb{C}$ is an open and connected subset with standard complex structure $j = i$. We call a pseudo-holomorphic trajectory $\gamma:U\to X$ \textbf{maximal} iff for every pseudo-holomorphic trajectory $\hat\gamma:\hat U\to X$ with $U\subset \hat U$ and $\hat\gamma\vert_U = \gamma$ one has $\hat U = U$ and $\hat \gamma = \gamma$.
\end{Def}

Alternatively, one can define pseudo-holomorphic Hamiltonian vector fields and trajectories in terms of the vector field $X^{\Omega_R}_{\mH_R}$:

\begin{Prop}[Alternative Definition of $X_\mH$ and $\gamma$]\label{prop:pseudo-holo_ham_field_and_traj}
 Let $(X,J;\Omega_R,\mH_R)$ be a PHHS with vector field $X^{\Omega_R}_{\mH_R}$ defined by $\Omega_R (X^{\Omega_R}_{\mH_R},\cdot) = -d\mH_R$. Then:
 \begin{gather*}
  X_\mH = \frac{1}{2} \left( X^{\Omega_R}_{\mH_R} - i\cdot J(X^{\Omega_R}_{\mH_R})\right).
 \end{gather*}
 Now, let $U\subset\mathbb{C}$ be an open and connected subset and $\gamma:U\to X$ be a map. Then, $\gamma$ is a pseudo-holomorphic trajectory iff $\gamma_s$ defined by $\gamma_s (t)\coloneqq \gamma (t + is)$ is an integral curve of $X^{\Omega_R}_{\mH_R}$ for every suitable $s\in\mathbb{R}$ and $\gamma:U\to X$ is pseudo-holomorphic.
\end{Prop}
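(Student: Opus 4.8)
The plan is to prove both assertions by unwinding the definitions, using only the anticompatibility relations recorded in Remark~\autoref{rem:omega_II}, the defining equations of $X^{\Omega_R}_{\mH_R}$ and of $\mH_I$, and the fact that $X_\mH$ is the \emph{unique} section of $T^{(1,0)}X$ with $\iota_{X_\mH}\Omega = -d\mH$ (uniqueness being immediate from the non-degeneracy of $\Omega$ on $T^{(1,0)}X$, cf. Remark~\autoref{rem:omega_II}).

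\textbf{Step 1: the formula for $X_\mH$.} Abbreviate $X^R\coloneqq X^{\Omega_R}_{\mH_R}$ and put $V\coloneqq \tfrac12\bigl(X^R - i\cdot J(X^R)\bigr)$. The fiberwise map $v\mapsto \tfrac12(v - iJv)$ is a $\mathbb{C}$-linear isomorphism $TX\to T^{(1,0)}X$ — this is purely pointwise linear algebra and does not use integrability of $J$ (cf. the linear part of Proposition~\autoref{prop:holo_vec_field_equiv_J_pre_vec_field} together with Remark~\autoref{rem:omega_I}) — so $V$ is a genuine smooth section of $T^{(1,0)}X$, and it suffices to check $\iota_V\Omega = -d\mH$. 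From $\Omega(J\cdot,\cdot) = i\Omega$ (Remark~\autoref{rem:omega_II}) one gets $\iota_{J(X^R)}\Omega = i\,\iota_{X^R}\Omega$, hence
\begin{gather*}
 \iota_V\Omega = \tfrac12\,\iota_{X^R}\Omega - \tfrac{i}{2}\,\iota_{J(X^R)}\Omega = \tfrac12\,\iota_{X^R}\Omega - \tfrac{i}{2}\cdot i\,\iota_{X^R}\Omega = \iota_{X^R}\Omega.
\end{gather*}
Splitting into real and imaginary parts, $\iota_{X^R}\Omega_R = -d\mH_R$ by definition of $X^R$, while $\iota_{X^R}\Omega_I = -\Omega_R\bigl(J(X^R),\cdot\bigr) = -d\mH_I$ since $\Omega_I = -\Omega_R(J\cdot,\cdot)$ and $\mH_I$ was chosen as a primitive of $\Omega_R\bigl(J(X^{\Omega_R}_{\mH_R}),\cdot\bigr)$. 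Therefore $\iota_V\Omega = -d\mH_R - i\,d\mH_I = -d\mH$, and uniqueness forces $V = X_\mH$.

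\textbf{Step 2: the characterization of trajectories.} A map $\gamma:U\to X$ with $U\subset\Cx$ open and connected, $j=i$, is pseudo-holomorphic iff $d\gamma\circ j = J\circ d\gamma$; because $U$ is one complex-dimensional this is the single Cauchy--Riemann equation $\tfrac{\pa\gamma}{\pa s} = J\bigl(\tfrac{\pa\gamma}{\pa t}\bigr)$. For such $\gamma$,
\begin{gather*}
 \frac{\pa\gamma}{\pa z}(z) = \frac12\left(\frac{\pa\gamma}{\pa t}(z) - i\,\frac{\pa\gamma}{\pa s}(z)\right) = \frac12\left(\frac{\pa\gamma}{\pa t}(z) - i\cdot J\!\left(\frac{\pa\gamma}{\pa t}(z)\right)\right),
\end{gather*}
i.e. $\tfrac{\pa\gamma}{\pa z}(z)$ is the image of the real tangent vector $\tfrac{\pa\gamma}{\pa t}(z)$ under the isomorphism $v\mapsto\tfrac12(v-iJv)$, whereas $X_\mH(\gamma(z))$ is the image of $X^R(\gamma(z))$ under the same isomorphism by Step 1. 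By injectivity, the pseudo-holomorphic integral curve equation $\tfrac{\pa\gamma}{\pa z} = X_\mH(\gamma)$ is equivalent to $\tfrac{\pa\gamma}{\pa t}(t+is) = X^R(\gamma(t+is))$ for all $t+is\in U$, that is, to $\gamma_s(t)\coloneqq\gamma(t+is)$ being an integral curve of $X^{\Omega_R}_{\mH_R}$ for every $s$ with $\{t:t+is\in U\}\neq\emptyset$ (this last condition is pointwise, so connectedness of $U$ plays no role). Conversely, if $\gamma$ is pseudo-holomorphic and each $\gamma_s$ solves $\dot\gamma_s = X^{\Omega_R}_{\mH_R}(\gamma_s)$, then $\tfrac{\pa\gamma}{\pa t} = X^R(\gamma)$ and the Cauchy--Riemann equation give $\tfrac{\pa\gamma}{\pa s} = J(X^R(\gamma))$, whence $\tfrac{\pa\gamma}{\pa z} = X_\mH(\gamma)$ and $\gamma$ is a pseudo-holomorphic trajectory.

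\textbf{Main obstacle.} There is no genuine difficulty here; the statement is definitional once the computation of $X_\mH$ is in place. The two points that actually require attention are (i) checking that $V$ really takes values in $T^{(1,0)}X$, so that the identity $V = X_\mH$ is even well-posed, and (ii) keeping the two anticompatibility sign conventions straight — in particular, that the sign built into Definition~\autoref{def:PHHS_2} makes $d\mH_I = \Omega_R\bigl(J(X^{\Omega_R}_{\mH_R}),\cdot\bigr)$ match $\iota_{X^R}\Omega_I = -\Omega_R\bigl(J(X^R),\cdot\bigr)$, so that no spurious sign survives in $\iota_V\Omega = -d\mH$.
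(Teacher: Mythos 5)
Your proposal is correct and follows exactly the route the paper intends: the paper's own proof simply declares the formula for $X_\mH$ to be ``a straightforward calculation'' and reduces the trajectory statement to the Cauchy--Riemann relation $\tfrac{\pa\gamma}{\pa s} = J\bigl(\tfrac{\pa\gamma}{\pa t}\bigr)$, which is precisely your Step 2. Your Step 1 just writes out the omitted computation (including the useful observations that $V$ is genuinely a section of $T^{(1,0)}X$ and that uniqueness follows from non-degeneracy of $\Omega$ on $T^{(1,0)}X$), so there is nothing to correct.
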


\begin{proof}
 A straightforward calculation verifies that $X_\mH$ as in Definition \autoref{def:pseudo-holo_ham_field_and_traj} satisfies the equation above. To prove the statement about pseudo-holomorphic trajectories, consider the real part of the pseudo-holomorphic integral curve equation and observe that due to the pseudo-holomorphicity of $\gamma$:
 \begin{gather*}
  \frac{\pa\gamma}{\pa s} = J(\frac{\pa\gamma}{\pa t}).
 \end{gather*}
\end{proof}

We designed PHHSs in such a way that all properties we found for HHSs in \autoref{sec:PHHS} (almost) completely transfer to PHHSs. For instance, we find the following PHHS-counterpart to Proposition \autoref{prop:holo_traj}:

\begin{Prop}[Existence and uniqueness of pseudo-holomorphic trajectories]\label{prop:pseudo-holo_traj}
 Let $(X,J;\Omega_R,\mH_R)$ be a PHHS. Then, for any $z_0\in\mathbb{C}$ and $x_0\in X$, there exists an open and connected subset $U\subset\mathbb{C}$ and a pseudo-holomorphic trajectory $\gamma^{z_0, x_0}:U\to X$ of $(X,J;\Omega_R,\mH_R)$ with $\gamma^{z_0, x_0} (z_0) = x_0$. Two pseudo-holomorphic trajectories\linebreak $\gamma^{z_0, x_0}_1:U_1\to X$ and $\gamma^{z_0, x_0}_2:U_2\to X$ with $\gamma^{z_0, x_0}_1 (z_0) = x_0 = \gamma^{z_0, x_0}_2 (z_0)$ locally coincide, in particular, they are equal iff their domains $U_1$ and $U_2$ are equal. Furthermore, the pseudo-holomorphic trajectory $\gamma^{z_0, x_0}$ depends pseudo-holomorphically on $z_0$, but, in general, \underline{only} smoothly on $x_0$.
\end{Prop}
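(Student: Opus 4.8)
The plan is to transcribe the proof of Proposition~\ref{prop:holo_traj} almost word for word, replacing ``holomorphic'' by ``pseudo-holomorphic'' throughout, and supplying two ingredients that in the integrable case were automatic. Write $X^R \coloneqq X^{\Omega_R}_{\mH_R}$, so that $X_\mH = \tfrac12\bigl(X^R - i\,J(X^R)\bigr)$ by Proposition~\ref{prop:pseudo-holo_ham_field_and_traj}. The first ingredient is that $X^R$ and $J(X^R)$ still commute: by Property~3 of Definition~\ref{def:PHHS_1} one has $J(X^R) = -X^{\Omega_R}_{\mH_I}$, and by Property~4 (equivalently, by the identity $\{\mH_R,\mH_I\}=0$ established in the proof of Proposition~\ref{prop:def_PHHS_agree}) the bracket $[X^R, X^{\Omega_R}_{\mH_I}]$ vanishes. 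Hence, for $z_0 = t_0 + i s_0$ and $|t-t_0|, |s-s_0|$ small enough, the expression
\[
 \gamma^{z_0,x_0}(t+is) \coloneqq \varphi^{J(X^R)}_{s-s_0}\circ\varphi^{X^R}_{t-t_0}(x_0) = \varphi^{X^R}_{t-t_0}\circ\varphi^{J(X^R)}_{s-s_0}(x_0)
\]
is well defined and smooth on an open square $U \ni z_0$ and satisfies $\gamma^{z_0,x_0}(z_0) = x_0$. Differentiating the two representations yields $\partial\gamma/\partial t = X^R(\gamma)$ and $\partial\gamma/\partial s = J(X^R)(\gamma) = J(\partial\gamma/\partial t)$, so $\gamma^{z_0,x_0}$ is pseudo-holomorphic, and then $\partial\gamma/\partial z = \tfrac12\bigl(X^R - i J(X^R)\bigr)(\gamma) = X_\mH(\gamma)$; thus $\gamma^{z_0,x_0}$ is a pseudo-holomorphic trajectory, proving existence.

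For local uniqueness, let $\hat\gamma$ be any pseudo-holomorphic trajectory with $\hat\gamma(z_0)=x_0$. By Proposition~\ref{prop:pseudo-holo_ham_field_and_traj} each slice $\hat\gamma_s$ is an integral curve of $X^R$, while pseudo-holomorphicity gives $\partial\hat\gamma/\partial s = J(\partial\hat\gamma/\partial t) = J(X^R)(\hat\gamma)$, so $s\mapsto\hat\gamma(t_0+is)$ is an integral curve of $J(X^R)$ through $x_0$. Uniqueness of solutions of ODEs then forces $\hat\gamma(t+is) = \varphi^{X^R}_{t-t_0}\bigl(\varphi^{J(X^R)}_{s-s_0}(x_0)\bigr) = \gamma^{z_0,x_0}(t+is)$ near $z_0$; note that this replaces the appeal to the identity theorem for holomorphic functions used in Proposition~\ref{prop:holo_traj}, which is unavailable here, by pure ODE uniqueness. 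To pass to the global statement, let $\gamma_1:U_1\to X$ and $\gamma_2:U_2\to X$ agree at $z_0$ and let $W$ be the connected component of $U_1\cap U_2$ containing $z_0$. The set $\{z\in W \mid \gamma_1(z)=\gamma_2(z)\}$ is closed by continuity and open by the local uniqueness just proved (near any point of agreement both maps equal the same composition of flows based at that point), hence equals $W$; in particular, if $U_1 = U_2$ (both connected) then $\gamma_1 = \gamma_2$, and the converse is trivial.

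It remains to discuss dependence on the data. Pseudo-holomorphic dependence on $z_0$ is immediate from $\gamma^{z_0,x_0}(z) = \gamma^{0,x_0}(z-z_0)$, since $z_0\mapsto z-z_0$ is holomorphic and $\gamma^{0,x_0}$ is pseudo-holomorphic. Dependence on $x_0$ is obtained from smooth dependence of ODE solutions on their initial value applied to the two flows $\varphi^{X^R}$ and $\varphi^{J(X^R)}$, and this is the one place where the PHHS situation genuinely differs from the HHS one: it cannot in general be improved to pseudo-holomorphic dependence because, when $J$ is non-integrable, $X^R$ need not be $J$-preserving. Indeed $\iota_{X^R}\Omega_R = -d\mH_R$ together with $d\Omega_R = 0$ give $L_{X^R}\Omega_R = 0$, whereas $\iota_{X^R}\Omega_I = -d\mH_I$ gives only $L_{X^R}\Omega_I = \iota_{X^R} d\Omega_I$, which need not vanish precisely because $\Omega_I$ is not closed; since $J$ is determined by $\Omega_R$ and $\Omega_I$ through $\Omega_I = -\Omega_R(J\cdot,\cdot)$, the flow of $X^R$ does not preserve $J$ and is therefore merely a smooth diffeomorphism, not a pseudo-holomorphic one. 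I expect this last point --- pinning down exactly why the dependence on $x_0$ is no better than smooth --- to be the only step requiring genuine thought; everything else is a faithful copy of the integrable argument.
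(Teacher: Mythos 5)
Your proposal is correct and follows essentially the same route as the paper: existence via the commuting flows of $X^{\Omega_R}_{\mH_R}$ and $J(X^{\Omega_R}_{\mH_R})$ (justified by Properties 3 and 4 of Definition \autoref{def:PHHS_1}), local uniqueness by replacing the identity theorem with ODE uniqueness applied to the flow formula, global agreement by propagating along the connected domain (you use an open-closed argument where the paper covers a path by finitely many rectangles — the same idea), and translation invariance in $z_0$ plus smooth dependence of flows on initial conditions for the last claim. Your closing observation on why the $x_0$-dependence cannot be upgraded matches the paper's Remark \autoref{rem:x_0-dependance} and Proposition \autoref{prop:J-preserving_criteria}.
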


\begin{proof}
 The proof of Proposition \autoref{prop:pseudo-holo_traj} works very similarly to the proof of Proposition \autoref{prop:holo_traj}. By Definition \autoref{def:PHHS_1} and Proposition \autoref{prop:pseudo-holo_ham_field_and_traj}, the real and imaginary part of $X_\mH$ commute, hence, we can proceed as in the proof of Proposition \autoref{prop:holo_traj} to show that pseudo-holomorphic trajectories $\gamma^{z_0,x_0}$ given an initial value $\gamma^{z_0, x_0} (z_0) = x_0$ exist.\\
 To prove uniqueness, we observe that the formula
 \begin{gather*}
  \gamma^{z_0, x_0}(z)\coloneqq \varphi^{J(X^{\Omega_R}_{\mH_R})}_{s-s_0}\circ\varphi^{X^{\Omega_R}_{\mH_R}}_{t-t_0} (x_0)\equiv \varphi^{X^{\Omega_R}_{\mH_R}}_{t-t_0}\circ\varphi^{J(X^{\Omega_R}_{\mH_R})}_{s-s_0} (x_0)\equiv\varphi^{(t-t_0)X^{\Omega_R}_{\mH_R} + (s-s_0)J(X^{\Omega_R}_{\mH_R})}_1 (x_0),
 \end{gather*}
 where $z = t+is$, uniquely determines $\gamma^{z_0, x_0}$ on a small rectangle in $\mathbb{C}$ near $z_0 = t_0 + is_0$. The rest now follows by covering a path between $z_0$ and any point $z_1$ in $U_1\equiv U_2$ with a finite number of such rectangles.\\
 Lastly, let us consider the dependence of $\gamma^{z_0, x_0}$ on $z_0\in\mathbb{C}$ and $x_0\in X$. Again, $\gamma^{z_1, x_0} (z)$ and $\gamma^{z_2, x_0} (z)$ only differ by a translation in $z$. As pseudo-holomorphic trajectories are pseudo-holomorphic maps, the $z_0$-dependence is also pseudo-holomorphic. For the $x_0$-dependence, we need to consider the flow of $X^{\Omega_R}_{\mH_R}$ and $J(X^{\Omega_R}_{\mH_R})$. Both $X^{\Omega_R}_{\mH_R}$ and $J(X^{\Omega_R}_{\mH_R})$ are smooth vector fields, thus, their flows are smooth as well concluding the proof.
\end{proof}

\begin{Rem}[$x_0$-dependence]\label{rem:x_0-dependance}
 Note that holomorphic trajectories of HHSs depend holomorphically on $x_0$, while pseudo-holomorphic trajectories of PHHSs do \underline{not} generally depend pseudo-holomorphically on $x_0$. This distinction can be traced back to the Hamiltonian vector field $X_\mH$. For HHSs, $X_\mH$ is a holomorphic vector field, in particular its real and imaginary part are $J$-preserving vector fields (cf. Proposition \autoref{prop:holo_vec_field_equiv_J_pre_vec_field}) implying that the differential of their flows commute with $J$. For PHHSs, this does not need to be the case anymore: neither $X^{\Omega_R}_{\mH_R}$ nor $J(X^{\Omega_R}_{\mH_R})$ are required to be $J$-preserving! In fact, we study an example of a proper PHHS in \autoref{sec:deformation} where $X^{\Omega_R}_{\mH_R}$ is $J$-preserving, but $J(X^{\Omega_R}_{\mH_R})$ is not.
\end{Rem}

As for HHSs, the maximal trajectories of a PHHS, given an initial value, do not need to be unique, however, we can still pseudo-holomorphically foliate energy hypersurfaces $\mH^{-1}(E)$ of a PHHS:

\begin{Prop}[Pseudo-holomorphic foliation of a regular hypersurface]\label{prop:pseudo-holo_foli}
 Let\linebreak $(X,J;\Omega_R,\mH_R)$ be a PHHS with Hamiltonian vector field $X_\mH = 1/2 (X^{\Omega_R}_{\mH_R} - i J(X^{\Omega_R}_{\mH_R}))$ and regular\footnote{As before, a PHHS $(X,J;\Omega_R,\mH_R)$ is regular at the energy $E\in\mathbb{C}$ iff $d\mH$ or, equivalently, $d\mH_R$ does not vanish on $\mH^{-1}(E)$.} value $E$ of $\mH$. Then, the energy hypersurface $\mH^{-1}(E)$ admits a pseudo-holomorphic foliation. The leaf $L_{x_0}$ of this foliation through a point $x_0\in\mH^{-1}(E)$ is given by
 \begin{align*}
  L_{x_0}\coloneqq \{y\in X\mid &y = \varphi^{X^{\Omega_R}_{\mH_R}}_{t_1}\circ\varphi^{J(X^{\Omega_R}_{\mH_R})}_{s_1}\circ\varphi^{X^{\Omega_R}_{\mH_R}}_{t_2}\circ\varphi^{J(X^{\Omega_R}_{\mH_R})}_{s_2}\circ\ldots\circ\varphi^{X^{\Omega_R}_{\mH_R}}_{t_n}\circ\varphi^{J(X^{\Omega_R}_{\mH_R})}_{s_n} (x_0);\\
  &t_1,\ldots,t_n, s_1,\ldots, s_n\in\mathbb{R};\ n\in\mathbb{N}\},
 \end{align*}
 where $\varphi^{X^{\Omega_R}_{\mH_R}}_{t_j}$ and $\varphi^{J(X^{\Omega_R}_{\mH_R})}_{s_j}$ are the flows of $X^{\Omega_R}_{\mH_R}$ and $J(X^{\Omega_R}_{\mH_R})$ for time $t_j$ and $s_j$, respectively. Every pseudo-holomorphic trajectory of $(X,J;\Omega_R,\mH_R)$ with energy $E$ is completely contained in one such leaf.
\end{Prop}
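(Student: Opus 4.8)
The plan is to transcribe the proof of Proposition \autoref{prop:holo_foli} into the pseudo-holomorphic setting; the only genuine change is that the holomorphic Frobenius theorem is no longer available, so the word ``pseudo-holomorphic'' in the conclusion has to be given a precise meaning by hand. First I would check that $\mH^{-1}(E)$ is an almost complex submanifold of $(X,J)$ on which both $X^{\Omega_R}_{\mH_R}$ and $J(X^{\Omega_R}_{\mH_R})$ are tangent. Since $E$ is a regular value, $\mH^{-1}(E)$ is a real submanifold whose tangent space at each point is $\ker d\mH_R\cap\ker d\mH_I = \ker d\mH$; by Remark \autoref{rem:H_pseudo-holo} we have $d\mH\circ J = i\,d\mH$, so this kernel is $J$-invariant and $\mH^{-1}(E)$ is almost complex. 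Writing $X_\mH = \tfrac12(X^{\Omega_R}_{\mH_R}-iJ(X^{\Omega_R}_{\mH_R}))$ as in Proposition \autoref{prop:pseudo-holo_ham_field_and_traj} and using that $d\mH$ vanishes on $T^{(0,1)}X$, one obtains $d\mH(X^{\Omega_R}_{\mH_R}) = d\mH(X_\mH) = -\Omega(X_\mH,X_\mH) = 0$ and hence also $d\mH(J(X^{\Omega_R}_{\mH_R})) = i\,d\mH(X^{\Omega_R}_{\mH_R}) = 0$, so both fields restrict to vector fields on $\mH^{-1}(E)$.

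Next, exactly as in Proposition \autoref{prop:holo_foli}: on $\mH^{-1}(E)$ neither field vanishes (regularity of $E$), and they are $\R$-linearly independent at every point, since otherwise one would produce a real number squaring to $-1$. They commute, because $J(X^{\Omega_R}_{\mH_R}) = -X^{\Omega_R}_{\mH_I}$ by Property 3 in Definition \autoref{def:PHHS_1} and $[X^{\Omega_R}_{\mH_R},X^{\Omega_R}_{\mH_I}] = 0$ by Property 4 (equivalently, $\{\mH_R,\mH_I\} = 0$ as computed in the proof of Proposition \autoref{prop:def_PHHS_agree}). Hence the smooth rank-two distribution $D \coloneqq \mathrm{span}_\R(X^{\Omega_R}_{\mH_R},J(X^{\Omega_R}_{\mH_R}))$ on $\mH^{-1}(E)$ is involutive, and Frobenius' theorem yields a foliation of $\mH^{-1}(E)$; since $D$ is globally spanned by these two commuting fields, the leaf through $x_0$ is precisely the orbit set obtained by composing their flows in all finite sequences, i.e. the set $L_{x_0}$ in the statement (one cannot shorten this to $n=1$, cf.\ Remark \autoref{rem:flows_do_not_commute}).

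It remains to see that this foliation is pseudo-holomorphic, which I would define to mean that the tangent distribution is $J$-invariant, equivalently that each leaf is an almost complex submanifold of $(X,J)$ --- a notion that reduces to the usual holomorphic foliation when $J$ is integrable. Here $D$ is $J$-invariant, because $J(X^{\Omega_R}_{\mH_R})\in D$ and $J(J(X^{\Omega_R}_{\mH_R})) = -X^{\Omega_R}_{\mH_R}\in D$; and since the leaves are real two-dimensional, the restriction of $J$ to each leaf is automatically integrable, so every leaf is canonically a Riemann surface with pseudo-holomorphic inclusion into $X$. Finally, for a pseudo-holomorphic trajectory $\gamma$ of energy $E$, the function $\mH\circ\gamma$ is constant (as $d\mH(X_\mH) = -\Omega(X_\mH,X_\mH) = 0$), so its image lies in $\mH^{-1}(E)$; locally $\gamma$ is a composition of the flows of $X^{\Omega_R}_{\mH_R}$ and $J(X^{\Omega_R}_{\mH_R})$ by the construction in Proposition \autoref{prop:pseudo-holo_traj}, so its image is locally, and therefore (by connectedness of the domain) globally, contained in a single leaf $L_{x_0}$.

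The main obstacle is precisely the content of the third paragraph: unlike the HHS case there is no holomorphic Frobenius theorem when $J$ is non-integrable, so one must isolate the right definition of a pseudo-holomorphic foliation and observe that the only integrability issue that could arise --- that of the leafwise almost complex structure --- vanishes for free in real dimension two. Everything else is a routine copy of the argument for Proposition \autoref{prop:holo_foli} together with the Poisson-bracket computation already carried out in the proof of Proposition \autoref{prop:def_PHHS_agree}.
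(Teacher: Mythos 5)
Your proposal is correct and follows essentially the same route as the paper: the paper likewise defines a pseudo-holomorphic foliation as a smooth foliation with $J$-invariant leaf tangent spaces and then runs the Frobenius argument of Proposition \autoref{prop:holo_foli} verbatim on $X^{\Omega_R}_{\mH_R}$ and $J(X^{\Omega_R}_{\mH_R})$, with commutativity supplied by Definition \autoref{def:PHHS_1}. The only difference is that you spell out the details the paper leaves implicit (tangency, independence, containment of trajectories) and add the correct but unneeded observation that the leafwise almost complex structure is automatically integrable in real dimension two.
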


\begin{proof}
 First, we need to clarify the notion of a pseudo-holomorphic foliation. In order to do that, recall the definition of a holomorphic foliation. A ($d$-dimensional) holomorphic foliation $\{L_{x_0}\}_{x_0\in I}$ ($I$: index set) of a complex manifold $X$ is a decomposition of\linebreak $X = \bigcup_{x_0\in I} L_{x_0}$ into a disjoint union of leaves $L_{x_0}$, path-connected subsets of $X$, such that for every point $x\in X$ there exists a holomorphic chart $\phi = (z_1,\ldots, z_n):U\to V\subset\mathbb{C}^n$ of $X$ near $x$ fulfilling: for every leaf $L_{x_0}$ with $U\cap L_{x_0}\neq\emptyset$, the connected components of $U\cap L_{x_0}$ are given by $z_{d+1} = c_{d+1}$,\ldots, $z_n = c_n$ for some constants $c_{d+1},\ldots, c_n\in\mathbb{C}$. Clearly, we cannot directly transfer this definition to the non-integrable case, since generic almost complex manifolds do not admit holomorphic charts. Therefore, we call $\{L_{x_0}\}_{x_0\in I}$ a pseudo-holomorphic foliation of an almost complex manifold $(X,J)$ iff $\{L_{x_0}\}_{x_0\in I}$ is a (smooth) foliation of $X$ and the tangent spaces of the leaves $L_{x_0}$ are closed under the action of $J$. We can now prove the last proposition in the same way as Proposition \autoref{prop:holo_foli} by applying the Frobenius theorem to the vector fields $X^{\Omega_R}_{\mH_R}$ and $J(X^{\Omega_R}_{\mH_R})$. 
\end{proof}

Similarly to HHSs, we can also define the notion of geometric trajectories for\linebreak PHHSs. We simply copy Definition \autoref{def:geo_traj} and replace the term ``holomorphic'' with\linebreak ``pseudo-holomorphic''. All results we found in \autoref{subsec:holo_traj} for geometric trajectories of HHSs still hold in the pseudo-holomorphic case. In particular, Proposition \autoref{prop:geo_traj} is still true for PHHSs. The proof is essentially the same as in the holomorphic case. However, the vector field $Y_\mH$ on the Riemann surface $\Sigma$ is a priori only a smooth section of $T^{(1,0)}\Sigma$. $Y_\mH$ becomes a holomorphic vector field on $\Sigma$ by noting that the real and imaginary part of the Hamiltonian vector field $X_\mH$ commute by construction. Thus, the real and imaginary part of $Y_\mH$ also commute, as the push-forward of $\gamma$ is a Lie algebra homomorphism, i.e., $\gamma_\ast [V,W] = [\gamma_\ast V, \gamma_\ast W]$ for vector fields\footnote{Precisely speaking, this is not correct, since $\gamma:\Sigma\to X$ is only an immersion and not a diffeomorphism, hence, the push-forward of $\gamma$ is not well-defined. Nevertheless, the argument still holds if we consider $\gamma_\ast V$ and $\gamma_\ast W$ to be sections of the pull-back bundle $\gamma^\ast TX$ and adjust the definition of the Lie bracket accordingly.} $V$ and $W$ on $\Sigma$. Now note that, for a Riemann surface $\Sigma$, the real and imaginary part of a smooth section $V$ of $T^{(1,0)}\Sigma$ commute if and only if $V$ is a holomorphic vector field on $\Sigma$. This is easily verified in holomorphic charts of $\Sigma$. This shows the holomorphicity of $Y_\mH$. The proofs for the remaining results regarding geometric trajectories work as in the holomorphic case after adjusting the language where need be.\\
Before we conclude this subsection, we want to formulate action functionals and principles for pseudo-holomorphic trajectories. By construction of PHHSs, this can be done in the same way as in \autoref{subsec:holo_action_fun_and_prin}. First, we note that a PHHS $(X,J;\Omega_R,\mH_R)$ decomposes into multiple RHSs. In contrast to HHSs, we only obtain two RHSs this time, namely $(X,\Omega_R,\mH_R)$ and $(X,\Omega_R,\mH_I)$, since $\Omega_I$ is, in general, not closed. However, this suffices to find action functionals for pseudo-holomorphic trajectories, as only two of the four underlying RHSs of a HHS are subject to different dynamics. If the PHHS $(X,J;\Omega_R,\mH_R)$ is exact, i.e., $\Omega_R = d\Lambda_R$, the two RHSs $(X,\Omega_R,\mH_R)$ and $(X,\Omega_R,\mH_I)$ are also exact and possess themselves action functionals. As in the case of HHSs, we can now average these action functionals over the remaining time variable and take suitable linear combinations afterwards to find the following action functional for pseudo-holomorphic trajectories:

\begin{Lem}[Action principle for pseudo-holomorphic trajectories]\label{lem:pseudo-holo_action_prin_para}
 {\textcolor{white}{Easter Egg}}\linebreak Let $(X,J;\Omega_R = d\Lambda_R,\mH_R)$ be an exact PHHS. For $\alpha\in\mathbb{R}\backslash\{n\cdot\pi\mid n\in\mathbb{Z}\}$, let\linebreak $P_\alpha\coloneqq [t_1,t_2] + e^{i\alpha}[r_1,r_2]\subset\mathbb{C}$ be a parallelogram in the complex plane with real numbers $t_1< t_2$ and $r_1< r_2$. Denote the space of smooth maps from $P_\alpha$ to $X$ by $\mathcal{P}_{P_\alpha}$ and define the action functional $\mathcal{A}^{P_\alpha}_\mH:\mathcal{P}_{P_\alpha}\to\mathbb{C}$ by
\begin{gather*}
 \mathcal{A}^{P_\alpha}_\mH[\gamma]\coloneqq \iint\limits_{P_\alpha}\left[\Lambda_R\vert_{\gamma (t+is)}\left(2\frac{\pa\gamma}{\pa z}(t+is)\right) - \mH\circ\gamma (t+is)\right] dt\wedge ds\ \text{with}\ \frac{\pa\gamma}{\pa z}\coloneqq \frac{1}{2}\left(\frac{\pa\gamma}{\pa t} - i\frac{d\gamma}{\pa s}\right)
\end{gather*}
 for every $\gamma\in\mathcal{P}_{P_\alpha}$. Now, let $\gamma\in\mathcal{P}_{P_\alpha}$ be a smooth map from $P_\alpha$ to $X$. Then, $\gamma$ is a pseudo-holomorphic trajectory of $(X,J;\Omega_R,\mH_R)$ iff $\gamma$ is a ``critical point''\footnote{``Critical point'' means that only those variations are allowed which keep $\gamma$ fixed on the boundary $\partial P_\alpha$.} of $\mathcal{A}^{P_\alpha}_\mH$.
\end{Lem}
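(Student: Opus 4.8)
The plan is to run the derivation that precedes \autoref{prop:holo_action_prin_para} essentially verbatim, replacing ``holomorphic'' by ``pseudo-holomorphic'' and, crucially, never invoking a primitive $\Lambda_I$ of $\Omega_I$ (which need not exist for a PHHS). First I would record the tilted-coordinate description of pseudo-holomorphic trajectories. Writing $z = t + re^{i\alpha}$ and $\gamma_r(t) = \gamma(t+re^{i\alpha}) = \gamma_t(r)$, the identity $\pa\gamma/\pa s = J(\pa\gamma/\pa t)$ coming from pseudo-holomorphicity together with \autoref{prop:pseudo-holo_ham_field_and_traj} gives: $\gamma\in\mathcal{P}_{P_\alpha}$ is a pseudo-holomorphic trajectory if and only if $\gamma_r$ is an integral curve of $X^{\Omega_R}_{\mH_R}$ for every $r\in[r_1,r_2]$ \emph{and} $\gamma_t$ is an integral curve of $\cos(\alpha)\,X^{\Omega_R}_{\mH_R} + \sin(\alpha)\,J(X^{\Omega_R}_{\mH_R})$ for every $t\in[t_1,t_2]$. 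Here $\sin(\alpha)\neq 0$, which is exactly the hypothesis $\alpha\notin\pi\mathbb{Z}$, is what lets the two scalar integral-curve conditions reassemble into the single pseudo-holomorphic integral curve equation.

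Next I would rewrite $\mathcal{A}^{P_\alpha}_\mH$ in the tilted coordinates, exactly as in the computation before \autoref{prop:holo_action_prin_para}: using $dt\wedge ds = \sin(\alpha)\,dr\wedge dt$ and $2\,\pa\gamma/\pa z$ in the form $ie^{-i\alpha}\,d\gamma_r/dt - i\,d\gamma_t/dr$, together with the scalar identity $\sin(\alpha)\,\mH = ie^{-i\alpha}\mH_R - i\,\text{Re}(e^{i\alpha}\mH)$, one obtains $\mathcal{A}^{P_\alpha}_\mH[\gamma] = ie^{-i\alpha}\int_{r_1}^{r_2}\mathcal{A}^{\Lambda_R}_{\mH_R}[\gamma_r]\,dr - i\int_{t_1}^{t_2}\mathcal{B}_\alpha[\gamma_t]\,dt$, where $\mathcal{A}^{\Lambda_R}_{\mH_R}$ is the action functional of the exact RHS $(X,\Omega_R,\mH_R)$ from \autoref{prop:action_fct_of_real_traj}, and $\mathcal{B}_\alpha[\gamma_t] \coloneqq \int_{r_1}^{r_2}\gamma_t^\ast\Lambda_R - \int_{r_1}^{r_2}\text{Re}(e^{i\alpha}\mH)\circ\gamma_t(r)\,dr$ is the action functional of the exact RHS $(X,\Omega_R,\text{Re}(e^{i\alpha}\mH))$. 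The key bookkeeping point is that only $\Lambda_R$ occurs: $\mathcal{B}_\alpha$ is precisely $\text{Re}\big(\mathcal{A}^{\Lambda}_{e^{i\alpha}\mH}\big)$ from \autoref{rem:tilted_traj}, and this real part does not see $\Lambda_I$, so the argument is legitimate for a PHHS.

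Finally I would run the ``averaging preserves criticality'' step just as in the proof of \autoref{lem:holo_action_prin}. Since $ie^{-i\alpha}$ and $-i$ form an $\mathbb{R}$-basis of $\Cx$ for $\alpha\notin\pi\mathbb{Z}$, $\gamma$ is a ``critical point'' of $\mathcal{A}^{P_\alpha}_\mH$ iff it is simultaneously a ``critical point'' of $\gamma\mapsto\int\mathcal{A}^{\Lambda_R}_{\mH_R}[\gamma_r]\,dr$ and of $\gamma\mapsto\int\mathcal{B}_\alpha[\gamma_t]\,dt$. Writing out the first variation of each averaged functional shows that the former holds iff $\gamma_r$ is a ``critical point'' of $\mathcal{A}^{\Lambda_R}_{\mH_R}$ for every $r$, and the latter iff $\gamma_t$ is a ``critical point'' of $\mathcal{B}_\alpha$ for every $t$. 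By the action principle for RHSs (\autoref{prop:action_fct_of_real_traj}) these say, respectively, that $\gamma_r$ is an integral curve of $X^{\Omega_R}_{\mH_R}$ and that $\gamma_t$ is an integral curve of $X^{\Omega_R}_{\text{Re}(e^{i\alpha}\mH)}$; and by property~3 in \autoref{def:PHHS_1} one has $X^{\Omega_R}_{\text{Re}(e^{i\alpha}\mH)} = \cos(\alpha)\,X^{\Omega_R}_{\mH_R} - \sin(\alpha)\,X^{\Omega_R}_{\mH_I} = \cos(\alpha)\,X^{\Omega_R}_{\mH_R} + \sin(\alpha)\,J(X^{\Omega_R}_{\mH_R})$. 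Comparing with the characterization from the first step closes the argument.

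\textbf{Main obstacle.} There is no deep obstacle: PHHSs were defined (properties~1--4 of \autoref{def:PHHS_1}, in particular the closedness of $\Omega_R$ alone, the anticompatibility relations, and the Cauchy--Riemann-like identities) precisely so that this chain of reductions goes through. The only genuine care points are (i) packaging the ``tilted'' contribution as the \emph{real} action functional $\mathcal{B}_\alpha$ of the honest RHS $(X,\Omega_R,\text{Re}(e^{i\alpha}\mH))$, so that no primitive $\Lambda_I$ is ever used, and (ii) the first-variation/averaging computation, which is routine and identical to that in \autoref{lem:holo_action_prin}. The hypothesis $\alpha\notin\pi\mathbb{Z}$ is used twice: once so that $\{ie^{-i\alpha},-i\}$ is an $\mathbb{R}$-basis of $\Cx$, and once (via $\sin(\alpha)\neq 0$) so that the two integral-curve conditions recombine into the pseudo-holomorphic integral curve equation.
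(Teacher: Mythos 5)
Your proposal is correct and is exactly what the paper intends: the paper's own proof consists of the single instruction to repeat the steps of \autoref{subsec:holo_action_fun_and_prin}, in particular the tilted-coordinate computation preceding Proposition \autoref{prop:holo_action_prin_para}, in the PHHS setting. You have simply written out that repetition in full, including the two points the paper leaves implicit -- that only $\Lambda_R$ (never $\Lambda_I$) enters, and that the Cauchy--Riemann-like relations of Definition \autoref{def:PHHS_1} identify $X^{\Omega_R}_{\text{Re}(e^{i\alpha}\mH)}$ with $\cos(\alpha)X^{\Omega_R}_{\mH_R}+\sin(\alpha)J(X^{\Omega_R}_{\mH_R})$ -- so nothing further is needed.
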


\begin{proof}
 For the proof of Lemma \autoref{lem:pseudo-holo_action_prin_para}, repeat the steps from \autoref{subsec:holo_action_fun_and_prin} for PHHSs, in particular the proof of Proposition \autoref{prop:holo_action_prin_para}.
\end{proof}

As before, if we wish to view pseudo-holomorphic trajectories as actual critical points of some functional, we can achieve this by either mapping the boundary $\partial P_\alpha$ to a Lagrangian submanifold of $(X,\Omega_R)$ or by imposing periodicity on the curves $\gamma$.

\newpage
\subsection{Relation between HHSs and PHHSs}
\label{subsec:rel_HHS_PHHS}

At this point, it is not clear how PHHSs relate to HHSs and why PHHSs are a ``reasonable'' generalization of HHSs with regard to the integrability of $J$. In particular, we do not know yet why the notion of PHHSs introduced in \autoref{subsec:def_PHHS} should coincide with the notion of HHSs when we restore the integrability of $J$. A priori, there is no reason why the $2$-form $\Omega\coloneqq \Omega_R - i\Omega_R (J\cdot,\cdot)$ associated with a PHHS $(X,J;\Omega_R,\mH_R)$ should be holomorphic or even closed for integrable $J$. That this is indeed the case is guaranteed by the following theorem:

\begin{Thm}[Relation between HSMs and PHSMs]\label{thm:rel_HSM_PHSM}
 Let $(X,J;\Omega_R)$ be a PHSM with $2$-forms $\Omega_I\coloneqq -\Omega_R (J\cdot,\cdot)$ and $\Omega\coloneqq \Omega_R + i\Omega_I$. Further, let $x_0\in X$ be any point. Then, $J$ is integrable near $x_0$ if and only if $d\Omega_I$ vanishes near $x_0$. Moreover, the following statements are equivalent:
 \begin{enumerate}
  \item $(X,\Omega)$ is a HSM with complex structure $J$.
  \item $\Omega_I$ is closed, $d\Omega_I = 0$.
  \item $J$ is integrable.
 \end{enumerate}
\end{Thm}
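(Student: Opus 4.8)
The plan is to first establish the local equivalence asserted in the opening sentence --- that $J$ is integrable near $x_0$ if and only if $d\Omega_I$ vanishes near $x_0$ --- and then to deduce the three-way equivalence as a formal consequence. Two preliminary observations organize everything. First, since $\Omega_R$ is closed by the definition of a PHSM, we have $d\Omega = d\Omega_R + i\,d\Omega_I = i\,d\Omega_I$, so ``$d\Omega_I$ vanishes near $x_0$'' is literally the same as ``$\Omega$ is closed near $x_0$''. Second, by Remark~\ref{rem:omega_II} the form $\Omega$ has type $(2,0)$ (it annihilates $T^{(0,1)}X$) and is non-degenerate on $T^{(1,0)}X$; these are the only structural facts about $\Omega$ I will use.

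For the easy implication ``$J$ integrable near $x_0$ $\Rightarrow d\Omega_I=0$ near $x_0$'': on the complex manifold obtained near $x_0$ one has $d=\partial+\bar\partial$, so the four summands $\partial\Omega,\ \bar\partial\Omega,\ \partial\overline{\Omega},\ \bar\partial\overline{\Omega}$ occurring in $2\,d\Omega_R = d(\Omega+\overline{\Omega})$ have respective pure types $(3,0),(2,1),(1,2),(0,3)$ and hence lie in pairwise distinct summands of the bundle of complex $3$-forms. Since their sum $2\,d\Omega_R$ vanishes, each vanishes; in particular $d\Omega=\partial\Omega+\bar\partial\Omega=0$, so $d\Omega_I=0$ near $x_0$.

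The heart of the argument is the converse: assume $d\Omega=0$ near $x_0$ and deduce integrability. For local sections $\bar U,\bar V$ of $T^{(0,1)}X$ write $\mu(\bar U,\bar V)$ for the $T^{(1,0)}X$-component of $[\bar U,\bar V]$; this is tensorial (it is essentially the Nijenhuis tensor), and by the Newlander--Nirenberg theorem it suffices to show $\mu\equiv 0$ near $x_0$, i.e.\ that $T^{(0,1)}X$ is involutive there. For any local section $W$ of $T^{(1,0)}X$, apply Cartan's formula to the $3$-form $d\Omega$ on the triple $(\bar U,\bar V,W)$: every ``directional-derivative'' term vanishes and every bracket term but one vanishes, because $\Omega$ kills any pair of vectors at least one of which lies in $T^{(0,1)}X$, leaving
\[
 0 \;=\; d\Omega(\bar U,\bar V,W) \;=\; -\,\Omega\big([\bar U,\bar V],\,W\big) \;=\; -\,\Omega\big(\mu(\bar U,\bar V),\,W\big).
\]
As $W$ ranges over $T^{(1,0)}X$, non-degeneracy of $\Omega$ on $T^{(1,0)}X$ together with $\mu(\bar U,\bar V)\in T^{(1,0)}X$ forces $\mu(\bar U,\bar V)=0$; since $\bar U,\bar V$ are arbitrary, $\mu\equiv 0$ near $x_0$ and $J$ is integrable there.

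Finally I would assemble (1)$\Leftrightarrow$(2)$\Leftrightarrow$(3). Statement (2)$\Leftrightarrow$(3) is just the local equivalence applied at every point of $X$. For (3)$\Rightarrow$(1): if $J$ is integrable then $\Omega=\Omega_R+i\Omega_I$ is a closed $2$-form of type $(2,0)$ on the complex manifold $(X,J)$, and closedness together with $d=\partial+\bar\partial$ gives $\bar\partial\Omega=0$, i.e.\ $\Omega$ is holomorphic; combined with its non-degeneracy on $T^{(1,0)}X$ from Remark~\ref{rem:omega_II}, this is precisely the statement that $(X,\Omega)$ is a HSM with complex structure $J$. The implication (1)$\Rightarrow$(3) is immediate, since by definition a HSM lives on a complex manifold, whose underlying almost complex structure is integrable. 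The only genuinely delicate point I anticipate is the Cartan-formula computation above together with careful tracking of $(p,q)$-types on a possibly non-integrable almost complex manifold (where $d$ need not split as $\partial+\bar\partial$); the remainder is bookkeeping plus the black-box input of Newlander--Nirenberg.
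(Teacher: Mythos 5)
Your proposal is correct and follows essentially the same route as the paper: both directions hinge on the type decomposition of $2\,d\Omega_R = d(\Omega+\overline{\Omega})$ for integrable $J$, and on showing that $T^{(0,1)}X = \ker\Omega$ is involutive by combining $d\Omega=0$ with the non-degeneracy of $\Omega$ on $T^{(1,0)}X$ and then invoking Newlander--Nirenberg. The only differences are cosmetic --- you use the intrinsic Palais formula for $d\Omega(\bar U,\bar V,W)$ where the paper uses Cartan's magic formula together with $\iota_{[V,W]}=[L_V,\iota_W]$, and your coordinate-free type splitting delivers holomorphicity and closedness of $\Omega$ in one step where the paper computes in a chart.
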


\begin{proof}
 We only show the equivalence of Statement 1, 2, and 3. The first part of Theorem \autoref{thm:rel_HSM_PHSM} is then just a local version of the equivalence. Direction ``1$\Rightarrow$2'' is trivially true by definition of a HSM. Implication ``2$\Rightarrow$3'' is due to Verbitsky (confer Theorem 3.5 in \cite{verbitsky2013} and Proposition 2.12 in \cite{bogomolov2020}). For completeness' sake, we include the proof here. Let $(X,J;\Omega_R)$ be a PHSM with $2$-forms $\Omega_I\coloneqq -\Omega_R (J\cdot,\cdot)$ and $\Omega\coloneqq \Omega_R + i\Omega_I$. Further, assume $d\Omega_I = 0$. We want to show that $J$ is integrable. By the Newlander-Nirenberg theorem, $J$ is integrable if and only if $J$ has no torsion, i.e., its Nijenhuis tensor vanishes. Now we apply Theorem 2.8 in Chapter IX of \cite{kobayashi1969}. Thus, $J$ is integrable if and only if the space of smooth sections of $T^{(0,1)}X$ is closed under the commutator $[\cdot,\cdot]$. From \autoref{subsec:def_PHHS}, we know that $\Omega$ is non-degenerate on $T^{(1,0)}X$, but vanishes on $T^{(0,1)}X$. Hence, a complex vector field $V$ on $X$ is a smooth section of $T^{(0,1)}$ if and only if $\iota_V\Omega = 0$. Therefore, $J$ is integrable if and only if for every pair of two complex vector fields $V$ and $W$ on $X$ satisfying $\iota_V\Omega = \iota_W\Omega = 0$ one has $\iota_{[V,W]}\Omega = 0$. Now let $V$ and $W$ be two complex vector fields on $X$ with $\iota_V\Omega = \iota_W\Omega = 0$. Recall that the interior product $\iota$ applied to forms fulfills the relation (cf. Proposition 3.10 in Chapter I of \cite{kobayashi1963}):
 \begin{gather*}
  \iota_{[V,W]} = [L_V, \iota_W],
 \end{gather*}
 where $L_V$ is the Lie derivative of $V$. We can calculate $L_V\Omega$ by using Cartan's magic formula, $\iota_V\Omega = 0$, and $d\Omega = 0$:
 \begin{gather*}
  L_V\Omega = d\iota_V\Omega + \iota_Vd\Omega = 0.
 \end{gather*}
 In total, we obtain using $\iota_W\Omega = 0$:
 \begin{gather*}
  \iota_{[V,W]}\Omega = [L_V,\iota_W]\Omega = L_V(\iota_W\Omega) - \iota_W(L_V\Omega) = 0
 \end{gather*}
 proving the integrability of $J$.
 \newpage
 The remaining direction ``3$\Rightarrow$1'' can be proven as follows: let $(X,J;\Omega_R)$ be a PHSM with $2$-forms $\Omega_I\coloneqq -\Omega_R (J\cdot,\cdot)$ and $\Omega\coloneqq \Omega_R + i\Omega_I$. Further, assume that $J$ is integrable. Then, $X$ is a complex manifold with complex structure $J$. We need to show that $\Omega$ is a closed, holomorphic $2$-form which is non-degenerate on $T^{(1,0)}X$. By Remark \autoref{rem:omega_II}, $\Omega$ is non-degenerate on $T^{(1,0)}X$ and of type $(2,0)$. Hence, $\Omega$ can be written in a holomorphic chart $\phi = (z_1,\ldots, z_{2m}):U\to V\subset\mathbb{C}^{2m}$ of $X$ as
 \begin{gather*}
  \Omega\vert_U = \sum\limits^{2m}_{i,j = 1} \Omega_{ij} dz_i\wedge dz_j,
 \end{gather*}
 where the coefficients $\Omega_{ij} = -\Omega_{ji}:U\to\mathbb{C}\cong\mathbb{R}^2$ are smooth functions on $U$. From $d\Omega_R = 0$, we deduce:
 \begin{align*}
  0 = 2d\Omega_R\vert_U = (\partial + \bar{\partial})(\Omega + \overline{\Omega})\vert_U &= \sum\limits^{2m}_{i,j,k = 1}\left(\frac{\partial\Omega_{ij}}{\partial z_k} dz_k\wedge dz_i\wedge dz_j + \frac{\partial\Omega_{ij}}{\partial \bar{z}_k} d\bar{z}_k\wedge dz_i\wedge dz_j\right.\\
  &\qquad\qquad + \left.\frac{\partial\overline{\Omega}_{ij}}{\partial z_k} dz_k\wedge d\bar{z}_i\wedge d\bar{z}_j + \frac{\partial\overline{\Omega}_{ij}}{\partial \bar{z}_k} d\bar{z}_k\wedge d\bar{z}_i\wedge d\bar{z}_j\right).
 \end{align*}
 This equation implies:
 \begin{gather*}
  \frac{\partial\Omega_{ij}}{\partial \bar{z}_k} = 0\quad\forall i,j,k\in\{1,\ldots, 2m\}.
 \end{gather*}
 Thus, the coefficients $\Omega_{ij}$ are holomorphic functions on $U$. As the last argument can be repeated for any holomorphic chart of $X$, the form $\Omega$ itself is holomorphic. Therefore, its exterior derivative $d\Omega$ is also a holomorphic form. In particular, $d\Omega$ satisfies:
 \begin{gather*}
  d\Omega (J\cdot,\cdot,\cdot) = i\cdot d\Omega.
 \end{gather*}
 As the exterior derivative is $\mathbb{C}$-linear, the decomposition of $d\Omega$ into real and imaginary part amounts to $d\Omega = d\Omega_R + id\Omega_I$. Combining this decomposition with the previous equation gives us
 \begin{gather*}
  d\Omega_I = -d\Omega_R (J\cdot,\cdot,\cdot) = 0,
 \end{gather*}
 where we have used the closedness of $\Omega_R$ again. This shows that $\Omega$ has the desired properties concluding the proof.
\end{proof}

\begin{Rem}[Closedness of $\Omega_R$]\label{rem:closedness_of_Omega_R}
 Note that the closedness of $\Omega_R$ is crucial for Theorem \autoref{thm:rel_HSM_PHSM}: if $(X,\Omega = \Omega_R + i\Omega_I)$ is a HSM and $f:X\to\mathbb{R}_+$ is a positive and smooth function on $X$, then $f\cdot\Omega_R$ is still a non-degenerate $2$-form on $X$ which is anticompatible with the integrable complex structure $J$, however, neither $f\cdot \Omega_R$ nor $f\cdot \Omega_I$ are necessarily closed. In fact, $f\cdot \Omega$ is, in general, not even holomorphic.
\end{Rem}

Of course, we can also formulate Theorem \autoref{thm:rel_HSM_PHSM} for Hamiltonian systems:

\begin{Cor}[Relation between HHSs and PHHSs]\label{cor:rel_HHS_PHHS}
 Let $(X,J;\Omega_R, \mH_R)$ be a PHHS with $2$-forms $\Omega_I\coloneqq -\Omega_R (J\cdot,\cdot)$ and $\Omega\coloneqq \Omega_R + i\Omega_I$ as well as a function $\mH\coloneqq \mH_R + i\mH_I$, where $\mH_I$ is any primitive of the $1$-form $\Omega_R (J(X^{\Omega_R}_{\mH_R}),\cdot)$. Then, the following statements are equivalent:
 \begin{enumerate}
  \item $(X,\Omega, \mH)$ is a HHS with complex structure $J$.
  \item $\Omega_I$ is closed, $d\Omega_I = 0$.
  \item $J$ is integrable.
 \end{enumerate}
\end{Cor}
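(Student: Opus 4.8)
The plan is to obtain the Corollary as an immediate consequence of Theorem \autoref{thm:rel_HSM_PHSM}, since almost all of the content is about the underlying pseudo-holomorphic symplectic structure; the only genuinely new point is checking that the Hamiltonian is holomorphic once $J$ is integrable. Observe first that any PHHS $(X,J;\Omega_R,\mH_R)$ carries an underlying PHSM $(X,J;\Omega_R)$ with exactly the same associated forms $\Omega_I = -\Omega_R(J\cdot,\cdot)$ and $\Omega = \Omega_R + i\Omega_I$. Applying Theorem \autoref{thm:rel_HSM_PHSM} to this PHSM already yields the equivalence of ``$\Omega_I$ closed'', ``$J$ integrable'', and ``$(X,\Omega)$ is a HSM with complex structure $J$'', which is precisely the equivalence of Statements 2 and 3 of the Corollary together with one implication of Statement 1.

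Next I would close the loop on Statement 1. For ``$(1)\Rightarrow(3)$'': if $(X,\Omega,\mH)$ is a HHS with complex structure $J$, then in particular $(X,\Omega)$ is a HSM with complex structure $J$, so Theorem \autoref{thm:rel_HSM_PHSM} forces $J$ to be integrable. For ``$(3)\Rightarrow(1)$'': assume $J$ integrable; by Theorem \autoref{thm:rel_HSM_PHSM}, $(X,\Omega)$ is a HSM with complex structure $J$, so it remains only to show that $\mH = \mH_R + i\mH_I$ is holomorphic. By Proposition \autoref{prop:def_PHHS_agree} the PHHS is also one in the sense of Definition \autoref{def:PHHS_1}, so Remark \autoref{rem:H_pseudo-holo} applies and gives $d\mH\circ J = i\cdot d\mH$, i.e.\ $\mH$ is pseudo-holomorphic. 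Since $J$ is now integrable, $X$ is a complex manifold and, in any holomorphic chart, the relation $d\mH\circ J = i\cdot d\mH$ is exactly the Cauchy--Riemann system for $\mH$; hence $\mH$ is holomorphic and $(X,\Omega,\mH)$ is a HHS with complex structure $J$. (The choice of primitive $\mH_I$ is immaterial here, as any two primitives of $\Omega_R(J(X^{\Omega_R}_{\mH_R}),\cdot)$ differ by a locally constant, hence holomorphic, function.)

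There is essentially no hard obstacle: the real work sits in Theorem \autoref{thm:rel_HSM_PHSM}, whose ``$2\Rightarrow3$'' direction (due to Verbitsky) is the substantive implication. The only thing one must be slightly careful about is that passing from a HSM plus a function to a \emph{HHS} requires holomorphicity of $\mH$, and this is supplied for free by the pseudo-holomorphicity built into the PHHS axioms — the existence of $\mH_I$ itself being part of Definition \autoref{def:PHHS_2}. Thus the proof is short: invoke Theorem \autoref{thm:rel_HSM_PHSM} for Statements 2 and 3, and add the Cauchy--Riemann observation to upgrade ``HSM plus function'' to ``HHS'' in Statement 1.
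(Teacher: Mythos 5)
Your proposal is correct and follows exactly the paper's route: the paper's proof is the single sentence that the Corollary is a direct consequence of Theorem \autoref{thm:rel_HSM_PHSM} and Remark \autoref{rem:H_pseudo-holo}, and your argument simply spells out the details of that deduction (including the Cauchy--Riemann upgrade of pseudo-holomorphicity to holomorphicity of $\mH$ once $J$ is integrable).
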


\begin{proof}
 Corollary \autoref{cor:rel_HHS_PHHS} is a direct consequence of Theorem \autoref{thm:rel_HSM_PHSM} and Remark \autoref{rem:H_pseudo-holo}.
\end{proof}

We can interpret Theorem \autoref{thm:rel_HSM_PHSM} and Corollary \autoref{cor:rel_HHS_PHHS} as follows: the integrability of the almost complex structure $J$ of a PHSM $(X,J;\Omega_R)$ or a PHHS $(X,J;\Omega_R,\mH_R)$ is completely measured by the closedness of the imaginary part $\Omega_I\coloneqq -\Omega_R (J\cdot,\cdot)$ and vice versa. Moreover, these quantities are the only local invariants of a PHSM or a PHHS: we know by Darboux's theorem for HSMs (cf. Theorem \autoref{thm:holo_Darboux}) that any HSM can locally be brought into standard form. Similarly, we will see in \autoref{subsec:deforming_HHS} that (regular) HHSs can locally also be brought into standard form. The existence of coordinates in which some geometrical object assumes a standard form implies that said geometrical object exhibits no local invariant. In this sense, Theorem \autoref{thm:rel_HSM_PHSM} and Corollary \autoref{cor:rel_HHS_PHHS} state that the Nijenhuis tensor $N_J$ of $J$ or, equivalently, the exterior derivative $d\Omega_I$ are the only local invariants of PHSMs and (regular) PHHSs. For general PHHSs, the Nijenhuis tensor and the behavior of the Hamiltonian near singular points are the only local invariants.

\newpage
\section{Construction of Proper PHHSs and Deformation of HHSs}
\label{sec:deformation}

This section serves two purposes. The first is to present examples of proper PHHSs. In fact, we provide a general method for constructing PHHSs out of HHSs (cf. \autoref{subsec:constructing_PHHS}), which, for instance, allows us to equip the holomorphic cotangent bundle of a complex manifold with a (non-canonical) PHHS-structure. Secondly, we study the ``size'' of the set of proper PHHSs within the set of all PHHSs. The main result of this investigation is that proper PHHSs are generic (cf. \autoref{subsec:deforming_HHS}). To prove this, we deform HHSs by proper PHHSs.
% We obtain such a deformation by locally expressing the given HHS in standard form and applying the construction from \autoref{subsec:constructing_PHHS} afterwards to deform this standard form by proper PHHSs.

\subsection{Constructing proper PHHSs out of HHSs}
\label{subsec:constructing_PHHS}

In this subsection, we wish to construct examples of proper PHHSs. The basic idea of the construction is to start with a HHS $(X,\Omega,\mH)$ and turn it into a PHHS by transforming its complex structure $J$ into an almost complex structure $J_g \coloneqq I_g\circ J\circ I_g$ using a $\Omega_R$-\underline{compatible}\footnote{Here, $I_g$ is $\Omega_R$-compatible iff $\Omega_R (I_g\cdot, I_g\cdot) = \Omega_R$. In the literature on symplectic geometry, this term is used differently. Usually, if $(X,\Omega_R)$ is a symplectic manifold and $I_g$ is an almost complex structure on $X$, one says that $I_g$ is $\Omega_R$-compatible if $-g\coloneqq \Omega_R (\cdot,I_g\cdot)$ is a Riemannian metric. This is stronger than our condition, since our condition only implies that $-g$ is a semi-Riemannian metric, but not necessarily positive definite. Also note that we use $g\coloneqq \Omega_R (I_g\cdot,\cdot)$ instead of $-g$ in the following computations, since the sign is not important for us.} almost complex structure $I_g$. Let us make this idea precise:\\
Pick a HHS $(X,\Omega = \Omega_R + i\Omega_I,\mH = \mH_R + i\mH_I)$ with complex structure $J$. Next, choose an almost complex structure $I_g$ on $X$ satisfying $\Omega_R (I_g\cdot, I_g\cdot) = \Omega_R$ and consider the smooth $(1,1)$-tensor field $J_g\coloneqq I_g\circ J\circ I_g$ on $X$. Clearly, $J_g$ is an almost complex structure on $X$:
\begin{gather*}
 J^2_g = \left(I_g\circ J\circ I_g\right)\circ \left(I_g\circ J\circ I_g\right) = - I_g\circ J^2\circ I_g = I^2_g = -1.
\end{gather*}
Moreover, $J_g$ is still $\Omega_R$-anticompatible:
\begin{gather*}
 \Omega_R (J_g\cdot, J_g\cdot) = \Omega_R (J\circ I_g\cdot, J\circ I_g\cdot) = -\Omega_R (I_g\cdot, I_g\cdot) = -\Omega_R.
\end{gather*}
Thus, $(X,J_g;\Omega_R)$ is a PHSM. If $I_g$ was chosen such that the $1$-form $\Omega_R (J_g (X^{\Omega_R}_{\mH_R}),\cdot)$ is exact, then $(X,J_g;\Omega_R,\mH_R)$ is even a PHHS. The PHHS constructed this way is, in general, proper. To check that, it suffices by Corollary \autoref{cor:rel_HHS_PHHS} to compute the exterior derivative of\linebreak $\Omega^g_I\coloneqq -\Omega_R (J_g\cdot,\cdot)$.\\
In practice, one can find $\Omega_R$-compatible almost complex structures $I_g$ by picking suitable semi-Riemannian metrics $g\coloneqq \Omega_R (I_g\cdot,\cdot)$. Given any HHS $(X,\Omega,\mH)$, however, an arbitrary $\Omega_R$-compatible almost complex structure $I_g$ is usually not compatible with $\mH_R$ in the sense explained above. Most often, it is simpler to \underline{first} pick an almost complex structure $I_g$ and \underline{afterwards} pick a suitable real function $\mH_R$. Since $X$ is contractible in most examples we wish to study, e.g. local considerations and $X = \mathbb{C}^{2m}$, one can often find suitable $\mH_R$ by solving the differential equation $d[\Omega_R (J_g (X^{\Omega_R}_{\mH_R}),\cdot)] = 0$.\\
To illustrate the construction, let us consider the simplest non-trivial example:

\begin{Ex}[PHHS on $X = \mathbb{C}^2$]\label{ex:constructing_PHHS}\normalfont
 Let $(X,\Omega,\mH)$ be the HHS with $X = \mathbb{C}^2$, $\Omega = dz_2\wedge dz_1$, where $(z_1,z_2)\in\mathbb{C}^2$, and $\mH (z_1,z_2) = i\cdot z_1$. With the decomposition $z_j = x_j + i\cdot y_j$, we obtain:
 \begin{gather*}
  \Omega_R = dx_2\wedge dx_1 - dy_2\wedge dy_1;\quad \mH_R = -y_1.
 \end{gather*}
 The complex structure $J$ of the HHS $(X,\Omega,\mH)$ is simply given by multiplication with $i$:
 \begin{gather*}
  J (\partial_{x_j})\coloneqq \partial_{y_j};\quad J (\partial_{y_j})\coloneqq -\partial_{x_j}.
 \end{gather*}
 Now we pick the following semi-Riemannian metric $g$ on $\mathbb{C}^2$:
 \begin{gather*}
  g(\pa_{x_1},\pa_{x_1}) = g(\pa_{x_2},\pa_{x_2})^{-1} = f;\quad g(\pa_{y_1},\pa_{y_1}) = g(\pa_{y_2},\pa_{y_2})^{-1} = h;\\
  g(\pa_{x_1},\pa_{x_2}) = g(\pa_{y_1},\pa_{y_2}) = g(\pa_{x_i},\pa_{y_j}) = 0,
 \end{gather*}
 where $f,h:\mathbb{C}^2\to\mathbb{R}$ are smooth, nowhere-vanishing functions. The corresponding almost complex structure $I_g$ is given by:
 \begin{gather*}
  I_g (\pa_{x_1}) = f\pa_{x_2},\ I_g (\pa_{x_2}) = -f^{-1}\pa_{x_1};\quad I_g (\pa_{y_1}) = -h\pa_{y_2},\ I_g (\pa_{y_2}) = h^{-1}\pa_{y_1}.
 \end{gather*}
 One easily checks that $I_g$ is indeed an almost complex structure and $\Omega_R$-compatible. We now employ the notation $r\coloneqq f/h$ and set $J_g\coloneqq I_g\circ J\circ I_g$. Then, $J_g$ is given by
 \begin{gather*}
  J_g (\pa_{x_1}) = r\pa_{y_1},\ J_g (\pa_{x_2}) = r^{-1}\pa_{y_2};\quad J_g (\pa_{y_1}) = -r^{-1}\pa_{x_1},\ J_g (\pa_{y_2}) = -r\pa_{x_2}.
 \end{gather*}
 By construction, $J_g$ is an almost complex structure and $\Omega_R$-anticompatible. Thus,\linebreak $(X,J_g;\Omega_R)$ is a PHSM. We find for the induced $2$-form $\Omega^g_I\coloneqq -\Omega_R (J_g\cdot,\cdot)$:
 \begin{gather*}
  \Omega^g_I = r^{-1}dx_2\wedge dy_1 + rdy_2\wedge dx_1.
 \end{gather*}
 Hence, the exterior derivative of $\Omega^g_I$ can be expressed as
 \begin{gather*}
  d\Omega^g_I %= -r^{-2}dr\wedge dx_2\wedge dy_1 + dr\wedge dy_2\wedge dx_1
  = dr\wedge\left( dy_2\wedge dx_1 - r^{-2}dx_2\wedge dy_1\right).
 \end{gather*}
 In general, the exterior derivative $d\Omega^g_I$ does not vanish. For instance, we can set\linebreak $f (z_1, z_2) = 1$ and $h(z_1, z_2) = e^{x_1}$ resulting in $r (z_1, z_2) = e^{-x_1}$. This choice yields
 \begin{gather*}
  d\Omega^g_I = e^{x_1} dx_1\wedge dx_2\wedge dy_1
 \end{gather*}
 which does not vanish at any point of $X$. Thus, $(X,J_g;\Omega_R)$ is a proper PHSM for the presented choice of $f$ and $h$.\\
 %Next, we want to define a smooth real function $H$ on $M$ such that $(M,\omega, H, J_g)$ becomes a proper $J_g$-anticompatible Hamiltonian system. This means that the function $H$ needs to be chosen such that $\iota_{J_g (X_H)}\omega$ is exact where $X_H$ is the Hamiltonian vector field of $(M,\omega, H)$. If the first cohomology of $M$ vanishes, which is the case for $M = \mathbb{C}^2$, as $\mathbb{C}^2$ is contractible, it suffices to check that $\iota_{J_g (X_H)}\omega$ is closed. There is no generic way to ensure this, so we will only work in the example $M = \mathbb{C}^2$ from now on.\\
 Next, we check whether the $1$-form $\Omega_R (J_g (X^{\Omega_R}_{\mH_R}),\cdot)$ is exact. Since $X = \mathbb{C}^2$ is contractible, it suffices to check whether $\Omega_R (J_g (X^{\Omega_R}_{\mH_R}),\cdot)$ is closed. For the sake of generality, we first perform the computation for any smooth real function $H:\mathbb{C}^2\to\mathbb{R}$ and afterwards insert\linebreak $H = \mH_R = -y_1$. We start by determining the Hamiltonian vector field $X_H\equiv X^{\Omega_R}_H$. It can be written as
 \begin{gather*}
  X_H = (\pa_{x_2}H)\cdot\pa_{x_1} - (\pa_{x_1}H)\cdot\pa_{x_2} - (\pa_{y_2}H)\cdot\pa_{y_1} + (\pa_{y_1}H)\cdot\pa_{y_2}.
 \end{gather*}
 Applying $J_g$ to $X_H$ yields:
 \begin{gather*}
  J_g (X_H) = (r^{-1}\pa_{y_2}H)\cdot \pa_{x_1} - (r\pa_{y_1}H)\cdot \pa_{x_2} + (r\pa_{x_2}H)\cdot \pa_{y_1} - (r^{-1}\pa_{x_1}H)\cdot\pa_{y_2}.
 \end{gather*}
 Contracting $J_g (X_H)$ with $\Omega_R$ gives:
 \begin{align*}
  \Omega_R (J_g (X_H),\cdot) &= -(r\pa_{y_1}H)\cdot dx_1 - (r^{-1}\pa_{y_2}H)\cdot dx_2 + (r^{-1}\pa_{x_1}H)\cdot dy_1 + (r\pa_{x_2}H)\cdot dy_2\\
  &\equiv w_{x_1} dx_1 + w_{x_2} dx_2 + w_{y_1} dy_1 + w_{y_2} dy_2.
 \end{align*}
 For $\Omega_R (J_g (X_H),\cdot)$ to be closed, the coefficients $w$ need to satisfy the following conditions:
 \begin{gather*}
  \pa_{x_i} w_{x_j} = \pa_{x_j} w_{x_i};\quad \pa_{y_i} w_{y_j} = \pa_{y_j} w_{y_i};\quad \pa_{x_i} w_{y_j} = \pa_{y_j} w_{x_i}.
 \end{gather*}
 Let us now set $H = \mH_R = -y_1$. Then, all coefficients except for $w_{x_1} = r$ vanish. In particular, $\Omega_R (J_g (X_H),\cdot)$ is exact iff $r$ only depends on $x_1$. In this case, the primitive $\mH^g_I$ of $\Omega_R (J_g (X_H),\cdot)$ is given by $R$, where $R$ only depends on $x_1$ and satisfies $\pa_{x_1}R = r$. For instance, we can again set $f (z_1, z_2) = 1$ and $h(z_1, z_2) = e^{x_1}$ leading to\linebreak $r (z_1, z_2) = e^{-x_1}$. Then, $(X,J_g;\Omega_R,\mH_R)$ is a proper PHHS for this choice, where\linebreak $\mH^g_I = -e^{-x_1}$ is one possibility for the imaginary part of the Hamiltonian.
\end{Ex}

Of course, the construction presented above is not the only way to obtain a PHSM out of a HSM. For instance, the simple structure of the example $(X = \mathbb{C}^2, \Omega = dz_2\wedge dz_1)$ allows us to transform the standard complex structure $J$ into a new $\Omega_R$-anticompatible almost complex structure $J_\varphi$ by rotation of the axes:
\begin{alignat*}{2}
 J_\varphi (\pa_{x_1})&\coloneqq \cos (\varphi)\pa_{y_1} - \sin (\varphi)\pa_{y_2},\quad &&J_\varphi (\pa_{x_2})\coloneqq \sin (\varphi)\pa_{y_1} + \cos (\varphi)\pa_{y_2};\\
 J_\varphi (\pa_{y_1})&\coloneqq -\cos (\varphi)\pa_{x_1} - \sin (\varphi)\pa_{x_2},\quad &&J_\varphi (\pa_{y_2})\coloneqq \sin (\varphi)\pa_{x_1} - \cos (\varphi)\pa_{x_2},
\end{alignat*}
where $\varphi:\mathbb{C}^2\to\mathbb{R}$ is any smooth function. The resulting PHSM $(X, J_\varphi; \Omega_R)$ is also, in general, proper.\\
The reason why we focus our attention on the construction involving $I_g$ is that this construction possesses an interesting connection to hyperk{\"a}hler manifolds. Consider Example \autoref{ex:constructing_PHHS} again. If we choose $g$ to be the standard Euclidean metric $\delta$ on $\mathbb{C}^2\cong\mathbb{R}^4$, i.e., $f \equiv h\equiv 1$, then $I_\delta$ anti-commutes with the standard complex structure $J$. Thus, $J_\delta$ coincides with $J$ and our construction does not change the base HSM $(\mathbb{C}^2, dz_2\wedge dz_1)$. In fact, $(\mathbb{C}^2, dz_2\wedge dz_1)$ comes from the hyperk{\"a}hler structure on $\mathbb{C}^2$ defined by $\delta$, $I_\delta$, $J$, and $K\coloneqq I_\delta\circ J$. Here, we call a collection $(X,g,I,J,K)$ a \textbf{hyperk{\"a}hler manifold} iff $X$ is a smooth manifold with Riemannian metric $g$ and integrable almost complex structures $I$, $J$, and $K$ on it such that $I$, $J$, and $K$ fulfill the relation $I^2 = J^2 = K^2 = IJK = -1$ and $\omega^a\coloneqq -g(a\cdot,\cdot)$ is a symplectic $2$-form on $X$ for every $a\in\{I,J,K\}$. Every hyperk{\"a}hler manifold $(X,g,I,J,K)$ admits three holomorphic symplectic forms, namely:
\begin{gather*}
 \Omega^I\coloneqq \omega^J + i\omega^K;\quad \Omega^J\coloneqq\omega^K + i\omega^I;\quad \Omega^K\coloneqq\omega^I + i\omega^J,
\end{gather*}
where $\Omega^a$ is holomorphic with respect to $a\in\{I,J,K\}$. For the hyperk{\"a}hler manifold $(X = \mathbb{C}^2,\delta, I_\delta, J, K)$, the holomorphic symplectic form $\Omega = dz_2\wedge dz_1$ coincides with $-i\Omega^J$.\\
Now, note that the space of pairs $(f,h)$ of smooth, nowhere-vanishing functions on $\mathbb{C}^2$ has four connected components which are isomorphic via $(f,h)\mapsto (-f,h)$ and\linebreak $(f,h)\mapsto (f,-h)$. Furthermore, each connected component is contractible. Thus, we can reach any pair $(f,h)$ in this space by a path starting at $(1,1)$ and a discrete transformation. Next, consider such a path in this space, i.e., a smooth $1$-parameter family of nowhere-vanishing functions $f^\varepsilon$ and $h^\varepsilon$ on $\mathbb{C}^2$ such that $f^0\equiv h^0 \equiv 1$. This family induces a $1$-parameter family $J^\varepsilon$ of almost complex structures on $\mathbb{C}^2$ satisfying $J^0 = J$.
%which we can interpret as a deformation of the HHS $(\mathbb{C}^2,dz_2\wedge dz_1)$ by PHHSs\footnote{Confer \autoref{subsec:deforming_HHS} for the precise definition of deformations.}.
By construction, $J^\varepsilon$ itself is defined via a $1$-parameter family of almost complex structures $I^\varepsilon$ satisfying $I^0 = I_\delta$. With our previous knowledge, we can interpret $I^\varepsilon$ as some sort of deformation of the hyperk{\"a}hler manifold $(X = \mathbb{C}^2,\delta, I_\delta, J, K)$. Hence, we can say that every almost complex structure $J_g$ as in Example \autoref{ex:constructing_PHHS} giving rise to a PHSM comes from such a ``deformation'' of a hyperk{\"a}hler manifold up to a discrete transformation $J_g\mapsto -J_g$. In general, this also seems to be the best application of our construction: pick a HSM $(X,\Omega = -i\Omega^J)$ coming from a hyperk{\"a}hler manifold $(X,g,I,J,K)$ and then deform $I$ as described to find PHSMs.\\
Next, we show that the construction $J_g = I_g\circ J\circ I_g$ of PHSMs is applicable to a rather large class of complex manifolds $X$, namely the class of holomorphic cotangent bundles $X = T^{\ast, (1,0)}Y$ of complex manifolds $Y$. For this, let $Y$ be a complex manifold and $X\coloneqq T^{\ast, (1,0)}Y$ be its holomorphic cotangent bundle. The complex structure of $Y$ induces a canonical complex structure $J$ on $X$. Furthermore, $X$ as a holomorphic cotangent bundle possesses a canonical HSM structure with $2$-form $\Omega_\text{can} = \Omega_R + i\Omega_I$. We observe that the real part $\Omega_R$ of $\Omega_\text{can}$ can be identified with the canonical symplectic $2$-form $\omega_\text{can}$ on the real cotangent bundle $T^\ast Y$:

\begin{Prop}[$F^\ast\omega_\text{can} = \Omega_R$]\label{prop:F}
 Let $Y$ be a complex manifold. Then, the map\linebreak $F:T^{\ast, (1,0)}Y\to T^\ast Y$
 \begin{gather*}
  F(\alpha)\coloneqq \text{\normalfont Re} (\alpha)\quad\forall \alpha\in T^{\ast, (1,0)}Y,
 \end{gather*}
 where $\text{\normalfont Re} (\alpha)$ denotes the real part of $\alpha$, is a bundle isomorphism between smooth real vector bundles. In particular, $F$ is a diffeomorphism between the smooth manifolds $T^{\ast, (1,0)}Y$ and $T^\ast Y$ satisfying $F^\ast \omega_\text{can} = \Omega_R$.
\end{Prop}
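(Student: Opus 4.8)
The plan is to treat the two halves of the statement in turn: first that $F$ is an isomorphism of smooth real vector bundles over $Y$, and then the identity $F^\ast\omega_\text{can}=\Omega_R$. Throughout, for $\alpha\in T^{\ast,(1,0)}_xY$ (a $\mathbb{C}$-linear functional on $T^{(1,0)}_xY$) the real covector $\text{Re}(\alpha)\in T^\ast_xY$ is to be read via the identification $T_xY\xrightarrow{\ \sim\ }T^{(1,0)}_xY$, $v\mapsto v^{(1,0)}=\tfrac12(v-iJv)$ of Proposition \autoref{prop:holo_vec_field_equiv_J_pre_vec_field}, i.e. $\text{Re}(\alpha)\colon u\mapsto\text{Re}\,\alpha(u^{(1,0)})$.

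For the bundle isomorphism statement, $F$ visibly covers $\mathrm{id}_Y$ and is fibrewise $\mathbb{R}$-linear; since $\dim_\mathbb{R}T^{\ast,(1,0)}_xY=\dim_\mathbb{R}T^\ast_xY=2\dim_\mathbb{C}Y$ for every $x\in Y$, it suffices to prove fibrewise injectivity together with smoothness of $F$ and $F^{-1}$. Injectivity rests on the elementary fact that a $(1,0)$-covector is determined by its real part: because $v\mapsto v^{(1,0)}$ is $\mathbb{C}$-linear for the $J$-complex structure on $T_xY$, one gets $\alpha((Jv)^{(1,0)})=i\,\alpha(v^{(1,0)})$ and hence $\text{Re}\,\alpha((Jv)^{(1,0)})=-\text{Im}\,\alpha(v^{(1,0)})$; so $\text{Re}(\alpha)=0$ forces both $\text{Re}\,\alpha(v^{(1,0)})$ and $\text{Im}\,\alpha(v^{(1,0)})$ to vanish for all $v$, i.e. $\alpha=0$. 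Smoothness I would read off in a holomorphic chart of $Y$: over a base point with coordinates $z_j=x_j+iy_j$, a point $\sum_j(a_j+ib_j)\,dz_j$ of $T^{\ast,(1,0)}Y$ is sent by $F$ to $\sum_j(a_j\,dx_j-b_j\,dy_j)$, so in the induced bundle charts $(x_j,y_j,a_j,b_j)$ on $T^{\ast,(1,0)}Y$ and $(x_j,y_j,\xi_j,\eta_j)$ on $T^\ast Y$ the map $F$ is $(\xi_j,\eta_j)=(a_j,-b_j)$, manifestly a diffeomorphism; the final claim of the proposition is then immediate.

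For $F^\ast\omega_\text{can}=\Omega_R$ my preferred route avoids $2$-forms and passes through the tautological $1$-forms. I would first show $F^\ast\theta_\text{can}=\Lambda_R:=\text{Re}(\Lambda_\text{can})$, where $\theta_\text{can}$ is the tautological $1$-form on $T^\ast Y$ and $\Lambda_\text{can}$ the holomorphic Liouville form on $T^{\ast,(1,0)}Y$. Since $F$ covers $\mathrm{id}_Y$ one has $\pi_\mathbb{R}\circ F=\pi_\mathbb{C}$ for the two cotangent projections, and $d\pi_\mathbb{C}$ is $\mathbb{C}$-linear, so $(d\pi_\mathbb{C}(v))^{(1,0)}=d\pi_\mathbb{C}(v^{(1,0)})$; for $\nu=(y_0,\alpha_0)$ and a real tangent vector $v$ at $\nu$,
\begin{align*}
(F^\ast\theta_\text{can})_\nu(v) &= \theta_\text{can}\big(dF(v)\big) = F(\alpha_0)\big(d\pi_\mathbb{R}(dF(v))\big) = \text{Re}(\alpha_0)\big(d\pi_\mathbb{C}(v)\big)\\
&= \text{Re}\,\alpha_0\big((d\pi_\mathbb{C}(v))^{(1,0)}\big) = \text{Re}\big((\Lambda_\text{can})_\nu(v^{(1,0)})\big) = (\Lambda_R)_\nu(v).
\end{align*}
Applying the exterior derivative and using that $d$ commutes both with pullback and with taking real parts, $F^\ast\omega_\text{can}=F^\ast d\theta_\text{can}=d\,F^\ast\theta_\text{can}=d\Lambda_R=\text{Re}(d\Lambda_\text{can})=\text{Re}(\Omega_\text{can})=\Omega_R$. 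As a cross-check one can redo this in the charts above, where $\Omega_\text{can}=\sum_j dp_j\wedge dz_j=\sum_j(da_j+i\,db_j)\wedge(dx_j+i\,dy_j)$ gives $\Omega_R=\sum_j(da_j\wedge dx_j-db_j\wedge dy_j)$, while $\omega_\text{can}=\sum_j(d\xi_j\wedge dx_j+d\eta_j\wedge dy_j)$ pulls back along $(\xi_j,\eta_j)=(a_j,-b_j)$ to exactly $\sum_j(da_j\wedge dx_j-db_j\wedge dy_j)$.

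None of this is analytically deep; the whole argument is bookkeeping around two identifications, $T_xY\cong T^{(1,0)}_xY$ (with its factor $\tfrac12$) and ``$\text{Re}$'' of a $(1,0)$-covector evaluated on a real vector. The one place a spurious sign or factor of $2$ could slip in is the computation of $\text{Re}(\Lambda_\text{can})$ and $\text{Re}(\Omega_\text{can})$ and their compatibility with those identifications, so that is the step I would spell out in full detail; everything else is routine.
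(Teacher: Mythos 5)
Your proof is correct. The paper's own argument is exactly your ``cross-check'': it writes down the induced charts $T^{\ast,(1,0)}\psi=(Q_{x,1},\ldots,P_{y,n})$ and $T^\ast\psi=(q_{x,1},\ldots,p_{y,n})$, reads off that $F$ is the map $(Q,P_x,P_y)\mapsto(Q,P_x,-P_y)$ (whence the bundle-isomorphism claim is immediate), and then compares $\mathrm{Re}(\Omega_\text{can})$ with the pullback of $\omega_\text{can}$ coefficient by coefficient -- precisely your computation $(\xi_j,\eta_j)=(a_j,-b_j)$. What you add is a chart-free primary route through the tautological $1$-forms: the identity $\pi_\mathbb{R}\circ F=\pi_\mathbb{C}$ plus $\mathbb{C}$-linearity of $d\pi_\mathbb{C}$ gives $F^\ast\theta_\text{can}=\Lambda_R$, and the $2$-form statement follows by applying $d$. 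This buys two things the paper's proof does not make explicit: it isolates where the identification $T_xY\cong T^{(1,0)}_xY$ and the reading of $\mathrm{Re}(\alpha)$ actually enter (your injectivity argument via $\alpha((Jv)^{(1,0)})=i\,\alpha(v^{(1,0)})$ is the clean way to see fibrewise bijectivity without coordinates), and it records the stronger primitive-level statement $F^\ast\theta_\text{can}=\Lambda_R$, which is the fact one actually wants later when exactness of $\Omega_R$ matters for the action functionals. The conventions match the paper's ($\omega_\text{can}=d\theta_\text{can}$, $\Omega_\text{can}=d\Lambda_\text{can}$), and your use of $\alpha(v)=\alpha(v^{(1,0)})$ for a $(1,0)$-covector is consistent with the paper's reading of $\mathrm{Re}(\alpha)$ as the real part of a complex-valued form on real tangent vectors, so no factor of $2$ or sign is lost. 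No gaps.
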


\begin{proof}
 Take the notations from above. Let $(Q_1 = Q_{x,1} + iQ_{y,1},\ldots, Q_n = Q_{x,n} + iQ_{y,n}) = \psi$ be a holomorphic chart of $Y$. Then, $T^{\ast, (1,0)}\psi$ and $T^{\ast}\psi$ are (smooth) charts of $T^{\ast, (1,0)}Y$ and $T^\ast Y$, respectively:
 \begin{align*}
  \left(T^{\ast, (1,0)}\psi\right)^{-1} (\tilde{Q}_{x,1},\ldots, P_{y,n}) &\coloneqq \sum^n_{j=1} P_j dQ_j\vert_{\psi^{-1}(\tilde{Q}_{1},\ldots, \tilde{Q}_{n})}\\
  = \sum^n_{j = 1} &P_{x,j}dQ_{x,j}\vert_{\ldots} - P_{y,j}dQ_{y,j}\vert_{\ldots} + i\sum^n_{j=1}P_{x,j}dQ_{y,j}\vert_{\ldots} + P_{y,j}dQ_{x,j}\vert_{\ldots},\\
  \left(T^{\ast}\psi\right)^{-1} (\tilde{q}_{x,1},\ldots, p_{y,n}) &\coloneqq \sum^n_{j = 1} p_{x,j}dQ_{x,j}\vert_{\psi^{-1}(\tilde{q}_{1},\ldots, \tilde{q}_{n})} + p_{y,j}dQ_{y,j}\vert_{\psi^{-1}(\tilde{q}_{1},\ldots, \tilde{q}_{n})}.
 \end{align*}
 We denote the components of $T^{\ast, (1,0)}\psi$ by $T^{\ast, (1,0)}\psi = (Q_{x,1},\ldots, P_{y,n})$, while we denote the components of $T^{\ast}\psi$ by $T^{\ast}\psi = (q_{x,1},\ldots, p_{y,n})$. In these coordinates, $\Omega_\text{can}$ and $\omega_\text{can}$ can be expressed as:
 \begin{align*}
  \Omega_\text{can} &= \sum^n_{j = 1} dP_j\wedge dQ_j = \sum^n_{j=1} dP_{x,j}\wedge dQ_{x,j} - dP_{y,j}\wedge dQ_{y,j} + i\sum^n_{j=1}dP_{x,j}\wedge dQ_{y,j} + dP_{y,j}\wedge dQ_{x,j},\\
  \omega_\text{can} &= \sum^n_{j=1} dp_{x,j}\wedge dq_{x,j} + dp_{y,j}\wedge dq_{y,j}.
 \end{align*}
 Expressing $F$ in these coordinates gives:
 \begin{gather*}
  T^\ast\psi\circ F\circ \left(T^{\ast, (1,0)}\psi\right)^{-1} (\tilde{Q}_{x,j}, \tilde{Q}_{y,j}, P_{x,j}, P_{y,j}) = (\tilde{Q}_{x,j}, \tilde{Q}_{y,j}, P_{x,j}, -P_{y,j}).
 \end{gather*}
 In total, we obtain:
 \begin{gather*}
  F^\ast\omega_\text{can} = \sum^n_{j=1} dP_{x,j}\wedge dQ_{x,j} - dP_{y,j}\wedge dQ_{y,j} = \text{Re} (\Omega_\text{can}) = \Omega_R.
 \end{gather*}
\end{proof}

Now choose a semi-Riemannian metric $g_Y$ on $Y$. In \autoref{app:almost_complex_structures}, we show that any semi-Riemannian metric $g_Y$ on $Y$ induces an almost complex structure $J^\ast_{\nabla^{g_Y}}$ on $T^\ast Y$ which is $\omega_\text{can}$-compatible. Using $F$, we can transfer this almost complex structure from $T^\ast Y$ to $X$. We denote the result by $I_g\coloneqq dF^{-1}\circ J^\ast_{\nabla^{g_Y}}\circ dF$. By Proposition \autoref{prop:F}, $I_g$ on $X$ is also $\Omega_R$-compatible. Thus, $J_g\coloneqq I_g\circ J\circ I_g$ is $\Omega_R$-anticompatible and $(X,J_g;\Omega_R)$ is a PHSM. Note that $J_g$ depends on a choice of metric $g_Y$.\\
In general, this PHSM is proper, since we have not imposed any relation between $g_Y$ and the complex structure on $Y$. However, there is a special case in which $J_g$ is integrable, namely if $g_Y = h_R$ is the real part of a holomorphic metric $h = h_R + ih_I$ on $Y$:

\begin{Lem}[$g_Y = h_R\ \Rightarrow\ J$ and $I_g$ commute]
 Let $Y$ be a complex manifold with holomorphic metric $h = h_R + ih_I$. Then, $I_g$ obtained from the construction above for $g_Y = h_R$ commutes with the complex structure $J$ on $X\coloneqq T^{\ast, (1,0)}Y$. In particular, $J_g \coloneqq I_g\circ J\circ I_g = -J$.
\end{Lem}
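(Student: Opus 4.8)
The second claim follows formally from the first: if $I_g\circ J=J\circ I_g$ then, using $I_g^2=-\mathrm{id}$,
\[
 J_g=I_g\circ J\circ I_g=I_g\circ (I_g\circ J)=I_g^2\circ J=-J .
\]
So the plan is entirely devoted to proving $I_g\circ J=J\circ I_g$ on $X=T^{\ast,(1,0)}Y$, and I would do this by computing both sides in a horizontal--vertical splitting of $TX$ and checking equality summand by summand.

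The structural input is that a holomorphic metric $h$ on $Y$ carries a holomorphic Levi-Civita connection $\nabla^h$ on $T^{(1,0)}Y$ (the unique holomorphic torsion-free connection with $\nabla^h h=0$). The first step is to observe that the real connection underlying $\nabla^h$ on $TY$ is exactly the Levi-Civita connection $\nabla^{h_R}$ of the semi-Riemannian metric $h_R=\mathrm{Re}(h)$: it is torsion-free, it kills $h_R$ (take real parts of $\nabla^h h=0$ under the real bundle isomorphism $f$ of \autoref{prop:holo_vec_field_equiv_J_pre_vec_field}), and in addition it preserves $J_Y$; uniqueness of the Levi-Civita connection forces the identification. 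Consequently the horizontal--vertical splitting of $T(T^\ast Y)$ induced by $\nabla^{h_R}$ corresponds, through the bundle isomorphism $F$ of \autoref{prop:F} (which covers $\mathrm{id}_Y$ and is, fibrewise, the real-part map), to the splitting $T^{(1,0)}X=\mathcal H\oplus\mathcal V$ induced by the holomorphic connection $\nabla^h$; write $TX=\mathcal H_{\mathbb R}\oplus\mathcal V_{\mathbb R}$ for its real form, with $\mathcal H_{\mathbb R}\cong T_qY$ (horizontal) and $\mathcal V_{\mathbb R}\cong T^{\ast,(1,0)}_qY$ (vertical, as a real vector space). Because $\mathcal H$ and $\mathcal V$ are complex subbundles of $T^{(1,0)}X$, both summands of the real splitting are $J$-invariant; and $I_g$ preserves that splitting as well because $J^\ast_{\nabla^{h_R}}$ interchanges the horizontal and vertical subbundles of $T(T^\ast Y)$.

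With the splitting fixed, the second step is to read off the two almost complex structures. On $\mathcal H_{\mathbb R}\cong T_qY$, $J$ acts as $J_Y$ and on $\mathcal V_{\mathbb R}$ as multiplication by $i$ on $T^{\ast,(1,0)}_qY$, while $J^\ast_{\nabla^{h_R}}$ (transported by $F$) sends a horizontal lift $v^{\mathrm h}$ to the vertical lift of $h^\flat f(v)$ and a vertical lift $\eta^{\mathrm v}$ to minus the horizontal lift of $f^{-1}h^\sharp\eta$, where $h^\flat\colon T^{(1,0)}Y\to T^{\ast,(1,0)}Y$ is the index-lowering by $h$ and $h^\sharp$ its inverse; here one uses $F^{-1}\circ (h_R)^\flat=h^\flat\circ f$, which is the fibrewise content of \autoref{prop:F}. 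The decisive point is that $h^\flat$ is $\Cx$-linear precisely because $h$ is $\Cx$-bilinear, and $f$ intertwines $J_Y$ with multiplication by $i$; hence $h^\flat\circ f$ and $f^{-1}\circ h^\sharp$ both intertwine $J_Y$ with multiplication by $i$. Evaluating $I_g\circ J$ and $J\circ I_g$ on $v^{\mathrm h}$ then both yield the vertical lift of $i\,h^\flat f(v)$, and on $\eta^{\mathrm v}$ both yield minus the horizontal lift of $J_Y(f^{-1}h^\sharp\eta)$. Hence $I_g\circ J=J\circ I_g$, and $J_g=-J$ as above.

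The main obstacle I anticipate is bookkeeping rather than conceptual: one must reconcile the precise sign and normalization conventions of the construction of $J^\ast_{\nabla^{g_Y}}$ from \autoref{app:almost_complex_structures} with the horizontal/vertical formulas used above, and one must carefully justify the identification of $\nabla^{h_R}$ with the real form of $\nabla^h$ (equivalently, that $(Y,J,h_R)$ is an ``anti-K\"ahler'' pair); note, however, that $J_g=I_g\circ J\circ I_g$ is insensitive to an overall sign of $I_g$, so convention choices for $J^\ast_{\nabla^{g_Y}}$ cannot affect the final conclusion. A more computational but conceptually lighter alternative is to fix a holomorphic chart in which $h=\sum_{i,j} h_{ij}\,dz_i\otimes dz_j$ with the $h_{ij}$ holomorphic, write $\nabla^{h_R}$ and hence $J^\ast_{\nabla^{h_R}}$ explicitly in the induced real coordinates on $T^\ast Y$, pull everything back by $F$, and verify by hand that the resulting endomorphism of $TX$ commutes with multiplication by $i$; in this version the Cauchy--Riemann equations for the $h_{ij}$ are exactly what makes the mixed blocks cancel.
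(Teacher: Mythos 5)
Your argument is correct, and it proves the same statement by the same underlying mechanism as the paper, but packaged invariantly rather than in coordinates. The paper's proof picks holomorphic coordinates on $Y$ near $p$ that are simultaneously normal coordinates for $(Y,h_R)$ (which exist precisely because $\nabla^{h_R}$ is the real form of the holomorphic Levi-Civita connection $\nabla^h$, as established in \autoref{app:holo_connection}), lifts them to $T^{\ast,(1,0)}\psi$, reads off the standard forms of $J$ and $I_g$ at points of the fibre over $p$ from \autoref{app:almost_complex_structures}, and checks commutation by hand. Your version replaces the pointwise coordinate check by the global horizontal--vertical splitting of $TX$ induced by $\nabla^{h_R}\cong\mathrm{Re}(\nabla^h)$, observes that both summands are $J$-invariant while $I_g$ interchanges them, and isolates the actual reason the two compositions agree: the musical isomorphism $h^\flat$ is $\Cx$-linear because $h$ is $\Cx$-bilinear, and $f$ intertwines $J_Y$ with multiplication by $i$. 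What the paper's route buys is brevity (all the splitting data collapses to the identities $\Gamma^k_{ij}(p)=0$ and $h_{ij}(p)=\delta_{ij}$ in one chart); what yours buys is that the commutation is visibly forced by the complex-bilinearity of $h$ rather than emerging from a matrix computation, and it makes transparent why the statement fails for a metric $g_Y$ not of the form $h_R$. The two points you flag as needing care --- that $\nabla^{h_R}$ preserves $J_Y$ and coincides with the real form of $\nabla^h$, and the sign bookkeeping in $F$ (which flips $P_{y,j}$) --- are exactly the inputs the paper supplies in \autoref{app:holo_connection} and \autoref{app:almost_complex_structures}, so nothing is missing.
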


\begin{proof}
 The idea of the proof is to choose coordinates in which $J$ and $I_g$ take a simple form. Let $p\in Y$ be any point. We start by choosing holomorphic coordinates\linebreak $(Q_1 = Q_{x,1} + iQ_{y,1},\ldots, Q_n = Q_{x,n} + iQ_{y,n}) = \psi$ of $Y$ near $p$ which, at the same time, are normal coordinates of $(Y,h_R)$ near $p$. In \autoref{app:holo_connection}, we show that such coordinates exist by considering normal coordinates of the holomorphic Levi-Civita connection $\nabla^h$ of the holomorphic metric $h$. The holomorphic normal coordinates $\psi$ then give rise to holomorphic coordinates $T^{\ast, (1,0)}\psi = (Q_1,\ldots, P_n = P_{x,n} + iP_{y,n})$ of $X$. As $J$ is the complex structure of $X$, $J$ takes the following form in $T^{\ast, (1,0)}\psi$ ($\alpha\in T^{\ast, (1,0)}_p Y$):
 \begin{alignat*}{2}
  J\left(\partial_{Q_{x,j}}\vert_\alpha\right) &= \partial_{Q_{y,j}}\vert_\alpha,\quad J\left(\partial_{Q_{y,j}}\vert_\alpha\right) &&= -\partial_{Q_{x,j}}\vert_\alpha,\\
  J\left(\partial_{P_{x,j}}\vert_\alpha\right) &= \partial_{P_{y,j}}\vert_\alpha,\quad J\left(\partial_{P_{y,j}}\vert_\alpha\right) &&= -\partial_{P_{x,j}}\vert_\alpha.
 \end{alignat*}
 In \autoref{app:almost_complex_structures}, we show that $I_g$ takes the following form\footnote{To be precise, we have determined the form of $J^\ast_{\nabla^{h_R}}$ in coordinates $T^\ast \psi$ in \autoref{app:almost_complex_structures}. We obtain the form of $I_g\coloneqq dF^{-1}\circ J^\ast_{\nabla^{h_R}}\circ dF$ by applying $F$. Mind the change of signs for $P_{y,j}$ due to $F$.} in $T^{\ast, (1,0)}\psi$:
 \begin{alignat*}{2}
  I_g\left(\partial_{Q_{x,j}}\vert_\alpha\right) &= -\partial_{P_{x,j}}\vert_\alpha,\quad I_g\left(\partial_{Q_{y,j}}\vert_\alpha\right) &&= -\partial_{P_{y,j}}\vert_\alpha,\\
  I_g\left(\partial_{P_{x,j}}\vert_\alpha\right) &= \partial_{Q_{x,j}}\vert_\alpha,\quad I_g\left(\partial_{P_{y,j}}\vert_\alpha\right) &&= \partial_{Q_{y,j}}\vert_\alpha.
 \end{alignat*}
 Now, one can check by direct computation that $J$ and $I_g$ commute at the point $\alpha$. Since the same argument can be repeated for every $\alpha\in T^{\ast, (1,0)}_pY$ and $p\in Y$, the result is true for all points of $X$ concluding the proof.
\end{proof}

Before we conclude this subsection and turn our attention to the deformation of HHSs, we should briefly illustrate one feature of proper PHHSs which is absent in the case of HHSs: the fact that the Hamiltonian vector fields $X^{\Omega_R}_{\mH_R}$ and $J(X^{\Omega_R}_{\mH_R})$ of a PHHS $(X,J;\Omega_R,\mH_R)$ do not need to be $J$-preserving (cf. Remark \autoref{rem:x_0-dependance}). Example \autoref{ex:constructing_PHHS} beautifully demonstrates that feature. To see this, let us first develop a criterium that tells us in which cases the vector fields $X^{\Omega_R}_{\mH_R}$ and $J(X^{\Omega_R}_{\mH_R})$ are $J$-preserving:

\begin{Prop}[When $X^{\Omega_R}_{\mH_R}$ and $J(X^{\Omega_R}_{\mH_R})$ are $J$-preserving]\label{prop:J-preserving_criteria}
 Let $(X,J;\Omega_R,\mH_R)$ be a PHHS with Hamiltonian vector field $X_\mH = 1/2 (X^{\Omega_R}_{\mH_R} - iJ(X^{\Omega_R}_{\mH_R}))$ and $2$-form\linebreak $\Omega_I = -\Omega_R (J\cdot,\cdot)$. Then, $X^{\Omega_R}_{\mH_R}$ is $J$-preserving if and only if $d\Omega_I (X^{\Omega_R}_{\mH_R},\cdot,\cdot) \equiv 0$. Similarly, $J(X^{\Omega_R}_{\mH_R})$ is $J$-preserving if and only if $d\Omega_I (J(X^{\Omega_R}_{\mH_R}),\cdot,\cdot) \equiv 0$.
\end{Prop}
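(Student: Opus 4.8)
The plan is to reduce the $J$-preservation of a vector field that is Hamiltonian for $\Omega_R$ to its $\Omega_I$-invariance, and then to compute the Lie derivative of $\Omega_I$ via Cartan's magic formula. First I would note that both vector fields in question are Hamiltonian for the symplectic form $\Omega_R$: by definition $\iota_{X^{\Omega_R}_{\mH_R}}\Omega_R = -d\mH_R$, and Property 3 of Definition \autoref{def:PHHS_1} gives $J(X^{\Omega_R}_{\mH_R}) = -X^{\Omega_R}_{\mH_I}$, hence $\iota_{J(X^{\Omega_R}_{\mH_R})}\Omega_R = d\mH_I$. Since $d\Omega_R = 0$, Cartan's formula yields $L_V\Omega_R = 0$ for $V\in\{X^{\Omega_R}_{\mH_R},\, J(X^{\Omega_R}_{\mH_R})\}$.

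The next step is to show that, for any vector field $V$ with $L_V\Omega_R = 0$, one has $L_V J = 0$ if and only if $L_V\Omega_I = 0$. Starting from the identity $\Omega_I = -\Omega_R(J\cdot,\cdot)$ (Remark \autoref{rem:omega_II}) and applying the Leibniz rule for the Lie derivative, treating $\Omega_R$ as a $(0,2)$-tensor and $J$ as a $(1,1)$-tensor, one obtains after cancelling the terms that differentiate the arguments the pointwise identity
\begin{gather*}
 (L_V\Omega_I)(W_1, W_2) = -(L_V\Omega_R)(JW_1, W_2) - \Omega_R\big((L_V J)W_1,\, W_2\big)
\end{gather*}
for all vector fields $W_1, W_2$. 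When $L_V\Omega_R = 0$ this reads $(L_V\Omega_I)(W_1,W_2) = -\Omega_R((L_V J)W_1, W_2)$, so the non-degeneracy of $\Omega_R$ makes $L_V\Omega_I = 0$ equivalent to $L_V J = 0$. (Intuitively: the flow of $V$ preserves $\Omega_R$, and $J = -\Omega_R^{-1}\Omega_I$ as a bundle map $TX\to TX$, so the flow preserves $J$ exactly when it preserves $\Omega_I$.)

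It then remains to identify $L_V\Omega_I$ for the two choices of $V$. For $V = X^{\Omega_R}_{\mH_R}$, the anticompatibility relations (Property 2 of Definition \autoref{def:PHHS_1}) together with the fact that $\mH_I$ is a primitive of $\Omega_R(J(X^{\Omega_R}_{\mH_R}),\cdot)$ give $\iota_{X^{\Omega_R}_{\mH_R}}\Omega_I = -d\mH_I$; for $V = J(X^{\Omega_R}_{\mH_R})$, Property 2 gives $\iota_{J(X^{\Omega_R}_{\mH_R})}\Omega_I = \Omega_R(X^{\Omega_R}_{\mH_R},\cdot) = -d\mH_R$. In both cases $\iota_V\Omega_I$ is closed, so Cartan's formula collapses to $L_V\Omega_I = \iota_V d\Omega_I = d\Omega_I(V,\cdot,\cdot)$. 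Combining this with the previous paragraph, $V$ is $J$-preserving if and only if $d\Omega_I(V,\cdot,\cdot)\equiv 0$, which is precisely the claimed criterion for $V = X^{\Omega_R}_{\mH_R}$ and for $V = J(X^{\Omega_R}_{\mH_R})$.

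The only step that requires genuine care is the tensorial identity in the second paragraph: one must carefully expand $L_V$ of $\Omega_I = -\Omega_R(J\cdot,\cdot)$, keeping track of every term produced by the Leibniz rule applied to $\Omega_R$ and to $J$, so that the $L_V W_i$-terms cancel and only the $L_V\Omega_R$- and $L_V J$-contributions survive. Everything else — the two Hamiltonicity observations, the contractions with $\Omega_I$, and the use of Cartan's formula — is a routine bookkeeping exercise with the PHHS axioms.
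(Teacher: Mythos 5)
Your proof is correct and follows essentially the same route as the paper: both arguments rest on $L_V\Omega_R=0$, on Cartan's formula collapsing to $L_V\Omega_I=\iota_Vd\Omega_I$ because $\iota_V\Omega_I$ is exact, and on the non-degeneracy of $\Omega_R$ to translate between $L_VJ$ and $d\Omega_I(V,\cdot,\cdot)$. The only cosmetic difference is that the paper encodes $L_VJ=0$ via the bracket identity $[V,J(W)]=J([V,W])$ and the commutator $\iota_{[V,W]}=[L_V,\iota_W]$, whereas you expand the Leibniz rule on $\Omega_I=-\Omega_R(J\cdot,\cdot)$ directly; both computations produce the same key identity $\Omega_R\bigl((L_VJ)W,\cdot\bigr)=-d\Omega_I(V,W,\cdot)$.
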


\begin{proof}
 Let $(X,J;\Omega_R,\mH_R)$ be a PHHS with $2$-form $\Omega_I = -\Omega_R (J\cdot,\cdot)$ and Hamiltonian vector field $X_\mH = 1/2 (X^{\Omega_R}_{\mH_R} - iJ(X^{\Omega_R}_{\mH_R}))$. Take $V\in\{X^{\Omega_R}_{\mH_R}, J(X^{\Omega_R}_{\mH_R})\}$. We want to determine in which cases $V$ is $J$-preserving. Thereto, we use Proposition 2.10 in Chapter IX of \cite{kobayashi1969}:\linebreak A (real) vector field $V$ on a smooth manifold $X$ with almost complex structure $J$ is $J$-preserving if and only if the following equation is fulfilled for every vector field $W$:
 \begin{gather*}
  [V, J(W)] = J([V,W]).
 \end{gather*}
 Since $\Omega_R$ is non-degenerate, we find that $V$ is $J$-preserving if and only if
 \begin{gather*}
  \iota_{[V, J(W)]}\Omega_R - \iota_{J([V,W])}\Omega_R = 0
 \end{gather*}
 for every vector field $W$ on $X$. Let us now compute the left-hand side of this equation. First, we obtain by definition of $\Omega_I$:
 \begin{gather*}
  \iota_{J([V,W])}\Omega_R = -\iota_{[V,W]}\Omega_I.
 \end{gather*}
 Now remember the relation (cf. Proposition 3.10 in Chapter I of \cite{kobayashi1963}):
 \begin{gather*}
  \iota_{[V,W]} = [L_V,\iota_W].
 \end{gather*}
 Furthermore, recall that by Definition \autoref{def:PHHS_1} the $1$-forms $\Omega_R (V,\cdot)$ and\linebreak $\Omega_I (V,\cdot) = -\Omega_R (J(V),\cdot)$ are exact. Together with $d\Omega_R = 0$ and Cartan's magic formula, this implies:
 \begin{gather*}
  L_V\Omega_R = 0;\quad L_V\Omega_I = \iota_V d\Omega_I,
 \end{gather*}
 where $L_V$ denotes the Lie derivative of $V$. These relations allow us to compute:
 \begin{align*}
  \iota_{[V,J(W)]}\Omega_R &= [L_V,\iota_{J(W)}]\Omega_R = L_V\left(\iota_{J(W)}\Omega_R\right) - \iota_{J(W)}\left(L_V\Omega_R\right)\\
  &= -L_V\left(\iota_W\Omega_I\right) = -[L_V,\iota_W]\Omega_I - \iota_W\left(L_V\Omega_I\right)\\
  &= -\iota_{[V,W]}\Omega_I - \iota_W\iota_V d\Omega_I = \iota_{J([V,W])}\Omega_R - \iota_W\iota_V d\Omega_I.
 \end{align*}
 In total, we find that $V$ is $J$-preserving if and only if
 \begin{gather*}
  \iota_{[V, J(W)]}\Omega_R - \iota_{J([V,W])}\Omega_R = - \iota_W\iota_V d\Omega_I = 0
 \end{gather*}
 for every vector field $W$ on $X$ concluding the proof.
\end{proof}

Now let us consider Example \autoref{ex:constructing_PHHS} again with $f = 1$, $h = e^{x_1}$, and $H = \mH_R = -y_1$. Then, the Hamiltonian vector fields and the exterior derivative of $\Omega^g_I$ are given by:
\begin{gather*}
 X^{\Omega_R}_{\mH_R} = -\pa_{y_2};\quad J_g(X^{\Omega_R}_{\mH_R}) = e^{-x_1}\pa_{x_2};\quad d\Omega^g_I = e^{x_1} dx_1\wedge dx_2\wedge dy_1.
\end{gather*}
Using Proposition \autoref{prop:J-preserving_criteria}, we immediately see that $X^{\Omega_R}_{\mH_R}$ is $J_g$-preserving, while $J_g(X^{\Omega_R}_{\mH_R})$ is not. By choosing $H$ to be a linear combination of $\mH_R = -y_1$ and $\mH^g_I = -e^{-x_1}$, Example \autoref{ex:constructing_PHHS} even shows that neither $X^{\Omega_R}_{\mH_R}$ nor $J(X^{\Omega_R}_{\mH_R})$ of a PHHS $(X,J;\Omega_R,\mH_R)$ need to be $J$-preserving.

\subsection{Deforming HHSs by proper PHHSs}
\label{subsec:deforming_HHS}

In this subsection, we examine the question: ``How `large' is the set of proper PHHSs within the set of all PHHSs?'' The answer and the main result of this subsection is that proper PHHSs are generic if $\text{dim}_\mathbb{R}(X)>4$. To prove this, we first reduce the genericity of proper PHHSs to the claim that every HHS can be deformed by proper PHHSs. Afterwards, we show this claim in two steps. The first step is to locally bring every HHS into standard form. Secondly, we deform the HHS standard form by proper PHHSs within a small neighborhood.\\
We start by recalling the definition of a generic property. For a topological space $B$ and a subset $A\subset B$, we call the property that an element is contained in $A$ \textbf{generic} iff $B\backslash A$ is a meagre subset of $B$ in the sense of Baire. In particular, the property $a\in A$ is generic if $A$ is an open and dense subset of $B$. Now let $X$ be a smooth manifold. We introduce the following notations for the set of almost complex structures, of PHSMs, and of PHHSs on $X$:
\begin{align*}
 \mathcal{J}_\text{a.c.} (X)&\coloneqq \{J\in\Gamma (\text{End}(TX))\mid J^2 = -1\},\\
 \text{PHSM} (X)&\coloneqq \{(J,\Omega_R)\in\mathcal{J}_\text{a.c.}(X)\times\Omega^2 (X)\mid (X,J;\Omega_R)\text{ is a PHSM}\},\\
 \text{PHHS} (X)&\coloneqq \{(J,\Omega_R,\mH_R)\in\mathcal{J}_\text{a.c.}(X)\times\Omega^2 (X)\times C^\infty (X,\mathbb{R})\mid (X,J;\Omega_R,\mH_R)\text{ is a PHHS}\}.
\end{align*}
Similarly, we denote the set of complex structures, of HSMs, and of HHSs on $X$ by:
\begin{align*}
 \mathcal{J}_\text{c} (X)&\coloneqq \{J\in\mathcal{J}_\text{a.c.}(X)\mid J\text{ is integrable}\},\\
 \text{HSM} (X)&\coloneqq \{(J,\Omega_R)\in \text{PHSM} (X)\mid J\text{ is integrable}\},\\
 \text{HHS} (X)&\coloneqq \{(J,\Omega_R,\mH_R)\in \text{PHHS} (X)\mid J\text{ is integrable}\}.
\end{align*}
Lastly, we write for the set of proper, i.e., non-integrable almost complex structures, of proper PHSMs, and of proper PHHSs on $X$:
\begin{align*}
 \mathcal{J}_\text{p} (X)&\coloneqq \mathcal{J}_\text{a.c.} (X)\backslash \mathcal{J}_\text{c} (X),\\
 \text{PHSM}_\text{p} (X)&\coloneqq \text{PHSM} (X)\backslash \text{HSM} (X),\\
 \text{PHHS}_\text{p} (X)&\coloneqq \text{PHHS} (X)\backslash \text{HHS} (X).
\end{align*}
We equip every set with the topology induced by the compact-open topology. We wish to show that $\mathcal{J}_\text{p} (X)\subset \mathcal{J}_\text{a.c.} (X)$, $\text{PHSM}_\text{p} (X)\subset \text{PHSM} (X)$, and $\text{PHHS}_\text{p} (X)\subset \text{PHHS} (X)$ are open and dense subsets if $\text{dim}_\mathbb{R} (X)>4$. Clearly, the sets in question are open subsets, since all former mentioned subsets contain exactly those elements from their respective supersets for which the Nijenhuis tensor $N_J$ of $J$ does not vanish. The argument is completed by noting that $N_J\neq 0$ is an open condition.\\
It is left to show that the subsets are dense in their respective supersets. This can be done by showing that the ``integrable'' sets $\mathcal{J}_\text{c} (X)$, $\text{HSM} (X)$, and $\text{HHS}(X)$ are contained in the boundary of the ``proper'' sets $\mathcal{J}_\text{p}$, $\text{PHSM}_\text{p} (X)$, and $\text{PHHS}_\text{p} (X)$, respectively. The last statement is true if the ``integrable'' elements can be deformed by ``proper'' elements:

\begin{Def}[(Proper) deformation]\label{def:deformation}
 Let $X$ be a smooth manifold with almost complex structure $J$ on it. $(X,J^\varepsilon)$ is called a \textbf{deformation} of $(X,J)$ iff $J^\varepsilon$ describes a smooth\footnote{Here, smooth path means that the map $J^\varepsilon:X\times\mathbb{R}\to \text{\normalfont End}(TX)$ is smooth. Similar remarks apply to $\Omega^\varepsilon_R$ and $\mH^\varepsilon_R$.} path in $\mathcal{J}_\text{\normalfont a.c.} (X)$ with start point $J^0 = J$. Now let $\Omega_R$ be a $2$-form on $X$ such that $(X,J;\Omega_R)$ is a PHSM. Then, $(X,J^\varepsilon; \Omega^\varepsilon_R)$ is a deformation of $(X,J;\Omega_R)$ iff $(J^\varepsilon, \Omega^\varepsilon_R)$ describes a smooth path in $\text{\normalfont PHSM}(X)$ with start point $(J^0, \Omega^0_R) = (J,\Omega_R)$. If, additionally, $\mH_R$ is a function on $X$ such that $(X,J;\Omega_R,\mH_R)$ is a PHHS, we say $(X,J^\varepsilon; \Omega^\varepsilon,\mH^\varepsilon_R)$ is a deformation of $(X,J;\Omega_R,\mH_R)$ iff $(J^\varepsilon, \Omega^\varepsilon_R, \mH^\varepsilon_R)$ describes a smooth path in $\text{\normalfont PHHS}(X)$ with start point $(J^0, \Omega^0_R,\mH^0_R) = (J,\Omega_R,\mH_R)$. We call a deformation \textbf{proper} iff the corresponding $J^\varepsilon$ is not integrable for $\varepsilon\neq 0$.
\end{Def}

Before we continue with the proof of genericity, we shall quickly address one aspect concerning our definitions: in the definition of a deformation, we have neglected the imaginary parts $\Omega_I$ and $\mH_I$, even though they are crucial for the definition of HSMs and HHSs. One might wonder whether this is justified, i.e., whether every deformation of a PHSM or a PHHS automatically gives us a suitable $1$-parameter family $\Omega^\varepsilon_I$ and $\mH^\varepsilon_I$ of imaginary parts. Regarding the form $\Omega_I$, this is certainly true by simply setting $\Omega^\varepsilon_I\coloneqq -\Omega^\varepsilon_R (J^\varepsilon\cdot,\cdot)$. However, $\mH_I$ defined as a primitive of $\Omega_R (J(X^{\Omega_R}_{\mH_R}),\cdot)$ might be more problematic. It is not obvious why a smooth $1$-parameter family of exact $1$-forms $\alpha^\varepsilon$ should be the differential of a smooth $1$-parameter family of functions $f^\varepsilon$. Nevertheless, this is still true in the case of $X$ being contractible:

\begin{Prop}[Primitive of $\alpha^\varepsilon$]\label{prop:primitive}
 Let $X$ be a smooth contractible manifold, $\alpha^\varepsilon$ be a smooth $1$-parameter family of exact $1$-forms on $X$, and $f\in C^\infty (X,\mathbb{R})$ such that $\alpha^0 = df$. Then, there exists a smooth $1$-parameter family of functions $f^\varepsilon$ on $X$ such that $\alpha^\varepsilon = df^\varepsilon$ and $f^0 = f$.
\end{Prop}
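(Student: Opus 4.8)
The plan is to use the standard homotopy operator for de Rham cohomology, but track its dependence on the parameter $\varepsilon$ and ensure continuity of the chosen primitives. Since $X$ is contractible, fix a smooth homotopy $H:X\times[0,1]\to X$ with $H(\cdot,1)=\mathrm{id}_X$ and $H(\cdot,0)\equiv x_\ast$ for some basepoint $x_\ast\in X$. This homotopy induces the usual chain homotopy operator $K:\Omega^k(X)\to\Omega^{k-1}(X)$ defined by $K\beta\coloneqq \int_0^1 \iota_{\partial_u} H^\ast\beta\,du$, satisfying $dK\beta + Kd\beta = \beta - (\text{pullback to the constant map})$. Applied to a $1$-form this reads $dK\beta + Kd\beta = \beta$ because the pullback of a $1$-form under the constant map $x_\ast$ vanishes. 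The key point is that $K$ is a \emph{linear} operator built from a \emph{fixed} homotopy $H$, hence it commutes with differentiation in any auxiliary parameter and maps smooth families to smooth families.

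Now define $g^\varepsilon\coloneqq K\alpha^\varepsilon$. Since $\alpha^\varepsilon$ is a smooth $1$-parameter family of $1$-forms on $X$, i.e. the map $X\times\mathbb{R}\to T^\ast X$ is smooth, and $K$ is given by fiber integration against a fixed smooth homotopy, $g^\varepsilon$ is a smooth $1$-parameter family of functions on $X$. Each $\alpha^\varepsilon$ is exact, hence closed, so $Kd\alpha^\varepsilon = 0$ and therefore $dg^\varepsilon = dK\alpha^\varepsilon = \alpha^\varepsilon$ for every $\varepsilon$. In particular $g^0$ is a primitive of $\alpha^0 = df$, so $g^0 - f$ is locally constant; as $X$ is contractible it is connected, hence $g^0 - f = c$ for a single constant $c\in\mathbb{R}$. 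Setting $f^\varepsilon\coloneqq g^\varepsilon - c$ gives a smooth $1$-parameter family with $df^\varepsilon = \alpha^\varepsilon$ and $f^0 = g^0 - c = f$, as required.

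I do not expect a genuine obstacle here; the only point requiring a little care is the claim that the homotopy operator $K$ preserves smoothness in the extra parameter $\varepsilon$. This follows because $K$ is fiber integration over the compact interval $[0,1]$ of a form pulled back by the fixed smooth map $H$, and fiber integration over a compact fiber sends smooth families to smooth families; concretely, in local coordinates the coefficients of $g^\varepsilon$ are integrals $\int_0^1(\cdots)\,du$ of expressions depending smoothly on $(x,u,\varepsilon)$, so differentiation under the integral sign applies. One could alternatively invoke that $X$ being contractible admits a smooth retraction onto a point and run the same argument; either way, the construction of $f^\varepsilon$ is entirely explicit once $H$ is chosen.
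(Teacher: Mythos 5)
Your proof is correct and is essentially the same as the paper's: the paper's function $\beta^\varepsilon_t$ is exactly $\iota_{\partial_t}F^\ast\alpha^\varepsilon$, so its primitive $f^\varepsilon=\int_0^1\beta^\varepsilon_t\,dt$ coincides with your $K\alpha^\varepsilon$, and both arguments conclude by adjusting a constant using connectedness. You have merely packaged the construction in the standard language of the de Rham homotopy operator, with a slightly more explicit justification of smoothness in $\varepsilon$.
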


\begin{proof}
 Let $X$, $\alpha^\varepsilon$, and $f$ be as above. $X$ is contractible, so there is a smooth map $F:[0,1]\times X\to X$ such that $F(0,x) = x_0$ and $F(1,x) = x$ for every $x\in X$ and some $x_0\in X$. For fixed $\varepsilon$, $F^\ast\alpha^\varepsilon$ is a closed $1$-form on $[0,1]\times X$ and can be expressed as (using $F_t:X\to X$, $F_t (x)\coloneqq F (t,x)$ for every $t\in [0,1]$):
 \begin{gather*}
  F^\ast\alpha^\varepsilon = F^\ast_t\alpha^\varepsilon + \beta^\varepsilon_t \cdot dt,
 \end{gather*}
 where $\beta^\varepsilon_t$ is a function on $X$ smoothly depending on $\varepsilon$ and $t$. For fixed $t$ and $\varepsilon$, $F^\ast_t\alpha^\varepsilon$ can be understood as a closed $1$-form on $X$. The closedness of $F^\ast\alpha^\varepsilon$ and $F^\ast_t\alpha^\varepsilon$ together with the expression for $F^\ast\alpha^\varepsilon$ implies:
 \begin{gather*}
  \frac{d}{dt}F^\ast_t\alpha^\varepsilon = d\beta^\varepsilon_t.
 \end{gather*}
 In total, we obtain:
 \begin{align*}
  \alpha^\varepsilon = \text{id}^\ast_X\alpha^\varepsilon - \text{const}^\ast_{x_0}\alpha^\varepsilon = F^\ast_1\alpha^\varepsilon - F^\ast_0\alpha^\varepsilon = \int\limits^1_0 \frac{d}{dt}(F^\ast_t\alpha^\varepsilon)\, dt = d\int\limits^1_0 \beta^\varepsilon_t dt\eqqcolon df^\varepsilon.
 \end{align*}
 Shifting $f^\varepsilon$ by a constant to match $f$ at $\varepsilon = 0$ concludes the proof.
\end{proof}

To show that the property ``proper'' is generic, we will only use local deformations. In particular, we can always shrink the neighborhood in which we deform an integrable structure such that it becomes contractible. Thus, we can rightfully neglect deformations of the imaginary parts $\Omega_I$ and $\mH_I$ here.\\
Let us return to the proof of genericity. As explained, it suffices to find a proper deformation of every ``integrable'' element to show genericity. We construct the desired proper deformations in two steps:
\begin{enumerate}
 \item We bring the ``integrable'' elements locally into standard form.
 \item We deform the standard form ``properly'' within a small neighborhood.
\end{enumerate}
Regarding the first step, it is clear what the standard forms of complex manifolds and HSMs are and how to achieve them. For complex manifolds, the standard form is just $X = \mathbb{C}^m$ with complex structure $J = i$ and every complex manifold can be brought into standard form by holomorphic charts. For HSMs, the standard form is $X = \mathbb{C}^{2n}$ together with $\Omega = \sum_j dP_j\wedge dQ_j$ and every HSM can be brought into standard form by holomorphic Darboux charts (cf. Theorem \autoref{thm:holo_Darboux} and \autoref{app:darboux}).\\
For a HHS $(X,\Omega,\mH)$, we observe that $\mH$ is either locally constant or regular at some point, i.e., there exists $x_0\in X$ such that $d\mH\vert_{x_0}\neq 0$. If the given HHS is locally constant near a point, then holomorphic Darboux coordinates describe the standard form of the HHS in question near that point. If the given HHS is regular near a point, then the following lemma brings it into standard form:

\begin{Lem}[Regular HHSs in standard form]\label{lem:HHS_in_standard_form}
 Let $(X,\Omega,\mH)$ be a HHS and\linebreak $x_0\in X$ be a point such that $d\mH\vert_{x_0}\neq 0$. Then, there exists a holomorphic chart\linebreak $\phi = (z_1,\ldots, z_{2n}):U\to V\subset\mathbb{C}^{2n}$ of $X$ near $x_0\in U$ such that $\Omega\vert_U = \sum^n_{j = 1} dz_{j+n}\wedge dz_j$ and $\mH\vert_U = z_{2n}$.
\end{Lem}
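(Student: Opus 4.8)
The strategy is the standard symplectic reduction of a Hamiltonian to a coordinate function, carried out in the holomorphic category. The plan is to first produce holomorphic Darboux-type coordinates adapted to $\mH$, and then observe that the normalization $\mH = z_{2n}$ follows automatically. Concretely, I would begin by exploiting the hypothesis $d\mH|_{x_0}\neq 0$: this means $\mH$ is a submersion near $x_0$, so $\mH^{-1}(E)$ with $E\coloneqq\mH(x_0)$ is a complex hypersurface through $x_0$, and the holomorphic Hamiltonian vector field $X_\mH$ defined by $\iota_{X_\mH}\Omega = -d\mH$ is nonvanishing near $x_0$ (by \autoref{rem:ham_vec_field_well-def}, $X_\mH$ is a well-defined holomorphic vector field, and $d\mH(X_\mH) = -\Omega(X_\mH,X_\mH) = 0$ forces $X_\mH$ to be tangent to the level sets while $d\mH|_{x_0}\neq 0$ forces $X_\mH|_{x_0}\neq 0$).

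Next I would set up an ``initial value hypersurface'' and flow off it. Choose a holomorphic hypersurface $\Sigma\subset X$ through $x_0$ that is transverse to $X_\mH$ at $x_0$ — e.g. the zero set of a holomorphic coordinate function $w$ near $x_0$ with $dw|_{x_0}(X_\mH|_{x_0})\neq 0$. By Darboux's theorem for HSMs on $\Sigma$ with the restricted form — or more directly, by an inductive application of the holomorphic flow-box construction of Proposition \autoref{prop:holo_traj} — build holomorphic coordinates $(z_1,\dots,z_{n-1},z_{n+1},\dots,z_{2n-1})$ on $\Sigma$ in which the restriction $\Omega|_\Sigma$ takes the form $\sum_{j=1}^{n-1} dz_{j+n}\wedge dz_j$. (Here one uses that $\mH|_\Sigma$ has $d(\mH|_\Sigma)\neq 0$, so one can even arrange $z_{2n-1} = \mH|_\Sigma - E$ by the classical Darboux-with-a-function argument applied holomorphically.) Then define $z_{2n}$ near $x_0$ by $z_{2n}(p) \coloneqq E + $ (the complex time needed to flow from $\Sigma$ to $p$ along $X_\mH$), which is holomorphic by the holomorphic dependence of trajectories on initial data in Proposition \autoref{prop:holo_traj}; and extend each $z_j$ ($j\neq 2n$) from $\Sigma$ to a neighborhood by requiring it to be constant along the flow of $X_\mH$, i.e. $X_\mH(z_j) = 0$. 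After shrinking, $\phi = (z_1,\dots,z_{2n})$ is a holomorphic chart near $x_0$.

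It then remains to check two things: that $\Omega$ has the claimed normal form in these coordinates, and that $\mH = z_{2n}$. For the latter: by construction $X_\mH(z_{2n}) = 1$ and $X_\mH(z_j) = 0$ for $j<2n$; on the other hand $\iota_{X_\mH}\Omega = -d\mH$ means $X_\mH(\mH) = 0$ and, pairing $X_\mH$ against the symplectic form, one recovers $d\mH$ from the ``$z_{2n}$-component'' — more cleanly, since $z_{2n}-\mH$ is constant along $X_\mH$ and equals $0$ on $\Sigma$ by the choice $z_{2n-1}=\mH|_\Sigma-E$ being decoupled and $z_{2n}|_\Sigma=E=\mH|_\Sigma$ on the relevant slice, one gets $z_{2n}\equiv\mH$ after a final affine adjustment. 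For the former: $\Omega$ is closed, holomorphic, tangent-data-adapted to the flow of $X_\mH$ whose contraction is $-d\mH = -dz_{2n}$; a Moser-type / Lie-derivative computation ($L_{X_\mH}\Omega = d\iota_{X_\mH}\Omega = -d\,d z_{2n} = 0$) shows $\Omega$ is flow-invariant, so it is determined by $\Omega|_\Sigma \oplus$ the $dz_{2n}$-coupling, and matching constants gives $\Omega = \sum_{j=1}^{n} dz_{j+n}\wedge dz_j$ with $z_{j+n}$ the coordinate dual-paired to $z_j$.

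\textbf{Main obstacle.} The delicate point is the holomorphic bookkeeping in the inductive Darboux step on $\Sigma$ together with the simultaneous normalization $z_{2n-1}=\mH|_\Sigma-E$: one must ensure that the Darboux coordinates produced on the hypersurface can be chosen with one of the ``action-type'' coordinates equal (up to constant) to the restricted Hamiltonian, which is the holomorphic analogue of the classical ``Darboux theorem relative to a function''. I would handle this by first splitting off $d\mH$ using the transversality, reducing $\Omega|_\Sigma$ on the sub-hypersurface $\{\mH=E\}\cap\Sigma$ by the holomorphic symplectic Gram–Schmidt / Darboux argument (legitimate since \autoref{thm:holo_Darboux} and \autoref{app:darboux} supply the holomorphic Darboux machinery), and then reintroducing the $\mH$-direction as the last conjugate pair. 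Everything else is routine once holomorphicity of the flow (Proposition \autoref{prop:holo_traj}) and the Cartan-calculus identity $L_{X_\mH}\Omega=0$ are in hand.
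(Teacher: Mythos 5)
There is a genuine error in your normalization step, and it is not cosmetic. You define $z_{2n}(p)$ to be (up to the additive constant $E$) the complex time needed to flow from $\Sigma$ to $p$ along $X_\mH$, so by construction $dz_{2n}(X_\mH)=1$; on the other hand $d\mH (X_\mH) = -\Omega (X_\mH, X_\mH) = 0$. Hence $X_\mH (z_{2n}-\mH) = 1$, and your sentence ``since $z_{2n}-\mH$ is constant along $X_\mH$ \dots one gets $z_{2n}\equiv\mH$'' is false; no affine adjustment can repair it. A flow-time function is canonically \emph{conjugate} to $\mH$ (it is a $G$ with $\{\mH,G\}=1$), never equal to it: in the target normal form $\Omega\vert_U=\sum_j dz_{j+n}\wedge dz_j$, $\mH = z_{2n}$, one has $X_\mH = \pa_{z_n}$, so the flow time must become the coordinate $z_n$, while $\mH$ itself is the conjugate coordinate $z_{2n}$, which is constant along the flow. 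Two further symptoms of the same confusion: you assert $z_{2n}\vert_\Sigma = E = \mH\vert_\Sigma$, but a hypersurface transverse to $X_\mH$ cannot lie in a level set of $\mH$ (the level sets contain $X_\mH$ in their tangent spaces — indeed you correctly note $d(\mH\vert_\Sigma)\neq 0$ elsewhere); and $\Sigma$ has complex dimension $2n-1$, so $\Omega\vert_\Sigma$ is degenerate and your $2n-2$ coordinates $(z_1,\dots,z_{n-1},z_{n+1},\dots,z_{2n-1})$ do not parametrize it.

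The paper's proof keeps the flow time and $\mH$ as a conjugate pair throughout: it first constructs $G$ as the flow time off a coordinate hyperplane, so that $\{\mH,G\}=1$ and hence $[X_\mH,X_G]=0$, and then builds the chart by flowing with \emph{both} commuting vector fields $X_\mH$ and $X_G$ off the codimension-two joint level set $\{\mH=\mH(x_0),\,G=0\}$, which (unlike your odd-dimensional $\Sigma$) is a symplectic submanifold. This yields $G=z^\#_n$, $\mH=z^\#_{2n}$, $X_\mH=\pa_{z^\#_n}$, $X_G=-\pa_{z^\#_{2n}}$, whence $\Omega = dz^\#_{2n}\wedge dz^\#_n + \Sigma'$ with $\Sigma'$ closed and independent of $z^\#_n,z^\#_{2n}$, and Darboux's theorem for HSMs applied to $\Sigma'$ finishes the proof. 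Your outline can be salvaged along these lines by relabelling the flow time as $z_n$, taking $z_{2n}$ to be the flow-invariant extension of $\mH\vert_\Sigma - E$ (which then genuinely equals $\mH - E$, since both are constant along the flow and agree on $\Sigma$), and replacing the transversal hypersurface by the codimension-two symplectic one when you invoke the holomorphic Darboux machinery — but as written the construction is inconsistent.
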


\begin{proof}
 Let $(X,\Omega,\mH)$ be a HHS and $x_0\in X$ be a point such that $d\mH\vert_{x_0}\neq 0$. Without loss of generality, we can assume $\mH (x_0) = 0$. The proof consists of three steps:
 \begin{enumerate}
  \item Construct a holomorphic function $G$ defined locally near $x_0$ satisfying\linebreak $\{\mH, G\}\coloneqq \Omega (X_\mH, X_G) = 1$.
  \item Find a holomorphic chart $\phi_3 = (z^\#_1,\ldots, z^\#_{2n}):U_3\to V_3$ of $X$ near $x_0$ such that $G\vert_{U_3} = z^\#_n$, $\mH\vert_{U_3} = z^\#_{2n}$, $X_\mH\vert_{U_3} = \pa_{z^\#_n}$, and $X_G\vert_{U_3} = -\pa_{z^\#_{2n}}$.
  \item We have $\Omega\vert_{U_3} = dz^\#_{2n}\wedge dz^\#_{n} + \Sigma$ where $\Sigma$ is a closed $2$-form only depending on $z^\#_1,\ldots, z^\#_{n-1}, z^\#_{n+1},\ldots, z^\#_{2n-1}$. Bring $\Sigma$ into standard form using Darboux's theorem for HSMs (cf. Theorem \autoref{thm:holo_Darboux} and \autoref{app:darboux}).
 \end{enumerate}
 \textbf{Step 1:} We pick a small open neighborhood $U_1$ of $x_0$ such that $U_1$ is the domain of a holomorphic chart $\phi_1 = (\hat z_1,\ldots, \hat z_{2n}):U_1\to V_1\subset\mathbb{C}^{2n}$ with $\phi_1 (x_0) = 0$ and $d\mH$ does not vanish on $U_1$. $\pa_{\hat z_1},\ldots, \pa_{\hat z_{2n}}$ form a holomorphic frame of the tangent bundle $T^{(1,0)}U_1$. Since the holomorphic Hamiltonian vector field $X_\mH$ of the HHS $(X,\Omega,\mH)$ does not vanish on $U_1$, we can replace one coordinate vector field, say $\pa_{\hat z_n}$, in the collection $\pa_{\hat z_1},\ldots, \pa_{\hat z_{2n}}$ with $X_\mH$ and still obtain a frame of $T^{(1,0)}U_1$ after shrinking $U_1$ if necessary.\\
 Now let $\varphi^{X_\mH}_z:U_1\to X$ be the complex flow of the holomorphic Hamiltonian vector field $X_\mH$ for suitable $z\in\mathbb{C}$. $\varphi^{X_\mH}_z$ is constructed as follows: for $x\in X$, we denote by $\gamma_x$ the holomorphic curve from $\mathbb{C}$ to $X$ solving the following initial value problem:
 \begin{gather*}
  \gamma^\prime_x (z) = X_\mH (\gamma_x (z));\quad \gamma_x (0) = x.
 \end{gather*}
 By Proposition \autoref{prop:holo_traj}, the curves $\gamma_x$ exist and are locally unique. With this, we can set the flow of $X_\mH$ to be $\varphi^{X_\mH}_z (x)\coloneqq \gamma_x (z)$.\\
 Let us now consider the map $\phi^{-1}_2$ from $\mathbb{C}^{2n}$ to $X$ defined by
 \begin{gather*}
  \phi^{-1}_2(\tilde z_1,\ldots, \tilde z_{2n})\coloneqq \varphi^{X_\mH}_{\tilde z_n}\circ \phi^{-1}_1 (\tilde z_1,\ldots, \tilde z_{n-1}, 0, \tilde z_{n+1},\ldots, \tilde z_{2n}).
 \end{gather*}
 The differential of $\phi^{-1}_2$ at $0\in\mathbb{C}^{2n}$ maps the standard complex basis of $\mathbb{C}^{2n}$ to the vector fields $\pa_{\hat z_1},\ldots, \pa_{\hat z_{n-1}}, X_{\mH}, \pa_{\hat z_{n+1}}, \ldots, \pa_{\hat z_{2n}}$ at $x_0\in X$. These vector fields form a complex basis at $x_0$, hence, $\phi^{-1}_2$ is a local biholomorphism near $0$ by the holomorphic\linebreak version of the inverse function theorem. This gives us the holomorphic chart\linebreak $\phi_2 = (\tilde z_1,\ldots, \tilde z_{2n}):U_2\to V_2$, where $U_2$ is a small open neighborhood of $x_0$.\\
 Next, we set $G:U_2\to\mathbb{C}$ to be the coordinate $\tilde z_{n}$. We find:
 \begin{gather*}
  dG\vert_x (X_\mH\vert_x) = \left.\frac{d}{dz}\right\vert_{z = 0} (G\circ \varphi^{X_\mH}_z (x)) = 1\quad\forall x\in U_2.
 \end{gather*}
 This implies for the Poisson bracket of $\mH$ and $G$:
 \begin{gather*}
  \{\mH, G\} \coloneqq \Omega (X_\mH, X_G) = dG (X_\mH) = 1,
 \end{gather*}
 where $X_G$ is the holomorphic Hamiltonian vector field of the HHS $(U_2,\Omega\vert_{U_2}, G)$.\\
 \textbf{Step 2:} Taking the Hamiltonian vector field of a holomorphic function is a Lie algebra homomorphism, hence, we get:
 \begin{gather*}
  [X_\mH, X_G] = X_{\{\mH, G\}} = 0.
 \end{gather*}
 As in the real case, the commutativity of the vector fields implies the\linebreak commutativity of their flows. This allows us to find a holomorphic chart\linebreak $\phi_3 = (z^\#_1,\ldots, z^\#_{2n}):U_3\to V_3$ of $X$ near $x_0$ such that $G\vert_{U_3} = z^\#_n$, $\mH\vert_{U_3} = z^\#_{2n}$, $X_\mH\vert_{U_3} = \pa_{z^\#_n}$, and $X_G\vert_{U_3} = -\pa_{z^\#_{2n}}$. The construction of $\phi_3$ makes use of the regular value theorem for complex manifolds. Consider the holomorphic map $f\coloneqq(H\vert_{U_2},G):U_2\to\mathbb{C}^2$. The map $f$ is a submersion due to $\Omega (X_\mH, X_G) = 1$. By the regular value theorem, the level sets of $f$ are complex submanifolds of $U_2$. The tangent space of a level set is given by the kernel of $df$. Now pick the level set $W\coloneqq f^{-1}(\mH (x_0), 0) = f^{-1}(0,0)$ containing $x_0$ and a holomorphic chart $\psi$ of $W$ near $x_0$ with $\psi (x_0) = 0$. Next, we define the map $\phi^{-1}_3$ via:
 \begin{gather*}
  \phi^{-1}_3 (z^\#_1,\ldots, z^\#_{2n})\coloneqq \varphi^{X_\mH}_{z^\#_n}\circ \varphi^{X_G}_{-z^\#_{2n}}\circ \psi^{-1} (z^\#_1,\ldots, z^\#_{n-1}, z^\#_{n+1}, \ldots, z^\#_{2n-1}).
 \end{gather*}
 The differential of $\phi^{-1}_3$ at $0\in\mathbb{C}^{2n}$ maps the standard complex basis of $\mathbb{C}^{2n}$ to the vector fields $X_\mH$, $-X_G$, and the coordinate vector fields of $\psi$ at $x_0$. The coordinate vector fields $v_j$ of $\psi$ satisfy $d\mH (v_j) = dG (v_j) = 0$, hence, are orthogonal to $X_\mH$ and $X_G$ with respect to the symplectic form $\Omega$. Thus, by $\Omega (X_\mH, X_G) = 1$ and standard arguments from linear symplectic algebra, the coordinate vector fields of $\psi$ form together with $X_\mH$ and $-X_G$ a complex basis of $T^{(1,0)}_{x_0}X$. We can again apply the inverse function theorem to show that $\phi^{-1}_3$ is locally biholomorphic near $0$. From $\phi^{-1}_3$, we obtain the holomorphic chart $\phi_3 = (z^\#_1,\ldots, z^\#_{2n}):U_3\to V_3$ of $X$ near $x_0$. Using the commutativity of $\varphi^{X_\mH}_{z^\#_n}$ and $\varphi^{X_G}_{-z^\#_{2n}}$,\linebreak we easily compute $\pa_{z^\#_n} = X_\mH\vert_{U_3}$ and $\pa_{z^\#_{2n}} = -X_G\vert_{U_3}$. By construction of $\phi_3$ via level sets of $\mH$ and $G$, by exploiting the fact that Hamiltonian flows preserve $\Omega$, and by using $d\mH (X_\mH) = dG (X_G) = 0$, we deduce that the functions $\mH$ and $G$ only change in $X_G$- and $X_\mH$-direction, respectively. Therefore, $\mH$ only depends on $z^\#_{2n}$ and $G$ only depends on $z^\#_{n}$. Integrating $dG(X_\mH) = d\mH (-X_G) = \Omega (X_\mH, X_G) = 1$ gives us $G = z^\#_{n}$ and $\mH = z^\#_{2n}$.\\
 \textbf{Step 3:} Let us inspect $\Omega$ in the chart $\phi_3$ more closely. The coordinates of $\phi_3$ satisfy $\iota_{\pa_{z^\#_{n}}}\Omega\vert_{U_3} = -dz^\#_{2n}$ and $\iota_{\pa_{z^\#_{2n}}}\Omega\vert_{U_3} = dz^\#_{n}$. Therefore, $\Omega\vert_{U_3}$ takes the following form:
 \begin{gather*}
  \Omega\vert_{U_3} = dz^\#_{2n}\wedge dz^\#_n + \sum\limits_{i,j\neq n,2n} \Omega_{ij} dz^\#_{i}\wedge dz^\#_{j}\eqqcolon dz^\#_{2n}\wedge dz^\#_n + \Sigma.
 \end{gather*}
 $\Omega\vert_{U_3}$ and $dz^\#_{2n}\wedge dz^\#_n$ are closed $2$-forms on $U_3$, thus, $\Sigma$ is also a closed $2$-form on $U_3$. $d\Sigma = 0$ implies that the partial derivatives of $\Omega_{ij}$ with respect to $z^\#_{n}$ and $z^\#_{2n}$ have to vanish for every $i,j\neq n,2n$. Thus, the restriction of $\Sigma$ to a hyperplane $z^\#_n = c_1$ and $z^\#_{2n} = c_2$ does not depend on the values $c_1$ and $c_2$. Furthermore, the restriction of $\Sigma$ is a holomorphic symplectic $2$-form on such a hyperplane. This is a direct consequence of $\Omega\vert_{U_3}$ being a holomorphic symplectic $2$-form on $U_3$. Therefore, we can apply Theorem \autoref{thm:holo_Darboux} to $\Sigma$ giving us a holomorphic chart $\Psi$ of the hyperplane $z^\#_n = 0$ and $z^\#_{2n} = 0$ in which $\Sigma$ assumes the standard form. Replacing the coordinates $z^\#_1,\ldots, z^\#_{n-1}, z^\#_{n+1}, \ldots, z^\#_{2n-1}$ of $\phi_3$ with the coordinates of $\Psi$ yields the holomorphic chart $\phi = (z_1,\ldots, z_{2n}):U\to V$ of $X$ near $x_0$ (choose $\Psi (x_0) = 0$):
 \begin{gather*}
  \phi^{-1} (z_1,\ldots, z_{2n})\coloneqq \varphi^{X_\mH}_{z_n}\circ \varphi^{X_G}_{-z_{2n}}\circ \Psi^{-1} (z_1,\ldots, z_{n-1}, z_{n+1}, \ldots, z_{2n-1}).
 \end{gather*}
 In this chart, $\Omega$ and $\mH$ take the form $\Omega\vert_U = \sum^n_{j = 1} dz_{j+n}\wedge dz_j$ and $\mH\vert_U = z_{2n}$ concluding the proof.
\end{proof}

\begin{Cor}[Every regular HHS is locally integrable as a Hamiltonian system]\label{cor:integrable}
 Let $(X,\Omega,\mH)$ be a HHS and $x_0\in X$ be a point such that $d\mH\vert_{x_0}\neq 0$. Then, there exists an open neighborhood $U$ of $x_0$ and holomorphic functions $F_i,G_i:U\to\mathbb{C}$ for $i\in\{1,\ldots, n\}$, $2n\coloneqq \text{\normalfont dim}_\mathbb{C}(X)$, such that:
 \begin{gather*}
  F_n = \mH\vert_U,\quad \{F_i, G_j\} = \delta_{ij},\quad \{F_i, F_j\} = \{G_i,G_j\} = 0\quad\forall i,j\in\{1,\ldots, n\}.
 \end{gather*}
 In particular, every regular HHS is locally integrable as a Hamiltonian system.
\end{Cor}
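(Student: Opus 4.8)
The plan is to obtain Corollary~\ref{cor:integrable} as an immediate consequence of Lemma~\ref{lem:HHS_in_standard_form}. Since $d\mH\vert_{x_0}\neq 0$, that lemma furnishes a holomorphic chart $\phi=(z_1,\ldots,z_{2n}):U\to V\subset\mathbb{C}^{2n}$ of $X$ near $x_0$ in which
\begin{gather*}
 \Omega\vert_U=\sum_{j=1}^n dz_{j+n}\wedge dz_j,\qquad \mH\vert_U=z_{2n}.
\end{gather*}
So on $U$ the HHS is literally the standard HHS $(\mathbb{C}^{2n},\sum_j dP_j\wedge dQ_j,\mH)$ with $P_j=z_{j+n}$, $Q_j=z_j$, and $\mH=P_n$; the idea is simply to read off the desired commuting family from these Darboux coordinates.

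Concretely, I would set $F_i\coloneqq z_{i+n}$ and $G_i\coloneqq z_i$ for $i\in\{1,\ldots,n\}$, so that $F_n=z_{2n}=\mH\vert_U$ by construction. It remains to verify the bracket relations. A short computation from $\iota_{X_f}\Omega=-df$ and the explicit form of $\Omega\vert_U$ gives the Hamiltonian vector fields $X_{z_{i+n}}=\partial_{z_i}$ and $X_{z_i}=-\partial_{z_{i+n}}$, and hence
\begin{gather*}
 \{F_i,G_j\}=\Omega\big(X_{z_{i+n}},X_{z_j}\big)=\delta_{ij},\qquad \{F_i,F_j\}=\{G_i,G_j\}=0,
\end{gather*}
which are exactly the claimed identities. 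The $F_i$ are visibly functionally independent (their differentials $dz_{i+n}$ are), one of them is $\mH\vert_U$, and the $G_i$ play the role of conjugate coordinates, so $(X,\Omega,\mH)$ is integrable as a Hamiltonian system on $U$.

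I do not expect any genuine obstacle in this proof: the substantive work --- building the complex flows of $X_\mH$ and of an auxiliary function $G$ with $\{\mH,G\}=1$, straightening them simultaneously, and applying Darboux's theorem for HSMs to the transverse form $\Sigma$ --- is already contained in the proof of Lemma~\ref{lem:HHS_in_standard_form}. The only point worth stating carefully is the meaning of ``integrable as a Hamiltonian system'': it is to be understood in the Arnold--Liouville sense of $n=\tfrac12\dim_{\mathbb{C}}(X)$ Poisson-commuting, functionally independent holomorphic first integrals including $\mH$, and with that convention the computation above completes the argument.
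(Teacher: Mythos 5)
Your proposal is correct and is exactly the paper's argument: the paper's proof of this corollary is the one-line instruction to take the chart $\phi$ from Lemma \autoref{lem:HHS_in_standard_form} and set $F_j\coloneqq z_{j+n}$, $G_j\coloneqq z_j$. Your explicit verification of the bracket relations via $X_{z_{i+n}}=\partial_{z_i}$ and $X_{z_i}=-\partial_{z_{i+n}}$ is a correct elaboration of what the paper leaves implicit.
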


\begin{proof}
 Take the chart $\phi$ from Lemma \autoref{lem:HHS_in_standard_form} and set $F_j\coloneqq z_{j + n}$ as well as $G_j\coloneqq z_j$.
\end{proof}

\begin{Rem}[Every regular RHS is locally integrable]\label{rem:integrable}
 Lemma \autoref{lem:HHS_in_standard_form} and Corollary \autoref{cor:integrable} are also true for regular RHSs with almost exactly the same proofs. In particular, every regular RHS is locally integrable.
\end{Rem}

With the ``integrable'' elements in standard form, let us now turn our attention to the second step. We need to construct local proper deformations of the standard form. Here, we only show how to deform HHSs explicitly. The deformations of HSMs and complex manifolds can be obtained in a similar way.

\begin{Prop}[Deformation of standard HHSs]\label{prop:deformation_of_standard_HHS}
 Let $(X,\Omega,\mH)$ be a HHS with complex structure $J$ and decompositions $\Omega = \Omega_R + i\Omega_I$ and $\mH = \mH_R + i\mH_I$, where $X = \mathbb{C}^{2n}\ (n>1)$, $J = i$, $\Omega = \sum^n_{j = 1} z_{j+n}\wedge z_j$, and $\mH \equiv c$ for some constant $c\in\mathbb{C}$ or $\mH = z_{2n}$. Furthermore, let $U\subset \mathbb{C}^{2n}$ be any non-empty open subset. Then, there exists a proper deformation $(X,J^\varepsilon;\Omega^\varepsilon_R,\mH^\varepsilon_R)$ of the HHS $(X,\Omega,\mH)$ such that $J^\varepsilon\vert_{X\backslash U} = J\vert_{X\backslash U}$, $\Omega^\varepsilon_R = \Omega_R$, and $\mH^\varepsilon_R = \mH_R$ for every $\varepsilon\in\mathbb{R}$.
\end{Prop}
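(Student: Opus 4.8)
The plan is to construct the deformation $J^\varepsilon$ explicitly by locally modifying $J$ inside a small ball contained in $U$, keeping $\Omega_R$ and $\mH_R$ completely untouched, and then to check that the resulting data still satisfies all four properties of Definition \autoref{def:PHHS_1}. Since we are allowed to leave $\Omega^\varepsilon_R = \Omega_R$ and $\mH^\varepsilon_R = \mH_R$ fixed, Property 1 ($d\Omega_R = 0$) is automatic. The key constraint is that $J^\varepsilon$ must remain $\Omega_R$-anticompatible (so that $(X,J^\varepsilon;\Omega_R)$ is a PHSM, giving Property 2 via Remark \autoref{rem:omega_II}), that the $1$-form $\Omega_R(J^\varepsilon(X^{\Omega_R}_{\mH_R}),\cdot)$ stays exact (so that $(X,J^\varepsilon;\Omega_R,\mH_R)$ is a PHHS — here Proposition \autoref{prop:primitive} and the contractibility of a ball help, and Properties 3 and 4 then follow from Proposition \autoref{prop:def_PHHS_agree}), and that $J^\varepsilon$ is genuinely non-integrable for $\varepsilon\neq 0$. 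The cleanest way to keep $\Omega_R$-anticompatibility is to borrow the recipe of \autoref{subsec:constructing_PHHS}: pick an $\Omega_R$-\emph{compatible} almost complex structure $I^\varepsilon_g$ that equals a fixed $\Omega_R$-compatible $I_0$ with $I_0 J I_0 = -J$ (e.g.\ the one coming from the Euclidean metric, as in Example \autoref{ex:constructing_PHHS}) outside a small ball $B\subset U$, and set $J^\varepsilon \coloneqq I^\varepsilon_g\circ J\circ I^\varepsilon_g$; then $J^0 = -(I_0 J I_0) = J$ wait — one sets $J^\varepsilon \coloneqq -I^\varepsilon_g\circ J\circ I^\varepsilon_g$ so that $J^0 = J$, and $J^\varepsilon$ agrees with $J$ outside $B$ while being $\Omega_R$-anticompatible everywhere.

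The concrete model I would use is exactly Example \autoref{ex:constructing_PHHS}, localized: work in Darboux coordinates $z_j = x_j + iy_j$, choose a bump function $\chi\in C^\infty_c(B,[0,1])$ supported in a small ball $B\subset U$ and identically $1$ on a smaller ball, and set $f^\varepsilon \equiv 1$, $h^\varepsilon \coloneqq \exp(\varepsilon\,\chi(z)\,\rho(x_1))$ for a suitable function $\rho$ depending only on $x_1$ (so that, by the computation in Example \autoref{ex:constructing_PHHS}, the induced $1$-form $\Omega_R(J^\varepsilon_g(X^{\Omega_R}_{\mH_R}),\cdot)$ is closed when $\mH_R$ is the relevant linear coordinate function, hence exact on the ball, hence globally exact after extending by the undeformed primitive). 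With $r^\varepsilon = f^\varepsilon/h^\varepsilon = \exp(-\varepsilon\chi\rho)$ one has $d\Omega^{g,\varepsilon}_I = dr^\varepsilon \wedge(\cdots)$, which is non-zero somewhere in $B$ for $\varepsilon\neq 0$ by the explicit formula; by Corollary \autoref{cor:rel_HHS_PHHS} (equivalence of "$J$ integrable" and "$d\Omega_I = 0$"), this makes the deformation proper. The two cases $\mH \equiv c$ and $\mH = z_{2n}$ should be handled by the same construction: in the constant case any $\mH_R$ works and there is nothing to check for the primitive (it is constant), in the regular case one arranges, after possibly relabelling coordinates, that $\mH_R$ is a single coordinate function transverse to the directions in which $h^\varepsilon$ varies, mirroring the choice $\mH_R = -y_1$, $h = e^{x_1}$ in Example \autoref{ex:constructing_PHHS}.

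The steps, in order, are: (i) fix Darboux coordinates and the ball $B\subset U$; (ii) define $I^\varepsilon_g$ via the metric data $(f^\varepsilon,h^\varepsilon)$ above and set $J^\varepsilon \coloneqq -I^\varepsilon_g J I^\varepsilon_g$, verifying $(J^\varepsilon)^2 = -1$, $\Omega_R$-anticompatibility, and $J^0 = J$, $J^\varepsilon = J$ off $B$; (iii) compute $X^{\Omega_R}_{\mH_R}$ and $\Omega_R(J^\varepsilon(X^{\Omega_R}_{\mH_R}),\cdot)$ in coordinates and check closedness, then invoke contractibility of $B$ (and Proposition \autoref{prop:primitive} for smoothness in $\varepsilon$) to get a primitive $\mH^\varepsilon_I$ agreeing with $\mH_I$ off $B$, so $(X,J^\varepsilon;\Omega_R,\mH_R)\in\mathrm{PHHS}(X)$; (iv) confirm smoothness of $\varepsilon\mapsto(J^\varepsilon,\Omega_R,\mH_R)$, i.e.\ that it is a deformation in the sense of Definition \autoref{def:deformation}; (v) compute $d\Omega^\varepsilon_I$ and exhibit a point where it is non-zero for $\varepsilon\neq 0$, concluding properness via Corollary \autoref{cor:rel_HHS_PHHS}. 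The main obstacle I anticipate is step (iii): ensuring that the deformed $1$-form remains exact is exactly why one cannot deform $h^\varepsilon$ arbitrarily, and the restriction $n>1$ (real dimension $>4$) is presumably what gives enough room — with $n>1$ there is a coordinate direction "free" to carry the $x_1$-dependence of $h^\varepsilon$ while leaving $\mH_R$ and the exactness condition intact, whereas in complex dimension two the analogous move would force $d\Omega^\varepsilon_I$ to re-close. I would double-check that the localized $\chi$ does not spoil the "depends only on $x_1$" structure that makes the $1$-form closed; if it does, the fix is to choose $\chi$ itself to depend only on a subset of coordinates disjoint from those appearing in $\mH_R$, which is possible precisely because $\dim_\mathbb{R} X > 4$.
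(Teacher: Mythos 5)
Your overall strategy --- keep $\Omega_R$ and $\mH_R$ fixed, deform $J$ by a conformal rescaling $r^\varepsilon$ in a pair of real coordinate planes, and detect properness via $d\Omega^\varepsilon_I\neq 0$ together with Corollary \autoref{cor:rel_HHS_PHHS} --- is indeed the paper's strategy, and your treatment of the case $\mH\equiv c$ and of properness is fine. The gap is in step (iii), in the regular case $\mH = z_{2n}$, and you half-notice it yourself. You model the exactness of $\Omega_R(J^\varepsilon(X^{\Omega_R}_{\mH_R}),\cdot)$ on Example \autoref{ex:constructing_PHHS}, where $\mH_R=-y_1$ and the relevant $1$-form equals $r\,dx_1$, so closedness \emph{forces} $r$ to depend on $x_1$ alone. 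That requirement is irreconcilable with $J^\varepsilon\vert_{X\backslash U}=J\vert_{X\backslash U}$: a non-constant function of $x_1$ alone cannot equal $1$ outside a bounded $U$, and once you multiply by a bump $\chi$ supported in a ball, $r^\varepsilon=\exp(-\varepsilon\chi\rho)$ depends on all coordinates, so $d(r^\varepsilon\,dx_1)=dr^\varepsilon\wedge dx_1\neq 0$ precisely on the collar where $d\chi\neq 0$; the parenthetical claim that the form is closed ``by the computation in Example \autoref{ex:constructing_PHHS}'' is false there. Your proposed repair (let $\chi$ depend only on coordinates absent from $\mH_R$) fails for the same reason: such a $\chi$ cannot be supported inside a bounded $U$ either.

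The correct use of $n>1$ --- which your closing sentence gestures at but your recipe does not implement --- is to decouple the deformation from the Hamiltonian entirely rather than to reproduce the coupled situation of Example \autoref{ex:constructing_PHHS}. The paper rescales $J$ only on the block spanned by $\pa_{x_1},\pa_{y_1},\pa_{x_{n+1}},\pa_{y_{n+1}}$ by an \emph{arbitrary} factor $r^\varepsilon=1+\varepsilon^2 f$ with $f\geq 0$ supported in $U$, and leaves $J^\varepsilon = i$ on all remaining coordinates. Since $\mH=z_{2n}$ depends only on $z_{2n}$ and $n>1$ guarantees $2n\notin\{1,n+1\}$, one gets $d\mH\circ J^\varepsilon=d\mH\circ J=i\cdot d\mH$, so $\mH$ stays pseudo-holomorphic for every $J^\varepsilon$ and exactness of $\Omega_R(J^\varepsilon(X^{\Omega_R}_{\mH_R}),\cdot)=d\mH_I$ is automatic by Remark \autoref{rem:H_pseudo-holo}, with no constraint on $r^\varepsilon$ at all. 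In your setup $X^{\Omega_R}_{\mH_R}$ lies inside the deformed block (that is the whole point of Example \autoref{ex:constructing_PHHS} on $\mathbb{C}^2$), which is exactly what creates the closedness constraint you then cannot localize. Two smaller points: the sign in $J^\varepsilon\coloneqq -I^\varepsilon_g\circ J\circ I^\varepsilon_g$ is wrong --- if $I_0$ anticommutes with $J$ then $I_0\circ J\circ I_0=+J$, so no sign is needed and with your sign $J^0=-J$; and the diagnosis that in complex dimension two the analogous move would ``force $d\Omega^\varepsilon_I$ to re-close'' is not the actual obstruction discussed after Theorem \autoref{thm:generic}, which concerns the existence of a pseudo-holomorphic function with prescribed real part.
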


\begin{proof}
 The idea for the deformation is based on Example \autoref{ex:constructing_PHHS}. Let $(X,\Omega,\mH)$ be a HHS as above with non-empty open subset $U\subset X$. Now pick a non-constant smooth function $f:X\cong\mathbb{R}^{4n}\to\mathbb{R}$ satisfying:
 \begin{gather*}
  f(x)\geq 0\ \forall x\in X,\quad f(x) = 0\ \forall x\in X\backslash U.
 \end{gather*}
 We define the $1$-parameter family of smooth functions $r^\varepsilon$ on $X$ as follows:
 \begin{gather*}
  r^\varepsilon (x)\coloneqq 1 + \varepsilon^2 f(x)\quad \forall x\in X\ \forall \varepsilon\in\mathbb{R}.
 \end{gather*}
 Using $r^\varepsilon$, we define the $1$-parameter family of almost complex structures $J^\varepsilon$ on $X$:
 \begin{align*}
  J^\varepsilon (\pa_{x_1}) \coloneqq r^\varepsilon\pa_{y_1},\ J^\varepsilon (\pa_{x_{n+1}}) \coloneqq \frac{1}{r^\varepsilon}\pa_{y_{n+1}}&;\quad J^\varepsilon (\pa_{y_1}) \coloneqq \frac{-1}{r^\varepsilon}\pa_{x_1},\ J^\varepsilon (\pa_{y_{n+1}}) \coloneqq -r^\varepsilon\pa_{x_{n+1}};\\
  J^\varepsilon (\pa_{x_j}) \coloneqq \pa_{y_j}&;\quad J^\varepsilon (\pa_{y_j}) \coloneqq -\pa_{x_j}\quad \forall j\in\{2,\ldots, n, n+2,\ldots, 2n\},
 \end{align*}
 where $\pa_{x_j}$ and $\pa_{y_j}$ are the vector fields coming from the coordinates\linebreak $(z_1 = x_1 + iy_1,\ldots, z_{2n} = x_{2n} + iy_{2n})\in\mathbb{C}^{2n}$. Clearly, $J^\varepsilon$ coincides with $J = i$ for $\varepsilon = 0$. Moreover, one easily checks that $J^\varepsilon$ satisfies $J^\varepsilon\vert_{X\backslash U} = J\vert_{X\backslash U}$ and is $\Omega_R$-anticompatible for every $\varepsilon\in\mathbb{R}$.\\
 Next, we set $\Omega^\varepsilon_R\equiv \Omega_R$ and $\Omega^\varepsilon_I\coloneqq -\Omega^\varepsilon_R (J^\varepsilon\cdot,\cdot)$ for every $\varepsilon\in\mathbb{R}$. As in Example \autoref{ex:constructing_PHHS}, the exterior derivative of $\Omega^\varepsilon_I$ can be expressed as
 \begin{gather*}
  d\Omega^\varepsilon_I = \varepsilon^2 \cdot df\wedge\left( dy_{n+1}\wedge dx_1 - \left(\frac{1}{r^\varepsilon}\right)^2dx_{n+1}\wedge dy_1\right).
 \end{gather*}
 We see that for every $\varepsilon\neq 0$ there exists a point $x_0\in U$ satisfying $d\Omega^\varepsilon_I\vert_{x_0}\neq 0$. Thus, $(X,J^\varepsilon;\Omega^\varepsilon_R)$ becomes a proper PHSM for every $\varepsilon\neq 0$ due to Theorem \autoref{thm:rel_HSM_PHSM}.\\
 The only thing left to check is that $\mH^\varepsilon_R\equiv \mH_R$ is compatible with the PHSM $(X,J^\varepsilon;\Omega^\varepsilon_R)$, i.e., $d[\Omega^\varepsilon_R (J^\varepsilon (X^{\Omega^\varepsilon_R}_{\mH^\varepsilon_R}),\cdot)] = 0$ for every $\varepsilon$. In the case of $\mH\equiv c$, this is trivially true, because then the Hamiltonian vector field $X^{\Omega^\varepsilon_R}_{\mH^\varepsilon_R}$ vanishes. For $\mH = z_{2n}$, we first realize that the equation above is equivalent to the condition that $\mH^\varepsilon_R$ is the real part of some pseudo-holomorphic function $\mH^\varepsilon$. We already know that $\mH^\varepsilon\equiv \mH$ is pseudo-holomorphic with respect to $J = i$. Since $\mH^\varepsilon$ only depends on the last component $z_{2n}$ and both $J^\varepsilon$ and $J = i$ act in the same way on the last components of $X$ for every $\varepsilon$ (as $n>1$), $\mH^\varepsilon\equiv\mH$ is also pseudo-holomorphic with respect to $J^\varepsilon$ for every $\varepsilon$. This turns $(X,J^\varepsilon;\Omega^\varepsilon_R,\mH^\varepsilon_R)$ into the desired deformation of $(X,\Omega,\mH)$ concluding the proof.
\end{proof}

\begin{Rem}[The case $\mH\equiv c$]\label{rem:constant_Ham}
 Upon closer inspection, we note that the proof also works for $n = 1$ if $\mH\equiv c$. Indeed, we only need the condition $n>1$ to ensure that $\mH^\varepsilon_R$ is the real part of some pseudo-holomorphic function $\mH^\varepsilon$. However, the constant function is pseudo-holomorphic with respect to any almost complex structure.
\end{Rem}

In order to find local proper deformations of the standard HSM, we simply use the deformation from the proof above and forget the Hamiltonian $\mH^\varepsilon_R$. Since we are not constrained by the compatibility with the Hamiltonian $\mH^\varepsilon_R$ in this case, this deformation even works for $n=1$. Similarly, we obtain a local proper deformation of the complex manifold $\mathbb{C}^m$ if we take the almost complex structure $J^\varepsilon$ from the proof. Here, we need to enforce $m>1$ to make sure that we even have four real dimensions to rescale.\\
Combining Lemma \autoref{lem:HHS_in_standard_form} with Proposition \autoref{prop:deformation_of_standard_HHS} proves the following theorem:
\newpage

\begin{Thm}[Proper PHHSs are generic]\label{thm:generic}
 Let $X$ be a smooth manifold, then the following statements apply depending on the real dimension of $X$:
 \begin{enumerate}
  \item If $\text{\normalfont dim}_\mathbb{R}(X) = 2$: Every almost complex structure on $X$ is integrable and automatically a complex structure\footnote{It is easy to check that the Nijenhuis tensor always vanishes in two dimensions.}.
  \item If $\text{\normalfont dim}_\mathbb{R}(X) > 2$: Every complex manifold $(X,J)$ and HSM $(X,\Omega)$ admits a proper deformation. In particular, the non-integrable almost complex structures and proper PHSMs on $X$ are generic within the set of all almost complex structures and PHSMs on $X$, respectively.
  \item If $\text{\normalfont dim}_\mathbb{R}(X) > 4$: Every HHS $(X,\Omega, \mH)$ admits a proper deformation. In particular, the proper PHHSs on $X$ are generic within the set of all PHHSs on $X$.
 \end{enumerate}
\end{Thm}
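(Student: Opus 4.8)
The plan is to deduce all three parts from the single assertion highlighted just before the theorem: that every ``integrable'' element --- a complex structure, a HSM, or a HHS --- admits a \emph{proper} deformation in the sense of \autoref{def:deformation}. Granting this, genericity follows at once from two facts already recorded in the text. First, the sets $\mathcal{J}_\text{p}(X)$, $\text{PHSM}_\text{p}(X)$, and $\text{PHHS}_\text{p}(X)$ are \emph{open} in their ambient spaces, since an element is proper precisely when its Nijenhuis tensor $N_J$ does not vanish identically, and $N_J\neq 0$ is an open condition in the compact-open topology. Second, if $J^\varepsilon$ is a proper deformation of an integrable $J$, then letting $\varepsilon\to 0$ exhibits $J$ as a limit of proper almost complex structures, so $\mathcal{J}_\text{c}(X)\subset\overline{\mathcal{J}_\text{p}(X)}$, and likewise for HSMs and HHSs; since $\mathcal{J}_\text{a.c.}(X)=\mathcal{J}_\text{c}(X)\sqcup\mathcal{J}_\text{p}(X)$ and analogously for the other two pairs, this gives density, hence genericity. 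Part 1 just restates the classical fact that in real dimension two $N_J$ vanishes identically, so every almost complex structure is integrable and there is nothing to deform.

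For Part 2 I would work inside a single chart. Given a complex manifold $(X,J)$ with $\dim_\mathbb{R}(X)>2$, choose a point and a holomorphic chart near it, so that $J$ becomes $J=i$ on an open subset of $\mathbb{C}^k$ with $k=\dim_\mathbb{C}(X)\ge 2$; given a HSM $(X,\Omega)$, invoke Darboux's theorem for HSMs (\autoref{thm:holo_Darboux}) to simultaneously normalize $\Omega$. Then apply the construction used in the proof of \autoref{prop:deformation_of_standard_HHS}, forgetting the Hamiltonian, inside a small neighborhood $U$ contained in the chart domain: one obtains a smooth path $J^\varepsilon$ in $\mathcal{J}_\text{a.c.}(X)$ with $J^0=J$, with $J^\varepsilon=J$ off $U$, and $\Omega_R$-anticompatible (in the HSM case with $\Omega^\varepsilon_R\equiv\Omega_R$), for which the explicit formula for $d\Omega^\varepsilon_I$ with $\Omega^\varepsilon_I:=-\Omega^\varepsilon_R(J^\varepsilon\cdot,\cdot)$ shows $d\Omega^\varepsilon_I\not\equiv 0$ when $\varepsilon\neq 0$. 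By \autoref{thm:rel_HSM_PHSM} the latter is equivalent to non-integrability of $J^\varepsilon$, so the deformation is proper. The hypothesis $\dim_\mathbb{R}(X)>2$ is precisely what provides four real coordinate directions in which to carry out the rescaling defining $J^\varepsilon$.

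For Part 3 let $(X,\Omega,\mH)$ be a HHS with $\dim_\mathbb{R}(X)>4$. Since $\mH$ is holomorphic, $\{d\mH=0\}$ is closed; if its complement is non-empty, pick a regular point $x_0$ and apply \autoref{lem:HHS_in_standard_form} to get a holomorphic chart near $x_0$ in which $\Omega=\sum_{j=1}^n dz_{j+n}\wedge dz_j$ and $\mH=z_{2n}$; if $d\mH\equiv 0$, then $\mH$ is locally constant and \autoref{thm:holo_Darboux} gives a chart in which $\Omega$ is standard and $\mH\equiv c$. In either case the hypotheses of \autoref{prop:deformation_of_standard_HHS} are satisfied --- here one uses $n>1$, equivalently $\dim_\mathbb{R}(X)=4n>4$ --- so there is a proper deformation $(X,J^\varepsilon;\Omega^\varepsilon_R,\mH^\varepsilon_R)$ supported near $x_0$ with $\Omega^\varepsilon_R=\Omega_R$ and $\mH^\varepsilon_R=\mH_R$ held fixed, which by the reduction above completes the proof.

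I expect the real content to reside entirely in the two cited ingredients, not in this assembly. The proof of \autoref{lem:HHS_in_standard_form} requires a holomorphic flow-box argument for the commuting Hamiltonian vector fields $X_\mH$ and $X_G$, combined with the complex regular value theorem and Darboux's theorem for HSMs. The proof of \autoref{prop:deformation_of_standard_HHS} is more delicate: one must rescale $J$ --- via the $I_g$-conjugation trick of \autoref{ex:constructing_PHHS} --- in such a way that $\Omega_R$ stays closed and $\mH_R$ stays compatible, while still forcing $d\Omega^\varepsilon_I\neq 0$. Within the assembly itself the only point needing care is the compatibility condition $d[\Omega^\varepsilon_R(J^\varepsilon(X^{\Omega^\varepsilon_R}_{\mH^\varepsilon_R}),\cdot)]=0$; this is what separates the constant case from the regular case and forces $n>1$, and it is verified by noting that $\mH$ remains pseudo-holomorphic because $J^\varepsilon$ and $i$ act identically on the last coordinate of $\mathbb{C}^{2n}$.
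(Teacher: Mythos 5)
Your proposal is correct and follows essentially the same route as the paper: openness via the open condition $N_J\neq 0$, density via proper deformations of every ``integrable'' element, obtained by first passing to the local standard form (holomorphic charts, Darboux's theorem for HSMs, or Lemma \autoref{lem:HHS_in_standard_form} in the regular case) and then applying the local deformation of Proposition \autoref{prop:deformation_of_standard_HHS}, forgetting the Hamiltonian where appropriate. The dimension counts you give for Parts 2 and 3 match the paper's reasoning exactly.
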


Before we conclude this section, let us quickly comment on PHHSs $(X,J;\Omega_R,\mH_R)$ in dimension $\text{dim}_\mathbb{R}(X) = 4$. These systems are the only geometrical object studied in this subsection to which Theorem \autoref{thm:generic} does \underline{not} apply. The reason is that Proposition \autoref{prop:deformation_of_standard_HHS} fails for $\text{dim}_\mathbb{R}(X) = 4$, since $\mH_R = x_2$ cannot be the real part of a complex function which is pseudo-holomorphic with respect to deformations $J^\varepsilon$ as chosen in the proof of Proposition \autoref{prop:deformation_of_standard_HHS}. Nevertheless, Statement 3 of Theorem \autoref{thm:generic} might still be true for $\text{dim}_\mathbb{R}(X) = 4$. One possible way to prove this could be to modify Proposition \autoref{prop:deformation_of_standard_HHS}. Instead of choosing $\mH^\varepsilon_R$ to be independent of $\varepsilon$, we could allow for general deformations $\mH^\varepsilon_R$ of $\mH_R$. Finding such deformations $\mH^\varepsilon_R$ involves solving a second order PDE with boundary conditions. However, the existence of non-trivial solutions to the given problem might be forbidden by the Nijenhuis tensor. We elaborate on this thought: let $X$ be a smooth manifold with almost complex structure $J$, $N_J$ be the Nijenhuis tensor of $J$, and $f:X\to\mathbb{C}$ be a pseudo-holomorphic map, i.e., $df\circ J = i\cdot df$. A straightforward calculation reveals that $df\left(N_J (V,W)\right) = 0$ for all vector fields $V$ and $W$ on $X$. Thus, the image of $N_J\vert_x$ is contained within the kernel of $df\vert_x$ for any pseudo-holomorphic function $f$ and any point $x\in X$. This implies that there are at most $1/2(\text{dim}_\mathbb{R}(X)-r_x)$ pseudo-holomorphic functions on $X$ whose differentials at a given point $x\in X$ are $\mathbb{C}$-linearly independent, where $r_x$ is the rank\footnote{The Nijenhuis tensor satisfies the relation $N_J (J(V),W) = -JN_J (V,W)$, hence, its rank is even.} of $N_J\vert_x$. In four dimensions, the rank of $N_J$ alone does not exclude the existence of non-trivial pseudo-holomorphic functions: let $V_1$, $V_2$, $J(V_1)$, and $J(V_2)$ be a local frame of $X$. One easily sees that, because of the symmetries of $N_J$, i.e., $N_J (V,W) = -N_J(W,V)$ and $N_J(J(V), W) = -JN_J (V,W)$, $N_J (V_1, V_2)$ and $N_J (V_1, J(V_2)) = -JN_J (V_1,V_2)$ are the only two components of $N_J$ in the local frame $V_1$, $V_2$, $J(V_1)$, and $J(V_2)$ which are not redundant. Thus, the rank of the Nijenhuis tensor is at most $2$ in four dimensions. For instance, the deformation $J^\varepsilon$ from above for $n=1$,
 \begin{gather*}
  J^\varepsilon (\pa_{x_1}) \coloneqq r^\varepsilon\pa_{y_1},\ J^\varepsilon (\pa_{x_{2}}) \coloneqq \frac{1}{r^\varepsilon}\pa_{y_{2}};\quad J^\varepsilon (\pa_{y_1}) \coloneqq \frac{-1}{r^\varepsilon}\pa_{x_1},\ J^\varepsilon (\pa_{y_{2}}) \coloneqq -r^\varepsilon\pa_{x_{2}},
 \end{gather*}
 yields for $r^\varepsilon\in C^\infty(\mathbb{R}^4,\mathbb{R}_{+})$:
 \begin{gather*}
  N_{J^\varepsilon} (\pa_{x_1},\pa_{x_2}) = \frac{1}{r^\varepsilon}J^\varepsilon\left(N_{J^\varepsilon} (\pa_{x_1},\pa_{y_2})\right) = \sum\limits_{a\in\{x,y\}}\sum^2_{i,j = 1;\ i\neq j}\left(\pa_{a_i}\ln (r^\varepsilon)\right)\cdot \pa_{a_j}.
 \end{gather*}
 Hence, the (real) rank of $N_{J^\varepsilon}\vert_x$ is $2$ for $dr^\varepsilon\vert_x \neq 0$ and $0$ for $dr^\varepsilon\vert_x = 0$.\\
 Nevertheless, the rank of $N_J$ is a rather weak bound for the number of independent pseudo-holomorphic functions. The exact number is given by $1/2(\text{dim}_\mathbb{R}(X) - k)$, where $k$ is the rank of the IJ-bundle\footnote{Confer \cite{muskarov1986} for the definition of the IJ-bundle and a detailed investigation of the relation between the Nijenhuis tensor and pseudo-holomorphic functions.} on $X$ containing the image of $N_J$. Even though there are non-integrable almost complex structures $J$ in four dimensions whose IJ-bundle does not have full rank, e.g. $J^\varepsilon$ with $r^\varepsilon = e^{-x_1}$ as in Example \autoref{ex:constructing_PHHS}, it is not clear at all why this should also apply to the almost complex structures $J^\varepsilon$ as above for general functions $r^\varepsilon$.

\newpage
\section*{Appendix}
\addcontentsline{toc}{section}{Appendix}
\markboth{}{Appendix}

\begin{appendix}
 \section{Proof of Darboux's Theorem for HSMs}
\label{app:darboux}

In this part of the appendix, we want to show that there is a holomorphic counterpart to Darboux's theorem.

\begin{Thm*}[Darboux's theorem for HSMs]
 Let $(X,\Omega)$ be a holomorphic symplectic manifold (HSM) of complex dimension $\text{\normalfont dim}_\mathbb{C}(X) = 2n$ $(n\in\mathbb{N})$. Then, for every point $x\in X$, there is a holomorphic chart $\psi = (Q_1,\ldots, Q_n, P_1,\ldots, P_n):U\to V\subset\mathbb{C}^{2n}$ of $X$ near $x$ such that
 \begin{gather*}
  \Omega\vert_{U} = \sum\limits^n_{j = 1} dP_j\wedge dQ_j.
 \end{gather*}
\end{Thm*}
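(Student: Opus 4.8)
The plan is to adapt Moser's homotopy (``path'') method to the holomorphic category. Fix $x \in X$. After composing with a holomorphic chart and a $\mathbb{C}$-linear change of coordinates, I may assume that $X$ is an open neighbourhood of $0$ in $\mathbb{C}^{2n}$, that $x = 0$, and that the constant-coefficient holomorphic $2$-form $\Omega_0$ equal to the value $\Omega|_0$ is already in standard form $\Omega_0 = \sum_{j=1}^n dP_j^0 \wedge dQ_j^0$; this last reduction is the complex analogue of the linear normal form for symplectic vector spaces and is pure linear algebra over $\mathbb{C}$. Set $\Omega_t := \Omega_0 + t(\Omega - \Omega_0)$ for $t \in [0,1]$. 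Since each $\Omega_t$ agrees with $\Omega_0$ at the origin, shrinking the neighbourhood makes $\Omega_t$ non-degenerate on $T^{(1,0)}X$ for all $t$ simultaneously.

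Next I would produce the interpolating flow. Both $\Omega_0$ and $\Omega$ are closed holomorphic $2$-forms, so $\Omega - \Omega_0$ is closed; by the holomorphic Poincar\'e lemma on a polydisc there is a holomorphic $1$-form $\sigma$ with $d\sigma = \Omega - \Omega_0$, and after subtracting $dL$ for a suitable $\mathbb{C}$-linear function $L$ I may assume $\sigma|_0 = 0$. Using the non-degeneracy of $\Omega_t$ on $T^{(1,0)}X$ (in the spirit of \autoref{rem:ham_vec_field_well-def}), define the time-dependent holomorphic vector field $X_t$, a section of $T^{(1,0)}X$, by $\iota_{X_t}\Omega_t = -\sigma$; then $X_t(0) = 0$. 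Because $0$ is a common zero of the $X_t$, holomorphic ODE theory together with continuous dependence on initial conditions gives a holomorphic flow $\psi_t$ defined on one fixed neighbourhood of $0$ for all $t \in [0,1]$, with $\psi_0 = \mathrm{id}$ and each $\psi_t$ a biholomorphism onto its image.

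The computation closing the argument is the usual Cartan-calculus one, valid verbatim for holomorphic forms and holomorphic vector fields: $\frac{d}{dt}(\psi_t^*\Omega_t) = \psi_t^*\big(L_{X_t}\Omega_t + \tfrac{d}{dt}\Omega_t\big) = \psi_t^*\big(d\iota_{X_t}\Omega_t + \iota_{X_t}d\Omega_t + (\Omega - \Omega_0)\big) = \psi_t^*\big(-d\sigma + d\sigma\big) = 0$. Hence $\psi_1^*\Omega = \psi_0^*\Omega_0 = \Omega_0$, so that $Q_j := Q_j^0\circ\psi_1^{-1}$ and $P_j := P_j^0\circ\psi_1^{-1}$ define a holomorphic chart near $x$ in which $\Omega|_U = \sum_{j=1}^n dP_j\wedge dQ_j$, as required.

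The main obstacle is not a single hard estimate but assembling the holomorphic counterparts of the standard tools and verifying they behave as expected: the holomorphic Poincar\'e lemma (a closed holomorphic form on a polydisc is $d$ of a holomorphic form), and the existence and holomorphic dependence of the flow of a time-dependent holomorphic vector field on the entire parameter interval $[0,1]$ — this is precisely where the normalisation $X_t(0) = 0$ is essential, since it pins $0$ as a stationary point and lets one shrink to a neighbourhood on which the flow persists up to $t = 1$. An alternative, more hands-on route would be the inductive ``Darboux by conjugate pairs'' argument: choose a holomorphic $Q_1$ with $dQ_1|_x \ne 0$, solve $X_{Q_1}(P_1) = 1$ holomorphically using the flow of the non-vanishing holomorphic Hamiltonian vector field $X_{Q_1}$ so that $\{P_1, Q_1\} = 1$, note that $[X_{P_1}, X_{Q_1}] = 0$, split off the symplectic plane they span, and restrict $\Omega$ to a common level set of $Q_1$ and $P_1$ — a complex submanifold on which $\Omega$ is again a holomorphic symplectic form — then induct on $n$. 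This mirrors Steps 1 and 2 of \autoref{lem:HHS_in_standard_form} and avoids the Poincar\'e lemma, at the cost of more intricate bookkeeping with submanifolds.
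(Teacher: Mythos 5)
Your proposal follows the same route as the paper's proof in \autoref{app:darboux}: linear normalisation at the point, the affine interpolation $\Omega_t$, a holomorphic primitive of $\Omega-\Omega_0$ vanishing at the base point, the vector field $\iota_{X_t}\Omega_t=-\sigma$, and Moser's homotopy argument. The outline is right, but the claim that the closing Cartan-calculus computation is ``valid verbatim for holomorphic forms and holomorphic vector fields'' papers over the one point where the holomorphic setting genuinely differs from the real one, and it is exactly the point the paper spends most of its effort on. Your $X_t$ is a section of $T^{(1,0)}X$; the object that actually generates a flow in real time $t$ is the associated real $J$-preserving field $X_t^R$ with $X_t=\tfrac12(X_t^R-iJ(X_t^R))$ (cf. \autoref{prop:holo_vec_field_equiv_J_pre_vec_field}). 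Consequently the identity you need is $\frac{d}{dt}(\psi_t^*\Omega_t)=\psi_t^*\bigl(L_{X_t^R}\Omega_t+\tfrac{d}{dt}\Omega_t\bigr)$, and to conclude you must justify $L_{X_t^R}\Omega_t=L_{X_t}\Omega_t$. This is true --- writing $X_t^R=X_t+\overline{X_t}$, the extra term $L_{\overline{X_t}}\Omega_t$ vanishes because $\Omega_t$ is a closed $(2,0)$-form and $\iota_{\overline{X_t}}\Omega_t=0$ --- but it is a lemma, not a tautology; the paper isolates it as an auxiliary lemma for general holomorphic tensors.

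The second elision is the biholomorphy of $\psi_1$. The standard fact that $J$-preserving vector fields have (locally) biholomorphic flows is stated for \emph{autonomous} fields, whereas your $X_t^R$ is time-dependent, so it does not apply directly. You gesture at ``holomorphic ODE theory'' and complex time, which is indeed the fix: one either solves the flow in a complex time variable and restricts to $[0,1]$, or (as the paper does) suspends $X_t$ to a time-independent holomorphic vector field $V(y,\tau)=V_\tau(y)+\partial_\tau$ on $\hat U\times O$ with $O\subset\mathbb{C}$ a neighbourhood of $[0,1]$, and identifies the real-time flow of $X_t^R$ with a slice of the flow of the autonomous $J$-preserving field $V^R$. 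Either way this step needs to be carried out explicitly; for instance, one must check that $\Omega_\tau$ remains non-degenerate for complex $\tau$ near $[0,1]$ and that the suspended field is holomorphic in $(y,\tau)$ jointly. With these two points supplied, your argument coincides with the paper's.
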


\begin{proof}
 The proof of Darboux's theorem for HSMs works almost completely analogously to the proof of Darboux's theorem in the real setup given by Weinstein. In particular, we will apply Moser's trick to HSMs. A detailed transcription of Moser's trick to complex manifolds can be found in \cite{soldatenkov2021} by Soldatenkov and Verbitsky.\\
 Let $(X,\Omega)$ be a HSM of complex dimension $\text{\normalfont dim}_\mathbb{C}(X) = 2n$ ($n\in\mathbb{N}$), $x\in X$ be any point of $X$, and $(\hat U, \hat\psi = (\hat z_1,\ldots, \hat z_{2n}))$ be a holomorphic chart of $X$ near $x$. First, we observe that every complex symplectic form $\omega$ on a complex vector space $V$ of dimension $\text{\normalfont dim}_\mathbb{C}(V) = 2n$ can be brought into standard form, i.e., can be written as
 \begin{gather*}
  \omega = \sum\limits^n_{j=1} \theta_{j+n}\wedge\theta_{j}
 \end{gather*}
 for a basis $(\theta_1,\ldots,\theta_{2n})$ dual to some complex basis $(e_1,\ldots, e_{2n})$ of $V$. Thus, we can assume that $\Omega$ at $x$ in the chart $(\hat U,\hat\psi)$ takes the form
 \begin{gather*}
  \Omega\vert_x = \sum\limits^n_{j = 1} d\hat z_{j+n}\vert_x\wedge d\hat z_{j}\vert_x
 \end{gather*}
 by applying a $\mathbb{C}$-linear transformation to $(\hat U,\hat\psi)$ if necessary. Next, we define the following $2$-form on $\hat U$:
 \begin{gather*}
  \Omega_1\coloneqq\sum\limits^n_{j=1} d\hat z_{j+n}\wedge d\hat z_{j} \in\Omega^{(2,0)}(\hat U).
 \end{gather*}
 Clearly, both $\Omega_0\coloneqq \Omega\vert_{\hat U}$ and $\Omega_1$ are holomorphic symplectic $2$-forms on $\hat U$. Now, we define the $2$-form $\Omega_t$ on $\hat U$ as the interpolation of $\Omega_0$ and $\Omega_1$:
 \begin{gather*}
  \Omega_t\coloneqq \Omega_0 + t(\Omega_1 - \Omega_0)\quad\forall t\in\mathbb{R}.
 \end{gather*}
 For every $t\in\mathbb{R}$, the form $\Omega_t$ is holomorphic and closed, as $\Omega_0$ and $\Omega_1$ are holomorphic and closed. Further, we find that $\Omega_t\vert_x = \Omega_0\vert_x$ for every $t\in\mathbb{R}$, as $\Omega_0$ and $\Omega_1$ coincide at $x$ by construction. This means that $\Omega_t\vert_x$ is non-degenerate for every $t$. As non-degeneracy is an open property, we can construct an open subset $U^\prime\subset\hat U$ containing $x$ such that $\Omega_t\vert_{U^\prime}$ is a non-degenerate $2$-form for every $t\in [0,1]$, where we have also used the fact that $\Omega_t$ depends continuously on $t$ and $[0,1]$ is a compact interval. This turns ($U^\prime$, $\Omega_t\vert_{U^\prime}$) into a HSM for every $t\in [0,1]$. Moreover, we can assume that $U^\prime$ is contractible by shrinking $U^\prime$ if necessary. For the sake of simplicity and ease of notation, we assume from now on that $U^\prime = \hat U$. As $\hat U$ is contractible, its cohomology is trivial. This allows us to apply Theorem 2.5 (Moser's isotopy, version I) from \cite{soldatenkov2021}. Hence, we can find a $1$-form $\alpha$\footnote{In the paper by Soldatenkov and Verbitsky, the $1$-form $\alpha \equiv \alpha_t$ has, in general, a non-trivial $t$-dependence. In the present case, however, we can omit the $t$-dependence, as $d/dt\,\Omega_t$ does not depend on $t$ anymore.} on $\hat U$ of type $(1,0)$ such that
 \begin{gather*}
  \Omega_1 - \Omega_0 = \frac{d}{dt}\Omega_t = d\alpha.
 \end{gather*}
 Since the $2$-form $\Omega_1 - \Omega_0$ is of type $(2,0)$, the $1$-form $\alpha$ satisfies $\bar{\partial}\alpha = 0$. Hence, the form $\alpha$ is even holomorphic. We can assume without loss of generality that $\alpha$ satisfies $\alpha\vert_x = 0$ by replacing $\alpha$ with
 \begin{gather*}
  \alpha^\prime = \alpha - \alpha^x
 \end{gather*}
 if necessary, where $\alpha^x$ is a holomorphic $1$-form with $\alpha^x\vert_x = \alpha\vert_x$ and $d\alpha^x = 0$. For every $t\in [0,1]$, define the vector field $V_t$ on $\hat U$ with values in $T^{(1,0)}\hat U$ by
 \begin{gather*}
  \iota_{V_t}\Omega_t = -\alpha.
 \end{gather*}
 Note that $V_t$ is well-defined, as $\alpha$ is a $1$-form of type $(1,0)$ and $\Omega_t$ is non-degenerate on $T^{(1,0)}\hat U$, thus, $\Omega_t$ gives rise to an isomorphism from $T^{(1,0)}\hat U$ to $T^{\ast, (1,0)}\hat U$. Since $\alpha$ and $\Omega_t$ are holomorphic, $V_t$ is a holomorphic vector field on $\hat U$ for every $t\in [0,1]$. Using the closedness of $\Omega_t$ and Cartan's magic formula, we calculate the Lie derivative $L_{V_t}\Omega_t$:
 \begin{gather*}
  L_{V_t}\Omega_t = d\iota_{V_t}\Omega_t + \iota_{V_t}d\Omega_t = d\iota_{V_t}\Omega_t = -d\alpha = -\frac{d}{dt}\Omega_t\quad\forall t\in [0,1].
 \end{gather*}
 Recall\footnote{Confer Proposition \autoref{prop:holo_vec_field_equiv_J_pre_vec_field} and \cite{kobayashi1969} for details.} that every holomorphic vector field $V$ can be uniquely written as\linebreak $1/2(V^R - i\cdot J(V^R))$, where $J$ is the complex structure of the underlying complex manifold and $V^R$ is a real $J$-preserving vector field. Now let $V^R_t$ be the real $J$-preserving vector field corresponding to $V_t$ for every $t\in [0,1]$. Next, we want to show the following equation:
 \begin{gather*}
  L_{V^R_t}\Omega_t = L_{V_t}\Omega_t = -\frac{d}{dt}\Omega_t.
 \end{gather*}
 We do this by proving a more general auxiliary lemma:
 
 \begin{Aux*}
  Let $X$ be a complex manifold with complex structure $J\in\Gamma (\text{\normalfont End}(TX))$. Further, let $V^R\in\Gamma_J (TX)$ be a $J$-preserving vector field on $X$ with corresponding holomorphic vector field $V\coloneqq 1/2(V^R -iJ(V^R))$ and $T$ be a holomorphic tensor field on $X$, then the Lie derivatives of $T$ with respect to $V^R$ and $V$ coincide, i.e.:
  \begin{gather*}
   L_{V^R} T = L_{V}T.
  \end{gather*}
 \end{Aux*}

 \begin{proof}
  Let $X$ and $J$ be as above and let $V^R\in\Gamma_J (TX)$ be a $J$-preserving vector field on $X$ with corresponding holomorphic vector field $V\coloneqq 1/2(V^R -iJ(V^R))$. Further, let $T$ be a holomorphic $(k,l)$-tensor field on $X$. The Lie derivative is complex linear, thus, we have by definition of $V$:
  \begin{gather*}
   L_V T = \frac{1}{2}\left(L_{V^R}T - i\cdot L_{J(V^R)}T\right).
  \end{gather*}
  Hence, it suffices to show:
  \begin{gather}\label{eq:lie_i}
   L_{J(V^R)}T = i\cdot L_{V^R}T.
  \end{gather}
  In a holomorphic chart $\phi = (z_1,\ldots, z_n):U\to V\subset\mathbb{C}^n$ of $X$, $T$ can be expressed as
  \begin{gather*}
   T\vert_U = \sum^n_{i_1\ldots i_k, j_1\ldots j_l = 1} T^{i_1\ldots i_k}_{j_1\ldots j_l}\cdot dz_{j_1}\otimes\ldots\otimes dz_{j_l}\otimes \partial_{z_{i_1}}\otimes\ldots\otimes \partial_{z_{i_k}},
  \end{gather*}
  where $T^{i_1\ldots i_k}_{j_1\ldots j_l}:U\to\mathbb{C}$ are holomorphic functions on $U$. Since the Lie derivative can be computed locally and satisfies the Leibniz rule, it suffices to show Equation \eqref{eq:lie_i} for $T$ being $T^{i_1\ldots i_k}_{j_1\ldots j_l}$, $dz_{i}$, and $\partial_{z_j}$. As the Lie derivative also commutes with the exterior differential $d$ and $\partial_{z_j}$ is a (local) holomorphic vector field, it is sufficient to prove Equation \eqref{eq:lie_i} for $T$ being a holomorphic function $h$ and holomorphic vector field $W$. For $T = h$, we find:
  \begin{gather*}
   L_{J(V^R)} h = dh\left( J (V^R)\right) = i\cdot dh (V^R) = i\cdot L_{V^R}h,
  \end{gather*}
  where we used that $h$ is holomorphic, i.e. $dh\circ J = i\cdot dh$. For $T = W$, we can use Proposition \autoref{prop:holo_vec_field_equiv_J_pre_vec_field} to obtain:
  \begin{align*}
   L_{J(V^R)}W &= \left[J(V^R), W\right] = \frac{1}{2}\left([J(V^R), W^R] - i[J(V^R), J(W^R)]\right)\\
   &= \frac{1}{2}\left([V^R, J(W^R)] - i[V^R, J^2(W^R)]\right) = \frac{i}{2}\left([V^R, W^R] - i[V^R, J(W^R)]\right)\\
   &= i\left[V^R, W\right] = i\cdot L_{V^R}W,
  \end{align*}
  concluding the proof.
 \end{proof}
 
 Now return to the proof of Darboux's theorem for HSMs. Let $\varphi_t$ be the (possibly local) flow of the real time-dependent vector field $V^R_t$. The pull-back $\varphi^\ast_t\Omega_t$ is a solution of the initial value problem:
 \begin{gather*}
  \frac{d}{dt}(\varphi^\ast_t \Omega_t) = \varphi^\ast_t (L_{V^R_t}\Omega_t + \frac{d}{dt}\Omega_t) = 0,\quad \varphi^\ast_0\Omega_0 = \Omega_0,
 \end{gather*}
 where we suppressed the fact that $\varphi_t$ might not be defined on all of $\hat U$ for every $t\in [0,1]$ in our notation. Clearly, $\Omega_0$ is also a solution to the same initial value problem. As the solution to the given initial value problem is unique, we obtain:
 \begin{gather*}
  \varphi^\ast_t\Omega_t = \Omega_0.
 \end{gather*}
 We have to show that the last equation holds true for every $t\in [0,1]$ in some open neighborhood $U\subset\hat U$ of $x$. For this, we recall that $\alpha\vert_x = 0$ by construction. Thus, we have $V^R_t (x) = V_t (x) = 0$ for every $t\in [0,1]$. This implies that the flow $\varphi_t$ is stationary at $x$, i.e., $\varphi_t (x) = x$ for every $t\in [0,1]$. We can deduce from this that there exists an open neighborhood $U\subset\hat U$ of $x$ such that the flow $\varphi_t: U\to \varphi_t(U)\subset\hat U$ is a well-defined diffeomorphism for every $t\in [0,1]$. In particular, the time-$1$-map $\varphi_1:U\to\varphi_1 (U)$ satisfies:
 \begin{gather*}
  \varphi^\ast_1\Omega_1 = \Omega_0.
 \end{gather*}
 Hence, $(U,\psi\coloneqq\hat \psi\circ\varphi_1)$ is a smooth chart of $X$ near $x$ which satisfies:
 \begin{gather*}
  \psi^{-1\,\ast}\Omega\vert_U = {\hat\psi}^{-1\,\ast}\left(\varphi^{-1\,\ast}_1 \Omega_0\right) = {\hat\psi}^{-1\,\ast}\Omega_1 = \sum^n_{j=1} \theta_{j+n}\wedge \theta_{j},
 \end{gather*}
 where $\sum \theta_{j+n}\wedge\theta_{j}$ is the standard symplectic form on $\mathbb{C}^{2n}$. Hence, the holomorphic symplectic form $\Omega$ takes the following form on $(U,\psi \equiv (z_1,\ldots, z_{2n}))$:
 \begin{gather*}
  \Omega\vert_{U} = \sum\limits^n_{j = 1} dz_{j+n}\wedge dz_{j}.
 \end{gather*}
 We see that $(U,\psi)$ is a good candidate for the desired Darboux chart. To conclude the proof, we need to show that $(U,\psi)$ is also a holomorphic chart of $X$. For this, it suffices to prove that the map $\varphi_1: U\to \varphi_1 (U)$ is locally biholomorphic. The idea behind this proof is simple:
%  We deduce from Lemma (holo. int. curves\todo{Insert proper name here!}) that the flow $\varphi^{V^R}_t$ of a \underline{time-independent} real $J$-preserving vector field $V^R$ is locally biholomorphic.
 In Chapter IX of \cite{kobayashi1969}, it is shown that the \underline{time-independent} $J$-preserving\footnote{$J$-preserving vector fields are called infinitesimal automorphisms in \cite{kobayashi1969}.} vector fields $V^R$ on a complex manifold $X$ are exactly those real vector fields whose flow $\varphi^{V^R}_t$ is locally biholomorphic. However, we cannot directly apply this statement to $V^R_t$, as, in general, $V^R_t$ carries a non-trivial time-dependence. To account for this, we relate $V_t$ to a time-independent holomorphic vector field $V$ on $\hat U \times O \ni (x,t)$, where $O\subset\mathbb{C}$.\\
 First, we generalize the definition of $\Omega_t$ and allow for complex times $t\equiv\tau\in\mathbb{C}$. By the same arguments as before and by shrinking $\hat U$ if necessary, we find an open neighborhood $O\subset\mathbb{C}$ of $[0,1]$ such that $\Omega_\tau$ is non-degenerate for every $\tau\in O$. This allows us to generalize the definition of $V_\tau$ to all $\tau\in O$. Now observe that we cannot only view $\Omega_\tau$ as a time-dependent $(2,0)$-form on $\hat U$, but also as a time-independent $(2,0)$-form on $\hat U\times O$. As the time-dependence of $\Omega_\tau$ is clearly holomorphic, $\Omega_\tau$ as a form on $\hat U\times O$ is also holomorphic. Thus, $V_\tau$ understood as a vector field on $\hat U\times O$ is also holomorphic. Now consider the time-independent vector field $V$ on $\hat U\times O$:
 \begin{gather*}
  V(y,\tau^\prime)\coloneqq V_{\tau^\prime}(y) + \partial_\tau\vert_{(y,\tau^\prime)}\quad\forall (y,\tau^\prime)\in\hat U\times O.
 \end{gather*}
 As $V_\tau$ and $\partial_\tau$ are holomorphic, $V$ is also holomorphic. Now let $V^R$ be the real $J$-preserving vector field corresponding to $V$ and let $\Gamma:[0,1]\to\hat U\times O$, $\Gamma (r)\equiv (\gamma (r), \rho (r))$ be a smooth curve in $\hat U\times O$ with $\rho (0) = 0$. Then, we have the following auxiliary lemma:
 
 \begin{Aux*}
  $\Gamma$ is an integral curve of $V^R$ if and only if $\gamma:[0,1]\to\hat U$ is an integral curve of $V^R_t$ and $\rho (r) = r$ for every $r\in [0,1]$.
 \end{Aux*}
 
 \begin{proof}[Proof of auxiliary lemma]
  This follows from a quick computation: let $\tau = t + is$ be the decomposition of $\tau$ into real and imaginary part, then the vector field $V^R$ is given by:
  \begin{gather*}
   V^R(y,\tau^\prime) = V^R_{\tau^\prime}(y) + \partial_t\vert_{(y,\tau^\prime)}\quad\forall (y,\tau^\prime)\in\hat U\times O.
  \end{gather*}
  Further, let $\rho = \rho_R + i\rho_I$ be the decomposition of $\rho$ into real and imaginary part, then the integral curve equation of $V^R$ for $\Gamma$ can be written as:
  \begin{gather*}
   \dot\gamma (r) = V^R_{\rho (r)} (\gamma (r));\quad \dot \rho_R (r)\cdot\partial_t\vert_{\Gamma (r)} + \dot \rho_I (r)\cdot\partial_s\vert_{\Gamma (r)} = \partial_t\vert_{\Gamma (r)}\quad r\in [0,1].
  \end{gather*}
  As $\rho$ needs to satisfy the initial condition $\rho (0) = 0$, $\rho$ is given by $\text{id}_{[0,1]}$ if $\Gamma$ is an integral curve of $V^R$. In this case, $\gamma$ has to satisfy the following differential equation:
  \begin{gather*}
   \dot\gamma (r) = V^R_{r} (\gamma (r))\quad r\in [0,1].
  \end{gather*}
  Thus, if $\Gamma$ is an integral curve of $V^R$, then $\gamma$ is an integral curve of the time-dependent vector field $V^R_t$ on $\hat U$. The converse direction follows similarly.
 \end{proof}
 
 From the auxiliary lemma, it follows that the flow $\varphi^{V^R}_t$ of $V^R$ and the flow $\varphi_t$ of the time-dependent vector field $V^R_t$ are related in the following way:
 \begin{gather*}
  \varphi^{V^R}_t (y,0) = \left(\varphi_t (y), t\right)
 \end{gather*}
 for every suitable $y\in\hat U$. As discussed earlier, $\varphi^{V^R}_t$ is the flow of a holomorphic vector field and, hence, locally biholomorphic. This implies that $\varphi_t$ is also locally biholomorphic concluding the proof.
\end{proof}

 \newpage
 \section{Proof of Morse Darboux Lemmata}
\label{app:morse_darboux_lem}

Our goal in this part is to prove Lemma \autoref{lem:morse_darboux_lem_I} and \autoref{lem:morse_darboux_lem_II}. We start with Lemma \autoref{lem:morse_darboux_lem_I}.

\begin{Lem*}[Morse Darboux lemma I]
 Let $(M^2,\omega)$ be a symplectic $2$-manifold, $L^1$ be a smooth $1$-manifold, $f\in C^\infty (M,L)$, and $p\in M$ be a non-degenerate critical point of $f$ with Morse index $\mu_f (p) = 0$. Further, let $T>0$ be a positive real number. Then, there exists a $C^1$-chart $\psi_L:U_L\to V_L\subset\R$ of $L$ near $f(p)$ which is smooth on $U_L\backslash\{f(p)\}$ such that all non-constant trajectories near $p$ of the RHS $(U_M, \omega\vert_{U_M}, H)$ with $U_M\coloneqq f^{-1}(U_L)$ and $H\coloneqq \psi_L\circ f\vert_{U_M}$ are $T$-periodic.
\end{Lem*}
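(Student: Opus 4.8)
The plan is to bring $f$ into Morse normal form near $p$ and to reduce the whole question to a one-variable reparametrization: once $f$ looks like $x^2+y^2$, the Hamiltonian $\psi_L\circ f$ is a function of $x^2+y^2$ alone, and I claim the correct $\psi_L$ is obtained by reparametrizing $L$ by (a multiple of) the symplectic area enclosed by the sublevel sets of $f$.

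First I would choose a chart $\psi_L^0:U_L^0\to V_L^0\subset\R$ of $L$ with $\psi_L^0(f(p))=0$; since $\mu_f(p)=0$, after possibly composing with an orientation-reversing chart of $L$ we may assume $p$ is a genuine local minimum of $F:=\psi_L^0\circ f$. By the Morse lemma there is a smooth chart $(x,y)$ of $M$ near $p$ with $(x,y)(p)=(0,0)$ and $F=x^2+y^2$ on its domain; shrinking if necessary, $\omega=\rho\,dx\wedge dy$ there with $\rho$ smooth and (after possibly swapping $x$ and $y$) $\rho>0$. Set
\[
 A(E):=\int_{\{x^2+y^2\le E\}}\omega=\int_0^{2\pi}\!\int_0^{\sqrt E}\rho(r\cos\theta,r\sin\theta)\,r\,dr\,d\theta,\qquad E\ge 0.
\]

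Next I would establish the two facts that make the reparametrization work, both by direct computation in these coordinates. Solving $\iota_{X_F}\omega=-dF$ in polar coordinates gives $X_F=\tfrac2\rho\,\partial_\theta$, so for $E>0$ the level set $\{F=E\}$ is a circle carrying a single non-constant orbit, traversed with angular speed $2/\rho$ and hence with period $\tau(E)=\tfrac12\int_0^{2\pi}\rho(\sqrt E\cos\theta,\sqrt E\sin\theta)\,d\theta$. On the other hand, differentiating $A$ (substitute $u=r^2$) yields exactly $A'(E)=\tfrac12\int_0^{2\pi}\rho(\sqrt E\cos\theta,\sqrt E\sin\theta)\,d\theta=\tau(E)$. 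Moreover $A(0)=0$ and $A'$ extends continuously to $E=0$ with $A'(0)=\pi\rho(0,0)>0$, so $A$ is a $C^1$-diffeomorphism of $[0,\varepsilon)$ onto its image which is smooth on $(0,\varepsilon)$; expanding $\rho$ in a Taylor polynomial and discarding the monomials $x^ay^b$ with $a$ or $b$ odd (which integrate to zero over $\theta$) actually shows $A$ is smooth at $0$ as well, but $C^1$ is all that is needed.

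Finally I would extend $A$ to a $C^1$-diffeomorphism of a two-sided interval about $0$ that is smooth away from $0$ (e.g. extend it linearly to the negative side) and define $\psi_L:=\tfrac1T\,A\circ\psi_L^0$ on a neighborhood $U_L$ of $f(p)$ small enough that the connected component of $p$ in $U_M:=f^{-1}(U_L)$ lies inside the Morse chart. Then $\psi_L$ is a $C^1$-chart of $L$ near $f(p)$, smooth on $U_L\setminus\{f(p)\}$, and $H=\psi_L\circ f=\tfrac1T\,A\circ F$, so $X_H=\tfrac1T A'(F)\,X_F$. Hence the non-constant orbit on $\{F=E\}=\{H=A(E)/T\}$ has period $\tau(E)/(A'(E)/T)=T$, and since every non-constant trajectory of $(U_M,\omega|_{U_M},H)$ near $p$ lies on such a level set, all of them are $T$-periodic. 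The only genuinely analytic point in this argument is the behaviour of $A$ near $E=0$ together with the identity $\tau=A'$; both are disposed of by the explicit polar-coordinate computation once the Morse normal form is available, and everything else — choosing $\psi_L^0$ with the right orientation, shrinking domains, extending $A$ across $0$ — is bookkeeping.
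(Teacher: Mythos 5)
Your proof is correct and follows essentially the same route as the paper's: Morse normal form for $f$, the polar-coordinate computation of the period $\tau(E)=\tfrac12\int_0^{2\pi}\rho(\sqrt{E}\cos\theta,\sqrt{E}\sin\theta)\,d\theta$, and a reparametrization of $L$ by the antiderivative of the period. Your area function satisfies $A(s)=\int_0^s\hat T(\sqrt{s'})\,ds'$ for $s\ge 0$, so your chart $\psi_L=\tfrac1T A$ is literally the one constructed in the paper (which extends to $s<0$ via $\hat T(\sqrt{|s'|})$ where you extend linearly); the identification of this antiderivative with the enclosed symplectic area is a pleasant reformulation that anticipates condition 3 of Morse Darboux Lemma II, but the underlying computation is identical.
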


\begin{proof}
 The proof consists of three steps:
 \begin{enumerate}
  \item First, we convince ourselves that the non-constant trajectories $\gamma$ near $p$ are indeed periodic.
  \item Afterwards, we compute the period $\hat T(r)$ of a trajectory $\gamma$ near $p$ with $f\circ \gamma = r^2$ to show that $\hat T(r)$ is defined for $r\in (-\varepsilon,\varepsilon)$ ($\varepsilon>0$), depends smoothly on $r$, and is bounded from below by a positive constant.
  \item Lastly, we use these properties of $\hat T(r)$ to define a $C^1$-diffeomorphism\linebreak $\psi_L:U_L\to V_L\subset\R$ such that the trajectories of the rescaled RHS $(U_M,\omega\vert_{U_M}, H)$ with $U_M$ and $H$ as above have fixed period $T>0$.
 \end{enumerate}
 \textbf{Step 1}\\\\
 Without loss of generality, we can assume, after choosing appropriate charts, that $L = \R$, $f(p) = 0\in\R$, and that the (usual) Morse index $\mu_f (p)$ of $f:M\to\R$ is $0$. Now, we apply the Morse lemma to find a chart $\hat\psi_M = (\hat x, \hat y):\hat U_M\to \hat V_M$ of $M$ near $p$ with $\hat \psi_M (p) = 0$ such that $f\vert_{\hat U_M} = {\hat x}^2 + {\hat y}^2$. In this chart, we have $\omega\vert_{\hat U_M} = \hat v\, d\hat x\wedge d\hat y$, where $\hat v\in C^\infty (\hat U_M,\R)$. Since $\omega$ is non-degenerate, we can assume $\hat v >0$ (after permuting $\hat x$ and $\hat y$ if necessary). Now consider the RHS $(M,\omega, f)$ and its trajectories $\gamma$ near $p$. $f$ is constant along $\gamma$, so for $r>0$ small enough the trajectories $\gamma$ near $p$ with $f\circ \gamma = r^2$ move along the circle
 \begin{gather*}
  f^{-1}(r^2) = \hat \psi^{-1}_M (\{(\hat x,\hat y)\in\R^2\mid \hat x^2 + \hat y^2 = r^2\}) \cong S^1
 \end{gather*}
 with velocity $\dot\gamma\neq 0$. Hence, the trajectories near $p$ are periodic.\\\\
 \textbf{Step 2}\\\\
 Denote the period of $\gamma$ with $f\circ \gamma = r^2$ and $r>0$ by $\hat T(r)$. We calculate $\hat T(r)$ by going into polar coordinates:
 \begin{gather*}
  (\hat x,\hat y) = (r\cos (\varphi), r\sin (\varphi)).
 \end{gather*}
 Define $v \coloneqq \hat v\circ \hat\psi^{-1}_M$, then the Hamiltonian vector field $X_f$ is given by:
 \begin{gather*}
  d\hat\psi_M (X_f) = \frac{2}{v(\hat x, \hat y)}
  \begin{pmatrix}
   -\hat y\\ \hat x
  \end{pmatrix} = \frac{2}{v(r\cos (\varphi), r\sin (\varphi))}
  \begin{pmatrix}
   -r\sin (\varphi)\\ r\cos (\varphi)
  \end{pmatrix}.
 \end{gather*}
 Now parameterize an integral curve $\gamma:\R\to M$ of $X_f$ by $r_\gamma, \varphi_\gamma:\R\to\R$ in the following way:
 \begin{gather*}
  \hat \psi_M\circ \gamma (t) =
  \begin{pmatrix}
   r_\gamma (t)\cos (\varphi_\gamma (t))\\
   r_\gamma (t)\sin (\varphi_\gamma (t))
  \end{pmatrix}.
 \end{gather*}
 The integral curve equation $\dot\gamma = X_f (\gamma)$ now yields:
 \begin{gather*}
  \dot r_\gamma = 0;\quad \dot \varphi_\gamma = \frac{2}{v(r_\gamma\cos (\varphi_\gamma), r_\gamma\sin (\varphi_\gamma))}.
 \end{gather*}
 Thus, $r_\gamma$ is constant and, since $f\circ\gamma = r^2$, given by $r$. This allows us to define $\Phi:\R\to \R$ by
 \begin{align*}
  \Phi (t)&\coloneqq \varphi_\gamma (t) - \varphi_\gamma (0) = \int\limits^t_0 \dot\varphi_\gamma (t^\prime) dt^\prime\\
  &= \int\limits^t_0 \frac{2}{v(r\cos (\varphi_\gamma (t^\prime)), r\sin (\varphi_\gamma(t^\prime)))} dt^\prime.
 \end{align*}
 $\Phi$ is an orientation preserving diffeomorphism, since $\dot\Phi = \frac{2}{v}>0$. Hence, $\Phi^{-1}$ exists and is given by:
 \begin{gather*}
  \Phi^{-1} (\alpha) = \frac{1}{2}\int\limits^{\varphi_0+\alpha}_{\varphi_0} v(r\cos (\varphi), r\sin (\varphi))\, d\varphi
 \end{gather*}
 with $\varphi_0\coloneqq \varphi_\gamma (0)$, as
 \begin{align*}
  \frac{d\Phi^{-1}}{d\alpha} (\alpha) &\stackrel{\phantom{\varphi_\gamma = \varphi_0 + \Phi}}{=} \frac{1}{\dot\Phi (\Phi^{-1}(\alpha))} = \frac{1}{2}v (r\cos (\varphi_\gamma (\Phi^{-1}(\alpha))), r\sin (\varphi_\gamma (\Phi^{-1}(\alpha))))\\
  &\stackrel{\varphi_\gamma = \varphi_0 + \Phi}{=} \frac{1}{2} v(r\cos (\varphi_0 + \alpha), r\sin (\varphi_0 + \alpha)).
 \end{align*}
 We can now use $\Phi$ and $\Phi^{-1}$ to compute $\hat T(r)$:
 \begin{align*}
  &\varphi_\gamma (t + \hat T(r)) = \varphi_\gamma (t) + 2\pi\quad\forall t\in\R\quad \Rightarrow \Phi (\hat T(r)) = 2\pi\\
  \Rightarrow &\hat T(r) = \Phi^{-1} (2\pi) = \frac{1}{2}\int\limits^{\varphi_0+2\pi}_{\varphi_0} v(r\cos (\varphi), r\sin (\varphi))\, d\varphi = \frac{1}{2}\int\limits^{2\pi}_{0} v(r\cos (\varphi), r\sin (\varphi))\, d\varphi
 \end{align*}
 As we can see, $\hat T(r)$ depends smoothly on $r>0$. In fact, this formula allows us to define $\hat T(r)$ smoothly for $r\leq 0$ as well. It turns out that $\hat T(r)$ is even:
 \begin{align*}
  \hat T(-r) &= \frac{1}{2}\int\limits^{2\pi}_{0} v(-r\cos (\varphi), -r\sin (\varphi))\, d\varphi = \frac{1}{2}\int\limits^{2\pi}_{0} v(r\cos (\varphi+\pi), r\sin (\varphi+\pi))\, d\varphi\\
  &= \frac{1}{2}\int\limits^{2\pi}_{0} v(r\cos (\varphi), r\sin (\varphi))\, d\varphi = \hat T(r).
 \end{align*}
 After shrinking $\hat U_M$ if necessary, we can assume that $\hat U_M$ has a compact neighborhood. Thus, $\hat v\in C^\infty (\hat U_M,\R)$ is bounded from below by a positive constant $v_{\min}>0$. Therefore, $v\coloneqq \hat v\circ\hat\psi^{-1}_M$ is also bounded from below by $v_{\min}$. This implies:
 \begin{gather*}
  \hat T(r) = \frac{1}{2}\int\limits^{2\pi}_{0} v(r\cos (\varphi), r\sin (\varphi)) \geq \pi v_{\min} > 0\quad\forall r.
 \end{gather*}
 \textbf{Step 3}\\\\
 Lastly, we use these properties of $\hat T(r)$ to define a $C^1$-diffeomorphism $\psi_L:U_L\to V_L\subset\R$ on a neighborhood $U_L\subset L$ of $0\in\R = L$ which rescales the periods of the trajectories $\gamma$ to a fixed period $T > 0$. Consider a trajectory $\gamma$ near $p$ with $f\circ \gamma = r^2$ again. We want to define $\psi_L (s)$ with $s = r^2$ in such a way that the rescaled trajectory $\Gamma (t)\coloneqq \gamma (\hat T(r)\cdot t/T)$ is a trajectory of the rescaled RHS $(U_M,\omega\vert_{U_M},H)$ ($U_M$ and $H$ as above). Hence, we want $\Gamma$ to be an integral curve of $X_H$:
 \begin{align*}
  \dot\Gamma (t) &= \frac{\hat T(r)}{T}\dot\gamma \left(\frac{\hat T(r)}{T}t\right) = \frac{\hat T(r)}{T} X_f (\Gamma (t))\\
  &\stackrel{!}{=} X_H (\Gamma (t)) = \frac{d\psi_L}{ds} (f\circ\Gamma (t)) X_f (\Gamma (t)) = \frac{d\psi_L}{ds} (r^2) X_f (\Gamma (t))
 \end{align*}
 Thus, we obtain the following condition for $\psi_L$:
 \begin{gather}
  \frac{d\psi_L}{ds} (r^2) = \frac{\hat T(r)}{T}.\label{eq:psi_L}
 \end{gather}
 To solve Equation \eqref{eq:psi_L}, we define the function $g:\R\to\R$ by $g(s)\coloneqq \sqrt{|s|}$. $g$ is continuous on $\R$ and smooth on $\R\backslash\{0\}$. Thus, $\hat T\circ g$ is also continuous on a neighborhood $U_L$ of $0$ and smooth on $U_L\backslash\{0\}$. Now we define $\psi_L$ as
 \begin{gather*}
  \psi_L (s)\coloneqq \frac{1}{T}\int\limits^s_0 \hat T\circ g (s^\prime) ds^\prime = \frac{1}{T} \int\limits^s_0 \hat T (\sqrt{|s^\prime|}) ds^\prime.
 \end{gather*}
 Since $\hat T\circ g$ is continuous, $\psi_L$ exists, is a $C^1$-function on $U_L$, and smooth on $U_L\backslash\{0\}$. Furthermore:
 \begin{gather*}
  \frac{d\psi_L}{ds} (s) = \frac{\hat T (\sqrt{|s|})}{T}\geq \frac{\pi v_{\min}}{T} >0.
 \end{gather*}
 Therefore, $\psi_L$ is also a $C^1$-diffeomorphism. The last equation together with the fact that $\hat T (r)$ is even shows Equation \eqref{eq:psi_L} concluding the proof:
 \begin{gather*}
  \frac{d\psi_L}{ds} (r^2) = \frac{\hat T (|r|)}{T} = \frac{\hat T(r)}{T}.
 \end{gather*}
\end{proof}

\begin{Rem}[No regularity issues in a real analytic setup]\label{rem:no_reg_issue}
 If all objects in Lemma \autoref{lem:morse_darboux_lem_I} are real analytic instead of smooth, then the regularity issues do not occur, i.e., the chart $\psi_L$ can be chosen to be real analytic. We can see this as follows: By similar arguments as before, the map $\hat T:\R\to\R$ assigning the period $\hat T(r)$ to each radius $r$ is given by
 \begin{gather*}
  \hat T(r) = \frac{1}{2}\int\limits^{2\pi}_{0} v(r\cos (\varphi), r\sin (\varphi))\, d\varphi
 \end{gather*}
 and, hence, real analytic, as $v$ is real analytic. Thus, $\hat T$ can be written as a power series near $r = 0$:
 \begin{gather*}
  \hat T (r) = \sum^\infty_{k=0} a_k r^k.
 \end{gather*}
 Now recall that $\hat T$ is even, therefore, only even powers occur in the power series of $\hat T$:
 \begin{gather*}
  \hat T (r) = \sum^\infty_{k=0} a_{2k} r^{2k}.
 \end{gather*}
 This allows us to define the real analytic function $\hat t$ by
 \begin{gather*}
  \hat t (s) \coloneqq \sum^\infty_{k=0} a_{2k} s^k.
 \end{gather*}
 Obviously, $\hat T$ and $\hat t$ satisfy the relation: $\hat t (r^2) = \hat T (r)$. Now we define the real-analytic chart $\psi_L$ by
 \begin{gather*}
  \psi_L (s)\coloneqq \frac{1}{T}\int\limits^s_0 \hat t (s^\prime) ds^\prime.
 \end{gather*}
 As in the proof of Lemma \autoref{lem:morse_darboux_lem_I}, all non-constant trajectories of the Hamiltonian $\psi_L\circ f$ are $T$-periodic, since $\psi_L$ fulfills the equation
 \begin{gather*}
  \frac{d\psi_L}{ds} (r^2) = \frac{\hat t (r^2)}{T} = \frac{\hat T (r)}{T}.
 \end{gather*}
\end{Rem}

\noindent Next, we prove Lemma \autoref{lem:morse_darboux_lem_II}.

\newpage

\begin{Lem*}[Morse Darboux lemma II]
 Let $(M^2,\omega)$ be a symplectic $2$-manifold and let\linebreak $H\in C^\infty (M,\R)$ be a smooth function on $M$ with non-degenerate critical point $p\in M$ of Morse index $\mu_H (p)\neq 1$. Further, let $T>0$ be a positive real number. Then, the following statements are equivalent:
 \begin{enumerate}
  \item There exists a topological chart $\psi_M = (x,y):U_M\to V_M\subset\R^2$ of $M$ near $p$ which is smooth on $U_M\backslash\{p\}$ such that ($\psi_M (p) = 0$):
  \begin{enumerate}[label = (\alph*)]
   \item $H\vert_{U_M} = H(p) \pm \frac{\pi}{T}(x^2 + y^2)$,
   \item $\omega\vert_{U_M} = dx\wedge dy$.
  \end{enumerate}
  \item There exists an open neighborhood $U_M\subset M$ of $p$ such that all non-constant trajectories of the RHS $(U_M, \omega\vert_{U_M}, H\vert_{U_M})$ are $T$-periodic.
  \item There exists a number $E_0 > 0$ such that $\int_{U(E)} \omega = T\cdot E$ for every $E\in [0, E_0]$,\linebreak where $U(E)$ is the connected component containing $p$ of the set\linebreak $\{q\in M\mid |H(q)-H(p)|\leq E\}$.
 \end{enumerate}
\end{Lem*}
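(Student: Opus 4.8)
The plan is to prove the cyclic chain of implications $1\Rightarrow 2\Rightarrow 3\Rightarrow 1$, which will establish equivalence. Throughout I would normalize $H(p)=0$ by an affine shift and, since $\mu_H(p)\neq 1$, fix the sign so that $p$ is a local minimum (the case of a local maximum is symmetric, replacing $H$ by $-H$). By the usual Morse lemma there is a smooth chart near $p$ in which $H = \hat x^2 + \hat y^2$; since $\omega$ is a symplectic form in dimension two, $\omega = \hat v\, d\hat x\wedge d\hat y$ for a positive smooth function $\hat v$ (flipping $\hat x\leftrightarrow\hat y$ if needed). This is exactly the setup used in the proof of Lemma \autoref{lem:morse_darboux_lem_I}, and I would borrow its computation: passing to polar coordinates $(\hat x,\hat y)=(r\cos\varphi, r\sin\varphi)$, the Hamiltonian flow of $H$ moves along the circles $\{r=\text{const}\}$, and the period of the orbit on $\{H = E\}$, $E = r^2$, is
\begin{gather*}
 \hat T(r) = \tfrac12\int_0^{2\pi} v(r\cos\varphi, r\sin\varphi)\, d\varphi,\qquad v \coloneqq \hat v\circ\hat\psi_M^{-1},
\end{gather*}
which is smooth and even in $r$, hence a smooth function $\hat t(E)$ of $E = r^2$ with $\hat t(E) = \hat T(\sqrt E)$.

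For $2\Rightarrow 3$: assume all non-constant trajectories near $p$ have period exactly $T$. Then $\hat T(r)\equiv T$, so $\hat t(E)\equiv T$. Now $\int_{U(E)}\omega$ is the symplectic area enclosed by the level set $\{H = E\}$, and the standard ``action = area'' relation in one degree of freedom gives $\frac{d}{dE}\int_{U(E)}\omega = \hat t(E)$; I would prove this directly in the polar coordinates above by computing $\int_{U(E)}\omega = \int_0^{\sqrt E}\!\int_0^{2\pi} v(r\cos\varphi,r\sin\varphi)\, r\, d\varphi\, dr$ and differentiating in $E$, which reproduces $\hat t(E)$. Integrating from $0$ gives $\int_{U(E)}\omega = TE$ for $E\in[0,E_0]$, where $E_0$ is small enough that $U(E)$ stays in the Morse chart.

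For $3\Rightarrow 1$: conversely, the same area computation shows $\frac{d}{dE}\int_{U(E)}\omega = \hat t(E)$, so the hypothesis $\int_{U(E)}\omega = TE$ forces $\hat t(E) = T$ for all $E\in[0,E_0]$. I then want a topological chart $\psi_M = (x,y)$, smooth off $p$, with $H = \frac{\pi}{T}(x^2+y^2)$ and $\omega = dx\wedge dy$. The idea is to define new radial and angular coordinates that absorb the function $v$: set $x = \rho(r)\cos\Theta(\varphi,r)$, $y=\rho(r)\sin\Theta(\varphi,r)$, where $\rho(r)^2 = \frac{T}{2\pi}\cdot(\text{area enclosed by }\{H=r^2\}) = \frac{T}{2\pi}\cdot Tr^2 = \frac{T^2}{2\pi}r^2$ — wait, more carefully, one chooses $\rho$ so that $\pi\rho^2 = \frac{T}{2}\int_0^{2\pi}\!\int_0^r v\, r'\,dr'\,d\varphi'\cdot\frac{1}{?}$; the clean way is: pick $\psi_L$ as in Step 3 of Lemma \autoref{lem:morse_darboux_lem_I} rescaling the Hamiltonian to period $T$, which here is already satisfied, then use an angular reparametrization making the flow uniform, and finally a radial rescaling making $\omega$ standard — the two conditions (period $T$ and area $TE$) are exactly what is needed for both the angular and radial adjustments to close up into a genuine (topological, smooth away from $p$) chart. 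The regularity at $p$ is only $C^0$ because $\rho(r)$ and the angular correction need not be smooth in $r$ at $0$; this matches the statement.

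For $1\Rightarrow 2$: in the model chart of statement 1, $H = H(p)\pm\frac{\pi}{T}(x^2+y^2)$ with $\omega = dx\wedge dy$, so the Hamiltonian vector field is $X_H = \pm\frac{2\pi}{T}(-y\,\partial_x + x\,\partial_y)$, whose flow is rotation by angular velocity $\frac{2\pi}{T}$, hence every non-constant orbit is $T$-periodic. The main obstacle I expect is the construction in $3\Rightarrow 1$: verifying that the combined angular-and-radial change of coordinates is well-defined, injective, continuous up to $p$, and smooth away from $p$ — in particular tracking why the hypothesis $\mu_H(p)\neq 1$ (so that the level sets near $p$ are circles rather than hyperbola-like crosses) is essential for the construction, and why one cannot in general do better than $C^1$/$C^0$ regularity at the critical point. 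The area and period formulas themselves are routine once set up in polar coordinates; the delicate point is packaging them into an honest chart.
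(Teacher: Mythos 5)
Your plan is correct and follows essentially the same route as the paper: the cyclic chain $1\Rightarrow 2\Rightarrow 3\Rightarrow 1$, with the period formula $\hat T(r)=\tfrac12\int_0^{2\pi}v\,d\varphi$ in a Morse chart, the area--period relation for $2\Rightarrow 3$, and for $3\Rightarrow 1$ the observation that differentiating $\int_{U(E)}\omega = TE$ forces $\int_0^{2\pi}v\,d\varphi = 2\pi$, which is exactly the condition for the angular reparametrization $\varphi\mapsto\int_0^\varphi v\,d\varphi'$ to close up into a circle-preserving homeomorphism (smooth off $p$) with Jacobian $v$. The only bookkeeping difference is that the paper normalizes the Morse chart to $H = H(p)\pm\frac{\pi}{T}(\hat x^2+\hat y^2)$ from the start, so no radial rescaling is needed and the step you left unresolved ("$\rho(r)^2=\ldots$") disappears.
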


\begin{proof}
 The idea of the proof is simple: The implications ``$1.\Rightarrow 2.$'' and ``$2.\Rightarrow 3.$'' follow from straightforward computations. To show the remaining implication ``$3.\Rightarrow 1.$'', we first choose a Morse chart $\hat \psi_M = (\hat x, \hat y):\hat U_M\to \hat V_M$ such that $H\vert_{\hat U_M} = H(p) + \varepsilon\frac{\pi}{T} (\hat x^2 + \hat y^2)$, where $\varepsilon\in\{-1,+1\}$. In general, $\hat \psi_M$ is not a Darboux chart for $\omega$. Still, the trajectories of the RHS $(\hat U_M, \omega\vert_{\hat U_M}, H\vert_{\hat U_M})$ are circles, in particular periodic orbits. Solely the angular velocity of these circles might not be constant. To rectify this, we go into polar coordinates $(r,\varphi)$ and apply an appropriately chosen diffeomorphism to $\varphi$. This operation results in a new chart $\psi_M$. Since we have not changed the radius $r$, $\psi_M$ is still a Morse chart. However, the change of the angle coordinate turns $\psi_M$ into a Darboux chart.\\\\
 $\boxed{1.\Rightarrow 2.}$\\\\
 In the chart $\psi_M$, we find for the Hamiltonian vector field $X_H$:
 \begin{gather*}
  d\psi_M(X_H) = \pm\frac{2\pi}{T}\begin{pmatrix}-y\\ x\end{pmatrix}.
 \end{gather*}
 Hence, the integral curves $\gamma$ of $X_H$ are given by:
 \begin{gather*}
  \gamma (t) = \begin{pmatrix}r_0 \cos (\varphi_0 \pm \frac{2\pi}{T}t)\\ r_0 \sin (\varphi_0 \pm \frac{2\pi}{T}t)\end{pmatrix}.
 \end{gather*}
 Thus, the trajectories $\gamma$ near $p$ are $T$-periodic.\\\\
 $\boxed{2.\Rightarrow 3.}$\\\\
 $p$ is a non-degenerate critical point of $H$ with Morse index $\mu_H (p) \neq 1$, hence, we can find a Morse chart $\hat \psi_M = (\hat x,\hat y):\hat U_M\to \hat V_M$ of $M$ near $p$ such that ($\hat\psi_M (p) = 0$)
 \begin{gather*}
  H\vert_{\hat U_M} = H(p) + \varepsilon\frac{\pi}{T}(\hat x^2 + \hat y^2)
 \end{gather*}
 for $\varepsilon\in\{-1,+1\}$. In this chart, we have $\omega\vert_{\hat U_M} = \hat v\cdot d\hat x\wedge d\hat y$ for $\hat v\in C^\infty (\hat U_M,\R)$. After permuting $\hat x$ and $\hat y$ if necessary, we can assume that $\hat v >0$. Let $\gamma$ be an integral curve of $X_H$. In polar coordinates $(\hat x, \hat y) = (r\cos (\varphi), r\sin (\varphi))$, we can parameterize $\gamma$ via $r_\gamma, \varphi_\gamma:\R\to\R$ as follows:
 \begin{gather*}
  \gamma (t) = \begin{pmatrix}
                r_\gamma (t) \cos (\varphi_\gamma (t))\\
                r_\gamma (t) \sin (\varphi_\gamma (t))
               \end{pmatrix}.
 \end{gather*}
 With $v\coloneqq \hat v\circ \hat\psi^{-1}_M$, the integral curve equation becomes:
 \begin{gather*}
  \dot r_\gamma = 0;\quad \dot \varphi_\gamma = \frac{2\pi\varepsilon}{Tv(r_\gamma\cos (\varphi_\gamma), r_\gamma\sin (\varphi_\gamma))}.
 \end{gather*}
 Thus, $r_\gamma\equiv r$ is constant. This allows us to define $\Phi:\R\to \R$ by
 \begin{align*}
  \Phi (t)&\coloneqq \varphi_\gamma (t) - \varphi_\gamma (0) = \int\limits^t_0 \dot\varphi_\gamma (t^\prime) dt^\prime\\
  &= \int\limits^t_0 \frac{2\pi\varepsilon}{T v(r\cos (\varphi_\gamma (t^\prime)), r\sin (\varphi_\gamma(t^\prime)))} dt^\prime.
 \end{align*}
 As in the proof of Lemma \autoref{lem:morse_darboux_lem_I}, $\Phi^{-1}$ exists and is given by:
 \begin{gather*}
  \Phi^{-1} (\alpha) = \frac{T}{2\pi\varepsilon}\int\limits^{\varphi_0+\alpha}_{\varphi_0} v(r\cos (\varphi), r\sin (\varphi))\, d\varphi
 \end{gather*}
 with $\varphi_0\coloneqq \varphi_\gamma (0)$. We now use the fact that, by assumption, $\gamma$ is $T$-periodic, so $\Phi^{-1}$ satisfies:
 \begin{gather*}
  \Phi^{-1} (2\pi) = \varepsilon T\quad\Rightarrow \int\limits^{2\pi}_0 v(r\cos (\varphi), r\sin (\varphi)) d\varphi = 2\pi.
 \end{gather*}
 Observe that the last equation holds for all $r>0$ small enough. 
 %By continuity and the fact that polar coordinates are ``invariant'' under the transformation $r\mapsto -r$ and $\varphi\mapsto \varphi + \pi$, it even holds for all $r\in (-\varepsilon_0,\varepsilon_0)$ with $\varepsilon_0>0$ small enough.
 It allows us to compute the symplectic area $\int_{U(E)}\omega$:
 \begin{align*}
  \int\limits_{U(E)} \omega &= \int\limits^{\sqrt{\frac{TE}{\pi}}}_0 \int\limits^{2\pi}_0 v(r\cos(\varphi), r\sin(\varphi))\, rdr\, d\varphi\\
  &= \int\limits^{\sqrt{\frac{TE}{\pi}}}_0 \left(\int\limits^{2\pi}_0 v(r\cos(\varphi), r\sin(\varphi))\, d\varphi\right) rdr\\
  &= \int\limits^{\sqrt{\frac{TE}{\pi}}}_0 2\pi r\, dr = T\cdot E.
 \end{align*}
 $\boxed{3.\Rightarrow 1.}$\\\\
 As in ``$2.\Rightarrow 3.$'', we can find a Morse chart $\hat \psi_M = (\hat x,\hat y):\hat U_M\to \hat V_M$ of $M$ near $p$ such that ($\hat\psi_M (p) = 0$)
 \begin{gather*}
  H\vert_{\hat U_M} = H(p) + \varepsilon\frac{\pi}{T}(\hat x^2 + \hat y^2)
 \end{gather*}
 for $\varepsilon\in\{-1,+1\}$ and $\omega\vert_{\hat U_M} = \hat v\cdot d\hat x\wedge d\hat y$ for $\hat v\in C^\infty (\hat U_M,\R)$ with $\hat v >0$. By taking the derivative of $\int_{U(E)}\omega = T\cdot E$ with respect to $E$, we deduce that
 \begin{gather*}
  \int\limits^{2\pi}_0 v(r\cos (\varphi), r\sin (\varphi))\, d\varphi = 2\pi
 \end{gather*}
 for $r>0$ small enough and $v\coloneqq \hat v\circ \hat\psi^{-1}_M$. The last equation implies that the map\linebreak $P:(0,\varepsilon_0)\times S^1\to (0,\varepsilon_0)\times S^1$ given by
 \begin{gather*}
  P(r, [\varphi])\coloneqq \left(r, \left[\int\limits^{\varphi}_0 v(r\cos (\varphi^\prime), r\sin (\varphi^\prime))\, d\varphi^\prime\right]\right)
 \end{gather*}
 is well-defined for $\varepsilon_0 >0$ small enough. $P$ is a smooth diffeomorphism, since the Jacobian of $P$ is $v>0$. Denote the map associated with polar coordinates by $S:\R_+ \times S^1\to \R^2\backslash\{0\}$, i.e., $S(r, [\varphi])\coloneqq (r\cos (\varphi), r\sin (\varphi))$, and consider the map $S\circ P\circ S^{-1}:\dot D_{\varepsilon_0}\to \dot D_{\varepsilon_0}$, where $\dot D_{\varepsilon_0}\coloneqq \{x\in\R^2\backslash\{0\}\mid ||x||\leq \varepsilon_0\}$. $S\circ P\circ S^{-1}$ is a smooth diffeomorphism, since both $S$ and $P$ are smooth diffeomorphisms. Furthermore, $S\circ P\circ S^{-1}$ maps circles of radius $r$ to circles of radius $r$, hence, we can extend $S\circ P\circ S^{-1}$ to a homeomorphism on $D_{\varepsilon_0}\coloneqq\{x\in\R^2\mid ||x||\leq \varepsilon_0\}$ by setting $S\circ P\circ S^{-1} (0)\coloneqq 0$.\\
 Now consider the map $(x,y)\equiv \psi_M\coloneqq S\circ P\circ S^{-1}\circ\hat \psi_M:U_M\to V_M$. $\psi_M$ is a topological chart of $M$ near $p$ and smooth on $U_M\backslash\{p\}$. Recall that $\hat \psi_M$ is a Morse chart for $H$, thus, the level sets of $H$ are circles in the chart $\hat \psi_M$. Since the charts $\psi_M$ and $\hat \psi_M$ only differ by postcomposition with $S\circ P\circ S^{-1}$ which preserves circles, $\psi_M$ is also a Morse chart for $H$:
 \begin{gather*}
  H\vert_{U_M} = H(p) + \varepsilon\frac{\pi}{T}(x^2 + y^2).
 \end{gather*}
 Furthermore, the fact that the Jacobian of $P$ is $v$ implies that
 \begin{gather*}
  \omega\vert_{U_M} = dx\wedge dy
 \end{gather*}
 concluding the proof.
\end{proof}

 \newpage
 \section{Various Action Functionals for HHSs and PHHSs}
\label{app:action_functionals}

In \autoref{subsec:holo_action_fun_and_prin} and \autoref{subsec:def_PHHS}, we have defined and studied action functionals for HHSs and PHHSs. The ``critical points'' of these action functionals gave us (pseudo-)holomorphic trajectories of the system under consideration whose domains are parallelograms in the complex plane. However, these action functionals are not the only functionals whose ``critical points'' can be linked to (pseudo-)holomorphic trajectories. There is, in fact, an abundance of action functionals that differ in the domain of their trajectories and the way the ``one-dimensional'' action functionals of their underlying RHSs are integrated. In this part of the appendix, we present and examine a large selection of such action functionals. First, we only formulate and explore action functionals for HHSs. Afterwards, we explain how these action functionals need to be modified in order to give action functionals for PHHSs. Hereby, we realize that the presented action functionals for PHHSs are all real-valued. From this point of view, a Floer-like theory for PHHSs revolving around these real-valued functionals might be possible.\\
We begin by defining an action functional for holomorphic trajectories whose domains are disks $D^{z_0}_R\subset\mathbb{C}$ of radius $R>0$ centered at $z_0\in\mathbb{C}$. To do that, we first need to partition the disk $D^{z_0}_R$ into lines. We choose the partition consisting of lines starting at the center $z_0$ and ending at any boundary point $z\in\partial D^{z_0}_R$. For every such radial line, we consider the action functional $\mathcal{A}^{\Lambda}_{e^{i\alpha}\mH}$ from Remark \autoref{rem:tilted_traj}. We now obtain an action functional for HHSs by integrating the action $\mathcal{A}^{\Lambda}_{e^{i\alpha}\mH}$ over all radial lines, i.e., $\alpha\in [0,2\pi]$:

\begin{Prop}[Action functional $\actdiski$]\label{prop:actdiski}
 Let $(X,\Omega = d\Lambda, \mH)$ be an exact\linebreak HHS, $D^{z_0}_R\coloneqq\{z\in\mathbb{C}\mid |z-z_0|\leq R\}$ be a disk of radius $R>0$ centered at $z_0\in\mathbb{C}$,\linebreak $\mathcal{P}_{D^{z_0}_R}\coloneqq C^\infty (D^{z_0}_R, X)$ be the set of smooth maps from $D^{z_0}_R$ to $X$, and $\actdiski:\mathcal{P}_{D^{z_0}_R}\to\mathbb{C}$ be the action functional defined by
 \begin{gather*}
  \actdiski [\gamma]\coloneqq \frac{1}{2\pi}\int\limits^{2\pi}_0\int\limits^R_0\left[\Lambda\vert_{\gamma_\alpha (r)}\left(\frac{d\gamma_\alpha}{dr}(r)\right) - e^{i\alpha}\cdot \mH\circ\gamma_\alpha (r)\right]dr\, d\alpha\quad\forall \gamma \in\mathcal{P}_{D^{z_0}_R},
 \end{gather*}
 where $\gamma_\alpha:[0,R]\to X$ is defined by $\gamma_\alpha (r)\coloneqq \gamma (z_0 + re^{i\alpha})$. Now let $\gamma\in\mathcal{P}_{D^{z_0}_R}$. Then, $\gamma$ is a holomorphic trajectory of the HHS $(X,\Omega,\mH)$ iff $\gamma$ is a ``critical point'' of $\actdiski$. Here, ``critical points'' means that we only allow for those variations of $\gamma$ which keep $\gamma$ fixed at the boundary $\partial D^{z_0}_R$ and the \underline{center} $z_0$.
\end{Prop}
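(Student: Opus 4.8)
The plan is to reduce the disk action functional $\actdiski$ to the already-understood action functionals $\mathcal{A}^{\Lambda}_{e^{i\alpha}\mH}$ of Remark \autoref{rem:tilted_traj}, exactly as the parallelogram action $\mathcal{A}^{P_\alpha}_\mH$ was handled via Proposition \autoref{prop:holo_action_prin_para}. First I would rewrite $\actdiski[\gamma]$ in the suggestive form
\begin{gather*}
 \actdiski[\gamma] = \frac{1}{2\pi}\int\limits^{2\pi}_0 \mathcal{A}^{\Lambda}_{e^{i\alpha}\mH}[\gamma_\alpha]\, d\alpha,
\end{gather*}
where $\gamma_\alpha(r) = \gamma(z_0 + re^{i\alpha})$ is the restriction of $\gamma$ to the radial segment at angle $\alpha$; this is immediate from the definitions once one notes $\gamma_\alpha^\ast\Lambda = \Lambda|_{\gamma_\alpha(r)}(d\gamma_\alpha/dr)\,dr$. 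The point of the normalization $1/(2\pi)$ is cosmetic and plays no role in the variational analysis.

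Next I would compute the first variation of $\actdiski$ under a variation $\gamma^\varepsilon$ fixing $\gamma$ on $\partial D^{z_0}_R$ and at the center $z_0$. Since the integrand is linear in the data of each radial segment, the variation commutes with the $\alpha$-integral, so
\begin{gather*}
 \left.\frac{d}{d\varepsilon}\right|_{\varepsilon = 0}\actdiski[\gamma^\varepsilon] = \frac{1}{2\pi}\int\limits^{2\pi}_0 \left.\frac{d}{d\varepsilon}\right|_{\varepsilon = 0}\mathcal{A}^{\Lambda}_{e^{i\alpha}\mH}[\gamma^\varepsilon_\alpha]\, d\alpha.
\end{gather*}
For each fixed $\alpha$, the variation $\gamma^\varepsilon_\alpha$ of the segment keeps the endpoints $r = 0$ (namely $\gamma(z_0)$) and $r = R$ (a point of $\partial D^{z_0}_R$) fixed, so the standard computation of the first variation of $\mathcal{A}^{\Lambda}_{e^{i\alpha}\mH}$ produces no boundary terms and yields the integral of $\Omega$-pairing against $(d\gamma_\alpha/dr - X_{e^{i\alpha}\mH}(\gamma_\alpha))$, where $X_{e^{i\alpha}\mH} = \cos(\alpha)X^R_\mH + \sin(\alpha)J(X^R_\mH)$ by Remark \autoref{rem:holo_traj_alpha}. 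Thus $\gamma$ is a ``critical point'' of $\actdiski$ iff for \emph{every} $\alpha$ the radial curve $\gamma_\alpha$ is an integral curve of $\cos(\alpha)X^R_\mH + \sin(\alpha)J(X^R_\mH)$; here the key technical observation, as in the proof of Lemma \autoref{lem:holo_action_prin}, is that a property holding for $\mathcal{A}^{\Lambda}_{e^{i\alpha}\mH}[\gamma_\alpha]$ for each $\alpha$ is inherited by the averaged functional, since the variations of $\gamma$ restrict to arbitrary endpoint-fixing variations of each $\gamma_\alpha$ and the first variation is linear. Finally, the condition ``$\gamma_\alpha$ is an integral curve of $\cos(\alpha)X^R_\mH + \sin(\alpha)J(X^R_\mH)$ for all $\alpha$'' is precisely the statement that $\gamma$ satisfies the holomorphic integral curve equation $\partial\gamma/\partial z = X_\mH\circ\gamma$ in polar form (cf. Remark \autoref{rem:holo_traj_alpha} and the proof of Proposition \autoref{prop:holo_traj}), i.e. $\gamma$ is a holomorphic trajectory.

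The one genuine subtlety — and the step I expect to be the main obstacle — is the role of the \emph{center} $z_0$ in the allowed variations. Unlike a rectangle or parallelogram, the disk's radial foliation is singular at $z_0$: all segments share the endpoint $z_0$, so a variation that is free at $z_0$ would move the common endpoint of every $\gamma_\alpha$ simultaneously, and the individual segment-variations would no longer be independent endpoint-fixing variations. Pinning $\gamma$ at $z_0$ restores independence and kills the inner boundary term $r = 0$ uniformly in $\alpha$; I would make this precise by checking that the boundary contribution at $r=0$ in the first variation of each $\mathcal{A}^{\Lambda}_{e^{i\alpha}\mH}[\gamma_\alpha]$ is $\Lambda|_{\gamma(z_0)}(\delta\gamma(z_0))$, which vanishes once $\delta\gamma(z_0) = 0$, and conversely that without fixing $z_0$ one picks up the spurious term $\int_0^{2\pi}\Lambda|_{\gamma(z_0)}(\delta\gamma(z_0))\,d\alpha = 2\pi\,\Lambda|_{\gamma(z_0)}(\delta\gamma(z_0))$ obstructing the characterization. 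I would also remark that one should verify the polar expression $2\,\partial\gamma/\partial z$ appearing implicitly matches $d\gamma_\alpha/dr$ correctly under the substitution $z = z_0 + re^{i\alpha}$, so that the reconstruction of the Cauchy–Riemann/integral-curve equation from the family of radial ODEs is valid — this is the same bookkeeping as in the parallelogram case and carries over verbatim after the change of coordinates.
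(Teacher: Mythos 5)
Your proposal is correct and follows essentially the same route as the paper's proof: rewrite $\actdiski$ as the average $\frac{1}{2\pi}\int_0^{2\pi}\mathcal{A}^{\Lambda}_{e^{i\alpha}\mH}[\gamma_\alpha]\,d\alpha$, argue as in Lemma \autoref{lem:holo_action_prin} that criticality is equivalent to each radial curve $\gamma_\alpha$ being an integral curve of $\cos(\alpha)X^R_\mH + \sin(\alpha)J(X^R_\mH)$, and compare with the flow formula from the proof of Proposition \autoref{prop:holo_traj}. Your discussion of why the center $z_0$ must be pinned (the shared endpoint of all radial segments producing an otherwise uncancelled boundary term) is exactly the paper's closing remark, made slightly more explicit.
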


\begin{proof}
 Take the notations from above. Using Remark \autoref{rem:tilted_traj} and writing $\actdiski$ as
 \begin{gather*}
  \actdiski [\gamma] = \frac{1}{2\pi}\int\limits^{2\pi}_0\mathcal{A}^{\Lambda}_{e^{i\alpha}\mH} [\gamma_\alpha] d\alpha,
 \end{gather*}
 we can show as in the proof of Lemma \autoref{lem:holo_action_prin} that $\gamma$ is a ``critical point'' of $\actdiski$ iff\linebreak $\gamma_\alpha:[0,R]\to X$ is a (real) integral curve of $\cos (\alpha)\cdot X^R_\mH + \sin (\alpha)\cdot J(X^R_\mH)$ for every $\alpha\in [0,2\pi]$, where $X_\mH = 1/2 (X^R_\mH - iJ(X^R_\mH))$ is the Hamiltonian vector field of $(X,\Omega,\mH)$. Thus, a ``critical point'' $\gamma$ is uniquely determined, given an initial value $x_0\coloneqq\gamma (z_0)$, by:
 \begin{gather*}
   \gamma(z_0 + re^{i\alpha}) = \varphi^{\cos (\alpha)\cdot X^R_\mH + \sin (\alpha)\cdot J(X^R_\mH)}_r (x_0) = \varphi^{r\cos (\alpha)\cdot X^R_\mH + r\sin (\alpha)\cdot J(X^R_\mH)}_1 (x_0),
 \end{gather*}
 where $\varphi^V_t$ is the time-$t$-flow of a real vector field $V$ on $X$. Comparing the last equation with the formula for the holomorphic trajectory $\gamma^{z_0,x_0}$ satisfying $\gamma (z_0) = x_0$ given in the proof of Proposition \autoref{prop:holo_traj} shows that $\gamma$ is a holomorphic trajectory of the HHS $(X,\Omega,\mH)$ iff $\gamma$ is a ``critical point'' of $\actdiski$.\\
 Lastly, we have to explain why a ``critical point'' $\gamma$ of $\actdiski$ needs to fix the variations of $\gamma$ at the boundary $\partial D^{z_0}_R$ and the center $z_0$. Recall the variation of $\mathcal{A}^{\Lambda}_{e^{i\alpha}\mH}$ at $\gamma_\alpha$. In general, the variation of this functional also includes terms associated with the boundary of the image of $\gamma_\alpha$. This boundary consists of two points, namely the center $z_0$ and one boundary point $z\in\partial D^{z_0}_R$. To get rid of these boundary terms in the variation of $\actdiski$, we have to keep $\gamma$ fixed at $z_0$ and $\partial D^{z_0}_R$.
\end{proof}

In \autoref{subsec:holo_action_fun_and_prin}, we have given two reasons why we need to vary over all smooth curves $\gamma$ and cannot simple restrict the variational problem to the space of holomorphic curves $\gamma$. The new-found action functional offers an additional perspective on that matter. It maps every holomorphic curve $\gamma$ to zero, hence, only varying it over the space of holomorphic curves is meaningless:

\begin{Prop}\label{prop:actdiski_value}
 Take the assumptions and notations from Proposition \autoref{prop:actdiski}. Further, let $\gamma:D^{z_0}_R\to X$ be any holomorphic map from $D^{z_0}_R$ to $X$. Then:
 \begin{gather*}
  \actdiski [\gamma] = 0.
 \end{gather*}
\end{Prop}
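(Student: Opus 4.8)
The plan is to evaluate $\actdiski[\gamma]$ directly on a holomorphic curve $\gamma$ by rewriting the integrand in terms of $z$ rather than the polar variables $(r,\alpha)$, and then recognizing the result as the integral of an exact form over a contractible domain. First I would note that for holomorphic $\gamma$ the chain rule gives $\frac{d\gamma_\alpha}{dr}(r) = e^{i\alpha}\,\gamma'(z_0 + re^{i\alpha})$, where $\gamma'$ is the complex derivative (this uses that $\gamma_\alpha(r) = \gamma(z_0 + re^{i\alpha})$ and holomorphicity to turn the directional derivative into $e^{i\alpha}$ times the complex derivative). Substituting this into the integrand, the bracket becomes
\begin{gather*}
 \Lambda\vert_{\gamma(z_0 + re^{i\alpha})}\!\left(e^{i\alpha}\gamma'(z_0 + re^{i\alpha})\right) - e^{i\alpha}\,\mH\circ\gamma(z_0 + re^{i\alpha}) = e^{i\alpha}\left[\Lambda\!\left(\gamma'(z)\right) - \mH\circ\gamma(z)\right],
\end{gather*}
with $z = z_0 + re^{i\alpha}$, since $\Lambda$ is $\mathbb{C}$-linear on each fiber. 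So the whole action is $\frac{1}{2\pi}\iint e^{i\alpha}\, F(z)\, dr\, d\alpha$ where $F(z) \coloneqq \Lambda(\gamma'(z)) - \mH\circ\gamma(z)$ is a holomorphic function of $z$ on $D^{z_0}_R$ (holomorphic because $\gamma$, $\gamma'$, $\Lambda$, $\mH$ are all holomorphic; cf. the last formula in Point 3 of Remark \autoref{rem:several_remarks}).

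Next I would convert $e^{i\alpha}\, dr\, d\alpha$ back to a planar form. Writing $z - z_0 = re^{i\alpha}$, one has $dz = e^{i\alpha}dr + ire^{i\alpha}d\alpha$, so $e^{i\alpha}\,dr\wedge d\alpha = \frac{1}{ir}\,dz\wedge(\text{something})$; cleaner is to observe $dz\wedge d\bar z = -2ir\, dr\wedge d\alpha$, hence $dr\wedge d\alpha = \frac{i}{2r}\,dz\wedge d\bar z$ and $e^{i\alpha}\,dr\wedge d\alpha = \frac{e^{i\alpha}}{r}\cdot\frac{i}{2}\,dz\wedge d\bar z = \frac{i}{2}\cdot\frac{dz\wedge d\bar z}{\bar z - \bar z_0}\cdot$ — this introduces an unwanted pole. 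To avoid that, the better route is to not pass through $dz\wedge d\bar z$ at all: keep the iterated integral and do the $\alpha$-integral first. For each fixed $r$, $\int_0^{2\pi} e^{i\alpha} F(z_0 + re^{i\alpha})\, d\alpha = \frac{1}{ir}\oint_{|w|=r} F(z_0+w)\, dw$ (substituting $w = re^{i\alpha}$, $dw = ire^{i\alpha}d\alpha$, so $e^{i\alpha}d\alpha = \frac{dw}{ir}$). By Cauchy's theorem, since $F(z_0 + \cdot)$ is holomorphic on the disk $\{|w|\le R\}$, the contour integral $\oint_{|w|=r} F(z_0+w)\, dw$ vanishes for every $r \in (0,R]$. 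Therefore the inner $\alpha$-integral is zero for each $r$, and so $\actdiski[\gamma] = 0$.

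The only step requiring care is the first one: justifying $\frac{d\gamma_\alpha}{dr} = e^{i\alpha}\gamma'$ and the $\mathbb{C}$-linearity manipulation of $\Lambda$, which are both routine once one recalls that $\Lambda$ is a holomorphic $1$-form so $\Lambda\vert_x$ is $\mathbb{C}$-linear on $T^{(1,0)}_xX$ and annihilates $T^{(0,1)}_xX$ (exactly as used in Remark \autoref{rem:several_remarks}, Point 3, via $\Lambda_R(V) = i\Lambda_I(V)$ for $V \in T^{(1,0)}X$). I do not expect any genuine obstacle here; the statement is essentially the observation that $\actdiski$ restricted to holomorphic curves is, after unwinding, an average of contour integrals of a holomorphic function over concentric circles, each of which vanishes by Cauchy's theorem. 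An even slicker phrasing, which I would include as a remark, is that $e^{i\alpha} F(z_0 + re^{i\alpha})\, dr\, d\alpha$ is, up to the harmless factor, $\mathrm{Re}$ or the full complex form $d(\text{primitive})$; but the Cauchy's-theorem argument is the cleanest and most self-contained, so that is the one I would write out.
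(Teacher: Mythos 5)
Your proposal is correct and follows essentially the same route as the paper: rewrite $\frac{d\gamma_\alpha}{dr}$ as $e^{i\alpha}\gamma^\prime$ using holomorphicity of $\gamma$ and the relation $\Lambda (J\cdot) = i\Lambda$, pull out the factor $e^{i\alpha}$, and turn the inner $\alpha$-integral into a contour integral of the holomorphic function $F(z) = \Lambda(\gamma^\prime(z)) - \mH\circ\gamma(z)$, which vanishes by Cauchy's theorem. The paper phrases the final step as the residue of a function holomorphic at the origin (integrating over $|z|=1$ with the radius $r$ absorbed into the argument $z_0+rz$), but this is only a cosmetic difference from your circle of radius $r$.
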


\begin{proof}
 Take the assumptions and notations from Proposition \autoref{prop:actdiski} and let $\gamma:D^{z_0}_R\to X$ be holomorphic. Using the relation $\Lambda (J\cdot) = i\cdot \Lambda$ for holomorphic $1$-forms, we find:
 \begin{gather*}
  \Lambda\vert_{\gamma_\alpha(r)}\left(\frac{d\gamma_\alpha}{dr}(r)\right) = e^{i\alpha}\cdot\Lambda\vert_{\gamma_\alpha (r)}\left( \gamma^{\prime}(z_0 + re^{i\alpha})\right),
 \end{gather*}
 where $\gamma^{\prime}$ is the complex derivative of $\gamma$. With this, we obtain:
 \begin{align*}
  \actdiski [\gamma] &= \frac{1}{2\pi}\int\limits^{2\pi}_0\int\limits^R_0\left[\Lambda\vert_{\gamma_\alpha (r)}\left(\frac{d\gamma_\alpha}{dr}(r)\right) - e^{i\alpha}\cdot \mH\circ\gamma_\alpha (r)\right]dr\, d\alpha\\
  &= \frac{1}{2\pi}\int\limits^{2\pi}_0\int\limits^R_0\left[\Lambda\vert_{\gamma (z_0 + re^{i\alpha})}\left(\gamma^\prime (z_0 + re^{i\alpha})\right) - \mH\circ\gamma (z_0 + re^{i\alpha})\right]dr\, e^{i\alpha}\, d\alpha\\
  &= \int\limits^R_0 \frac{1}{2\pi i}\oint\limits_{|z| = 1}\left[\Lambda\vert_{\gamma (z_0 + rz)}\left(\gamma^\prime (z_0 + rz)\right) - \mH\circ\gamma (z_0 + rz)\right]dz\, dr\\
  &= \int\limits^R_0 \text{\normalfont Res}_{z = 0}\left[\Lambda\vert_{\gamma (z_0 + rz)}\left(\gamma^\prime (z_0 + rz)\right) - \mH\circ\gamma (z_0 + rz)\right] dr,
 \end{align*}
 where $\text{\normalfont Res}_{z = 0} [f(z)]$ denotes the residue of a meromorphic function $f(z)$ at $z = 0$. In the last line of the computation, we have used Cauchy's theorem. The function for which we need to determine the residue is clearly holomorphic at $z = 0$. Thus, the residue is $0$ and we see that the action vanishes for holomorphic curves concluding the proof.
\end{proof}

Even though the ``critical values'' of $\actdiski$ are nice and easy to understand, the action functional itself does not appear to be particularly useful. Often, we want to modify action functionals such that trajectories become actual critical points. The standard ways to achieve this are to either put the boundary of the trajectory on Lagrangian submanifolds or to impose periodicity. Both ways do not appear to be meaningful here. For the presented action functional, periodicity means periodicity of the radial lines. Thus, a ``periodic'' curve $\gamma:D^{z_0}_R\to X$ needs to attain the same value on its boundary as on its center. However, the only holomorphic maps $\gamma:D^{z_0}_R\to X$ exhibiting such a behavior are constant curves by the identity theorem.\\
The other method, mapping the ``boundary'' to Lagrangian submanifolds, takes an unnatural and downright ugly form here, namely mapping $z_0$ and $\partial D^{z_0}_R$ to Lagrangian submanifolds. The action functional $\actdiskii$ improves on $\actdiski$ in that regard. To avoid boundary terms associated with $z_0$, which are at the center\footnote{Cum grano salis.} of our problem, we now partition the disk $D^{z_0}_R$ into lines starting at $z_0 - z$ and ending at $z_0 + z$ ($|z| = R$). In order to account for the doubled length of the radial lines, we only integrate over the angles $\alpha\in [0,\pi]$ this time:

\begin{Prop}[Action functional $\actdiskii$]\label{prop:actdiskii}
 Let $(X,\Omega = d\Lambda, \mH)$ be an exact\linebreak HHS, $D^{z_0}_R\coloneqq\{z\in\mathbb{C}\mid |z-z_0|\leq R\}$ be a disk of radius $R>0$ centered at $z_0\in\mathbb{C}$,\linebreak $\mathcal{P}_{D^{z_0}_R}\coloneqq C^\infty (D^{z_0}_R, X)$ be the set of smooth maps from $D^{z_0}_R$ to $X$, and $\actdiskii:\mathcal{P}_{D^{z_0}_R}\to\mathbb{C}$ be the action functional defined by
 \begin{gather*}
  \actdiskii [\gamma]\coloneqq \frac{i}{4R}\int\limits^{\pi}_0\int\limits^R_{-R}\left[\Lambda\vert_{\gamma_\alpha (r)}\left(\frac{d\gamma_\alpha}{dr}(r)\right) - e^{i\alpha}\cdot \mH\circ\gamma_\alpha (r)\right]dr\, d\alpha\quad\forall \gamma \in\mathcal{P}_{D^{z_0}_R},
 \end{gather*}
 where $\gamma_\alpha:[-R,R]\to X$ is defined by $\gamma_\alpha (r)\coloneqq \gamma (z_0 + re^{i\alpha})$. Now let $\gamma\in\mathcal{P}_{D^{z_0}_R}$. Then, $\gamma$ is a holomorphic trajectory of the HHS $(X,\Omega,\mH)$ iff $\gamma$ is a ``critical point'' of $\actdiskii$. Here, ``critical points'' means that we only allow for those variations of $\gamma$ which keep $\gamma$ fixed at the boundary $\partial D^{z_0}_R$.
\end{Prop}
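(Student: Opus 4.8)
The plan is to mirror the proof of Proposition \autoref{prop:actdiski} almost verbatim, with the two modifications that the partitioning lines now run from $z_0 - z$ to $z_0 + z$ (i.e. diameters of $D^{z_0}_R$ rather than radii), and that the integration over the angular parameter only covers $\alpha \in [0,\pi]$. First I would rewrite $\actdiskii$ in terms of the ``tilted'' action functionals from Remark \autoref{rem:tilted_traj}: for fixed $\alpha$, the inner integral $\int_{-R}^{R}\left[\Lambda\vert_{\gamma_\alpha(r)}\left(\frac{d\gamma_\alpha}{dr}(r)\right) - e^{i\alpha}\mH\circ\gamma_\alpha(r)\right]dr$ is precisely $\mathcal{A}^{\Lambda}_{e^{i\alpha}\mH}[\gamma_\alpha]$ evaluated on the segment $[-R,R]$, because along the line $z = z_0 + re^{i\alpha}$ one has $\frac{d\gamma_\alpha}{dr}(r) = e^{i\alpha}\gamma'(z_0+re^{i\alpha})$ when $\gamma$ is holomorphic, and in general the pullback $\gamma_\alpha^\ast\Lambda$ plus the Hamiltonian term reproduce the one-dimensional action of the RHS with the $e^{i\alpha}$-tilt. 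Thus $\actdiskii[\gamma] = \frac{i}{4R}\int_0^\pi \mathcal{A}^{\Lambda}_{e^{i\alpha}\mH}[\gamma_\alpha]\, d\alpha$, and the overall constant $\frac{i}{4R}$ is irrelevant for the variational problem.

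Next I would compute the first variation. Exactly as in the proof of Lemma \autoref{lem:holo_action_prin} and Proposition \autoref{prop:actdiski}, one differentiates under the integral sign: $\gamma$ is a ``critical point'' of $\actdiskii$ (with variations fixed on $\partial D^{z_0}_R$) if and only if for every $\alpha \in [0,\pi]$ the curve $r\mapsto\gamma_\alpha(r)$ is a critical point of $\mathcal{A}^{\Lambda}_{e^{i\alpha}\mH}$ with \emph{both} endpoints $z_0\pm Re^{i\alpha}$ held fixed — and this is the key advantage of the diameter partition: the point $z_0$ now lies in the \emph{interior} of each segment, so no spurious boundary term at the center survives, and fixing $\gamma$ on $\partial D^{z_0}_R$ alone suffices to kill all boundary contributions. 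By Remark \autoref{rem:tilted_traj}, this means $\gamma_\alpha$ is an integral curve of $\cos(\alpha)X^R_\mH + \sin(\alpha)J(X^R_\mH)$ for every $\alpha\in[0,\pi]$, where $X_\mH = \frac{1}{2}(X^R_\mH - iJ(X^R_\mH))$.

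It then remains to argue that the condition ``$\gamma_\alpha$ is an integral curve of $\cos(\alpha)X^R_\mH + \sin(\alpha)J(X^R_\mH)$ along $[-R,R]$ for every $\alpha\in[0,\pi]$'' is equivalent to $\gamma$ being a holomorphic trajectory on $D^{z_0}_R$. One direction is immediate from Remark \autoref{rem:holo_traj_alpha}: a holomorphic trajectory restricted to any line through $z_0$ at angle $\alpha$ (traversed in either direction) is such an integral curve. For the converse, fix $x_0 \coloneqq \gamma(z_0)$; the integral-curve conditions for $\alpha$ and $\alpha+\pi$ together with the group law for flows force
\begin{gather*}
 \gamma(z_0 + re^{i\alpha}) = \varphi^{r\cos(\alpha)X^R_\mH + r\sin(\alpha)J(X^R_\mH)}_1(x_0)\quad \text{for all } r\in[-R,R],\ \alpha\in[0,\pi],
\end{gather*}
which is exactly the formula for $\gamma^{z_0,x_0}$ from the proof of Proposition \autoref{prop:holo_traj} (now valid on the full disk $D^{z_0}_R$ since $z_0 + re^{i\alpha}$ with $r\in[-R,R]$, $\alpha\in[0,\pi]$ sweeps out all of $D^{z_0}_R$); hence $\gamma$ is the holomorphic trajectory through $x_0$. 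The main obstacle — really the only subtlety — is making the endpoint-fixing bookkeeping precise: one must check that with the diameter partition the boundary term of $\mathcal{A}^{\Lambda}_{e^{i\alpha}\mH}$ at the interior point $z_0$ automatically cancels between the two half-segments $[-R,0]$ and $[0,R]$ (equivalently, between the angles $\alpha$ and $\alpha+\pi$ in the radial picture), so that no ``fix $\gamma$ at $z_0$'' hypothesis is needed, unlike in Proposition \autoref{prop:actdiski}. This is a routine but careful computation with the first-variation formula for the one-dimensional action, and I would carry it out by writing the variation as a bulk term (vanishing iff the integral-curve equation holds) plus boundary terms at $z_0 \pm Re^{i\alpha}$, then invoking the fixed boundary values on $\partial D^{z_0}_R$.
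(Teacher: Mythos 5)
Your proposal is correct and follows essentially the same route as the paper: the paper's own proof consists precisely of rewriting $\actdiskii[\gamma] = \frac{i}{4R}\int_0^\pi \mathcal{A}^{\Lambda}_{e^{i\alpha}\mH}[\gamma_\alpha]\, d\alpha$ and then repeating the argument of Proposition \autoref{prop:actdiski}, noting that only the boundary $\partial D^{z_0}_R$ must be fixed because the partitioning lines now start and end there. Your additional bookkeeping (no surviving boundary term at the interior point $z_0$, and the reconstruction of $\gamma$ from the flow formula of Proposition \autoref{prop:holo_traj}) just spells out details the paper leaves implicit.
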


\begin{proof}
 The proof works as the proof of Proposition \autoref{prop:actdiski} by writing $\actdiskii$ as
 \begin{gather*}
  \actdiskii [\gamma] = \frac{i}{4R}\int\limits^{\pi}_0\mathcal{A}^{\Lambda}_{e^{i\alpha}\mH} [\gamma_\alpha] d\alpha.
 \end{gather*}
 Here, the variations of $\gamma$ only need to keep $\gamma$ fixed at the boundary $\partial D^{z_0}_R$, since the radial lines start and end at $\partial D^{z_0}_R$.
\end{proof}

\begin{Rem}[No Proposition \autoref{prop:actdiski_value} for $\actdiskii$]\label{rem:actdiskii_value}
 Proposition \autoref{prop:actdiski_value} does not apply to $\actdiskii$. In fact, the normalization in Proposition \autoref{prop:actdiskii} is chosen such that the action of constant curves is given by the Hamilton function:
 \begin{gather*}
  \actdiskii [\gamma_{x_0}] = \mH (x_0),
 \end{gather*}
 where $\gamma_{x_0} (z)\coloneqq x_0\in X$ for every $z\in D^{z_0}_{R}$. Thus, any singular point $x_0$ of $\mH$ with $\mH (x_0)\neq 0$ provides a counterexample to Proposition \autoref{prop:actdiski_value} for $\actdiskii$.
\end{Rem}

If we modify $\actdiskii$ such that the holomorphic trajectories become actual critical points, we see that this action is a bit more reasonable. In the Lagrangian case, we now restrict the space of smooth curves $\gamma:D^{z_0}_R\to X$ to the space of those curves which only map the boundary $\partial D^{z_0}_R$ to Lagrangian submanifolds, as one would expect. However, the modification via periodicity still only gives trivial results. One can see this as follows: now, periodicity means periodicity of radial lines starting and ending at $\partial D^{z_0}_R$. In this sense, we say $\gamma:D^{z_0}_R\to X$ is ``periodic'' if it assigns the same value to opposite points on the boundary $\partial D^{z_0}_R$. For the sake of simplicity, let us now assume $z_0 = 0$. For such a ``periodic'' $\gamma$, define $\gamma_-$ by $\gamma_- (z)\coloneqq \gamma (-z)$. If $\gamma$ is holomorphic, then $\gamma_-$ is also holomorphic and, by assumption, attains the same values on $\partial D^{0}_R$ as $\gamma$. Hence, by the identity theorem, $\gamma$ and $\gamma_-$ denote the same map. However, if $\gamma$ is even a holomorphic trajectory, then $\gamma$ is an integral curve of the Hamiltonian vector field $X_\mH$ and we have:
\begin{gather*}
 X_\mH (\gamma (z)) = \frac{\pa}{\pa z}(\gamma (z)) = \frac{\pa}{\pa z} (\gamma (-z)) = - X_\mH (\gamma (-z)) = -X_\mH (\gamma (z)).
\end{gather*}
Thus, the Hamiltonian vector field vanishes in this case and $\gamma$ is a constant curve.\\
We cannot only formulate $\actdiski$ and $\actdiskii$ for disks $D^{z_0}_R$, but for any bounded star-shaped domain\footnote{Here, a domain $D\subset\mathbb{C}$ is a path-connected subset of $\mathbb{C}$ with non-empty interior $D^\circ$ dense in $D$.} $D\subset\mathbb{C}$ with smooth boundary\footnote{The boundary $b$ is parameterized by the polar angle $\alpha$ in the decomposition $z = z_0 + re^{i\alpha}\in D$.} $b:\mathbb{R}/2\pi\mathbb{Z}\to \partial D$:

\begin{Prop}[Action functionals $\acti$ and $\actii$ for bounded star-shaped domains] \label{prop:action_starshaped}
 Let $(X,\Omega = d\Lambda, \mH)$ be an exact HHS, let $D\subset\mathbb{C}$ be a bounded domain in $\mathbb{C}$ which is star-shaped with respect to $z_0$ and has smooth boundary $b:\mathbb{R}/2\pi\mathbb{Z}\to\partial D$, and let $\mathcal{P}_{D}\coloneqq C^\infty (D, X)$ be the set of smooth maps from $D$ to $X$. Then, we can define the action functionals $\acti:\mathcal{P}_{D}\to\mathbb{C}$ and $\actii:\mathcal{P}_D\to \mathbb{C}$ by
 \begin{align*}
  \acti [\gamma]&\coloneqq \frac{1}{2\pi}\int\limits^{2\pi}_0\int\limits^{R(\alpha)}_0\left[\Lambda\vert_{\gamma_\alpha (r)}\left(\frac{d\gamma_\alpha}{dr}(r)\right) - e^{i\alpha}\cdot \mH\circ\gamma_\alpha (r)\right]dr\, d\alpha,\\
  \actii [\gamma]&\coloneqq \frac{i}{4\hat{R}}\int\limits^{\pi}_0\int\limits^{R(\alpha)}_{-R(\alpha - \pi)}\left[\Lambda\vert_{\gamma_\alpha (r)}\left(\frac{d\gamma_\alpha}{dr}(r)\right) - e^{i\alpha}\cdot \mH\circ\gamma_\alpha (r)\right]dr\, d\alpha,
 \end{align*}
 where $\gamma\in\mathcal{P}_D$, $R:\mathbb{R}/2\pi\mathbb{Z}\to\mathbb{R}$ is defined by $R(\alpha)\coloneqq |b(\alpha)-z_0|$, $\gamma_\alpha:[-R(\alpha - \pi), R(\alpha)]\to X$ is given by $\gamma_\alpha (r)\coloneqq \gamma(z_0 + re^{i\alpha})$, and $\hat{R}$ is defined by
 \begin{gather*}
  \hat{R}\coloneqq \frac{i}{4}\left[\int\limits^{2\pi}_{\pi} R(\alpha)e^{i\alpha} d\alpha - \int\limits^{\pi}_{0} R(\alpha)e^{i\alpha} d\alpha\right].
 \end{gather*}
 Now let $\gamma\in\mathcal{P}_{D}$. Then, $\gamma$ is a holomorphic trajectory of the HHS $(X,\Omega,\mH)$ iff $\gamma$ is a ``critical point''\footnote{In the sense of Proposition \autoref{prop:actdiski}.} of $\acti$ iff $\gamma$ is a ``critical point''\footnote{In the sense of Proposition \autoref{prop:actdiskii}.} of $\actii$.
\end{Prop}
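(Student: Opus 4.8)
The plan is to reduce Proposition \ref{prop:action_starshaped} to the one-dimensional action principle of Remark \ref{rem:tilted_traj} by the same slicing argument used for $\actdiski$ and $\actdiskii$, and then identify the resulting system of conditions with the defining property of holomorphic trajectories from the proof of Proposition \ref{prop:holo_traj}. First I would write $\acti$ and $\actii$ as iterated integrals over the slicing parameter $\alpha$ of the ``tilted'' one-dimensional actions $\mathcal{A}^{\Lambda}_{e^{i\alpha}\mH}$, exactly as in the proofs of Proposition \ref{prop:actdiski} and Proposition \ref{prop:actdiskii}; the only new feature here is that the inner integration runs over the $\alpha$-dependent interval $[0,R(\alpha)]$ (resp. $[-R(\alpha-\pi),R(\alpha)]$), but since the endpoints of integration are held fixed during the variation (they are determined by the fixed domain $D$), the boundary terms produced when differentiating under the integral sign are controlled by the values of $\gamma$ on $\partial D$ (and, for $\acti$, on $z_0$) only. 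Concretely,
\begin{gather*}
 \acti[\gamma] = \frac{1}{2\pi}\int\limits^{2\pi}_0 \mathcal{A}^{\Lambda}_{e^{i\alpha}\mH}[\gamma_\alpha]\, d\alpha,\qquad
 \actii[\gamma] = \frac{i}{4\hat R}\int\limits^{\pi}_0 \mathcal{A}^{\Lambda}_{e^{i\alpha}\mH}[\gamma_\alpha]\, d\alpha,
\end{gather*}
where the normalizations $2\pi$ and $\hat R$ are exactly what makes these agree with the disk case when $R(\alpha)\equiv R$.

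Next I would invoke Remark \ref{rem:tilted_traj}: for each fixed $\alpha$, $\gamma_\alpha$ is a ``critical point'' of $\mathcal{A}^{\Lambda}_{e^{i\alpha}\mH}$ (with fixed endpoints) iff $\gamma_\alpha$ is an integral curve of $\cos(\alpha)X^R_\mH + \sin(\alpha)J(X^R_\mH)$. As in the proof of Lemma \ref{lem:holo_action_prin}, I would then argue that this pointwise-in-$\alpha$ characterization is preserved under the outer $\alpha$-integration: writing out the first variation of $\acti$ (resp. $\actii$) for an arbitrary variation $\delta\gamma$ vanishing on $\partial D$ (and on $z_0$ for $\acti$), the integrand at each $\alpha$ is precisely the first variation of $\mathcal{A}^{\Lambda}_{e^{i\alpha}\mH}$ at $\gamma_\alpha$, and since $\delta\gamma$ restricted to each radial ray is an arbitrary variation of $\gamma_\alpha$ with fixed endpoints, the total variation vanishes for all such $\delta\gamma$ iff each $\gamma_\alpha$ is an integral curve of $\cos(\alpha)X^R_\mH + \sin(\alpha)J(X^R_\mH)$. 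This yields: $\gamma$ is a ``critical point'' of $\acti$ (or $\actii$) iff for every $\alpha$ the ray $\gamma_\alpha$ solves the tilted integral curve equation.

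Finally I would match this condition with the construction of holomorphic trajectories in the proof of Proposition \ref{prop:holo_traj}. Writing $z = z_0 + re^{i\alpha}$, a smooth map $\gamma$ on $D$ whose every radial ray $\gamma_\alpha$ is an integral curve of $\cos(\alpha)X^R_\mH + \sin(\alpha)J(X^R_\mH)$ satisfies, near $z_0$, $\gamma(z_0 + re^{i\alpha}) = \varphi^{r\cos(\alpha)X^R_\mH + r\sin(\alpha)J(X^R_\mH)}_1(\gamma(z_0))$, which by the commutativity of $X^R_\mH$ and $J(X^R_\mH)$ (Corollary \ref{cor:commute}) is exactly the formula defining $\gamma^{z_0,\gamma(z_0)}$; conversely any holomorphic trajectory restricted to each ray is such an integral curve by Remark \ref{rem:holo_traj_alpha}. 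Since $D$ is star-shaped with respect to $z_0$, every point of $D$ lies on some ray, and the identity theorem (applied on holomorphic charts, as in Proposition \ref{prop:holo_traj}) propagates the agreement from a neighbourhood of $z_0$ to all of $D$; hence ``critical point'' of $\acti$ (or $\actii$) $\iff$ holomorphic trajectory on $D$. For the boundary-term bookkeeping I would note, as in Proposition \ref{prop:actdiski} and Proposition \ref{prop:actdiskii}, that the endpoints of each ray are $z_0$ and $b(\alpha)$ for $\acti$ (so both $z_0$ and $\partial D$ must be held fixed) and $b(\alpha-\pi)$, $b(\alpha)$ for $\actii$ (so only $\partial D$ need be held fixed, since $z_0$ is an interior point of each ray).

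\textbf{Main obstacle.} The genuinely new point compared to the disk case is the $\alpha$-dependence of the integration limits together with the global star-shapedness hypothesis: one must check that differentiating $\acti,\actii$ under the integral sign over a variable domain produces no extra terms beyond those from the fixed boundary $\partial D$ (the limits $R(\alpha)$ are not varied, so this is fine, but it should be stated carefully), and one must use star-shapedness precisely to guarantee that the rays cover $D$ and that the $\alpha$-fibered first variation ``sees'' every admissible variation $\delta\gamma$. I expect the rest to be a routine transcription of the arguments already given for Proposition \ref{prop:actdiski} and Proposition \ref{prop:actdiskii}.
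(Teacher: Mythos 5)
Your proposal is correct and follows exactly the route the paper intends: the paper's own proof of this proposition is simply a reference back to the proofs of Propositions \autoref{prop:actdiski} and \autoref{prop:actdiskii}, and your write-up is a faithful elaboration of that reduction (decomposition into the tilted one-dimensional actions $\mathcal{A}^{\Lambda}_{e^{i\alpha}\mH}$, Remark \autoref{rem:tilted_traj}, the flow formula from Proposition \autoref{prop:holo_traj}, and the identity-theorem propagation). Your check that $\hat{R}=R$ for the disk, and your remarks on the variable integration limits and the role of star-shapedness, are accurate and in fact more explicit than what the paper records.
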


\begin{proof}
 Confer the proofs of Proposition \autoref{prop:actdiski} and \autoref{prop:actdiskii}.
\end{proof}

\begin{Rem}[Normalization of $\acti$ and $\actii$]\label{rem:normalization}
 The normalization of $\acti$ and $\actii$ are chosen such that they coincide with our previous definitions for $D = D^{z_0}_R$ being a disk. In particular, $\actii$ agrees with the Hamilton function $\mH$ for constant curves $\gamma$.
\end{Rem}

One might wonder how the action functionals $\acti$ and $\actii$ are related to the action functional $\mathcal{A}^{P_\alpha}_\mH$ for parallelograms $P_\alpha$ from \autoref{subsec:holo_action_fun_and_prin} and \autoref{subsec:def_PHHS}, especially because a parallelogram $P_\alpha$ is also a bounded star-shaped domain in $\mathbb{C}$. When we modify the functionals $\acti$ and $\actii$ to describe general PHHSs, we will see that $\acti$ and $\actii$ differ a lot from $\mathcal{A}^{P_\alpha}_\mH$. To compare $\mathcal{A}^{P_\alpha}_\mH$ directly with $\acti$ and $\actii$, let us express $\acti$ and $\actii$ in the same coordinates as $\mathcal{A}^{P_\alpha}_\mH$, namely Cartesian coordinates $z = t + is$:

\begin{Prop}[$\acti$ and $\actii$ in Cartesian coordinates]\label{prop:cartesian_coordinates}
 Employ the assumptions and notations from Proposition \autoref{prop:action_starshaped}. For $\gamma\in\mathcal{P}_D$, we define the following derivatives in Cartesian coordinates $z = t + is \in D$:
 \begin{gather*}
  \frac{\partial\gamma}{\partial z} (z)\coloneqq \frac{1}{2}\left(\frac{\partial\gamma}{\partial t} (z) - i\frac{\partial\gamma}{\partial s} (z)\right);\quad \frac{\partial\gamma}{\partial \bar{z}} (z)\coloneqq \frac{1}{2}\left(\frac{\partial\gamma}{\partial t} (z) + i\frac{\partial\gamma}{\partial s} (z)\right).
 \end{gather*}
 Furthermore, define the complex functions $f,g:D\to\mathbb{C}$ by:
 \begin{gather*}
  f(z)\coloneqq \Lambda\vert_{\gamma (z)}\left(\frac{\partial\gamma}{\partial z}(z)\right) - \mH\circ\gamma (z);\quad g(z)\coloneqq \Lambda\vert_{\gamma (z)}\left(\frac{\partial\gamma}{\partial \bar{z}}(z)\right).
 \end{gather*}
 Then, the action functionals $\acti$ and $\actii$ in Cartesian coordinates are given by:
 \begin{align*}
  \acti [\gamma] &= \frac{1}{2\pi}\iint\limits_{D}\left[\frac{f(z)}{\bar{z} - \bar{z_0}} + \frac{g(z)}{z - z_0}\right]dt\wedge ds,\\
  \actii [\gamma] &= \frac{i}{4\hat{R}}\iint\limits_{D^+}\left[\frac{f(z)}{\bar{z} - \bar{z_0}} + \frac{g(z)}{z - z_0}\right]dt\wedge ds - \frac{i}{4\hat{R}}\iint\limits_{D^-}\left[\frac{f(z)}{\bar{z} - \bar{z_0}} + \frac{g(z)}{z - z_0}\right]dt\wedge ds,
 \end{align*}
 where $z = t+is\in D$, $\bar{\cdot}$ denotes the complex conjugation, $D^+\coloneqq \{z\in D\mid \text{\normalfont Im}(z-z_0)\geq 0\}$, and\linebreak $D^-\coloneqq \{z\in D\mid \text{\normalfont Im}(z-z_0)\leq 0\}$.
\end{Prop}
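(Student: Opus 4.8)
The plan is to reduce both identities to a single change of variables in the radial integrals defining $\acti$ and $\actii$, converting from polar coordinates $(r,\alpha)$ centred at $z_0$ to Cartesian coordinates $z=t+is$. First I would rewrite the integrand: writing $z=z_0+re^{i\alpha}$, the radial vector field is $\pa/\pa r=\cos\alpha\,\pa/\pa t+\sin\alpha\,\pa/\pa s$, and a one-line computation gives
\[
 \frac{\pa}{\pa r}=e^{i\alpha}\frac{\pa}{\pa z}+e^{-i\alpha}\frac{\pa}{\pa \bar z},
\]
with $\pa/\pa z,\pa/\pa\bar z$ as in the statement. Hence $\tfrac{d\gamma_\alpha}{dr}(r)=e^{i\alpha}\,\tfrac{\pa\gamma}{\pa z}(z)+e^{-i\alpha}\,\tfrac{\pa\gamma}{\pa\bar z}(z)$, and using the $\Cx$-linearity of the complex $1$-form $\Lambda$ the bracketed integrand of both functionals becomes
\[
 \Lambda\big|_{\gamma_\alpha(r)}\!\Big(\tfrac{d\gamma_\alpha}{dr}(r)\Big)-e^{i\alpha}\,\mH\circ\gamma_\alpha(r)=e^{i\alpha}f(z)+e^{-i\alpha}g(z),
\]
with $f,g$ the functions defined in the proposition.

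Next, for $\acti$ I would apply the polar change of variables $dt\wedge ds=r\,dr\wedge d\alpha$ on the bounded star-shaped domain $D$ — legitimate because star-shapedness makes $(r,\alpha)\mapsto z_0+re^{i\alpha}$ a diffeomorphism of $\{0<r<R(\alpha)\}$ onto $D^\circ\setminus\{z_0\}$, with $R$ smooth — together with $r=|z-z_0|$, $e^{i\alpha}=(z-z_0)/|z-z_0|$, $e^{-i\alpha}=(\bar z-\bar z_0)/|z-z_0|$, and $r^2=(z-z_0)(\bar z-\bar z_0)$. This turns $\int_0^{2\pi}\!\int_0^{R(\alpha)}\big(e^{i\alpha}f+e^{-i\alpha}g\big)\,dr\,d\alpha$ into $\iint_D\big(\tfrac{f(z)}{\bar z-\bar z_0}+\tfrac{g(z)}{z-z_0}\big)\,dt\wedge ds$, which is the claimed formula after the $\tfrac1{2\pi}$ prefactor. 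Absolute convergence is automatic: $f,g$ are smooth, hence bounded near $z_0$, and $|z-z_0|^{-1}$ is locally integrable in two real dimensions.

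For $\actii$ the only additional step is to split the inner integral $\int_{-R(\alpha-\pi)}^{R(\alpha)}$ into the part over $[0,R(\alpha)]$ and the part over $[-R(\alpha-\pi),0]$. The first part, with $\alpha\in[0,\pi]$, sweeps out $D^+$ and gives $\iint_{D^+}\big(\tfrac{f}{\bar z-\bar z_0}+\tfrac{g}{z-z_0}\big)\,dt\wedge ds$ exactly as above. On the second part I would substitute $r=-\rho$, $\beta=\alpha+\pi$; then $z_0+re^{i\alpha}=z_0+\rho e^{i\beta}$, the factors pick up $e^{i\alpha}=-e^{i\beta}$ and $e^{-i\alpha}=-e^{-i\beta}$, the limits become $\rho\in[0,R(\beta)]$ (using $R(\alpha-\pi)=R(\beta)$ modulo $2\pi$), and $\beta$ ranges over $[\pi,2\pi]$, whose sector is $D^-$; the overall sign flip then yields $-\iint_{D^-}\big(\tfrac{f}{\bar z-\bar z_0}+\tfrac{g}{z-z_0}\big)\,dt\wedge ds$. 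Together with the $\tfrac{i}{4\hat R}$ prefactor this is the stated expression, and one checks in passing that $\hat R$ is a nonzero constant, equal to $R$ for a disk, recovering the earlier normalization.

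I do not expect a serious obstacle: once the radial derivative is expressed through $\pa_z$ and $\pa_{\bar z}$ the computation is routine. The two points needing a little care are justifying the polar change of variables on a general bounded star-shaped domain with smooth boundary (so that the parametrization by $(r,\alpha)$ is almost everywhere a diffeomorphism and $R$ is smooth), and carefully bookkeeping the sign and the index shift $\alpha\mapsto\alpha+\pi$ in the $\actii$ case so that the two halves of $D$ are correctly identified with $D^+$ and $D^-$.
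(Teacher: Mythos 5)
Your proposal is correct and follows essentially the same route as the paper's proof: express $\frac{d\gamma_\alpha}{dr}$ through $\frac{\partial\gamma}{\partial z}$ and $\frac{\partial\gamma}{\partial\bar z}$, recognize the integrand as $e^{i\alpha}f(z)+e^{-i\alpha}g(z)=r\bigl[\tfrac{f(z)}{\bar z-\bar z_0}+\tfrac{g(z)}{z-z_0}\bigr]$, and convert via $r\,dr\wedge d\alpha=dt\wedge ds$. The only difference is that the paper carries out the computation explicitly just for a disk centered at the origin and asserts the general case is similar, whereas you treat the general star-shaped domain and the $\alpha\mapsto\alpha+\pi$ sign bookkeeping for $\actii$ directly — a welcome bit of extra care, not a different method.
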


\begin{proof}
 Take the assumptions and notations from above. We only show Proposition \autoref{prop:cartesian_coordinates} for $D = D^0_R\equiv D_R$ being a disk of radius $R>0$ centered at the origin. The general case can be shown similarly. Using the derivatives defined above, we can write:
 \begin{gather*}
  \frac{\partial\gamma}{\partial t} (z) = \frac{\partial\gamma}{\partial z} (z) + \frac{\partial\gamma}{\partial \bar{z}} (z);\quad \frac{\partial\gamma}{\partial s} (z) = i\left(\frac{\partial\gamma}{\partial z} (z) - \frac{\partial\gamma}{\partial \bar{z}} (z)\right).
 \end{gather*}
 Now consider the map $\gamma_\alpha:[-R,R]\to X$ defined in polar coordinates $z = re^{i\alpha}$ by\linebreak $\gamma_\alpha (r) = \gamma (re^{i\alpha})$ for every $\alpha\in [0,2\pi]$. $\Lambda$ applied to the derivative of $\gamma_\alpha$ gives:
 \begin{alignat*}{2}
  \Lambda\vert_{\gamma_\alpha (r)}\left(\frac{d\gamma_\alpha}{dr} (r)\right) &= \cos (\alpha)\cdot \Lambda\vert_{\gamma (re^{i\alpha})}\left(\frac{\partial\gamma}{\partial t} (re^{i\alpha})\right) &&+ \sin (\alpha)\cdot \Lambda\vert_{\gamma (re^{i\alpha})}\left(\frac{\partial\gamma}{\partial s} (re^{i\alpha})\right)\\
  &= e^{i\alpha}\cdot \Lambda\vert_{\gamma (re^{i\alpha})}\left(\frac{\partial\gamma}{\partial z} (re^{i\alpha})\right) &&+ e^{-i\alpha}\cdot \Lambda\vert_{\gamma (re^{i\alpha})}\left(\frac{\partial\gamma}{\partial \bar{z}} (re^{i\alpha})\right).
 \end{alignat*}
 Recalling the definition of $f$ and $g$, this allows us to write:
 \begin{align*}
  \Lambda\vert_{\gamma_\alpha (r)}\left(\frac{d\gamma_\alpha}{dr}(r)\right) - e^{i\alpha}\cdot \mH\circ\gamma_\alpha (r) &= e^{i\alpha}\cdot f(re^{i\alpha}) + e^{-i\alpha}\cdot g(re^{i\alpha})\\
  &= r\cdot\left[\frac{f(re^{i\alpha})}{re^{-i\alpha}} + \frac{g(re^{i\alpha})}{re^{i\alpha}}\right] = r\cdot\left[\frac{f(z)}{\bar{z}} + \frac{g(z)}{z}\right].
 \end{align*}
 The expression for $\acti$ is now obtained by inserting the last equation into the defining formula for $\acti$ and using $r\cdot dr\wedge d\alpha = dt\wedge ds$ for $re^{i\alpha} = z = t + is$. Observing that the integrands of $\acti$ and $\actii$ agree on $D^+_R$ and differ by a sign on $D^-_R$ concludes the proof.
\end{proof}

%\footnote{We have renamed the angle $\alpha$ of the parallelogram $P_\alpha$ to $\beta$ here to avoid confusion with the polar coordinates $z = re^{i\alpha}$.}

The form of $\acti$ and $\actii$ in Cartesian coordinates is rather remarkable. In fact, we can express the action functional $\mathcal{A}^{P_\alpha}_\mH$ from \autoref{subsec:holo_action_fun_and_prin} in the same form as $\acti$ for $D = P_\alpha$, just with different complex functions $f$ and $g$, namely by setting:
\begin{gather*}
 f(z) = 2\pi\overline{(z - z_0)}\cdot\left[\Lambda_R\vert_{\gamma (z)}\left(2\frac{\partial\gamma}{\partial z}(z)\right) - \mH\circ\gamma (z)\right];\quad g(z) = 0.
\end{gather*}
The similarities between $\acti$ and $\mathcal{A}^{P_\alpha}_\mH$ become even more apparent if we evaluate $\mathcal{A}^{P_\alpha}_\mH$ at holomorphic curves $\gamma:P_\alpha\to X$. As in Remark \autoref{rem:several_remarks}, Point 3, we find in this case:
\begin{gather*}
 \mathcal{A}^{P_\alpha}_\mH[\gamma] =  \iint\limits_{P_\alpha}\left[\Lambda\left(\frac{\partial\gamma}{\partial z}(z)\right) - \mH\circ\gamma (z)\right] dt\wedge ds.
\end{gather*}
Because the function $g$ as defined in Proposition \autoref{prop:cartesian_coordinates} vanishes for holomorphic $\gamma$, the only difference between $\acti$ and $\mathcal{A}^{P_\alpha}_\mH$ is now the factor $2\pi\overline{(z - z_0)}$ in the integrand. However, this seemingly small difference is rather impactful, as we will shortly see.\\
Lastly, we want to modify the actions $\acti$ and $\actii$ in such a way that they also apply to PHHSs. Recall that for an exact PHHS $(X,J;\Omega_R = d\Lambda_R,\mH_R)$ the induced $2$-form $\Omega_I$ is, in general, not closed. Hence, only the parts of $\acti$ and $\actii$ that do not include $\Lambda_I$ are well-defined for PHHSs. Precisely speaking, these are the real part of $\acti$ and the real part of $-4i\hat R\cdot \actii$. Still, these \underline{real-valued} functionals satisfy an action principle with respect to the pseudo-holomorphic trajectories of a PHHS. In fact, this can be shown in the same way as Proposition \autoref{prop:actdiski} by simply observing that Remark \autoref{rem:tilted_traj}, which is crucial for the proof of Proposition \autoref{prop:actdiski}, is also valid for the real part of the functional $\mathcal{A}^\Lambda_{e^{i\alpha}\mH}$.\\
The generalization of $\acti$ and $\actii$ to PHHSs has now revealed the most striking difference between $\acti$ and $\mathcal{A}^{P_\alpha}_\mH$: while the action principle related to $\acti$ still applies if we only consider the real part of $\acti$, both the real \underline{and} imaginary part of $\mathcal{A}^{P_\alpha}_\mH$ are crucial for the validity of the action principle related to $\mathcal{A}^{P_\alpha}_\mH$. In particular, there might exist a Floer-like theory related to the real part of $\acti$ or $-4i\hat R\cdot \actii$. For $\mathcal{A}^{P_\alpha}_\mH$, we have no intuition on how such a theory should look like, since we do not know how to interpret a complex function as a Morse function in the sense of Morse homology. Since we can turn $\acti$ and $\actii$ into real-valued action functionals, the same objections do not apply to them. Nevertheless, the question remains whether the real part of $\acti$ or $-4i\hat R\cdot \actii$ are indeed suitable Morse functions and whether the resulting Floer theories, if they exist, give any non-trivial result. At least for the (conjectured) Hamiltonian\footnote{In Hamiltonian Floer theory, one only considers periodic orbits.} Floer theory related to $\acti$ and $\actii$, it is most likely that it only gives trivial results, since all trajectories, which are ``periodic'' in a sense suitable for $\acti$ and $\actii$ as explained above, are automatically constant.

 \newpage
 \section{Almost Complex Structures on $\mathbf{TM}$ and $\mathbf{T^\ast M}$}
\label{app:almost_complex_structures}

In this part, we explain how a connection $\nabla$ on a manifold $M$ induces an almost complex structure $J_\nabla$ on its tangent bundle $TM$ (cf. \cite{Cieliebak1994}). If $\nabla = \nabla^g$ is the Levi-Civita connection of a semi-Riemannian metric $g$ on $M$, then $\nabla^g$ also defines an almost complex structure $J^\ast_{\nabla^g}$ on the cotangent bundle $T^\ast M$ via the bundle isomorphism $G:TM\to T^\ast M, v\mapsto \iota_v g$. In this case, $J^\ast_{\nabla^g}$ is compatible with the canonical symplectic form $\omega_\can$ on $T^\ast M$ in the sense that $\omega_\can (\cdot, J^\ast_{\nabla^g}\cdot)$ is a semi-Riemannian metric on $T^\ast M$ of signature $(2s, 2t)$, where $(s,t)$ is the signature of $g$. Furthermore, we will see that $J_{\nabla^g}$ or, equivalently, $J^\ast_{\nabla^g}$ is integrable if and only if $\nabla^g$ or, equivalently, $g$ is flat.\\
Let $M$ a smooth manifold of dimension $n$, $\pi_E:E\to M$ be a (smooth) vector bundle, and $\nabla$ be a linear connection\footnote{Sometimes, the term ``affine connection'' is used instead of ``linear connection''.} on the vector bundle $E\to M$. The (fiberwise) kernel of the differential $d\pi_E: TE\to TM$ yields the vertical subbundle $VE$ of $TE$, while $\nabla$ defines the horizontal subbundle $HE$ of $TE$. In fact, the notion of a horizontal subbundle $HE$ is equivalent to the notion of a linear connection $\nabla$ on $E\to M$. To see this, let $K:TE = VE\oplus HE\to E$ be the vertical projection\footnote{
 Technically speaking, the map $\hat K:TE = VE\oplus HE\to VE$ is the vertical projection. To obtain $K$ from $\hat K$, we have already exploited the fact that $E\to M$ is a vector bundle allowing us to identify the fibers of $VE$ with the fibers of $E$ via the linear isomorphism
 \begin{gather*}
  E_p\to V_wE, v\mapsto \left.\frac{d}{dt}\right\vert_{t = 0} (w + vt)
 \end{gather*}
 for $p\in M$ and $w\in E_p = \pi_E^{-1}(p)$.
} (cf. \cite{Eliasson1967} for the construction of $K$). The data $HE$ and $K$ are equivalent, since, given the horizontal subbundle $HE$, we can always define the vertical projection $K$ and, given the map $K$, we can always define the horizontal bundle $HE$ to be the (fiberwise) kernel of $K$. Likewise, the data $\nabla$ and $K$ are equivalent. Their relation is encoded in the following formula:
\begin{gather*}
 \nabla_X Y = K\circ dY (X)\quad\forall X\in TM,
\end{gather*}
where the section $Y\in\Gamma (E)$ is viewed as a smooth map $Y:M\to E$.\\
Now observe that the map $f^\nabla \coloneqq (d\pi_E, K):TE\to TM\oplus E$ is a bundle map over\linebreak $\pi_E:E\to M$, i.e., the diagram
\begin{center}
 \begin{tikzcd}
  TE \arrow[r, "f^\nabla"] \arrow[d]
  & TM\oplus E \arrow[d] \\
  E \arrow[r, "\pi_E"]
  & M
 \end{tikzcd}
\end{center}
commutes, where the vertical arrows are the base point projections of the vector bundles $TE$ and $TM\oplus E$. Fiberwise, the bundle map $f^\nabla$ is a linear isomorphism. Thus, $TE$ is isomorphic to the pullback bundle $\pi_E^\ast (TM\oplus E)$.\\
Now take $E\to M$ to be the tangent bundle $TM\to M$ of $M$ with base point projection $\pi\equiv \pi_{TM}:TM\to M$. Then, $f^\nabla$ allows us to define the almost complex structure $J_\nabla$ on the manifold $TM$ by ``pulling back'' the almost complex structure\linebreak $J_{TM\oplus TM}:TM\oplus TM \to TM\oplus TM, (w_1, w_2)\mapsto (w_2, -w_1)$ of the vector bundle\linebreak $TM\oplus TM\to M$ to the vector bundle $T(TM)\to TM$. Explicitly speaking, the almost complex structure $J_\nabla$ is completely determined by the following equations:
\begin{gather*}
 d\pi\circ J_\nabla = K;\quad K\circ J_\nabla = -d\pi.
\end{gather*}
Next, we wish to express $J_\nabla$ in local coordinates. For this, we need to parametrize the vertical and horizontal subspaces first. Choose a point $p\in M$ and normal coordinates $\psi = (x_1,\ldots, x_n):U\to V\subset\R^n$ of $(M,\nabla)$ near $p$, i.e., a chart $\psi$ in which all lines through the origin $\psi (p) = 0$ are geodesics. We denote by $T\psi = (\hat x_1,\ldots, \hat x_n, v_1,\ldots, v_n)$ the coordinates of $TM$ near any point $w\in TM$ with $\pi (w) = p$, which are defined by:
\begin{gather*}
 (T\psi)^{-1} (\hat x_1,\ldots, \hat x_n, v_1,\ldots, v_n)\coloneqq \sum^n_{k = 1}v_k\partial_{x_k}\vert_{\psi^{-1}(\hat x_1,\ldots, \hat x_n)}.
\end{gather*}
We then find:
\begin{gather*}
 d\pi (\partial_{v_k}) = \left.\frac{d}{dt}\right\vert_{t = 0} \pi\left(\sum^n_{l\neq k}v_l\partial_{x_l} + (v_k + t)\partial_{x_k}\right) = \left.\frac{d}{dt}\right\vert_{t = 0} \pi\left(\sum^n_{l = 1}v_l\partial_{x_l}\right) = 0.
\end{gather*}
Thus, the vector fields $\partial_{v_1},\ldots, \partial_{v_n}$ span the vertical subspaces.\\
To parametrize the horizontal subspaces, we consider the local vector field $X_c\coloneqq \sum_k c_k\partial_{x_k}$ with constants $c_k\in\R$. We find:
\begin{gather*}
 \nabla_{\partial_{x_k}} X_c (p) = K\circ dX_c\vert_p (\partial_{x_k}\vert_p) = K\left(\left.\frac{d}{dt}\right\vert_{t = 0}\left(X_c\circ \psi^{-1}(\psi (p) + t\hat e_k)\right)\right) = K(\partial_{\hat x_k}\vert_{X_c (p)})
\end{gather*}
Hence, the vectors $\partial_{\hat x_1}\vert_w,\ldots, \partial_{\hat x_n}\vert_w$ span the horizontal subspaces $H(TM) = \ker (K)$ for any $w = X_c (p)\in T_pM$ if and only if the equation
\begin{gather*}
 \nabla_{\partial_{x_i}}\partial_{x_j} (p) = 0
\end{gather*}
holds for all $i,j\in\{1,\ldots, n\}$. The last equation is satisfied for all normal coordinates $(x_1,\ldots, x_n)$ near any point $p\in M$ if and only if $\nabla$ is symmetric, i.e., satisfies:
\begin{gather*}
 \nabla_X Y - \nabla_Y X = [X,Y]\quad \forall X,Y\in\Gamma (TM).
\end{gather*}
Thus, we shall henceforth assume that the connection $\nabla$ is symmetric.\\
In total, we have found that the vertical subspaces at $w\in T_pM$ are spanned by the vectors $\partial_{v_1}\vert_w,\ldots, \partial_{v_n}\vert_w$, while the horizontal subspaces at $w\in T_pM$ are spanned by the vectors $\partial_{\hat x_1}\vert_w,\ldots, \partial_{\hat x_n}\vert_w$ for normal coordinates $\psi = (x_1,\ldots, x_n)$ of $(M,\nabla)$ near $p\in M$ with $T\psi = (\hat x_1,\ldots, \hat x_n, v_1,\ldots, v_n)$. If we express an arbitrary vector $u\in T_w (TM)$ and its image $J_\nabla(u)\in T_w (TM)$ as
\begin{gather*}
 u = \sum^n_{k = 1}a_k\partial_{v_k}\vert_w + b_k\partial_{\hat x_k}\vert_w\quad\text{and}\quad J_\nabla(u) = \sum^n_{k = 1}c_k\partial_{v_k}\vert_w + d_k\partial_{\hat x_k}\vert_w,
\end{gather*}
we can compute the coefficients $c_k$ and $d_k$ in terms of $a_k$ and $b_k$:
\begin{alignat*}{2}
 -\sum^n_{k = 1}b_k\partial_{x_k}\vert_p &= -d\pi (u) = K\circ J_\nabla (u) = \sum^n_{k = 1}c_k\partial_{x_k}\vert_p\quad &\Rightarrow c_k = -b_k\\
 \sum^n_{k = 1} a_k\partial_{x_k}\vert_p &= K(u) = d\pi\circ J_\nabla (u) = \sum^n_{k =1}d_k\partial_{x_k}\vert_p\quad &\Rightarrow d_k = a_k,
\end{alignat*}
where we used $V(TM) = \ker (d\pi)$, $H(TM) = \ker (K)$, and $d\pi (\partial_{\hat x_k}\vert_w) = \partial_{x_k}\vert_p = K (\partial_{v_k}\vert_w)$. This gives us:
\begin{gather*}
 J_\nabla (\partial_{v_k}\vert_w) = \partial_{\hat x_k}\vert_w;\quad J_\nabla (\partial_{\hat x_k}\vert_w) = -\partial_{v_k}\vert_w.
\end{gather*}
We now see that the almost complex structure $J_\nabla$ assumes the standard form in normal coordinates near $p\in M$ for points $w\in T_pM$. This does not mean, however, that $J_\nabla$ is integrable, since the last equation is not necessarily true for all points $w$ within the chart domain $TU$. This is the case if $\nabla_{\partial_{x_i}}\partial_{x_j} \equiv 0$, i.e., if $\nabla$ is flat\footnote{If, additionally, $\nabla = \nabla^g$ is the Levi-Civita connection of some semi-Riemannian metric $g$, this becomes an ``if and only if''-statement, i.e., $J_{\nabla^g}$ is integrable if and only if $\nabla^g$ is flat if and only if $g$ is flat (this statement was proven for Riemannian metrics by Michel Grueneberg \cite{Grueneberg2001}, but the same proof also works for semi-Riemannian metrics).}.\\
Next, we want to transfer the almost complex structure $J_\nabla$ from $TM$ to $T^\ast M$. In general, we can pick any bundle isomorphism $TM\to T^\ast M$, which is then also a diffeomorphism between the manifolds $TM$ and $T^\ast M$, and translate $J_\nabla$ using this diffeomorphism. However, there is no canonical choice of bundle isomorphism for generic manifolds $M$ with connection $\nabla$. The situation is different if $M$ is equipped with a semi-Riemannian metric $g$ and $\nabla$ is the Levi-Civita connection $\nabla^g$. In this case, we can choose $G:TM\to T^\ast M$, $v\mapsto \iota_v g$ as our bundle isomorphism and define the almost complex structure $J^\ast_{\nabla^g}$ on $T^\ast M$ via
\begin{gather*}
 J^\ast_{\nabla^g}\coloneqq dG\circ J_{\nabla^g}\circ dG^{-1}.
\end{gather*}
Again, we aim to express $J^\ast_{\nabla^g}$ in normal coordinates $\psi = (x_1,\ldots, x_n)$ of $(M, g)$ near $p\in M$. Note that the chosen normal coordinates satisfy
\begin{gather*}
 g(p) = \sum^s_{k = 1} dx_k\vert^2_p - \sum^{n = s+t}_{k = s+1} dx_k\vert^2_p\quad\text{and}\quad \partial_{x_k} g_{lm} (p) = 0,
\end{gather*}
where $(s,t)$ is the signature of $g$. As before, we introduce the notation\linebreak $T\psi = (\hat x_1,\ldots, \hat x_n, v_1,\ldots, v_n)$ for the induced coordinates on $TM$. Similarly, we employ the notation $T^\ast \psi = (q_1,\ldots, q_n, p_1,\ldots, p_n)$ for the induced coordinates on $T^\ast M$:
\begin{gather*}
 (T^\ast\psi)^{-1} (q_1,\ldots, q_n, p_1,\ldots, p_n) \coloneqq \sum^n_{k = 1} p_k dx_k\vert_{\psi^{-1}(q_1,\ldots, q_n)}.
\end{gather*}
In these coordinates, $G$ is given by:
\begin{align*}
 T^\ast\psi\circ G\circ (T\psi)^{-1} (0,\ldots, 0, v_1,\ldots, v_n) = (0,\ldots, 0, v_1,\ldots, v_s, -v_{s+1},\ldots, -v_n).
\end{align*}
Together with $\partial_{x_k} g_{lm} (p) = 0$, this implies:
\begin{gather*}
 dG\vert_w (\partial_{\hat x_k}\vert_w) = \partial_{q_k}\vert_{G(w)},\quad dG\vert_w (\partial_{v_k}\vert_w) = \begin{cases}\partial_{p_k}\vert_{G(w)}\text{ for }1\leq k\leq s\\ -\partial_{p_k}\vert_{G(w)}\text{ for } k>s\end{cases}\quad \forall w\in T_pM.
\end{gather*}
This allows us to compute $J^\ast_{\nabla^g}$ in coordinates for points $\alpha\in T^\ast_p M$:
\begin{alignat*}{3}
 &J^\ast_{\nabla^g} (\partial_{q_k}\vert_\alpha) = -\partial_{p_k}\vert_\alpha;\quad &&J^\ast_{\nabla^g} (\partial_{p_k}\vert_\alpha) = \phantom{-} \partial_{q_k}\vert_\alpha\quad &&(1\leq k\leq s),\\
 &J^\ast_{\nabla^g} (\partial_{q_k}\vert_\alpha) = \phantom{-} \partial_{p_k}\vert_\alpha;\quad &&J^\ast_{\nabla^g} (\partial_{p_k}\vert_\alpha) = -\partial_{q_k}\vert_\alpha\quad &&(k>s).
\end{alignat*}
Again, this does not imply that $J^\ast_{\nabla^g}$ is integrable, as the equations above are only true for points $\alpha\in T^\ast_p M$ and not necessarily the entire chart domain $T^\ast U$. However, $J^\ast_{\nabla^g}$ is integrable if and only if $\nabla^g$ is flat, i.e., if and only if $g$ is flat (cf. \cite{Grueneberg2001}).\\
The reason why we are interested in the almost complex structure $J^\ast_{\nabla^g}$ is the curious fact that $J^\ast_{\nabla^g}$ is naturally compatible with the canonical symplectic form $\omega_\can$ on $T^\ast M$, as one easily checks: In the coordinates $(q_1,\ldots, q_n, p_1,\ldots, p_n)$ of $T^\ast M$ from above, $\omega_\can$ is given by:
\begin{gather*}
 \omega_\can\vert_{T^\ast U} = \sum^n_{k = 1} dp_k\wedge dq_k.
\end{gather*}
Thus, $\omega_\can (\cdot, J^\ast_{\nabla^g}\cdot)$ is a semi-Riemannian metric on $T^\ast M$ of signature $(2s, 2t)$:
\begin{gather*}
 \omega_\can\vert_\alpha (\cdot, J^\ast_{\nabla^g}\cdot) = \sum^s_{k = 1} dq_k\vert^2_\alpha + dp_k\vert^2_\alpha - \sum^{n}_{k = s+1} dq_k\vert^2_\alpha + dp_k\vert^2_\alpha\quad\forall \alpha\in T_pM.
\end{gather*}
As we can find normal coordinates near any $p\in M$, we have proven the following theorem:
\begin{Thm}[Almost complex structures on $TM$ and $T^\ast M$]
 Let $M$ be a smooth,\linebreak $n$-dimensional manifold together with a connection $\nabla$ on it. Then, there exists a unique almost complex structure $J_\nabla$ on the tangent bundle $TM$ such that
 \begin{gather*}
  d\pi\circ J_\nabla = K;\quad K\circ J_\nabla = -d\pi,
 \end{gather*}
 where $\pi:TM\to M$ is the base point projection of $TM$ and $K:T(TM)\to TM$ is the vertical projection corresponding to $\nabla$. If $\nabla$ is symmetric, then $J_\nabla$ can be expressed as
 \begin{gather*}
  J_\nabla (\partial_{v_k}\vert_w) = \partial_{\hat x_k}\vert_w;\quad J_\nabla (\partial_{\hat x_k}\vert_w) = -\partial_{v_k}\vert_w,
 \end{gather*}
 where $w\in TM$ is a point, $\psi = (x_1,\ldots, x_n)$ are normal coordinates of $(M,\nabla)$ near $p = \pi (w)$, and $T\psi = (\hat x_1,\ldots, \hat x_n, v_1,\ldots, v_n)$ are the induced coordinates on $TM$. If, additionally, $\nabla$ is flat, then $J_{\nabla}$ is integrable.\\
 If $\nabla = \nabla^g$ is the Levi-Civita connection of a semi-Riemannian metric $g$ on $M$ of signature $(s,t)$, then there exists a (canonical) almost complex structure $J^\ast_{\nabla^g}$ on $T^\ast M$ such that $\omega_\can (\cdot, J^\ast_{\nabla^g}\cdot)$ is a semi-Riemannian metric on $T^\ast M$ of signature $(2s, 2t)$, where $\omega_\can$ is the canonical symplectic form on $T^\ast M$. In coordinates, $J^\ast_{\nabla^g}$ is given by:
 \begin{alignat*}{3}
  &J^\ast_{\nabla^g} (\partial_{q_k}\vert_\alpha) = -\partial_{p_k}\vert_\alpha;\quad &&J^\ast_{\nabla^g} (\partial_{p_k}\vert_\alpha) = \phantom{-} \partial_{q_k}\vert_\alpha\quad &&(1\leq k\leq s),\\
  &J^\ast_{\nabla^g} (\partial_{q_k}\vert_\alpha) = \phantom{-} \partial_{p_k}\vert_\alpha;\quad &&J^\ast_{\nabla^g} (\partial_{p_k}\vert_\alpha) = -\partial_{q_k}\vert_\alpha\quad &&(k>s),
 \end{alignat*}
 where $\alpha\in T^\ast_p M$ is a point, $\psi = (x_1,\ldots, x_n)$ are normal coordinates of $(M,g)$ near $p\in M$, and $T^\ast \psi = (q_1,\ldots, q_n, p_1,\ldots, p_n)$ are the induced coordinates of $T^\ast M$. Furthermore, $J^\ast_{\nabla^g}$ is integrable if and only if $g$ is flat.
\end{Thm}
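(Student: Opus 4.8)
The plan is to assemble the statement from the construction already carried out in this appendix, so the proof is essentially a verification that the pieces fit together. First I would establish the existence and uniqueness of $J_\nabla$: the bundle map $f^\nabla = (d\pi, K):T(TM)\to \pi^\ast(TM\oplus TM)$ is a fiberwise linear isomorphism, so pulling back the fiberwise endomorphism $(w_1,w_2)\mapsto (w_2,-w_1)$ of $TM\oplus TM$ along $f^\nabla$ yields an endomorphism $J_\nabla$ of $T(TM)$ characterized by $d\pi\circ J_\nabla = K$ and $K\circ J_\nabla = -d\pi$; applying these two identities twice gives $J_\nabla^2 = -1$, and the fiberwise bijectivity of $f^\nabla$ forces uniqueness.

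Next I would record the coordinate form. Assuming $\nabla$ symmetric, in normal coordinates $\psi = (x_1,\dots,x_n)$ of $(M,\nabla)$ near $p$ one has $\nabla_{\partial_{x_i}}\partial_{x_j}(p) = 0$, hence the $\partial_{\hat x_k}\vert_w$ span the horizontal subspace $\ker K$ and the $\partial_{v_k}\vert_w$ span the vertical subspace $\ker d\pi$ for every $w\in T_pM$; feeding this into the defining relations together with $d\pi(\partial_{\hat x_k}\vert_w) = \partial_{x_k}\vert_p = K(\partial_{v_k}\vert_w)$ gives $J_\nabla(\partial_{v_k}\vert_w) = \partial_{\hat x_k}\vert_w$ and $J_\nabla(\partial_{\hat x_k}\vert_w) = -\partial_{v_k}\vert_w$, exactly as above. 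For the flat case I would invoke that a flat symmetric connection admits local affine coordinates in which the Christoffel symbols vanish \emph{identically}, not just at $p$; then the above coordinate expression for $J_\nabla$ holds on the whole chart domain $TU$, so $J_\nabla$ has constant coefficients in these coordinates and therefore has vanishing Nijenhuis tensor, i.e.\ is integrable.

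For the cotangent bundle I would use that $G:TM\to T^\ast M$, $v\mapsto \iota_v g$, is a vector bundle isomorphism because $g$ is non-degenerate, hence a diffeomorphism, and set $J^\ast_{\nabla^g} := dG\circ J_{\nabla^g}\circ dG^{-1}$, which is automatically an almost complex structure. Passing to normal coordinates of $(M,g)$ near $p$ — in which $g(p)$ is diagonal with entries $\pm 1$ according to the signature $(s,t)$ and $\partial_{x_k}g_{lm}(p) = 0$ — one reads off the coordinate form of $G$, hence of $dG$ on the horizontal and vertical frames, and therefore the stated coordinate expression for $J^\ast_{\nabla^g}$ at points $\alpha\in T^\ast_pM$; substituting into $\omega_\can = \sum_k dp_k\wedge dq_k$ yields $\omega_\can\vert_\alpha(\cdot, J^\ast_{\nabla^g}\cdot) = \sum_{k\le s}(dq_k^2 + dp_k^2) - \sum_{k>s}(dq_k^2 + dp_k^2)$, a non-degenerate symmetric bilinear form of signature $(2s,2t)$. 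Since a normal chart can be chosen around any $p\in M$ and any $\alpha\in T^\ast_pM$ lies in such a chart's domain, this shows $\omega_\can(\cdot, J^\ast_{\nabla^g}\cdot)$ is a semi-Riemannian metric on $T^\ast M$ of signature $(2s,2t)$.

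Finally, for the integrability equivalence: since $G$ is a diffeomorphism, $J^\ast_{\nabla^g}$ is integrable iff $J_{\nabla^g}$ is. The implication ``$g$ flat $\Rightarrow J_{\nabla^g}$ integrable'' is the flat case already handled. The converse is the genuinely nontrivial point, and I would cite the result of Grueneberg \cite{Grueneberg2001}, noting (as in the footnote) that the argument — computing the Nijenhuis tensor of $J_{\nabla^g}$ in terms of the torsion and curvature of $\nabla^g$ and observing that, the torsion being zero, its vanishing is equivalent to vanishing curvature, hence to $g$ being flat — carries over verbatim from the Riemannian to the semi-Riemannian setting. This last step is where essentially all the content lies; everything else is bookkeeping with the canonical identifications $f^\nabla$ and $G$.
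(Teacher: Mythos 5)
Your proposal is correct and follows essentially the same route as the paper's Appendix~C: defining $J_\nabla$ via the fiberwise isomorphism $f^\nabla = (d\pi, K)$, reading off its coordinate form in normal coordinates, transferring to $T^\ast M$ through $G(v) = \iota_v g$, and citing Grueneberg for the equivalence of integrability with flatness. Your remark that a flat symmetric connection admits local affine coordinates in which the Christoffel symbols vanish identically makes the ``flat $\Rightarrow$ integrable'' step slightly more explicit than the paper's phrasing, but the argument is the same.
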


 \newpage
 \section{Holomorphic Levi-Civita Connection}
\label{app:holo_connection}

In this part, we introduce the notion of a (linear/affine) holomorphic connection on a complex manifold $(X,J)$ (cf. the end of Section 4.2 in \cite{huybrechts2005}). In particular, we define the holomorphic Levi-Civita connection $\nabla^h$ induced by a holomorphic metric $h = h_R + ih_I$ on $(X,J)$. We show that the standard Levi-Civita connections $\nabla^{h_R}$ and $\nabla^{h_I}$ of the real and imaginary part $h_R$ and $h_I$ agree with each other, that the holomorphic Levi-Civita connection $\nabla^h$ is just the complexification of $\nabla^{h_R} = \nabla^{h_I}$, and that holomorphic normal coordinates of $h$ are also normal coordinates of $h_R$.\\
We start by defining a holomorphic connection $\nabla$ on a complex manifold $(X,J)$ (cf. \cite{huybrechts2005}):

\begin{Def}[Holomorphic connection]\label{def:holo_connection}
 Let $X$ be a complex manifold with complex structure $J$ and let $\Gamma (T^{(1,0)}U)$ be the complex vector space of holomorphic vector fields on any open subset $U\subset X$. We call $\nabla$ a \textbf{holomorphic connection} on $(X,J)$ iff $\nabla$ is a collection
 \begin{gather*}
  \{\nab{U}:\hvec{U}\times\hvec{U}\to\hvec{U}\mid U\subset X\text{\normalfont\ open}\}
 \end{gather*}
 of $\Cx$-bilinear maps satisfying for all open subsets $U^\prime\subset U\subset X$, all holomorphic functions $f:U\to\Cx$, and all holomorphic vector fields $X,Y\in\hvec{U}$: 
 \begin{enumerate}[label = (\arabic*)]
  \item Tensorial in first component:
  \begin{gather*}
   \nab{U}_{fX} Y = f\nab{U}_X Y.
  \end{gather*}
  \item Leibniz rule in second component:
  \begin{gather*}
   \nab{U}_X fY = X(f) Y + f\nab{U}_X Y.
  \end{gather*}
  \item Presheaf property:
  \begin{gather*}
   (\nab{U}_X Y)\vert_{U^\prime} = \nab{U^\prime}_{X\vert_{U^\prime}} Y\vert_{U^\prime}.
  \end{gather*}
 \end{enumerate}
\end{Def}

\begin{Rem}[Presheaf property]\label{rem:presheaf}
 The standard definition of a (smooth) connection $\nabla$ on a (smooth) manifold $M$\footnote{To be precise, a linear/affine connection $\nabla$ on the vector bundle $TM\to M$.} is formulated globally and usually does not include the presheaf property, since the existence of (smooth) partitions of unity together with Property (1) and (2) implies the presheaf property in those cases. However, there are no holomorphic partitions of unity for complex manifolds, which is why we need to enforce Property (3) manually. Otherwise, the zero map would be a holomorphic connection on the complex torus $\Cx^n/\Gamma$ ($\Gamma$: lattice), since the (global) Leibniz rule reduces to linearity in this case.
\end{Rem}

As in the real case, a holomorphic connection can be computed locally:

\begin{Prop}[$\nabla$ is local]
 Let $(X,J)$ be a complex manifold with holomorphic connection $\nabla$. For all points $p\in X$, all open neighborhoods $U^\prime\subset U\subset X$ of $p$, and all holomorphic vector fields $X,Y\in \hvec{U}$ on $U$, the expression $\nab{U}_X Y (p)$ is completely determined by $X(p)$ and $Y\vert_{U^\prime}$.
\end{Prop}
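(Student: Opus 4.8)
The plan is to reduce the statement to two standard claims — locality in the second argument and pointwise dependence in the first argument — and to prove each by the usual ``bump function'' trick, adapted to the holomorphic setting where holomorphic bump functions are unavailable. First I would fix $p\in X$, open neighborhoods $U'\subset U\subset X$, and holomorphic vector fields $X,Y\in\hvec{U}$, and reformulate the goal: show that $\nab{U}_XY(p)$ depends only on $X(p)$ and on $Y|_{U'}$. By $\Cx$-bilinearity of $\nab{U}$ it suffices to show that if $\tilde X,\tilde Y\in\hvec{U}$ satisfy $\tilde X(p)=0$ and $\tilde Y|_{U'}=0$, then $\nab{U}_{\tilde X}\tilde Y(p)=0$ and, separately, $\nab{U}_{X}\tilde Y(p)=0$ and $\nab{U}_{\tilde X}Y(p)=0$; combining these three vanishing statements via bilinearity gives the claim.

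The key observation that replaces holomorphic partitions of unity is \emph{Property (3)}, the presheaf property, which is precisely the hypothesis that was built into Definition \autoref{def:holo_connection} for this purpose (cf. Remark \autoref{rem:presheaf}). For the dependence on $Y|_{U'}$: since $\tilde Y|_{U'}=0$, the presheaf property gives $(\nab{U}_X\tilde Y)|_{U'}=\nab{U'}_{X|_{U'}}(\tilde Y|_{U'})=\nab{U'}_{X|_{U'}}0=0$ by $\Cx$-bilinearity, so in particular $\nab{U}_X\tilde Y(p)=0$; thus $\nab{U}_XY(p)=\nab{U}_X(Y|_{U'}\text{-part})(p)$ depends only on $Y|_{U'}$. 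For the dependence on $X$ only through $X(p)$: here I would work in a holomorphic chart $\phi=(z_1,\ldots,z_m):V\to W\subset\Cx^m$ around $p$ with $V\subset U'$, write $X|_V=\sum_j X_j\pa_{z_j}$ with holomorphic coefficients $X_j:V\to\Cx$, and compute using Property (1) (tensoriality in the first slot) that $\nab{V}_{X|_V}(Y|_V)=\sum_j X_j\,\nab{V}_{\pa_{z_j}}(Y|_V)$; evaluating at $p$ shows this depends on $X|_V$ only through the numbers $X_j(p)$, i.e. only through $X(p)$. The presheaf property again transports this back: $\nab{U}_XY(p)=(\nab{U}_XY)|_V(p)=\nab{V}_{X|_V}(Y|_V)(p)$.

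Concretely the steps are: (i) restrict to a chart $V\subset U'$ around $p$ and invoke Property (3) to replace $\nab{U}_XY(p)$ by $\nab{V}_{X|_V}(Y|_V)(p)$; (ii) expand $X|_V$ in the coordinate frame and apply Property (1) to pull out the holomorphic coefficient functions, obtaining $\nab{V}_{X|_V}(Y|_V)(p)=\sum_j X_j(p)\,\nab{V}_{\pa_{z_j}}(Y|_V)(p)$, which manifestly depends on $X$ only via $X(p)$; (iii) use Property (3) in the form $(\nab{U}_X\,\cdot\,)|_{U'}=\nab{U'}_{X|_{U'}}(\,\cdot\,|_{U'})$ together with $\Cx$-linearity in the second slot to conclude that $\nab{U}_XY(p)$ is unchanged if $Y$ is replaced by any holomorphic field agreeing with it on $U'$. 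The main obstacle — and the only genuinely non-routine point — is making sure the argument never secretly uses a bump function: every place where in the smooth case one would multiply by a cutoff must instead be handled by restriction plus the presheaf axiom, and one must double-check that Property (1) really is stated tensorially (not merely $\R$- or $\Cx$-linearly) in the first argument so that step (ii) is legitimate. Once those axiom invocations are lined up correctly, the proof is a short formal computation, entirely parallel to the classical proof that a linear connection on a smooth manifold is a local operator.
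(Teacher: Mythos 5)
Your proposal is correct and follows essentially the same route as the paper: the presheaf property (3) reduces everything to a restriction $\nab{U'}_{X|_{U'}}Y|_{U'}(p)$, and then tensoriality (1) applied in a holomorphic chart shows the dependence on $X$ is only through $X(p)$. The extra framing about avoiding bump functions is accurate commentary but does not change the argument.
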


\begin{proof}
 Take the notations from above, then:
 \begin{gather*}
  \nab{U}_X Y (p) = (\nab{U}_X Y)\vert_{U^\prime} (p) \stackrel{(3)}{=} \nab{U^\prime}_{X\vert_{U^\prime}} Y\vert_{U^\prime} (p)
 \end{gather*}
 Thus, $\nab{U}_X Y(p)$ only depends on $X\vert_{U^\prime}$ and $Y\vert_{U^\prime}$. By going into holomorphic charts of $X$ near $p$ and using Property (1), we see that $\nab{U}_X Y (p)$ only depends on $X(p)$ and $Y\vert_{U^\prime}$.
\end{proof}

Next, we want to define the holomorphic Levi-Civita connection $\nabla^h$ of a holomorphic metric $h$ on $(X,J)$. As in the real case, $\nabla^h$ should be the unique holomorphic connection on $X$ which is torsion-free and compatible with $h$. Thus, we first need to define the torsion $T$ of a holomorphic connection $\nabla$:

\begin{Def}[Torsion $T$]
 Let $(X,J)$ be a complex manifold with holomorphic connection $\nabla$. The \textbf{torsion}\footnote{Sometimes also called torsion tensor or torsion (tensor) field} $T$ of $\nabla$ is the collection of maps
 \begin{gather*}
  \{\tor{U}:\hvec{U}\times\hvec{U}\to\hvec{U}\mid U\subset X\text{\normalfont\ open}\}
 \end{gather*}
 defined by
 \begin{gather*}
  \tor{U}(X,Y)\coloneqq \nab{U}_X Y - \nab{U}_Y X - [X,Y]\quad\forall X,Y\in\hvec{U}\ \forall U\subset X\text{\normalfont\ open}.
 \end{gather*}
 $\nabla$ is said to be \textbf{torsion-free} iff $\tor{U}\equiv 0$ for all open subsets $U\subset X$.
\end{Def}

\begin{Rem*}[$T$ is a tensor]
 $T$ is bitensorial and satisfies the presheaf property.
\end{Rem*}

We also need to say what it means for a holomorphic connection $\nabla$ to be compatible with a holomorphic metric $h$:

\begin{Def}[Metric compatibility]
 Let $(X,J)$ be a complex manifold with holomorphic connection $\nabla$. Furthermore, let $h$ be a holomorphic metric on $X$. The \textbf{metric compatibility tensor} $\nabla h$ of $\nabla$ and $h$ is the collection of maps
 \begin{gather*}
  \{\nab{U}h:\hvec{U}^3\to\mathcal{O}_U\coloneqq\{f:U\to\Cx\mid f\text{\normalfont\ holomorphic}\}\mid U\subset X\text{\normalfont\ open}\}
 \end{gather*}
 defined by
 \begin{gather*}
  \nab{U}h(X,Y,Z)\coloneqq X(h\vert_U (Y,Z)) - h\vert_U (\nab{U}_X Y, Z) - h\vert_U (Y, \nab{U}_X Z)
 \end{gather*}
 for all holomorphic vector fields $X,Y,Z\in\hvec{U}$ and all open subsets $U\subset X$. We say that $\nabla$ is \textbf{compatible} with $h$ iff $\nab{U}h\equiv 0$ for every open subset $U\subset X$.
\end{Def}

\begin{Rem*}[$\nabla h$ is a tensor.]
 $\nabla h$ is tritensorial and satisfies the presheaf property.
\end{Rem*}

We are now ready to define the holomorphic Levi-Civita connection $\nabla^h$:

\begin{Lem}[Levi-Civita connection $\nabla^h$]\label{lem:holo_levi-civita}
 Let $(X,J)$ be a complex manifold together with a holomorphic metric $h$. Then, there exists exactly one holomorphic connection $\nabla^h$ on $X$, the holomorphic \textbf{Levi-Civita connection}, which is torsion-free and compatible with $h$.
\end{Lem}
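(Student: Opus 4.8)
The statement is the exact holomorphic analogue of the classical existence-and-uniqueness theorem for the Levi-Civita connection, so the plan is to mimic the standard real proof via the Koszul formula, while being careful about the presheaf property (Remark \autoref{rem:presheaf}) that replaces partitions of unity. First I would fix an open set $U\subset X$ and holomorphic vector fields $X,Y,Z\in\Gamma(T^{(1,0)}U)$, and \emph{derive} the Koszul formula as a necessary condition: assuming $\nabla$ is torsion-free and compatible with $h$, cyclically expand $X(h(Y,Z))$, $Y(h(Z,X))$, $-Z(h(X,Y))$ using metric compatibility, then use torsion-freeness to replace each $\nabla_AB$ by $\nabla_BA+[A,B]$ and collect terms. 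This yields
\begin{gather*}
 2h(\nabla_XY,Z) = X(h(Y,Z)) + Y(h(Z,X)) - Z(h(X,Y)) + h([X,Y],Z) - h([Y,Z],X) + h([Z,X],Y).
\end{gather*}
Since $h$ is non-degenerate on $T^{(1,0)}X$ (it is a holomorphic metric), the right-hand side, viewed as a function of $Z$, determines $\nabla_XY$ uniquely. This already gives \emph{uniqueness}.

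For \emph{existence}, I would turn the Koszul formula into a definition. The right-hand side is manifestly $\mathbb{C}$-bilinear in $(X,Y)$ and, for fixed $X,Y$, it is $\mathcal{O}_U$-linear in $Z$ as a consequence of the Leibniz rule and the derivation property of the bracket — here I would do the short check that the $X(f)$-type terms produced by inserting $fZ$ cancel against the bracket terms $[Y,fZ]=f[Y,Z]+Y(f)Z$ etc. Because $h$ induces, fiberwise, an isomorphism $T^{(1,0)}U\xrightarrow{\sim} T^{\ast,(1,0)}U$ (non-degeneracy on $T^{(1,0)}X$, cf. \autoref{rem:omega_II} for the analogous argument), there is a unique holomorphic vector field $\nabla^h_XY$ on $U$ with $2h(\nabla^h_XY,Z)$ equal to that right-hand side for all $Z$; holomorphicity of $\nabla^h_XY$ follows since all ingredients ($h$, the holomorphic functions $h(Y,Z)$, the brackets of holomorphic vector fields, and the musical isomorphism) are holomorphic. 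Then I would verify the three axioms of Definition \autoref{def:holo_connection}: tensoriality in $X$ (the $fX$-substitution scales every term on the right by $f$, using that $[fX,Y]=f[X,Y]-Y(f)X$ and the compensating $-Y(f)h(X,Z)$ term), the Leibniz rule in $Y$ (the $Y(f)$-terms assemble to $2f\,X(f)\cdot\text{stuff}$... more precisely to $2h(X(f)Y,Z)$), and the presheaf property, which is immediate because the defining formula only involves local operations (derivatives, brackets, pointwise $h$) and hence commutes with restriction. Finally I would check that the $\nabla^h$ so defined is torsion-free and metric-compatible by running the Koszul computation backwards: $2h(\nabla^h_XY-\nabla^h_YX,Z)$ collapses to $2h([X,Y],Z)$, giving $T\equiv 0$, and $X(h(Y,Z))-h(\nabla^h_XY,Z)-h(Y,\nabla^h_XZ)$ collapses to $0$ after adding the two Koszul expressions, giving $\nabla^h h\equiv 0$.

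\textbf{Main obstacle.} None of the algebra is deep — it is the verbatim real proof — so the only genuine point requiring care is the interplay with the presheaf axiom (3). In the real setting one first builds $\nabla$ globally and locality is a lemma; here locality is built into the definition, so I must make sure the Koszul formula is used \emph{consistently on each} $U$ and that the resulting collection $\{\nabla^U\}$ really satisfies $(\nabla^U_XY)|_{U'} = \nabla^{U'}_{X|_{U'}}Y|_{U'}$. This is true because each term in the Koszul formula restricts correctly (brackets of holomorphic fields restrict to brackets, $h$ restricts to $h$, derivatives are local), but it is worth stating explicitly, since it is precisely the subtlety flagged in Remark \autoref{rem:presheaf} that distinguishes the holomorphic case — for instance it is what prevents the zero map from being a legitimate competing connection on $\mathbb{C}^n/\Gamma$. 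A secondary, purely bookkeeping, point is ensuring that "holomorphic metric" is used with full strength: symmetry $h(X,Y)=h(Y,X)$ is needed to make the Koszul right-hand side consistent, and non-degeneracy on $T^{(1,0)}X$ is needed for the musical isomorphism; both are part of the definition of a holomorphic metric given in \autoref{subsec:def_HHS}, so no extra hypotheses are required.
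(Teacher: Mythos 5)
Your overall route --- derive the Koszul formula as a necessary condition, then turn it into a definition --- is exactly the paper's, and most of the algebra you describe is the standard computation. The one genuine gap is in how you pass from the Koszul right-hand side to the vector field $\nabla^U_XY$. You treat the right-hand side as an $\mathcal{O}_U$-linear functional in $Z$ on the space $\Gamma(T^{(1,0)}U)$ of holomorphic vector fields on $U$ and invoke the fiberwise musical isomorphism to produce $\nabla^U_XY$; in the uniqueness step you likewise assert that non-degeneracy of $h$ lets the formula ``determine $\nabla_XY$ uniquely''. Both steps silently assume that $U$ carries enough holomorphic vector fields $Z$ to span each fiber $T^{(1,0)}_pU$. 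Unlike in the smooth category, where bump functions guarantee this, a general open subset $U$ of a complex manifold may admit very few (possibly no non-zero) global holomorphic vector fields; in that case the condition ``$2h(\nabla^U_XY,Z)=\mathrm{RHS}$ for all $Z\in\Gamma(T^{(1,0)}U)$'' determines nothing, and $\mathcal{O}_U$-linearity alone does not show that the functional is represented by a section of $T^{\ast,(1,0)}U$. This is precisely the point the paper's proof flags and fixes: it first shrinks to a holomorphic chart domain $U'\subset U$ via the presheaf axiom, where the coordinate fields $\partial_{z_1},\dots,\partial_{z_n}$ supply a frame of test fields, pins down $\nabla_XY(p)$ there, and then notes that the answer is independent of the chosen chart.

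You did correctly identify the presheaf axiom (Remark \autoref{rem:presheaf}) as the locus of subtlety, but you deploy it only to verify restriction-compatibility of an already-constructed family $\{\nabla^U\}$, whereas it is needed one step earlier, to make the construction (and the uniqueness argument) possible at all. The repair is small and is exactly what the paper does: define $\nabla^U_XY(p)$ by evaluating the Koszul formula against a coordinate frame on a small chart around $p$, observe that the result is chart-independent because the Koszul formula is, and only then conclude that the resulting collection satisfies axiom (3) of Definition \autoref{def:holo_connection}. Everything else in your write-up --- the derivation of the formula, tensoriality in $X$, the Leibniz rule in $Y$, torsion-freeness, and metric compatibility --- goes through as you describe.
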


\begin{proof}
 Take the notations from above. The proof works very similarly to the real case by exploiting the Koszul formula.\\\\
 \noindent \textbf{Uniqueness:} Let $\nabla^{h,1}$ and $\nabla^{h,2}$ be two holomorphic connections which are torsion-free and compatible with $h$. As in the real case, one can show that $\nabla^{h,1}$ and $\nabla^{h,2}$ satisfy the Koszul formula ($U\subset X$ open; $X,Y,Z\in\hvec{U}$):
 \begin{align*}
  2h\vert_U (\nab{U}^{h,1}_X Y, Z) &= X(h\vert_U (Y,Z)) + Y(h\vert_U (Z,X)) - Z (h\vert_U (X,Y))\\
  &+ h\vert_U ([X,Y], Z) - h\vert_U ([Y,Z], X) - h\vert_U([X,Z],Y)\\
  &= 2h\vert_U (\nab{U}^{h,2} Y, Z).
 \end{align*}
 Now let $U\subset X$ be any open subset and $X,Y\in\hvec{U}$ be two holomorphic vector fields on $U$. Pick any point $p\in U$ and a holomorphic chart $(z_1,\ldots, z_n):U^\prime\to V^\prime\subset\Cx^n$ of $X$ near $p$ such that $U^\prime\subset U$. Due to the presheaf property of $\nabla^{h,i}$, we have ($i\in\{1,2\}$):
 \begin{gather*}
  \nab{U}^{h,i}_X Y (p) = \nab{U^\prime}^{h,i}_{X\vert_{U^\prime}} Y\vert_{U^\prime} (p).
 \end{gather*}
 We now set $Z\vert_{U^\prime} = \partial_{z_j}$ ($j\in\{1,\ldots, n\}$) in the formula above (evaluated at $p$) and obtain using the previous equation:
 \begin{gather*}
  h\vert_{U^\prime} (\nab{U^\prime}^{h,1}_{X\vert_{U^\prime}} Y\vert_{U^\prime}, \partial_{z_j}) (p) = h\vert_{U^\prime} (\nab{U^\prime}^{h,2}_{X\vert_{U^\prime}} Y\vert_{U^\prime}, \partial_{z_j}) (p).
 \end{gather*}
 Since the last equation holds for every $j\in\{1,\ldots, n\}$, we can combine the previous two equations to find:
 \begin{gather*}
  \nab{U}^{h,1}_X Y (p) = \nab{U^\prime}^{h,1}_{X\vert_{U^\prime}} Y\vert_{U^\prime} (p) = \nab{U^\prime}^{h,2}_{X\vert_{U^\prime}} Y\vert_{U^\prime} (p) = \nab{U}^{h,2}_X Y (p).
 \end{gather*}
 As this argument can be repeated for every $p\in U$, we find:
 \begin{gather*}
  \nab{U}^{h,1}_X Y = \nab{U}^{h,2}_X Y.
 \end{gather*}
 Again, we can repeat the argument for all $X,Y\in\hvec{U}$ and any open subset $U\subset X$ implying $\nabla^{h,1} = \nabla^{h,2}$.\\\\
 \noindent \textbf{Existence:} The Koszul formula gives us an expression for
 \begin{gather*}
  h\vert_U (\nab{U}^{h}_X Y, Z).
 \end{gather*}
 A priori, it is not cleared whether this expression completely determines $\nab{U}^h_X Y$, as $U$ might be ``too large'' to admit enough linearly independent, holomorphic vector fields $Z$. However, we can always shrink $U$ by the presheaf property. Especially, if we shrink $U$ to be a chart domain, then $U$ admits enough holomorphic vector fields. Thus, we can define $\nab{U}^h_X Y (p)$ for any point $p\in U$ via the Koszul formula in a small chart near $p$. Since the Koszul formula is independent of the choice of charts, the resulting holomorphic connection $\nabla^h$ is well-defined. One easily checks that $\nabla^h$ defined this way is torsion-free and compatible with $h$.
\end{proof}

Next, we want to consider normal coordinates of a holomorphic connection. To do so, we need to define geodesics first:

\begin{Def}[Complex derivative $\nabla/dz$ along $\gamma$ and geodesics]
 Let $(X,J)$ be a complex manifold with holomorphic connection $\nabla$. Further, let $D\subset\Cx$ be an open and connected subset, $\gamma:D\to X$ be a holomorphic curve, and $Y$ be a holomorphic vector field along $\gamma$, i.e., a holomorphic section of the pullback bundle $\gamma^\ast T^{(1,0)}X$. Analogously to the real case, we can define the \textbf{complex derivative} $\nabla/dz$ along $\gamma$. In holomorphic charts $\phi = (z_1,\ldots, z_n):U^\prime\to V^\prime$ of $X$, $\nabla/dz$ is defined by:
 \begin{gather*}
  \left(\frac{\nabla}{dz} Y\right)_j (z)\coloneqq Y_j^\prime (z) + \sum^n_{k,l = 1}\gamma_k^\prime (z)\cdot Y_l (z)\cdot \Gamma^j_{kl} (\gamma (z)),
 \end{gather*}
 where ``$\prime$'' denotes the usual complex derivative and $\nab{U^\prime}_{\partial_{z_k}} \partial_{z_l}\eqqcolon \sum_{j} \Gamma^j_{kl}\cdot \partial_{z_j}$ are the holomorphic \textbf{Christoffel symbols} of $\nabla$. We call $\gamma$ a \textbf{geodesic} of $\nabla$ iff $\nabla/dz\ \gamma^\prime \equiv 0$.
\end{Def}

The following existence and uniqueness result regarding geodesics of $\nabla$ is reminiscent of the real case:

\begin{Prop}[Existence and uniqueness of geodesics]
 Let $(X,J)$ be a complex manifold with holomorphic connection $\nabla$. Further, let $p\in X$ and $v\in T^{(1,0)}_p X$. Then, for every $z_0\in\Cx$, there exists an open and connected neighborhood $D\subset\Cx$ of $z_0$ and a geodesic $\gamma:D\to X$ of $\nabla$ such that $\gamma (z_0) = p$ and $\gamma^\prime (z_0) = v$. Moreover, if $\gamma_1, \gamma_2:D\to X$ are two geodesics of $\nabla$ with $\gamma_1 (z_0) = \gamma_2 (z_0) = p$ and $\gamma^\prime_1 (z_0) = \gamma^\prime_2 (z_0) = v$ for any open and connected neighborhood $D\subset\Cx$ of $z_0$, then $\gamma_1 \equiv \gamma_2$.
\end{Prop}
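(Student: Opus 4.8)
The plan is to reduce the statement to the classical ODE existence-and-uniqueness theorem for holomorphic ODEs, exactly mirroring how the analogous statement for Hamiltonian trajectories (Proposition \autoref{prop:holo_traj}) and for geodesics in the real setting are proven. First I would work in a holomorphic chart $\phi = (z_1,\ldots,z_n):U'\to V'\subset\Cx^n$ around $p$ with $\phi(p)=0$. Writing $\gamma(z) = \phi^{-1}(\gamma_1(z),\ldots,\gamma_n(z))$ and using the holomorphic Christoffel symbols $\Gamma^j_{kl}$ introduced just above, the geodesic equation $\nabla/dz\,\gamma' \equiv 0$ becomes the second-order system
\begin{gather*}
 \gamma_j''(z) + \sum_{k,l=1}^n \Gamma^j_{kl}(\gamma(z))\,\gamma_k'(z)\,\gamma_l'(z) = 0,\quad j = 1,\ldots,n.
\end{gather*}
Since the $\Gamma^j_{kl}$ are holomorphic functions on $V'$, the right-hand side (after solving for $\gamma_j''$) is a holomorphic function of $(\gamma, \gamma')$. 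Introducing $w_j \coloneqq \gamma_j'$ turns this into a first-order holomorphic system in the $2n$ unknowns $(\gamma_1,\ldots,\gamma_n,w_1,\ldots,w_n)$ with holomorphic right-hand side.

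The second step is to invoke the existence and uniqueness theorem for first-order holomorphic ODEs (the complex-analytic Picard–Lindelöf / Cauchy–Kovalevskaya-type statement): given the initial data $(\gamma_j(z_0), w_j(z_0)) = (0, v_j)$, where $(v_1,\ldots,v_n)$ are the components of $v$ in the chart, there is an open connected neighborhood $D\subset\Cx$ of $z_0$ and a unique holomorphic solution. After translating back through $\phi^{-1}$, this yields a holomorphic geodesic $\gamma:D\to X$ with $\gamma(z_0)=p$ and $\gamma'(z_0)=v$, establishing existence. For the uniqueness claim, suppose $\gamma_1,\gamma_2:D\to X$ are two geodesics with the same initial position and velocity at $z_0$. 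Near $z_0$ they agree by the local uniqueness just established. To propagate this to all of the connected set $D$, I would use the identity theorem for holomorphic functions: the set where $\gamma_1$ and $\gamma_2$ agree (together with their derivatives) is closed in $D$ by continuity, and open in $D$ because at any point where they coincide, local uniqueness in a chart forces them to coincide on a neighborhood. As in the proof of Proposition \autoref{prop:holo_traj}, one covers a path from $z_0$ to any target point of $D$ by finitely many chart domains and applies the local uniqueness repeatedly; since $D$ is connected, $\gamma_1\equiv\gamma_2$ on all of $D$.

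I do not expect a genuine obstacle here: the entire argument is the standard transcription of the smooth geodesic existence/uniqueness result to the holomorphic category, where the only inputs are the holomorphic ODE theorem and the identity theorem, both of which are available. The one point requiring a little care — the "main obstacle," such as it is — is the globalization of uniqueness over the connected domain $D$: one must make sure that the set of agreement is genuinely open, which uses that local uniqueness holds with respect to initial position \emph{and} velocity (so that matching first-order data at one point forces matching on a neighborhood), and that one can chain chart-local uniqueness along an arbitrary path in $D$. This is exactly the mechanism already used for holomorphic trajectories in \autoref{subsec:holo_traj}, so I would simply refer to that proof pattern rather than repeat the details.
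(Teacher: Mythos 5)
Your proposal is correct and follows essentially the same route as the paper: reduce the geodesic equation in a holomorphic chart to a first-order holomorphic system by doubling the variables, apply the holomorphic existence/uniqueness theorem (the paper phrases this as interpreting the autonomous first-order system as the integral-curve equation of a holomorphic vector field so as to cite Proposition \autoref{prop:holo_traj}), and globalize uniqueness over the connected domain via the identity theorem. The paper additionally sketches an alternative uniqueness argument via power-series coefficients, but this is a supplement, not a different proof.
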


\begin{proof}
 We observe that the geodesic equation is locally a second order complex differential equation. By introducing new variables $v_i\coloneqq \gamma^\prime_i$, thus, doubling the number of equations, we can rewrite the second order differential equation into a first order one. Since the geodesic equation has no explicit time-dependence ($z$-dependence), we can interpret the first order differential equation obtained this way as the integral curve equation of a holomorphic vector field. The existence and uniqueness results for geodesics now follow from the existence and uniqueness results for integral curves of holomorphic vector fields (cf. Proposition \autoref{prop:holo_traj}).\\
 There is also an alternative way to prove uniqueness: Write the geodesic equation in holomorphic charts near $p$ and expand the coordinates of $\gamma_1$, $\gamma_2$ in a power series around $z_0$. This way, the geodesic equation becomes a recursive formula for the coefficients of the power series. Furthermore, we realize that this recursive formula completely determines all coefficients if we fix the first two terms in each power series. Hence, fixing $\gamma_1 (z_0) = \gamma_2 (z_0) = p$ and $\gamma^\prime_1 (z_0) = \gamma^\prime_2 (z_0) = v$ uniquely determines the coefficients of the power series. The rest now follows from the identity theorem.
\end{proof}

\begin{Rem}[Geodesics depend holomorphically on initial values]
 Proposition \autoref{prop:holo_traj} also shows that a geodesic $\gamma$ of a holomorphic connection $\nabla$ with $\gamma (z_0) = p$ and $\gamma^\prime (z_0) = v$ depends holomorphically on $z_0$, $v$, and $p$.
\end{Rem}

We can now use the geodesics of $\nabla$ to define the exponential map of $\nabla$:

\begin{Def}[Exponential map]
 Let $(X,J)$ be a complex manifold with holomorphic connection $\nabla$  and $p\in X$ be any point. The \textbf{exponential map} of $\nabla$, $\exp_p:V\to X$ with a suitable open neighborhood $V\subset T^{(1,0)}_p X$ of $0$, is defined by:
 \begin{gather*}
  \exp_p (v) \coloneqq \gamma^v_p (1),
 \end{gather*}
 where $\gamma^v_p$ is a geodesic of $\nabla$ satisfying $\gamma^v_p (0) = p$ and $\gamma^{v\prime}_p (0) = v$. Here, $V\subset T^{(1,0)}_p X$ is chosen small enough such that $\gamma^v_p (1)$ is uniquely defined and unaffected by monodromy effects (cf. \autoref{subsec:holo_traj}). For instance, choose $V\subset T^{(1,0)}_pX$ such that the domain of the geodesic $\gamma^v_p:D\to X$ can be chosen to be $D = \{z\in\Cx\mid |z|<2\}$ for every $v\in V$.
\end{Def}

\begin{Rem}[Exponential map is holomorphic]
 By the previous remark, the exponential map $\exp_p$ of a holomorphic connection $\nabla$ is holomorphic.
\end{Rem}

We can now shrink the domain of the exponential map to obtain a biholomorphism:

\begin{Prop}
 Let $(X,J)$ be a complex manifold with holomorphic connection $\nabla$  and let $p\in X$ be any point. Then, there exists an open neighborhood $V\subset T^{(1,0)}_p X$ of $0$ such that $\exp_p: V\to \exp_p (V)$ is a biholomorphism.
\end{Prop}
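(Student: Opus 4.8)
The plan is to mimic the classical proof that the exponential map of a smooth affine connection is a local diffeomorphism near the origin, using the holomorphic inverse function theorem in place of its smooth counterpart. The key computation is the differential of $\exp_p$ at $0 \in T^{(1,0)}_p X$. First I would note that $\exp_p$ is holomorphic on a neighborhood $V$ of $0$ by the preceding remark (which itself rests on Proposition \autoref{prop:holo_traj} applied to the geodesic equation rewritten as a first-order holomorphic ODE). Thus $d(\exp_p)|_0 : T^{(1,0)}_0(T^{(1,0)}_p X) \to T^{(1,0)}_p X$ is a $\Cx$-linear map between complex vector spaces of the same complex dimension $n$, and it suffices to show it is an isomorphism; then shrinking $V$ and invoking the holomorphic inverse function theorem yields the claim.

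To compute $d(\exp_p)|_0$, I would use the standard identification $T^{(1,0)}_0(T^{(1,0)}_p X) \cong T^{(1,0)}_p X$ (since $T^{(1,0)}_p X$ is a complex vector space) and exploit the homogeneity of geodesics: for $v \in T^{(1,0)}_p X$ and $\lambda \in \Cx$ small, the geodesic $\gamma^{\lambda v}_p(z)$ equals $\gamma^{v}_p(\lambda z)$, which follows from the uniqueness of geodesics (both curves solve the geodesic equation with initial point $p$ and initial velocity $\lambda v$). Consequently $\exp_p(\lambda v) = \gamma^{\lambda v}_p(1) = \gamma^v_p(\lambda)$, and differentiating in $\lambda$ at $\lambda = 0$ gives
\begin{gather*}
 d(\exp_p)|_0(v) = \left.\frac{d}{d\lambda}\right|_{\lambda = 0} \gamma^v_p(\lambda) = \gamma^{v\,\prime}_p(0) = v.
\end{gather*}
Hence $d(\exp_p)|_0$ is the identity on $T^{(1,0)}_p X$, in particular a $\Cx$-linear isomorphism.

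With the differential identified, I would apply the holomorphic inverse function theorem (the same tool already used in the proof of Lemma \autoref{lem:HHS_in_standard_form} and in \autoref{app:darboux}): since $\exp_p$ is holomorphic near $0$ and $d(\exp_p)|_0$ is invertible, there is an open neighborhood $V \subset T^{(1,0)}_p X$ of $0$, which we may take to lie inside the original domain of definition of $\exp_p$, such that $\exp_p|_V : V \to \exp_p(V)$ is a biholomorphism onto an open subset of $X$. This completes the proof.

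\textbf{Main obstacle.} The only genuinely delicate point is bookkeeping around the domain of $\exp_p$: one must ensure that the homogeneity relation $\gamma^{\lambda v}_p(z) = \gamma^v_p(\lambda z)$ and the evaluation $\gamma^v_p(1)$ are both legitimate on a common neighborhood of $0$, i.e. that monodromy effects (flagged in the definition of $\exp_p$ and in \autoref{subsec:holo_traj}) do not interfere. This is handled by choosing $V$ small enough that every $\gamma^v_p$ with $v \in V$ is defined on the disk $\{|z| < 2\}$, exactly as in the definition of the exponential map; then all curves and derivatives above make sense and the argument goes through without further subtlety.
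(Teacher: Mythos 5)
Your proposal is correct and follows essentially the same route as the paper, whose entire proof is to apply the holomorphic inverse function theorem to $d\exp_p\vert_0 = \mathrm{id}_{T^{(1,0)}_p X}$. You simply make explicit the computation of the differential via homogeneity of geodesics and the domain bookkeeping, both of which the paper leaves implicit.
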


\begin{proof}
 Apply the holomorphic inverse function theorem to $d\exp_p\vert_0 = \text{id}_{T^{1,0}_p X}$.
\end{proof}

Lastly, we use this biholomorphism to define normal coordinates:

\begin{Def}[Normal coordinates]
 Let $(X,J)$ be a complex manifold with holomorphic connection $\nabla$  and let $p\in X$ be any point. Further, choose a $\Cx$-linear isomorphism $l:\Cx^n\to T^{(1,0)}_p X$. We call the coordinates $(z_1,\ldots, z_n)$ of the holomorphic chart $l^{-1}\circ \exp_p^{-1}$ \textbf{holomorphic normal coordinates} of $(X,\nabla)$ near $p$. If $\nabla = \nabla^h$ is the Levi-Civita connection of a holomorphic metric $h$ on $X$, we additionally require that the vectors $v_1\coloneqq l (\hat e_1),\ldots, v_n\coloneqq l (\hat e_n)$ ($\hat e_1,\ldots, \hat e_n$: standard basis of $\Cx^n$) satisfy:
 \begin{gather*}
  h (v_i, v_j) = \delta_{ij}.
 \end{gather*}
\end{Def}

\begin{Rem*}
 Note that in the complex case all non-degenerate, symmetric bilinear forms are isomorphic, while in the real case two non-degenerate, symmetric bilinear forms are isomorphic if and only if they have the same signature.
\end{Rem*}

Holomorphic normal coordinates satisfy properties similar to their real counterparts:

\newpage

\begin{Prop}[Properties of holomorphic normal coordinates]
 Let $(X,J)$ be a complex manifold with holomorphic metric $h$ and corresponding Levi-Civita connection $\nabla^h$. Further, let $p\in X$ be any point and $\phi = (z_1,\ldots, z_n):U\to V$ be holomorphic normal coordinates of $(X,\nabla^h)$ near $p$. Then, we have in coordinates $(z_1,\ldots, z_n)$:
 \begin{enumerate}[label = (\arabic*)]
  \item $h_{ij} (p) = \delta_{ij}$
  \item $\partial_{z_k}h_{ij} (p) = 0$
  \item $\Gamma^k_{ij} (p) = 0$
 \end{enumerate}
\end{Prop}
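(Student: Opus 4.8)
The plan is to mimic the classical proof that Riemannian normal coordinates kill the metric's first derivatives and the Christoffel symbols at the base point, adapting each step to the holomorphic setting where the Levi-Civita connection $\nabla^h$ and geodesics have already been set up in this appendix. Property (1) is immediate: by the extra requirement in the definition of holomorphic normal coordinates, the $\Cx$-linear isomorphism $l:\Cx^n\to T^{(1,0)}_pX$ sends the standard basis $\hat e_i$ to vectors $v_i$ with $h(v_i,v_j)=\delta_{ij}$, and since $\partial_{z_i}|_p$ corresponds to $v_i$ under $l^{-1}\circ\exp_p^{-1}$, we get $h_{ij}(p)=h(v_i,v_j)=\delta_{ij}$.

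For Property (3), I would use the defining feature of normal coordinates: every line through the origin in the chart is a geodesic of $\nabla^h$. Concretely, fix $v=\sum_k a_k\hat e_k\in\Cx^n$ and consider $\gamma(z)\coloneqq \exp_p(zl(v))$; in the chart $\phi$ this curve is simply $z\mapsto (za_1,\ldots,za_n)$, so $\gamma_k(z)=za_k$ and $\gamma_k'(z)=a_k$, $\gamma_k''(z)=0$. Plugging into the geodesic equation $\nabla/dz\,\gamma'=0$ written in the chart gives $0=\gamma_j''(z)+\sum_{k,l}\gamma_k'(z)\gamma_l'(z)\Gamma^j_{kl}(\gamma(z))=\sum_{k,l}a_ka_l\Gamma^j_{kl}(\gamma(z))$. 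Evaluating at $z=0$ yields $\sum_{k,l}a_ka_l\Gamma^j_{kl}(p)=0$ for every $(a_k)\in\Cx^n$. Since the $\Gamma^j_{kl}$ are the Christoffel symbols of the torsion-free connection $\nabla^h$, they are symmetric in the lower indices, $\Gamma^j_{kl}=\Gamma^j_{lk}$; a symmetric bilinear form that vanishes on the diagonal over the infinite field $\Cx$ is identically zero (polarize: replace $(a_k)$ by $(a_k+b_k)$), hence $\Gamma^j_{kl}(p)=0$ for all $i,j,k$.

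For Property (2), I would invoke metric compatibility of $\nabla^h$ with $h$: the metric compatibility tensor $\nabla^h h$ vanishes, which in the chart $\phi$ reads $\partial_{z_k}h_{ij}=\sum_m\Gamma^m_{ki}h_{mj}+\sum_m\Gamma^m_{kj}h_{im}$ (the holomorphic analogue of the usual formula, obtained by feeding the coordinate vector fields $\partial_{z_k},\partial_{z_i},\partial_{z_j}$ into $\nabla^h h\equiv 0$). Evaluating at $p$ and using Property (3), every term on the right-hand side contains a factor $\Gamma^m_{\cdot\cdot}(p)=0$, so $\partial_{z_k}h_{ij}(p)=0$. One small technical point to check along the way is that all the local formulas — the geodesic equation, the Christoffel symbols, the coordinate expression of $\nabla^h h$ — are genuinely valid on a chart domain around $p$; this is guaranteed by the presheaf property built into Definition \ref{def:holo_connection}, so restricting to a holomorphic chart near $p$ (as done repeatedly in this appendix) is legitimate. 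I do not expect a real obstacle here: the only thing one must be slightly careful about is the polarization argument in Property (3), where one uses that $\Cx$ has characteristic zero so that $\Gamma^j_{kl}(p)+\Gamma^j_{lk}(p)=2\Gamma^j_{kl}(p)$ recovers the off-diagonal coefficient — but torsion-freeness makes this automatic, and the rest is a direct transcription of the real proof with holomorphic derivatives in place of real ones.
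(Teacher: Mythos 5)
Your proposal is correct and follows essentially the same route as the paper's own proof: property (1) from the normalization $h(v_i,v_j)=\delta_{ij}$ in the definition of holomorphic normal coordinates, property (3) by evaluating the geodesic equation along radial lines $z\mapsto\phi^{-1}(z\cdot x)$ at $z=0$ and polarizing with the symmetry of the Christoffel symbols, and property (2) from $\nabla^h h\equiv 0$ combined with (3). No gaps.
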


\begin{proof}
 We only show (3), as (1) is obvious and (2) follows from (3) by exploiting the vanishing of the metric compatibility tensor. Define for $x = (x_1,\ldots, x_n)\in\Cx^n$ the curve $\gamma (z)\coloneqq \phi^{-1}(z\cdot x)$. By definition of the holomorphic normal coordinates, $\gamma$ is a geodesic of $\nabla^h$ with $\gamma^\prime (0) = d\phi^{-1}\vert_0 (x)$. In coordinates $(z_1,\ldots, z_n)$, the geodesic equation reads:
 \begin{gather*}
  \gamma^{\prime\prime}_{j}(z) + \sum^n_{k,l = 1} \Gamma^j_{kl} (\gamma (z))\gamma^\prime_k (z)\gamma^\prime_l (z) = 0
 \end{gather*}
 For $z = 0$, this gives:
 \begin{gather*}
  \sum^n_{k,l = 1} \Gamma^j_{kl} (p)x_kx_l = 0
 \end{gather*}
 The last equation is true for any $x\in\Cx^n$, thus, it enforces $\Gamma^j_{kl} (p) + \Gamma^j_{lk}(p) = 0$. Since $\nabla^h$ is symmetric, $\Gamma^j_{kl} (p)$ is symmetric in $k$ and $l$. This implies $2\Gamma^j_{kl} (p) = 0$ concluding the proof.
\end{proof}

Our next goal is to relate the holomorphic Levi-Civita connection $\nabla^{h}$ to the standard Levi-Civita connections $\nabla^{h_R}$ and $\nabla^{h_I}$. In particular, we aim to relate the holomorphic normal coordinates of $(X,\nabla^h)$ to the standard normal coordinates of $(X,\nabla^{h_R})$ and $(X,\nabla^{h_I})$ for holomorphic metrics $h = h_R + ih_I$. To achieve that, we first prove the following proposition:

\begin{Prop}\label{prop:h_R_in_coordinates}
 Let $(X,J)$ be a complex manifold with holomorphic metric $h = h_R + ih_I$ and corresponding Levi-Civita connections $\nabla^h$, $\nabla^{h_R}$, and $\nabla^{h_I}$. Further, let $p\in X$ be any point and $\phi = (z_1 = x_1 + iy_1,\ldots, z_n = x_n + iy_n):U\to V$ be holomorphic normal coordinates of $(X,\nabla^h)$ near $p$. Then:
 \begin{enumerate}[label = (\arabic*)]
  \item $h_R\vert_p (\partial_{x_i}\vert_p, \partial_{x_j}\vert_p) = -h_R\vert_p (\partial_{y_i}\vert_p, \partial_{y_j}\vert_p) = \delta_{ij},\quad h_I\vert_p (\partial_{x_i}\vert_p, \partial_{y_j}\vert_p) = \delta_{ij}$\\
  $h_R\vert_p (\partial_{x_i}\vert_p, \partial_{y_j}\vert_p) = h_I\vert_p (\partial_{x_i}\vert_p, \partial_{x_j}\vert_p)\ \ = h_I\vert_p (\partial_{y_i}\vert_p, \partial_{y_j}\vert_p)\qquad\ \ = 0$
  \item All derivatives of $h_R$ and $h_I$ vanish at $p$ in coordinates $(x_1,\ldots, x_n, y_1,\ldots, y_n)$.
  \item All Christoffel symbols of $\nabla^{h_R}$ and $\nabla^{h_I}$ vanish at $p$ in coordinates\linebreak $(x_1,\ldots, x_n, y_1,\ldots, y_n)$.
 \end{enumerate}
\end{Prop}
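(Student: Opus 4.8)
The proof naturally decomposes into three verifications, and the key structural observation is that a holomorphic metric $h = h_R + i h_I$ and its complex-bilinear extension are tightly constrained by the relation $h(J\cdot, \cdot) = i\cdot h(\cdot,\cdot)$ together with $h(J\cdot, J\cdot) = h$.

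\textbf{Step 1: Proving (1).} I would start from the defining property of holomorphic normal coordinates, namely $h_{ij}(p) = h\vert_p(\partial_{z_i}\vert_p, \partial_{z_j}\vert_p) = \delta_{ij}$. The plan is to expand $\partial_{z_j} = \frac12(\partial_{x_j} - i\partial_{y_j})$ and use that $h\vert_p$ is a $\mathbb{C}$-bilinear form which equals the $\mathbb{C}$-bilinear extension of both $h_R\vert_p$ and $-ih_I$-type pieces when restricted appropriately. Concretely, since $h$ is complex-bilinear, $h(\partial_{x_i}, \partial_{x_j}) = h(\partial_{z_i} + \partial_{\bar z_i}\text{-combination})$; more cleanly, using $J(\partial_{x_j}) = \partial_{y_j}$ and $h(J\cdot,\cdot) = i h$, one gets $h(\partial_{x_i},\partial_{y_j}) = h(\partial_{x_i}, J\partial_{x_j}) = i\, h(\partial_{x_i},\partial_{x_j})$ and $h(\partial_{y_i},\partial_{y_j}) = h(J\partial_{x_i}, J\partial_{x_j}) = -h(\partial_{x_i},\partial_{x_j})$. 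Then $\delta_{ij} = h(\partial_{z_i},\partial_{z_j}) = \frac14\big(h(\partial_{x_i},\partial_{x_j}) - h(\partial_{y_i},\partial_{y_j}) - i h(\partial_{x_i},\partial_{y_j}) - i h(\partial_{y_i},\partial_{x_j})\big)$, which upon substituting the above collapses to $h(\partial_{x_i},\partial_{x_j})\vert_p = \delta_{ij}$. Taking real and imaginary parts of all these identities yields exactly the six equations in (1).

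\textbf{Step 2: Proving (2).} Here the idea is that holomorphicity of $h$ means its coefficients $h_{ij}$ in the chart are holomorphic functions, so $\partial_{\bar z_k} h_{ij} = 0$, and property (2) of the previous proposition gives $\partial_{z_k} h_{ij}(p) = 0$; hence \emph{all} first-order complex partial derivatives of $h_{ij}$ vanish at $p$. I would then translate this into vanishing of $\partial_{x_k}$ and $\partial_{y_k}$ derivatives of $h_R$ and $h_I$ by repeating the bilinear bookkeeping of Step 1 but now differentiated: for fixed basis vector fields (which in these coordinates are literally $\partial_{x_i}, \partial_{y_i}$), the components $h_R(\partial_{a_i},\partial_{b_j})$ and $h_I(\partial_{a_i},\partial_{b_j})$ are real/imaginary parts of $\mathbb{C}$-linear combinations of the $h_{ij}$, and applying $\partial_{x_k} = \partial_{z_k} + \partial_{\bar z_k}$ and $\partial_{y_k} = i(\partial_{z_k} - \partial_{\bar z_k})$ kills everything at $p$.

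\textbf{Step 3: Proving (3).} This I would deduce from (2) via the Koszul formula for $\nabla^{h_R}$ (and identically for $\nabla^{h_I}$): the Christoffel symbols at $p$ in the real coordinates $(x_1,\dots,y_n)$ are built from first derivatives of the metric components $h_R(\partial_{a_i},\partial_{b_j})$ and from Lie brackets of the coordinate frame, but the coordinate vector fields $\partial_{x_i},\partial_{y_i}$ commute, so the bracket terms drop out, and by (2) the derivative terms vanish at $p$; since $h_R\vert_p$ is non-degenerate (being, up to signature, standard by (1)), this forces $\Gamma^k_{ij}(p) = 0$. The same argument applies verbatim to $h_I$, provided $h_I\vert_p$ is non-degenerate — which it is, by (1), since its matrix at $p$ in the $\partial_x,\partial_y$ frame is (a permutation of) the standard symplectic-looking form $\left(\begin{smallmatrix}0 & I\\ I & 0\end{smallmatrix}\right)$, hence invertible.

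The main obstacle, such as it is, is purely organizational: keeping the real/imaginary-part bookkeeping of the $\mathbb{C}$-bilinear form $h$ under control, since one must consistently exploit the three ingredients $h$ complex-bilinear, $h(J\cdot,\cdot) = ih$, and $h_{ij}$ holomorphic with vanishing first jet at $p$. There is no deep difficulty — it is a coordinate computation — but I would be careful to present Step 1 cleanly (perhaps isolating the identities $h(\partial_{x_i},\partial_{y_j}) = i h(\partial_{x_i},\partial_{x_j})$, etc., as a short lemma-like display) so that Steps 2 and 3 become immediate differentiations and an invocation of Koszul, rather than re-deriving everything. A secondary point worth stating explicitly is the non-degeneracy of $h_I\vert_p$, which is what licenses the Koszul argument for $\nabla^{h_I}$ and is not entirely obvious before (1) is in hand.
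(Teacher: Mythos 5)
Your proposal is correct and follows essentially the same route as the paper: (1) is the expansion of $h\vert_p = \sum_j dz_j^2\vert_p$ into real and imaginary parts, (2) combines holomorphicity of the components $h_{ij}$ with $\partial_{z_k}h_{ij}(p)=0$ from the preceding proposition, and (3) is the Koszul formula applied to (2). One small slip: in your opening paragraph you write $h(J\cdot,J\cdot)=h$, whereas the correct identity --- which you in fact use in Step 1 --- is $h(J\cdot,J\cdot)=-h$.
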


\begin{proof}
 (1) follows from (1) of the previous proposition:
 \begin{gather*}
  h_R\vert_p + i h_I\vert_p = h\vert_p = \sum^n_{j = 1} dz^2_j\vert_p = \sum^n_{j = 1}\left(dx^2_j\vert_p - dy^2_j\vert_p\right) + i\sum^n_{j = 1}\left(dx_j\vert_p\otimes dy_j\vert_p + dy_j\vert_p\otimes dx_j\vert_p\right).
 \end{gather*}
 (2) follows from (2) of the previous proposition and the fact that the components $h_{ij}$ are holomorphic, i.e., $\partial_{\bar{z}_k} h_{ij} (p) = 0$.\\
 (3) follows from (2) of the proposition at hand and the Koszul formula for Christoffel symbols of standard Levi-Civita connections.
\end{proof}

The last proposition implies that the Levi-Civita connections $\nabla^{h_R}$ and $\nabla^{h_I}$ describe the same connection.

\begin{Cor}[$\nabla^{h_R} = \nabla^{h_I}$]
 Let $(X,J)$ be a complex manifold with holomorphic metric $h = h_R + ih_I$. Then, the (standard) Levi-Civita connections $\nabla^{h_R}$ and $\nabla^{h_I}$ coincide, i.e., $\nabla^{h_R} = \nabla^{h_I}$.
\end{Cor}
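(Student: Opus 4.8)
The plan is to exploit the defining characterization of the Levi-Civita connection: for any (pseudo-)Riemannian metric $g$ on the underlying real manifold, $\nabla^g$ is the unique torsion-free connection with $\nabla^g g = 0$, and it is uniquely pinned down by the Koszul formula. Since both $\nabla^{h_R}$ and $\nabla^{h_I}$ are torsion-free by construction, it suffices to show that $\nabla^{h_R}$ is also compatible with $h_I$ (or, symmetrically, that $\nabla^{h_I}$ is compatible with $h_R$); uniqueness in Lemma for the real Levi-Civita connection then forces $\nabla^{h_R} = \nabla^{h_I}$. So the real content is the single identity $\nabla^{h_R} h_I = 0$.

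First I would reduce this to a pointwise statement. Compatibility $\nabla^{h_R} h_I = 0$ is a tensorial equation: the tensor field $\nabla^{h_R} h_I$ vanishes on all of $X$ if and only if it vanishes at every point $p \in X$. Fix an arbitrary $p$. By the existence of holomorphic normal coordinates for the holomorphic Levi-Civita connection $\nabla^h$ (established earlier in this appendix), choose a holomorphic chart $\phi = (z_1 = x_1 + i y_1, \ldots, z_n = x_n + i y_n)$ of $(X, \nabla^h)$ near $p$. Now invoke Proposition \ref{prop:h_R_in_coordinates}: in the real coordinates $(x_1, \ldots, x_n, y_1, \ldots, y_n)$, all first partial derivatives of the components of $h_R$ and of $h_I$ vanish at $p$, and all Christoffel symbols of $\nabla^{h_R}$ (and of $\nabla^{h_I}$) vanish at $p$.

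With these facts in hand the computation at $p$ is immediate. In coordinates, the components of the metric compatibility tensor are
\begin{gather*}
 (\nabla^{h_R} h_I)_{kij} = \partial_{k} (h_I)_{ij} - \Gamma^{l}_{ki}(h_I)_{lj} - \Gamma^{l}_{kj}(h_I)_{il},
\end{gather*}
where $\Gamma^{l}_{ki}$ are the Christoffel symbols of $\nabla^{h_R}$ and all indices range over the $2n$ real coordinate directions. Evaluating at $p$: the first term vanishes because every derivative of $h_I$ vanishes at $p$ by point (2) of Proposition \ref{prop:h_R_in_coordinates}, and the remaining two terms vanish because every Christoffel symbol of $\nabla^{h_R}$ vanishes at $p$ by point (3). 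Hence $(\nabla^{h_R} h_I)|_p = 0$. Since $p$ was arbitrary, $\nabla^{h_R} h_I \equiv 0$ on $X$. Thus $\nabla^{h_R}$ is a torsion-free connection compatible with $h_I$, so by uniqueness of the Levi-Civita connection of $h_I$ we conclude $\nabla^{h_R} = \nabla^{h_I}$.

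The argument has essentially no obstacle: all the real work has already been done in Proposition \ref{prop:h_R_in_coordinates}, which supplies precisely the vanishing of first derivatives of $h_R, h_I$ and of the real Christoffel symbols at the center of holomorphic normal coordinates. The only point requiring a line of care is the tensoriality reduction — that $\nabla^{h_R} h_I$ is genuinely a tensor field (well-defined pointwise), which is standard and already noted for $\nabla h$ in the holomorphic setting and holds verbatim in the smooth real setting — together with the observation that holomorphic normal coordinates exist near every point, so that one really can arrange the distinguished coordinate system at any prescribed $p$.
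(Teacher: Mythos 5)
Your proof is correct and rests on exactly the same foundation as the paper's: Proposition \autoref{prop:h_R_in_coordinates} applied at the center of holomorphic normal coordinates around an arbitrary point $p$. The paper concludes slightly more directly — since all Christoffel symbols of both $\nabla^{h_R}$ and $\nabla^{h_I}$ vanish at $p$ in those coordinates, the two connections already agree at $p$ by definition of the Christoffel symbols — whereas you route through $\nabla^{h_R}h_I = 0$ and uniqueness of the Levi-Civita connection of $h_I$; both are equally valid immediate consequences of that proposition.
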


\begin{proof}
 This is an immediate consequence of the previous proposition: For every point $p\in X$, there are coordinates in which the Christoffel symbols of $\nabla^{h_R}$ and $\nabla^{h_I}$ agree at $p$, hence, $\nabla^{h_R} = \nabla^{h_I}$ by definition of the Christoffel symbols.
\end{proof}

We will see later on that the holomorphic Levi-Civita connection $\nabla^h$ can be regarded as the complexification of the connection $\nabla^{h_R} = \nabla^{h_I}$. Before we can make this statement precise, we first need to investigate how the complex structure $J$ interacts with the connections $\nabla^{h}$ and $\nabla^{h_R} = \nabla^{h_I}$. For this, we introduce the following definition:

\begin{Def}[Complex connection]
 Let $X$ be a smooth manifold with almost complex structure $J$. We call a connection $\nabla:\Gamma (TX)\times \Gamma (TX)\to \Gamma (TX)$ on $X$ \textbf{complex} with respect to $J$ iff $\nabla J \equiv 0$.
\end{Def}

\begin{Rem*}[Definition of $\nabla J$]
 Recall that for all $X,Y\in\Gamma (TX)$, one has:
 \begin{gather*}
  (\nabla_X J) (Y) = \nabla_X (J(Y)) - J(\nabla_X Y).
 \end{gather*}
\end{Rem*}

We find that $\nabla^{h_R} = \nabla^{h_I}$ is complex:

\begin{Prop}[$\nabla^{h_R} = \nabla^{h_I}$ is complex]
 Let $(X,J)$ be a complex manifold with holomorphic metric $h = h_R + ih_I$. Then, the connection $\nabla^{h_R} = \nabla^{h_I}$ is complex with respect to $J$.
\end{Prop}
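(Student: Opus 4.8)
The plan is to exploit the work already done: by the previous proposition, for every point $p\in X$ there exist holomorphic normal coordinates $\phi = (z_1 = x_1+iy_1,\ldots, z_n = x_n+iy_n)$ of $(X,\nabla^h)$ near $p$ in which all Christoffel symbols of $\nabla^{h_R} = \nabla^{h_I}$ vanish \emph{at} $p$. Since $J$ is the complex structure induced by the chart $\phi$, it acts in these coordinates by $J(\partial_{x_j}) = \partial_{y_j}$ and $J(\partial_{y_j}) = -\partial_{x_j}$, so the components of $J$ are \emph{constant} throughout the chart domain. The strategy is therefore to compute $(\nabla^{h_R}_W J)(V)$ at $p$ directly from the coordinate formula and see that it vanishes, then conclude by the arbitrariness of $p$.

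Concretely, I would first recall that $\nabla J$ is a genuine tensor field (it is $C^\infty(X,\mathbb{R})$-bilinear in $W$ and $V$), so it suffices to evaluate $(\nabla^{h_R}_W J)(V)\vert_p$ on coordinate vector fields $W = \partial_{a_i}$, $V = \partial_{b_j}$ with $a,b\in\{x,y\}$. Using $(\nabla_W J)(V) = \nabla_W(J(V)) - J(\nabla_W V)$ and the fact that $J$ has constant coefficients in this chart, the first term reduces to $\nabla^{h_R}_{\partial_{a_i}}(\pm\partial_{b'_j})$ where $b'$ is the "rotated" index, and both $\nabla^{h_R}_{\partial_{a_i}}\partial_{b'_j}\vert_p$ and $\nabla^{h_R}_{\partial_{a_i}}\partial_{b_j}\vert_p$ are expressed purely through Christoffel symbols of $\nabla^{h_R}$ at $p$, which all vanish by Part (3) of the previous proposition. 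Hence $(\nabla^{h_R}_W J)(V)\vert_p = 0$ for all such $W,V$, and by tensoriality $(\nabla^{h_R}J)\vert_p = 0$. Since $p\in X$ was arbitrary, $\nabla^{h_R}J \equiv 0$, i.e. $\nabla^{h_R} = \nabla^{h_I}$ is complex with respect to $J$.

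I do not expect a genuine obstacle here: the only subtlety is making sure that $J$ really does have constant components in \emph{holomorphic} normal coordinates — this is immediate because holomorphic normal coordinates are in particular holomorphic charts, and in any holomorphic chart $J$ is the standard almost complex structure with constant matrix. A secondary point worth a sentence is that one should not confuse "all Christoffel symbols vanish at $p$" (true in normal coordinates) with "all their derivatives vanish at $p$" (false in general); but the argument only uses the former, since $(\nabla_W J)(V)$ at $p$ involves $\nabla^{h_R}$ applied once, hence only the Christoffel symbols themselves and not their derivatives. This is the main thing to be careful about. An alternative, coordinate-free route would be to observe that $\nabla^h$ is the complexification of $\nabla^{h_R} = \nabla^{h_I}$ and that complexification automatically preserves $J$, but since the excerpt states the complexification identification is only established "later on," I would prefer the direct normal-coordinate computation above.
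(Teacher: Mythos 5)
Your argument is correct and is essentially identical to the paper's own proof: both use the tensoriality of $\nabla^{h_R}J$ to reduce to coordinate vector fields at an arbitrary point $p$, then invoke the vanishing of the Christoffel symbols of $\nabla^{h_R}$ at $p$ in holomorphic normal coordinates (Part (3) of the preceding proposition) together with the fact that $J$ has the constant standard form in any holomorphic chart. Your cautionary remark that only the Christoffel symbols themselves, not their derivatives, are needed is accurate and does not change the argument.
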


\begin{proof}
 We need to show:
 \begin{gather*}
  (\nabla^{h_R}_X J) (Y) = \nabla^{h_R}_X (J(Y)) - J(\nabla^{h_R}_X Y) = 0\quad\forall X,Y\in\Gamma (TX).
 \end{gather*}
 The object $(\nabla^{h_R}_X J) (Y)$ is tensorial in $X$ and $Y$, hence, it suffices to compute $\nabla^{h_R} J$ at any point $p\in X$ for the tangent vector fields associated to the holomorphic normal coordinates $(z_1 = x_1 + iy_1,\ldots, z_n = x_n + iy_n)$ near $p$. By Proposition \autoref{prop:h_R_in_coordinates}, we find:
 \begin{align*}
  \nabla^{h_R}_{\partial_{x_i}}\left( J (\partial_{x_j})\right) (p) &= \nabla^{h_R}_{\partial_{x_i}} \partial_{y_j} (p) = 0,\quad \nabla^{h_R}_{\partial_{x_i}}\left( J (\partial_{y_j})\right) = \ldots\\
  J(\nabla^{h_R}_{\partial_{x_i}} \partial_{x_j}) (p) &= 0,\quad J(\nabla^{h_R}_{\partial_{x_i}} \partial_{y_j}) (p) =\ldots
 \end{align*}
 Hence, $\nabla^{h_R} J = 0$.
\end{proof}

Before we move on with the investigation of the relation between $\nabla^h$ and $\nabla^{h_R} = \nabla^{h_I}$, let us take a moment to understand what it means for a connection to be complex. To do that, we first realize that every connection $\nabla:\Gamma (TX)\times \Gamma (TX)\to \Gamma (TX)$ on a smooth manifold $X$ can be complexified by $\Cx$-linearity, i.e., can be turned into a connection $\nabla:\Gamma (T_\Cx X)\times \Gamma (T_\Cx X)\to \Gamma (T_\Cx X)$ acting on smooth complex vector fields. Now we can rephrase the property ``complex'' as follows:

\begin{Prop}
 Let $X$ be a smooth manifold with almost complex structure $J$. A connection $\nabla:\Gamma (TX)\times \Gamma (TX)\to \Gamma (TX)$ is complex if and only if its complexification $\nabla:\Gamma (T_\Cx X)\times \Gamma (T_\Cx X)\to \Gamma (T_\Cx X)$ satisfies:
 \begin{gather*}
  \nabla_X Y\in \Gamma_{C^\infty} (T^{(1,0)}X)\quad \forall X\in \Gamma (T_\Cx X)\ \forall Y\in \Gamma_{C^\infty} (T^{(1,0)}X),
 \end{gather*}
 where $\Gamma_{C^\infty} (T^{(1,0)}X)$ is the space of smooth sections of $T^{(1,0)}X$.
\end{Prop}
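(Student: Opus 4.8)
The plan is to verify both implications by reducing everything to a computation in an arbitrary holomorphic chart. The natural first step is to fix a holomorphic chart $\phi = (z_1,\ldots, z_n):U\to V\subset\Cx^n$ of $X$ and to set up notation: write a complex vector field as $X = X^{(1,0)} + X^{(0,1)}$ according to the splitting $T_\Cx X = T^{(1,0)}X\oplus T^{(0,1)}X$, and note that a smooth section $Y$ of $T^{(1,0)}X$ can be written locally as $Y\vert_U = \sum_j Y_j\,\partial_{z_j}$ with smooth (not necessarily holomorphic) coefficients $Y_j:U\to\Cx$. The key observation is that $J$ acts as multiplication by $i$ on $T^{(1,0)}X$ and by $-i$ on $T^{(0,1)}X$, so the condition $\nabla J\equiv 0$ is equivalent to saying that $\nabla$ preserves each eigenbundle of $J$; I will make this precise below.

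For the direction ``$\nabla$ complex $\Rightarrow$ complexified $\nabla$ preserves $T^{(1,0)}X$'': given $X\in\Gamma(T_\Cx X)$ and $Y\in\Gamma_{C^\infty}(T^{(1,0)}X)$, we have $J(Y) = i\,Y$, so $\nabla_X(J(Y)) = i\,\nabla_X Y$ by $\Cx$-linearity of the complexified connection. On the other hand $(\nabla_X J)(Y) = 0$ gives $\nabla_X(J(Y)) = J(\nabla_X Y)$. Combining the two yields $J(\nabla_X Y) = i\,\nabla_X Y$, which says precisely that $\nabla_X Y$ lies in the $i$-eigenbundle of $J$, i.e. in $\Gamma_{C^\infty}(T^{(1,0)}X)$. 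The only mild care needed here is that $\nabla_X J$ as defined for real vector fields extends, again by $\Cx$-bilinearity, to a tensor on complex vector fields and still vanishes; this is routine since $J$ itself is a real tensor that we complexify.

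For the converse: assume $\nabla_X Y\in\Gamma_{C^\infty}(T^{(1,0)}X)$ whenever $Y\in\Gamma_{C^\infty}(T^{(1,0)}X)$. Taking complex conjugates and using that $\nabla$ (being the complexification of a real connection) commutes with complex conjugation, one gets that $\nabla_X Y\in\Gamma_{C^\infty}(T^{(0,1)}X)$ whenever $Y\in\Gamma_{C^\infty}(T^{(0,1)}X)$. Now let $Y\in\Gamma(TX)$ be an arbitrary real vector field and decompose $Y = Y^{(1,0)} + Y^{(0,1)}$ with $Y^{(0,1)} = \overline{Y^{(1,0)}}$. Since $J(Y^{(1,0)}) = i\,Y^{(1,0)}$ and $J(Y^{(0,1)}) = -i\,Y^{(0,1)}$, we compute
\begin{align*}
 \nabla_X(J(Y)) &= \nabla_X(i\,Y^{(1,0)} - i\,Y^{(0,1)}) = i\,\nabla_X Y^{(1,0)} - i\,\nabla_X Y^{(0,1)}\\
 &= J(\nabla_X Y^{(1,0)}) + J(\nabla_X Y^{(0,1)}) = J(\nabla_X Y),
\end{align*}
where in the third equality we used that $\nabla_X Y^{(1,0)}$ lies in $T^{(1,0)}X$ (so $J$ acts as $i$) and $\nabla_X Y^{(0,1)}$ lies in $T^{(0,1)}X$ (so $J$ acts as $-i$). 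Hence $(\nabla_X J)(Y) = 0$ for all real $Y$ and all complex $X$, and restricting to real $X$ shows $\nabla$ is complex. I expect the main obstacle — really the only subtlety worth flagging — to be bookkeeping the extension of $\nabla$, $J$, and the tensor $\nabla J$ from the real to the complexified setting and checking compatibility with conjugation; once that is set up cleanly, both directions are one-line eigenbundle arguments. One should also remark that the statement is purely pointwise/tensorial in the first argument, so no presheaf considerations (Remark \ref{rem:presheaf}) enter here.
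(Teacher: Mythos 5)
Your proof is correct and follows essentially the same route as the paper's: both directions come down to the decomposition $T_\Cx X = T^{(1,0)}X\oplus T^{(0,1)}X$ into $J$-eigenbundles and the observation that $\nabla J=0$ is equivalent to the complexified connection preserving them. The paper phrases the converse via the real/imaginary parts of $\frac{1}{2}(\hat Y - iJ(\hat Y))$ rather than via conjugation to the $(0,1)$-bundle, but this is only a presentational difference.
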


\begin{proof}
 ``$\Rightarrow$'': Let $X\in \Gamma (T_\Cx X)$ and $Y\in \Gamma_{C^\infty} (T^{(1,0)}X)$, then $Y = 1/2 (\hat Y - iJ (\hat Y))$ for a unique real vector field $\hat Y$ on $X$. Thus, we obtain:
 \begin{gather*}
  \nabla_X Y = \frac{1}{2}\left(\nabla_X\hat Y - i\nabla_X (J(\hat Y))\right) \stackrel{\nabla J = 0}{=} \frac{1}{2}\left(\nabla_X\hat Y - iJ (\nabla_X \hat Y)\right)\in\Gamma_{C^\infty} (T^{(1,0)}X).
 \end{gather*}
 ``$\Leftarrow$'': Let $X\in\Gamma (TX)$ and $\hat Y\in \Gamma (TX)$, then $Y\coloneqq 1/2 (\hat Y - iJ(\hat Y))\in\Gamma_{C^\infty} (T^{(1,0)}X)$. Thus, we find $\nabla_X Y \in\Gamma_{C^\infty} (T^{(1,0)}X)$. Since
 \begin{gather*}
  \text{Re} \left(\nabla_X Y\right) = \frac{1}{2} \nabla_X \hat Y,
 \end{gather*}
 we must have:
 \begin{gather*}
  \text{Im} \left(\nabla_X Y\right) = -\frac{1}{2} J(\nabla_X \hat Y).
 \end{gather*}
 In total, this gives:
 \begin{gather*}
  \frac{1}{2}\nabla_X (\hat Y - i J(\hat Y)) = \frac{1}{2}\left(\nabla_X \hat Y - i J(\nabla_X \hat Y)\right)
 \end{gather*}
 Hence:
 \begin{gather*}
  \nabla_X J(\hat Y) = J(\nabla_X\hat Y)\ \Rightarrow\ \nabla J = 0
 \end{gather*}
\end{proof}

We now see that, at first glance, the definition of a complex connection and a holomorphic connection look very similar. A complex connection $\nabla:\Gamma (TX)\times \Gamma (TX)\to \Gamma (TX)$ can be regarded as a map $\nabla:\Gamma_{C^\infty} (T^{(1,0)}X)\times \Gamma_{C^\infty} (T^{(1,0)}X)\to \Gamma_{C^\infty} (T^{(1,0)}X)$ (via complexification) satisfying Property (1), (2), and (3)\footnote{We can convince ourselves that a complex connection satisfies Property (3) by exploiting smooth partitions of unity (cf. Remark \autoref{rem:presheaf}).} from Definition \autoref{def:holo_connection}, where all vector fields $X$ and $Y$ are taken to be smooth complex vector fields and all functions $f$ are taken to be smooth $\Cx$-valued functions. The main difference between complex and holomorphic connections is that holomorphic connections are only defined for integrable $J$ and send holomorphic vector fields to holomorphic vector fields, while complex connections are defined on general almost complex manifolds $(X,J)$. In particular, if $J$ is non-integrable, there is no notion of holomorphic vector fields. Even if $J$ is integrable, complex connections are not required to send holomorphic vector fields to holomorphic vector fields. In this sense, the notion of a holomorphic connection is more restrictive than the notion of a complex connection.\\
We now have all tools at hand to show that $\nabla^h$ is the complexification of $\nabla^{h_R} = \nabla^{h_I}$:

\begin{Lem}[$\nabla^h = \nabla^{h_R} = \nabla^{h_I}$]
 Let $(X,J)$ be a complex manifold with holomorphic metric $h = h_R + ih_I$. Then, the holomorphic Levi-Civita connection $\nabla^h$ is given by the complexification\footnote{We also denote the complexification of $\nabla^{h_R} = \nabla^{h_I}$ by $\nabla^{h_R} = \nabla^{h_I}$.} of the standard Levi-Civita connections $\nabla^{h_R} = \nabla^{h_I}$, i.e.:
 \begin{gather*}
  \nab{U}^h_X Y = \nabla^{h_R\vert_U}_X Y = \nabla^{h_I\vert_U}_X Y\quad\forall X,Y\in\Gamma (T^{(1,0)}X)\ \forall U\subset X\text{ open}.
 \end{gather*}
\end{Lem}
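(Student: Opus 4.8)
The plan is to use uniqueness: both $\nabla^h$ and the complexification of $\nabla^{h_R}=\nabla^{h_I}$ are holomorphic connections on $(X,J)$ that are torsion-free and compatible with $h$, so they must agree by Lemma \autoref{lem:holo_levi-civita} (the uniqueness of the holomorphic Levi-Civita connection). Concretely, write $\tilde\nabla$ for the complexification of $\nabla^{h_R}$, acting (via $\mathbb{C}$-linearity) on smooth complex vector fields. The first task is to check that $\tilde\nabla$ restricts to a well-defined operation on holomorphic vector fields, i.e. $\tilde\nabla_X Y\in\Gamma(T^{(1,0)}U)$ whenever $X,Y\in\Gamma(T^{(1,0)}U)$. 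For this I would invoke the proposition in the excerpt characterizing complex connections: since $\nabla^{h_R}=\nabla^{h_I}$ is complex with respect to $J$ (already proven), its complexification sends $\Gamma_{C^\infty}(T^{(1,0)}X)$ into $\Gamma_{C^\infty}(T^{(1,0)}X)$. It then remains to see that this image consists of \emph{holomorphic} sections when $X,Y$ are holomorphic, which I would verify locally in a holomorphic chart: the Christoffel symbols of $\tilde\nabla$ in holomorphic coordinates are $\mathbb{C}$-linear combinations of partial derivatives of the holomorphic metric coefficients $h_{ij}$, hence holomorphic (the $\bar\partial$-derivatives vanish, as already used in Proposition \autoref{prop:h_R_in_coordinates}), so $\tilde\nabla_X Y$ has holomorphic components.

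Next I would verify the three defining axioms of a holomorphic connection (Definition \autoref{def:holo_connection}) for $\tilde\nabla$. Tensoriality in the first slot and the Leibniz rule in the second slot are inherited immediately from the corresponding properties of the real connection $\nabla^{h_R}$ after $\mathbb{C}$-linear extension, now tested against holomorphic functions $f$ instead of arbitrary smooth complex-valued ones. The presheaf property holds because $\nabla^{h_R}$ is a genuine (local) connection on the smooth manifold underlying $X$: restriction of $\nabla^{h_R}_X Y$ to an open subset equals $\nabla^{h_R\vert_{U'}}_{X\vert_{U'}}Y\vert_{U'}$, and this passes to the complexification and to holomorphic sections without change. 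Thus $\tilde\nabla = \{\,\tilde\nabla^U\,\}_{U\subset X\text{ open}}$ is a holomorphic connection on $(X,J)$.

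Then I would check torsion-freeness and metric compatibility of $\tilde\nabla$. For torsion: for holomorphic $X,Y$ we have $\tilde\nabla_X Y - \tilde\nabla_Y X - [X,Y] = \nabla^{h_R}_X Y - \nabla^{h_R}_Y X - [X,Y] = 0$ since $\nabla^{h_R}$ is torsion-free and the Lie bracket of holomorphic vector fields is again holomorphic (and unchanged under complexification). For metric compatibility: expand $\tilde\nabla^U h(X,Y,Z) = X(h(Y,Z)) - h(\tilde\nabla_X Y,Z) - h(Y,\tilde\nabla_X Z)$. Writing $h = h_R + ih_I$ and using that $\nabla^{h_R}$ is compatible with $h_R$ and $\nabla^{h_I}=\nabla^{h_R}$ is compatible with $h_I$, the real and imaginary parts of $\tilde\nabla^U h$ each vanish; here one uses that on holomorphic vector fields the complex-bilinear form $h$ agrees with $h_R + ih_I$ evaluated on the same (complex) arguments, so no complex-conjugate terms intrude. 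Hence $\tilde\nabla$ is torsion-free and compatible with $h$, and by the uniqueness part of Lemma \autoref{lem:holo_levi-civita}, $\tilde\nabla = \nabla^h$, which is exactly the claimed identity $\nab{U}^h_X Y = \nabla^{h_R\vert_U}_X Y = \nabla^{h_I\vert_U}_X Y$.

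The main obstacle I anticipate is the bookkeeping in the verification that $\tilde\nabla_X Y$ is \emph{holomorphic} (not merely a smooth section of $T^{(1,0)}X$) for holomorphic $X,Y$ — that is, cleanly separating the roles of the real connection's complex-linearity, the $J$-invariance ($\nabla^{h_R}J=0$), and the holomorphicity of the metric coefficients, without circular appeal to $\nabla^h$ itself. Everything else is a routine transfer of the real Levi-Civita axioms through $\mathbb{C}$-linear extension, restricted to holomorphic arguments and holomorphic test functions.
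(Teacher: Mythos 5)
Your proposal is correct, but it takes a different route from the paper. You verify that the complexification $\tilde\nabla$ of $\nabla^{h_R}=\nabla^{h_I}$ is itself a holomorphic connection (checking the three axioms, plus the nontrivial point that $\tilde\nabla_XY$ has holomorphic coefficients for holomorphic $X,Y$), that it is torsion-free and compatible with $h$, and then invoke the uniqueness part of Lemma \autoref{lem:holo_levi-civita}. The paper instead compares the two connections directly: it complexifies the Koszul formula, evaluates it on holomorphic $X,Y,Z$, and obtains $h(\nabla^h_XY-\nabla^{h_R}_XY,Z)=0$ for all holomorphic $Z$; since $h$ vanishes on $T^{(0,1)}X$ this alone does not suffice, and the paper closes the gap with a type argument --- $\nabla^h_XY$ is of type $(1,0)$ because $\nabla^h$ is holomorphic and $\nabla^{h_R}_XY$ is of type $(1,0)$ because $\nabla^{h_R}$ is complex, so the difference lies in $T^{(1,0)}X$, where $h$ is non-degenerate. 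The paper's route is shorter and obtains the holomorphicity of $\nabla^{h_R}_XY$ a posteriori (it equals $\nabla^h_XY$), whereas your route must establish it a priori; your sketch of that step (Christoffel symbols in a holomorphic frame are expressible via Wirtinger derivatives of the holomorphic $h_{ij}$ and the holomorphic inverse matrix $h^{ij}$, hence holomorphic) is sound, and in fact makes explicit a fact the paper never isolates. Both arguments ultimately rest on the same complexified Koszul identity, since that is also what drives the uniqueness statement you cite.
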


\begin{proof}
 For smooth real vector fields $X,Y,Z\in\Gamma (TX)$, we can express the terms
 \begin{gather*}
  2h_R \left(\nabla^{h_R}_X Y, Z\right)\quad\text{and}\quad 2h_I \left(\nabla^{h_I}_X Y, Z\right)
 \end{gather*}
 with the help of the Koszul formula as in the proof of Lemma \autoref{lem:holo_levi-civita}. We realize that the Koszul formula also holds for complex smooth vector fields $X,Y,Z\in\Gamma (T_\Cx X)$ if we complexify the expressions above and the Koszul formula by $\Cx$-linearity in $X,Y,Z$. In particular, for holomorphic vector fields $X,Y,Z\in\Gamma (T^{(1,0)}X)$, this gives:
 \begin{align*}
  2h (\nabla^{h_R}_X Y, Z) &= 2h_R (\nabla^{h_R}_X Y, Z) + i 2h_I (\nabla^{h_R}_X Y, Z) = 2h_R (\nabla^{h_R}_X Y, Z) + i 2h_I (\nabla^{h_I}_X Y, Z)\\
  &= (\text{Koszul formula for $h_R$}) + i (\text{Koszul formula for $h_I$})\\
  &= \text{Koszul formula for $h$} = 2h (\nabla^h_X Y, Z).
 \end{align*}
 Thus, we have:
 \begin{gather*}
  h(\nabla^{h}_X Y - \nabla^{h_R}_X Y, Z) = 0.
 \end{gather*}
 A priori, the last equation does not enforce $\nabla^h = \nabla^{h_R}$, since the holomorphic metric $h$ is only non-degenerate on $T^{(1,0)}X$, but vanishes on $T^{(0,1)}X$. However, since $\nabla^h$ is holomorphic and $\nabla^{h_R}$ is complex, the vector $\nabla^{h}_X Y - \nabla^{h_R}_X Y$ is of type $(1,0)$ concluding the proof.
\end{proof}

\begin{Rem}[Holomorphic connections as complexifications of real connections]
 Locally, every holomorphic connection $\nabla$ on $X$ can be interpreted as the complexification $\hat\nabla:\Gamma (T_\Cx U)\times \Gamma (T_\Cx U)\to \Gamma (T_\Cx U)$ of a real connection $\hat\nabla:\Gamma (T U)\times \Gamma (T U)\to \Gamma (T U)$ on a chart domain $U\subset X$. For instance, we can define $\hat\nabla$ by\footnote{These equations determine by $\Cx$-linearity the expressions $\hat\nabla_{\pa_{x_i}}\pa_{x_j}$, $\hat\nabla_{\pa_{x_i}}\pa_{y_j}$,\ldots, which in turn determine the Christoffel symbols of $\hat\nabla$. Since $\hat\nabla$ satisfies $\hat\nabla_{\pa_{z_i}}\pa_{z_j} = \overline{\hat\nabla_{\pa_{\bar{z}_i}}\pa_{\bar{z}_j}}$ and $\hat\nabla_{\pa_{\bar{z}_i}}\pa_{z_j} = \overline{\hat\nabla_{\pa_{z_i}}\pa_{\bar{z}_j}}$, the Christoffel symbols of $\hat\nabla$ w.r.t. the chart $(x_1,\ldots, y_n)$ are real implying that $\hat\nabla$ is indeed a real connection on $U$.}:
 \begin{gather*}
  \hat\nabla_{\pa_{z_i}}\pa_{z_j} = \nab{U}_{\pa_{z_i}}\pa_{z_j},\quad \hat\nabla_{\pa_{\bar{z}_i}}\pa_{\bar{z}_j} = \overline{\nab{U}_{\pa_{z_i}}\pa_{z_j}},\quad \hat\nabla_{\pa_{\bar{z}_i}}\pa_{z_j} = \hat\nabla_{\pa_{z_i}}\pa_{\bar{z}_j} = 0.
 \end{gather*}
 However, such a connection $\hat\nabla$ is not unique. In fact, replacing the last equation by
 \begin{gather*}
  \hat\nabla_{\pa_{\bar{z}_i}} \pa_{z_j} = \overline{\hat\nabla_{\pa_{z_i}} \pa_{\bar{z}_j}} = \sum^n_{k=1} f^k_{ij}\cdot \pa_{z_k} + g^k_{ij}\cdot \pa_{\bar{z}_k}
 \end{gather*}
 with $f^k_{ij}, g^k_{ij}\in C^\infty (U,\Cx)$ also yields a suitable connection $\hat\nabla$.
\end{Rem}

Lastly, we want to examine the relation between the holomorphic normal coordinates of $\nabla^h$ and the normal coordinates of $\nabla^{h_R}$. The following formula will be helpful in the upcoming discussion.

\begin{Prop}
 Let $(X,J)$ be a complex manifold with holomorphic metric $h = h_R + i h_I$ and Levi-Civita connection $\nabla^h\equiv \nabla^{h_R}\equiv \nabla^{h_I}$. Then:
 \begin{gather*}
  \nabla^{h_R}_{J(X)} Y = i\nabla^{h_R}_X Y\quad\forall X\in\Gamma (TX)\ \forall Y\in\Gamma (T^{(1,0)}X).
 \end{gather*}
\end{Prop}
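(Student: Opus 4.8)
The plan is to verify the identity $\nabla^{h_R}_{J(X)}Y = i\nabla^{h_R}_X Y$ pointwise by working in holomorphic normal coordinates of $(X,\nabla^h)$, exploiting the structural results established just above: namely that $\nabla^h = \nabla^{h_R} = \nabla^{h_I}$ (as connections, after complexification), that $\nabla^{h_R}$ is complex (i.e.\ $\nabla^{h_R}J = 0$), and that in holomorphic normal coordinates near a point $p$ all Christoffel symbols of $\nabla^{h_R}$ vanish at $p$. Since both sides of the claimed equation are $C^\infty(X,\mathbb{C})$-bilinear tensorial expressions in $X\in\Gamma(TX)$ and $Y\in\Gamma(T^{(1,0)}X)$, it suffices to check the identity at an arbitrary point $p\in X$ for $X$ and $Y$ taken from a convenient local frame.

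First I would fix $p\in X$ and choose holomorphic normal coordinates $\phi = (z_1 = x_1 + iy_1,\ldots, z_n = x_n + iy_n)$ of $(X,\nabla^h)$ near $p$. Then I would observe that it is enough to take $X = \partial_{x_i}|_p$ (the case $X = \partial_{y_i}|_p = J(\partial_{x_i})|_p$ then follows from the $\partial_{x_i}$ case together with $J^2 = -1$), and $Y$ ranging over the coordinate sections $\partial_{z_j}$ of $T^{(1,0)}X$. The key computational fact is that at $p$ the complexified connection $\nabla^{h_R}$ has vanishing Christoffel symbols in these coordinates, so $\nabla^{h_R}_V \partial_{z_j}(p) = 0$ for every coordinate vector $V$ at $p$ — but this alone would give $0 = 0$ and is not the whole story. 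The identity is really about the behaviour of the connection in a neighbourhood, not just at $p$, so I would instead argue via the holomorphicity of $\nabla^h$: for holomorphic $X$ and $Y$, the vector field $\nabla^h_X Y$ is holomorphic, hence depends holomorphically on the base point, and the substitution $\partial_{x_i}\mapsto \partial_{y_i}$ corresponds under the Cauchy–Riemann structure to multiplication by $i$. Concretely, writing $\nabla^{h_R}_{\partial_{z_i}}\partial_{z_j} = \sum_k \Gamma^k_{ij}\partial_{z_k}$ with $\Gamma^k_{ij}$ holomorphic (this is the content of $\nabla^h$ being holomorphic and equal to $\nabla^{h_R}$), and using $\partial_{x_i} = \partial_{z_i} + \partial_{\bar z_i}$, $\partial_{y_i} = i(\partial_{z_i} - \partial_{\bar z_i})$ together with $\nabla^{h_R}_{\partial_{\bar z_i}}\partial_{z_j} = 0$ (which follows since $\nabla^{h_R}$ is complex and $\nabla^h$ is holomorphic, so $\nabla^{h_R}_{\partial_{\bar z_i}}\partial_{z_j}$ is both of type $(1,0)$ and annihilated by the holomorphic structure), one gets
\begin{gather*}
 \nabla^{h_R}_{\partial_{x_i}}\partial_{z_j} = \nabla^{h_R}_{\partial_{z_i}}\partial_{z_j}, \qquad \nabla^{h_R}_{\partial_{y_i}}\partial_{z_j} = i\,\nabla^{h_R}_{\partial_{z_i}}\partial_{z_j},
\end{gather*}
and since $\partial_{y_i} = J(\partial_{x_i})$, this is exactly $\nabla^{h_R}_{J(\partial_{x_i})}\partial_{z_j} = i\nabla^{h_R}_{\partial_{x_i}}\partial_{z_j}$.

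To finish, I would upgrade from coordinate sections to arbitrary $Y\in\Gamma(T^{(1,0)}X)$ and arbitrary $X\in\Gamma(TX)$ by tensoriality in $X$ and the Leibniz rule in $Y$: writing $Y = \sum_j f_j\,\partial_{z_j}$ locally with $f_j$ smooth $\mathbb{C}$-valued functions, the Leibniz rule gives $\nabla^{h_R}_{J(X)}Y = \sum_j\bigl((J(X))(f_j)\partial_{z_j} + f_j\nabla^{h_R}_{J(X)}\partial_{z_j}\bigr)$, and comparing with $i\nabla^{h_R}_X Y$ reduces the claim to two ingredients: the coordinate-frame identity just proved, and the fact that $(J(X))(f_j) = i\,X(f_j)$ whenever $f_j$ is — wait, that would require $f_j$ holomorphic, which $Y$ need not be. So the correct reduction is subtler: I would instead note that it suffices to prove the identity for $Y$ holomorphic (a holomorphic frame), because a general smooth section of $T^{(1,0)}X$ is a $C^\infty$-combination of holomorphic ones only locally after shrinking — actually the cleanest route is to fix $p$, pick holomorphic $Y$ with prescribed value and prescribed first-order jet, use tensoriality in $X$ plus the Leibniz rule to see that $\nabla^{h_R}_{(\cdot)}Y(p)$ depends only on the $1$-jet of $Y$ at $p$, and then the coordinate computation above covers all such $1$-jets. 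The main obstacle I anticipate is precisely this last bookkeeping step — making sure the passage from the coordinate frame to general $(X,Y)$ correctly separates the tensorial ($X$) behaviour from the derivational ($Y$) behaviour without accidentally needing $Y$ to be holomorphic — but the $1$-jet argument, combined with the identity $\nabla^{h_R}_{\partial_{\bar z_i}}\partial_{z_j} = 0$ which is the real engine here, resolves it.
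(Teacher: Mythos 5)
Your computation is correct for holomorphic $Y$, and it reaches the result by a genuinely different route than the paper. The paper fixes $p$, takes holomorphic normal coordinates of $(X,\nabla^h)$ so that all Christoffel symbols of $\nabla^{h_R}$ vanish \emph{at} $p$, writes $Y=\sum_j c_j\partial_{z_j}$ with $c_j$ holomorphic, and then the whole identity at $p$ collapses to $dc_j(J(X))=i\,dc_j(X)$. You instead prove the global frame identity $\nabla^{h_R}_{\partial_{\bar z_i}}\partial_{z_j}=0$, which immediately gives $\nabla^{h_R}_{\partial_{x_i}}\partial_{z_j}=\nabla^{h_R}_{\partial_{z_i}}\partial_{z_j}$ and $\nabla^{h_R}_{\partial_{y_i}}\partial_{z_j}=i\nabla^{h_R}_{\partial_{z_i}}\partial_{z_j}$ in \emph{any} holomorphic chart, with no normal coordinates needed; this is arguably the more structural argument (it exhibits $\nabla^{h_R}$ as a Chern-type connection whose $(0,1)$-part on $T^{(1,0)}X$ is $\bar\partial$). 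Two points to tighten. First, your justification of $\nabla^{h_R}_{\partial_{\bar z_i}}\partial_{z_j}=0$ (``annihilated by the holomorphic structure'') is too vague; a clean argument is that torsion-freeness gives $\nabla^{h_R}_{\partial_{\bar z_i}}\partial_{z_j}=\nabla^{h_R}_{\partial_{z_j}}\partial_{\bar z_i}$, and since $\nabla^{h_R}J=0$ forces the left-hand side to be of type $(1,0)$ and the right-hand side to be of type $(0,1)$, both vanish. (Alternatively, plug into the complexified Koszul formula and use $\partial_{\bar z_k}h_{ij}=0$.)

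Second, your closing worry about non-holomorphic $Y$ should simply be deleted rather than patched with a $1$-jet argument. In this paper $\Gamma(T^{(1,0)}X)$ denotes the space of \emph{holomorphic} vector fields (smooth sections are written $\Gamma_{C^\infty}(T^{(1,0)}X)$), so $Y$ is holomorphic by hypothesis, the coefficients $c_j$ in $Y=\sum_j c_j\partial_{z_j}$ are holomorphic, and $(J(X))(c_j)=i\,X(c_j)$ holds. This hypothesis is not removable: for $Y=f\partial_{z_j}$ with $f$ smooth but not holomorphic the Leibniz term produces $df(J(X))\neq i\,df(X)$, so the identity genuinely fails for merely smooth $(1,0)$-sections. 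Consequently the proposed extension ``the coordinate computation covers all such $1$-jets'' cannot work: the $1$-jets at $p$ of holomorphic sections form a proper subspace of the $1$-jets of smooth sections of $T^{(1,0)}X$ (those with no $\bar z$-derivatives), and only those are covered. With the hypothesis read correctly, your frame identity plus tensoriality in $X$ and the Leibniz rule in $Y$ already constitute a complete proof.
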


\begin{proof}
 Take the notations from above and pick any point $p\in X$. We show:
 \begin{gather*}
  \nabla^{h_R}_{J(X)} Y (p) = i\nabla^{h_R}_X Y (p).
 \end{gather*}
 Let $(z_1 = x_1 + iy_1,\ldots, z_n = x_n + iy_n)$ be holomorphic normal coordinates of $(X,\nabla^h)$ near $p$ and write
 \begin{gather*}
  Y = \sum^n_{j = 1} c_j\pa_{z_j}
 \end{gather*}
 for some locally defined holomorphic functions $c_j$. Then:
 \begin{align*}
  \nabla^{h_R}_{J(X)} Y (p) &= \sum^n_{j = 1} c_j (p)\cdot \nabla^{h_R}_{J(X)}\pa_{z_j} (p) + dc_j (J(X)) (p)\cdot\pa_{z_j}\vert_p\\
  &= \sum^n_{j = 1} c_j (p)\cdot \nabla^{h_R}_{J(X)}\pa_{z_j} (p) + i\cdot dc_j (X) (p)\cdot\pa_{z_j}\vert_p\\
  &= \sum^n_{j = 1} i\cdot dc_j (X) (p)\cdot\pa_{z_j}\vert_p\\
  &= i\sum^n_{j = 1} c_j (p)\cdot \nabla^{h_R}_{X}\pa_{z_j} (p) + dc_j (X) (p)\cdot\pa_{z_j}\vert_p = i\nabla^{h_R}_X Y (p),
 \end{align*}
 where we used that $\nabla^{h_R}$ is $\Cx$-linear in both components, tensorial in the first component, and $\nabla^{h_R}_{\pa_{x_i}} \pa_{x_j} (p) = \nabla^{h_R}_{\pa_{x_i}} \pa_{y_j} (p) = \ldots = 0$ (cf. Proposition \autoref{prop:h_R_in_coordinates}).
\end{proof}

The formula we have just derived now allows us to link the (holomorphic) geodesics of $\nabla^h$ with the geodesics of $\nabla^{h_R} = \nabla^{h_I}$.

\begin{Prop}
 Let $(X,J)$ be a complex manifold with holomorphic metric $h = h_R + ih_I$  and Levi-Civita connection $\nabla^h\equiv\nabla^{h_R}\equiv\nabla^{h_I}$. Further, let $U\coloneqq [t_0, t_1] + i[s_0, s_1]$ be a domain in $\Cx$ and $\gamma:U\to X$ be a holomorphic curve. Define the curves $\gamma_s:[t_0,t_1]\to X$ and $\gamma_t: [s_0, s_1]\to X$ by $\gamma_s (t) \coloneqq \gamma (t+is) \eqqcolon \gamma_t (s)$. Then, $\gamma$ is a (holomorphic) geodesic of $\nabla^h$ iff $\gamma_s$ is a geodesic of $\nabla^{h_R} = \nabla^{h_I}$ for every $s\in[s_0, s_1]$ iff $\gamma_t$ is a geodesic of $\nabla^{h_R} = \nabla^{h_I}$ for every $t\in[t_0, t_1]$.
\end{Prop}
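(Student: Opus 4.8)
The plan is to write everything in holomorphic normal coordinates $(z_1 = x_1 + iy_1, \ldots, z_n = x_n + iy_n)$ centered at a point and to reduce the geodesic equations for $\gamma$, $\gamma_s$, and $\gamma_t$ to one another using the Cauchy--Riemann equations for $\gamma$ together with the identity $\nabla^{h_R}_{J(X)} Y = i\nabla^{h_R}_X Y$ for holomorphic $Y$ just established. First I would observe that, since $\gamma$ is holomorphic, $\partial\gamma/\partial s = J(\partial\gamma/\partial t)$, so the real curves $\gamma_s$ and $\gamma_t$ are related by $\dot\gamma_t(s) = J(\dot\gamma_s(t))$ at the point $z = t+is$. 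The main computation is to expand the complex covariant derivative $\tfrac{\nabla}{dz}\gamma'$ and to recognize its real and imaginary parts as the covariant accelerations of $\gamma_s$ and $\gamma_t$ with respect to $\nabla^{h_R} = \nabla^{h_I}$ (the equality of these two connections is the corollary proven just above, so I may invoke it freely).

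Concretely, write $\gamma' = \tfrac12(\partial_t\gamma - i\,\partial_s\gamma) = \partial_t\gamma$ (using holomorphicity) as a section of $\gamma^\ast T^{(1,0)}X$, and compute
\begin{align*}
 \frac{\nabla}{dz}\gamma'(z) &= \nabla^h_{\gamma'(z)}\gamma'
 = \nabla^{h_R}_{\partial_t\gamma(z)}\bigl(\partial_t\gamma - i\,\partial_s\gamma\bigr)\\
 &= \nabla^{h_R}_{\partial_t\gamma}\partial_t\gamma - i\,\nabla^{h_R}_{\partial_t\gamma}\partial_s\gamma,
\end{align*}
where in the first line I used the previous lemma identifying $\nabla^h$ with the complexification of $\nabla^{h_R}$ on holomorphic vector fields, and the fact that $\gamma'$ is of type $(1,0)$. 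Using $\partial_s\gamma = J(\partial_t\gamma)$ and the symmetry (torsion-freeness) of $\nabla^{h_R}$ together with $[\partial_t\gamma,\partial_s\gamma]$-type bookkeeping along the map $\gamma$, the real part of the right-hand side is the $\nabla^{h_R}$-acceleration $\tfrac{\nabla}{dt}\dot\gamma_s$ of the curve $t\mapsto\gamma_s(t)$, and the imaginary part is $-\tfrac{\nabla}{ds}\dot\gamma_t$ (or a suitable sign thereof), the $\nabla^{h_R}$-acceleration of $s\mapsto\gamma_t(s)$. Hence $\tfrac{\nabla}{dz}\gamma' \equiv 0$ is equivalent to the simultaneous vanishing of both, i.e. to $\gamma_s$ being a $\nabla^{h_R}$-geodesic for every $s$ and $\gamma_t$ being one for every $t$. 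That each of these two conditions alone implies the other follows because, by the Cauchy--Riemann relation $\dot\gamma_t(s) = J(\dot\gamma_s(t))$ and the identity $\nabla^{h_R}_{J(X)}Y = i\nabla^{h_R}_X Y$, the acceleration of $\gamma_t$ in the $s$-direction is $i$ times the acceleration of $\gamma_s$ in the $t$-direction along $\gamma$ (one must be slightly careful here, arguing in coordinates where the Christoffel symbols vanish at the base point and then noting the statement is pointwise and the base point arbitrary, exactly as in the proof of the preceding proposition).

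The step I expect to be the main obstacle is the clean bookkeeping in the middle display: relating $\nabla^{h_R}_{\partial_t\gamma}\partial_s\gamma$ along the map $\gamma$ to the intrinsic covariant derivatives $\tfrac{\nabla}{dt}\dot\gamma_s$ and $\tfrac{\nabla}{ds}\dot\gamma_t$ requires the standard symmetry lemma $\tfrac{\nabla}{\partial t}\tfrac{\partial\gamma}{\partial s} = \tfrac{\nabla}{\partial s}\tfrac{\partial\gamma}{\partial t}$ for torsion-free connections, applied to the pullback connection on $\gamma^\ast TX$, and keeping track of where the factor of $i$ (i.e. the action of $J$) enters. Once that symmetry is invoked, the computation collapses quickly; I would do it in holomorphic normal coordinates at an arbitrary point $p = \gamma(z_0)$ so that all Christoffel symbols vanish at $p$, verify the identity of accelerations there, and conclude by arbitrariness of $p$. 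The two remaining equivalences (``$\gamma$ geodesic $\Leftrightarrow$ all $\gamma_s$ geodesics'' and ``$\Leftrightarrow$ all $\gamma_t$ geodesics'') then follow immediately from the real/imaginary part decomposition above, finishing the proof.
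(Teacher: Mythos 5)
Your argument is correct and follows essentially the same route as the paper: identify $\nabla^h$ with the complexification of $\nabla^{h_R}$, use $\nabla^{h_R}_{J(X)}Y = i\nabla^{h_R}_X Y$ to reduce the lower slot of $\nabla^h_{\gamma'}\gamma'$ to $\partial_t\gamma$, and read off the geodesic equations for $\gamma_s$ and $\gamma_t$ from the resulting decomposition. The only cosmetic difference is that the paper extracts the second term by pulling $J$ through the connection via $\nabla^{h_R}J=0$, landing directly on $\tfrac{1}{2}\bigl(\tfrac{\nabla^{h_R}}{dt}\tfrac{d\gamma_s}{dt} - iJ\bigl(\tfrac{\nabla^{h_R}}{dt}\tfrac{d\gamma_s}{dt}\bigr)\bigr)$, whereas you route through the symmetry lemma for the pullback connection; in your bookkeeping the imaginary part is $J$ applied to (minus) the acceleration of $\gamma_t$ rather than that acceleration itself, which is harmless for the vanishing argument since $J$ is invertible.
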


\begin{proof}
 This statement is mostly a consequence of the previous formula and the fact that $\nabla^{h_R}$ is complex: For $z = t+is$, we can write:
 \begin{gather*}
  \gamma^\prime (z) = \frac{1}{2}\left(\pa_t\gamma (z) - iJ(\pa_t\gamma (z))\right) = \frac{1}{2}\left(-J(\pa_s\gamma (z)) - i\pa_s\gamma (z)\right).
 \end{gather*}
 Hence:
 \begin{align*}
  \frac{\nabla^h}{dz}\gamma^\prime &= \nabla^h_{\gamma^\prime} \gamma^\prime = \nabla^{h_R}_{\gamma^\prime} \gamma^\prime = \nabla^{h_R}_{1/2 (\pa_t\gamma - iJ(\pa_t\gamma))} \gamma^\prime = \nabla^{h_R}_{\pa_t\gamma}\gamma^\prime = \frac{1}{2}\left(\nabla^{h_R}_{\pa_t\gamma} \pa_t\gamma - iJ\left(\nabla^{h_R}_{\pa_t\gamma} \pa_t\gamma\right)\right)\\
  &= \frac{1}{2}\left(\frac{\nabla^{h_R}}{dt}\frac{d\gamma_s}{dt} - iJ\left(\frac{\nabla^{h_R}}{dt}\frac{d\gamma_s}{dt}\right)\right).
 \end{align*}
 A similar expression can be found for $\gamma_t$ concluding the proof.
\end{proof}

We are now able to prove the following lemma:

\begin{Lem}
 Let $(X,J)$ be a complex manifold with holomorphic metric $h = h_R + ih_I$. Let $p\in X$ be any point. Then, holomorphic normal coordinates\linebreak $(z_1 = x_1 + iy_1,\ldots, z_n = x_n + iy_n)$ of $(X,\nabla^h)$ near $p$ give rise to normal coordinates $(x_1,\ldots, y_n)$ of $(X, \nabla^{h_R})$ near $p$.
\end{Lem}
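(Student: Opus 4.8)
The plan is to characterize holomorphic normal coordinates of $(X,\nabla^h)$ near $p$ by the property that the radial lines through the origin are mapped to geodesics of $\nabla^h$ and that the coordinate tangent vectors at $p$ are $h$-orthonormal in the sense of the preceding definitions, and then to invoke the two propositions just proven — that $\nabla^h = \nabla^{h_R} = \nabla^{h_I}$ (the complexification statement) and the relation between holomorphic geodesics of $\nabla^h$ and geodesics of $\nabla^{h_R}$ — to transfer both properties to the real connection. Concretely, let $\phi = (z_1,\ldots,z_n) = l^{-1}\circ\exp^{\nabla^h\,-1}_p$ be the given holomorphic normal coordinates, with decomposition $z_j = x_j + iy_j$, so that $\psi \coloneqq (x_1,\ldots,x_n,y_1,\ldots,y_n)$ is a smooth chart of $X$ near $p$. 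I would verify that $\psi$ satisfies the defining property of normal coordinates for $(X,\nabla^{h_R})$, namely that every straight line through the origin $0 = \psi(p)$ in $\mathbb{R}^{2n}$ is a geodesic of $\nabla^{h_R}$.

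First I would set up the geodesics: fix $a = (a_1,\ldots,a_n,b_1,\ldots,b_n)\in\mathbb{R}^{2n}$ and consider the curve $\sigma(t) \coloneqq \psi^{-1}(ta)$ for $t$ near $0$. Writing $c_j \coloneqq a_j + ib_j \in \mathbb{C}$, the point $ta$ in the real chart corresponds to the complex point $(tc_1,\ldots,tc_n)$, so $\sigma(t) = \phi^{-1}(tc_1,\ldots,tc_n)$. By the definition of holomorphic normal coordinates via the exponential map of $\nabla^h$, the map $z\mapsto\phi^{-1}(zc_1,\ldots,zc_n)$ is precisely the holomorphic geodesic $\gamma(z) \coloneqq \exp^{\nabla^h}_p(z\cdot l(c))$ of $\nabla^h$ through $p$ with initial velocity $l(c)\in T^{(1,0)}_pX$. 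Hence $\sigma(t) = \gamma(t)$ is the restriction of a holomorphic geodesic of $\nabla^h$ to the real axis. Now I apply the proposition relating geodesics of $\nabla^h$ and $\nabla^{h_R}$: a holomorphic geodesic $\gamma$ of $\nabla^h$ restricts, on each horizontal segment $t\mapsto\gamma(t+is)$, to a geodesic of $\nabla^{h_R} = \nabla^{h_I}$; taking $s = 0$ gives that $\sigma = \gamma|_{\mathbb{R}}$ is a geodesic of $\nabla^{h_R}$. Since $a\in\mathbb{R}^{2n}$ was arbitrary, all lines through $0$ in the chart $\psi$ are $\nabla^{h_R}$-geodesics, which is exactly the condition that $\psi$ is a normal chart for $(X,\nabla^{h_R})$ centered at $p$ — up to the choice of the linear identification $\mathbb{R}^{2n}\cong T_pX$, which here is the underlying real form of $l:\mathbb{C}^n\to T^{(1,0)}_pX$ composed with the canonical isomorphism $T_pX\cong T^{(1,0)}_pX$, and one checks this identification is $\mathbb{R}$-linear and invertible, so the coordinates $(x_1,\ldots,y_n)$ are genuine normal coordinates in the sense of Appendix~\ref{app:almost_complex_structures}.

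I do not expect a serious obstacle here; the content has essentially been pre-packaged by the earlier propositions, and the only care needed is bookkeeping. The one point that warrants explicit attention — and which I would flag as the ``hard part,'' though it is more a matter of precision than difficulty — is making sure the linear algebra of the initial velocity matches up: the real tangent vector $\dot\sigma(0)\in T_pX$ corresponding to $a\in\mathbb{R}^{2n}$ must be sent by $f:T_pX\to T^{(1,0)}_pX$, $v\mapsto\tfrac12(v - iJ(v))$, to the vector $l(c)$ appearing as the initial velocity of the holomorphic geodesic. This follows because $\phi$ is $J$-holomorphic, so $d\phi$ intertwines $J$ with multiplication by $i$, and hence the real and imaginary parts of the complex coordinates $z_j$ are exactly the coordinates whose associated frame $\{\partial_{x_j},\partial_{y_j}\}$ satisfies $J\partial_{x_j} = \partial_{y_j}$; combined with $z_j = x_j+iy_j$ this gives the desired correspondence $c_j = a_j + ib_j$. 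Once this identification is in place, the normalization condition $h(v_i,v_j) = \delta_{ij}$ from the definition of holomorphic normal coordinates translates, via Proposition~\ref{prop:h_R_in_coordinates}(1), into the statement that $\{\partial_{x_j}|_p,\partial_{y_j}|_p\}$ is $h_R$-orthonormal in the appropriate signature, so $\psi$ is even an $h_R$-normal chart in the metric sense, not merely a $\nabla^{h_R}$-affine normal chart. This completes the proof.
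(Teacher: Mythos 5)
Your proposal is correct and follows essentially the same route as the paper, whose own proof is just the one-line instruction to combine the preceding results of the appendix (the geodesic correspondence between $\nabla^h$ and $\nabla^{h_R}=\nabla^{h_I}$, the coordinate expressions at $p$, and the definition of normal coordinates via lines through the origin). Your write-up is simply a careful expansion of that combination, including the correct bookkeeping of the initial velocity under $v\mapsto\tfrac12(v-iJ(v))$ and the signature remark matching the paper's closing comment about $h_I$.
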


\begin{proof}
 Combine all previous results from this section. Note that the same result is only true for $h_I$ after applying a linear transformation, since
 \begin{gather*}
  h_R\vert_p = \sum^n_{j = 1} dx^2_j\vert_p - dy^2_j\vert_p
 \end{gather*}
 is in standard form at $p$ in the coordinates $(x_1,\ldots, y_n)$, while the same is not true for
 \begin{gather*}
  h_I\vert_p = \sum^n_{j=1} dx_j\vert_p\otimes dy_j\vert_p + dy_j\vert_p\otimes dx_j\vert_p.
 \end{gather*}
\end{proof}

\end{appendix}

\newpage
\pagenumbering{Roman}
\addcontentsline{toc}{section}{References}
\markboth{}{References}
%\printbibliography[heading = bibintoc, title = {References}]
%\bibliographystyle{ieeetr}
%\bibliography{msbib}

\end{document}